\numberwithin{equation}{section}
\numberwithin{figure}{section}
\theoremstyle{plain}
\newtheorem{thm}{\protect\theoremname}
  \theoremstyle{remark}
  \theoremstyle{plain}
\newtheorem{cor}{\protect\corollaryname}
  \newtheorem{rem}{\protect\remarkname}
  \theoremstyle{plain}
  \newtheorem{lem}{\protect\lemmaname}
  \theoremstyle{definition}
  \newtheorem{defn}{\protect\definitionname}
  \theoremstyle{plain}
  \newtheorem{prop}{\protect\propositionname}
  \theoremstyle{plain}
  \newtheorem*{thm*}{\protect\theoremname}
\DeclareMathOperator{\supp}{supp}
\DeclareMathOperator{\esssup}{ess \ sup}
\DeclareMathOperator{\essinf}{ess \ inf}
\DeclareMathOperator{\inte}{int}
\DeclareMathOperator{\BMO}{BMO}
\def\XXint#1#2#3{{\setbox0=\hbox{$#1{#2#3}{\int}$}
\vcenter{\hbox{$#2#3$}}\kern-.5\wd0}}
\def\supp{{\rm supp}\ }
\def\bbZ{\mathbb{Z}}
\def\bbR{\mathbb{R}}
\newcommand{\mM}{{\mathcal M}}
\def\ps@pprintTitle{%
 \let\@oddhead\@empty
 \let\@evenhead\@empty
 \def\@oddfoot{}%
 \let\@evenfoot\@oddfoot}
  \providecommand{\definitionname}{Definition}
  \providecommand{\lemmaname}{Lemma}
  \providecommand{\propositionname}{Proposition}
  \providecommand{\remarkname}{Remark}
  \providecommand{\theoremname}{Theorem}
\providecommand{\theoremname}{Theorem}
\providecommand{\corollaryname}{Corollary}
\begin{document}

\begin{frontmatter}{}

\title{   Vector-valued operators, optimal weighted estimates and the $C_p$ condition}

\author[BCAM,UNL]{Mar\'{\i}a Eugenia Cejas}

\ead{mec.eugenia@gmail.com }

\author[BCAM,Fund1,Fund4]{Kangwei Li}

\ead{kli@bcamath.org}

\author[EHU,Ikerbasque,BCAM,Fund1,Fund2]{Carlos P\'erez}

\ead{carlos.perezmo@ehu.es }

\author[EHU,BCAM,Fund1,Fund2,Fund3]{Israel Pablo Rivera-R\'{\i}os}

\ead{petnapet@gmail.com}

\fntext[Fund1]{Supported by the Basque Government through the BERC 2014-2017 program
and by Spanish Ministry of Economy and Competitiveness MINECO: BCAM
Severo Ochoa excellence accreditation SEV-2013-0323}

\fntext[Fund2]{Supported by the Spanish Ministry of Economy and Competitiveness MINECO through the project
MTM2014-53850-P.}

\fntext[Fund3]{Supported by Spanish Ministry of Economy and Competitiveness MINECO through the project MTM2012-30748}
\fntext[Fund4]{Supported by Juan de la Cierva - Formaci\'on 2015 FJCI-2015-24547}

\address[BCAM]{BCAM - Basque Center for Applied Mathematics, Bilbao, Spain}

\address[UNL]{CONICET, Facultad de Ciencias Exactas, Departamento de Matematica,
UNLP}

\address[EHU]{Department of Mathematics, University of the Basque Country, Bilbao,
Spain}

\address[Ikerbasque]{Ikerbasque, Basque Foundation of Science, Bilbao, Spain}

\begin{abstract}
Sharp weighted estimates and local exponential decay estimates are obtained for vector-valued extensions
of the Hardy-Littlewood maximal operator, Calder\'on-Zygmund operators, rough singular integrals
and commutators. Those estimates will rely upon
suitable pointwise estimates in terms of sparse operators.
We also prove some new results for the $C_p$ classes introduced by Muckenhoupt \cite{M}  and later extended by Sawyer \cite{S}, in particular we extend the result to the full expected range $p>0$, to the weak norm, to other operators and to the their vector-valued extensions.

\end{abstract}

\end{frontmatter}{}
\setcounter{tocdepth}{1}
\renewcommand*\contentsname{Summary}
\tableofcontents
\section{Introduction and main results }

We recall that a weight $w$, that is, a non-negative locally integrable
function, belongs to the Muckenhoupt $A_{p}$ class for $1<p<\infty$
if
\[
\left[w\right]_{A_{p}}=\sup_{Q}\left(\frac{1}{|Q|}\int_{Q}w\right)\left(\frac{1}{|Q|}\int_{Q}w^{-\frac{1}{p-1}}\right)^{p-1}<\infty,
\]
where the supremum is taken over all cubes in $\mathbb{R}^{n}$ with sides parallel to the axes. And
in the case $p=1$ we say that $w\in A_{1}$ if
\[
Mw\leq\kappa w\quad\text{a.e.}
\]
and we define $[w]_{A_{1}}=\inf\{\kappa>0\,:\,Mw\leq\kappa w\quad\text{a.e.}\}$.  The quantity $[w]_{A_{p}}$ is called the $A_{p}$ constant or characteristic
of the weight $w$.

In the last years the sharp dependence on the $A_{p}$ constant of
weighted inequalities for several operators in Harmonic Analysis has
been thoroughly studied by many authors. We
could set the begining of the ``history of quantitative weighted
estimates'' in the pioneering work of S.M. Buckley \cite{B} in which
he proved that the following estimate holds
\begin{equation}\label{Buck}
\|M\|_{L^{p}(w)\rightarrow L^{p}(w)}\leq c_{n,p}[w]_{A_{p}}^{\frac{1}{p-1}},
\end{equation}
where $M$ stands for the Hardy-Littlewood maximal function, $1<p<\infty$
and $w\in A_{p}$. He also proved that the exponent of the $A_{p}$
constant cannot be replaced by a smaller one. Almost ten years later,
the so-called $A_{2}$ conjecture for the Ahlfors-Beurling transform
was raised in \cite{AIS} and solved not much later by S. Petermichl
and A. Volberg in \cite{PV}. The solution of this problem motivated
the study of the sharp dependence on the $A_{p}$ constant of operators
such as the Hilbert transform \cite{P1} or the Riesz transforms \cite{P2}. Then
to prove the linear dependence on the $A_{2}$ constant
for general Calder\'on-Zygmund operators became an important problem
in the area that was finally solved by T. Hyt\"onen \cite{HAnnals}.  The best result until then for the case of sufficiently smooth operators Calder\'on-Zygmund operators was obtained in  \cite{CUMP2} using  a method that was sufficiently flexible to treat other operators as can be found in \cite{CUMP3}. This result was improved in \cite{HPAinfty} for the case $p=2$ as well as \eqref{Buck} for any $p$, by means of what are now called mixed $A_p-A_{\infty}$ estimates as the one ones we obtain in this paper.  The mixed $A_p-A_{\infty}$ estimate for Calder\'on-Zygmund operators and general $p$  was obtained in \cite{HLac,HLP}. Also new proofs and improvements of the mixed $A_p-A_{\infty}$ versions of \eqref{Buck} were obtained in \cite{HPR} and \cite{PR}.

In the attempt to simplify the proof of the $A_{2}$ conjecture, as well
as trying to solve other problems, a ``new technology'' based on
the domination of Calder\'on-Zygmund operators by positive objects was
developed. One paradigmatic result in that direction is the norms
domination of Calder\'on-Zygmund operators by sparse operators due to
A. Lerner \cite{L2} that reads as follows. Given a maximal Calder\'on-Zygmund
operator $T^{*}$ and a Banach function space $X$
\[
\|T^{*}f\|_{X}\leq\sup_{\mathcal{S}}\|\mathcal{A}_{\mathcal{S}}f\|_{X},
\]
where
\[
\mathcal{A}_{\mathcal{S}}f(x)=\sum_{Q\in\mathcal{S}}\frac{1}{|Q|}\int_{Q}|f(y)|dy\chi_{Q}(x)=: \sum_{Q\in\mathcal{S}}\langle |f| \rangle_Q \chi_{Q}(x)
\]
and each $\mathcal{S}$ is a sparse family contained in any general
dyadic grid (see Section \ref{subsec:Lerner's-Formula} for the precise
definition of sparse family).

Quite recently the former norm control was improved to a pointwise
control (see \cite{LN,CAR,La,HRT,L} and also \cite{CDPO,BFP} for
other interesting advances) which in its fully quantitative version
(see \cite{HRT,L}) reads as follows
\[
\left|Tf(x)\right|\leq c_{n}C_{T}\sum_{k=1}^{3^{n}}\mathcal{A}_{\mathcal{S}_{k}}|f|(x),
\]
where $C_{T}=C_{K}+\|T\|_{L^{2}\rightarrow L^{2}}+\|\omega\|_{\text{Dini}}$
(see Section \ref{subsec:Notation} for the precise definition of
$T$ and these constants).

The development of such a pointwise control has required the development
of several new approaches. Among them we would like stress \cite{L,LN,LORR}
since the techniques contained in them will be crucial in the present work.

\subsection{Estimates for vector-valued Hardy-Littlewood maximal operators}
Given $0<q<\infty$ we define
the vector-valued Hardy-Littlewood maximal operator $\overline{M}_{q}$
as
\[
\overline{M}_{q}\bm{f}(x)=\left(\sum_{j=1}^{\infty}Mf_{j}(x)^{q}\right)^{\frac{1}{q}}.
\]
This operator was introduced by Fefferman and Stein
in \cite{FS1} as a generalization of both the scalar maximal function
$M$ and the classical integral of Marcinkiewicz. Since then, it has played an important role in Harmonic Analysis.  It is well known
that this operator is of strong type $(p,p)$ and of weak type
$(1,1)$:
\begin{equation*}
||\overline{M}_{q}\bm{f}||_{L^{p}(\mathbb{R}^{n})}\leq c_{n,p,q} \||\bm{f}|_{q}\|_{L^{p}\left(\mathbb{R}^{n} \right)}  \qquad  1<p,q<\infty
\end{equation*}
and
\begin{equation*}
\|\overline{M}_{q}(\bm{f})\|_{L^{1,\infty}(\mathbb{R}^{n})}
\leq c_{n,q}\int_{\mathbb{R}^{n}}|\bm{f}(x)|_{q}\,dx,
\end{equation*}
where $|\bm{f}|_{q}= \left(   \sum_{j=1}^{\infty} |f_{j} |^q  \right)^{\frac{1}{q}}$
 (see \cite{FS1}). They are bounded on $L^p(w)$
if and only if $w\in A_{p}$  \cite{AJ,K}.  It is also known  the sharp dependence on the $A_{p}$ constant \cite{CUMP3}
and that they satisfy a Fefferman-Stein type inequality (\cite[Theorem 1.1]{P}).

Now we turn to our contribution in this work. In \cite{LN} A. Lerner
and F. Nazarov developed a new version of Lerner's formula (\cite{L1})
that allows to obtain a pointwise control of any function satisfying
minimum assumptions in terms of its ``oscillations'' (see Section
\ref{subsec:Lerner's-Formula} for more details). Using Lerner-Nazarov
formula we will obtain a sparse control for vector-valued Hardy-Littlewood
maximal operators, that somehow appears implicitly in \cite{CUMP3},
that reads as follows.
\begin{thm}
\label{ThmSparseMq}Let $1<q<\infty$ and $f=\{f_{j}\}_{j=1}^{\infty}$,
such that for each $\varepsilon>0$
\[
\left|\left\{ x\in[-R,R]^{n}\,:\,|\overline{M}_{q}\bm{f}(x)|>\varepsilon\right\} \right|=o(R^{n})\,\, \text{as}\,\,R\rightarrow \infty.
\]
Then there exists $3^{n}$ dyadic lattices $\mathcal{D}_{k}$ and
$3^{n}$ $\frac{1}{6}$-sparse families $\mathcal{S}_{k}\subseteq\mathcal{D}_{k}$
depending on $f$ such that
\[
|\overline{M}_{q}\bm{f}(x)|\leq c_{n,q}\sum_{k=1}^{3^{n}}\mathcal{A}_{\mathcal{S}_{k}}^{q}|\bm{f}|_{q}(x),
\]
where
$$\mathcal{A}_{\mathcal{S}_{k}}^{q}|\bm{f}|_{q}(x)=\left(\sum_{Q\in\mathcal{S}_{k}}  \langle |\bm{f}|_{q}\rangle_Q^q \chi_{Q}(x)\right)^{\frac{1}{q}}.$$
\end{thm}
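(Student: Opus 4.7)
The plan is to apply the Lerner--Nazarov formula to $G=(\overline{M}_q\bm{f})^q$, which inherits the hypothesised decay from $\overline{M}_q\bm{f}$ since $\{G>\varepsilon\}=\{\overline{M}_q\bm{f}>\varepsilon^{1/q}\}$. This yields $3^n$ dyadic lattices $\mathcal{D}_k$ and $\frac16$-sparse subfamilies $\mathcal{S}_k\subset \mathcal{D}_k$ (depending on $\bm{f}$) such that
\[
G(x)\leq c_n\sum_{k=1}^{3^n}\sum_{Q\in \mathcal{S}_k}\omega_{\lambda}(G;Q)\chi_Q(x)
\]
for a small fixed $\lambda=\lambda(n)$, with no leftover ``median at infinity'' term thanks to the decay assumption.

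The crux is the pointwise estimate of the local oscillation $\omega_\lambda(G;Q)\leq c_{n,q}\langle|\bm{f}|_q\rangle_{3Q}^{\,q}$. Decomposing $\bm{f}=\bm{f}\chi_{3Q}+\bm{f}\chi_{(3Q)^c}$ and using Minkowski's inequality in $\ell^q$, write
\[
\overline{M}_q\bm{f}\leq \overline{M}_q(\bm{f}\chi_{3Q})+\overline{M}_q(\bm{f}\chi_{(3Q)^c})=:F_1+F_2 .
\]
For the far part, any cube $R\ni x\in Q$ that meets $(3Q)^c$ must satisfy $\ell(R)\gtrsim \ell(Q)$, so there exists a slightly larger cube $R^*\supset R\cup Q$ with $|R^*|/|R|\leq C_n$. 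This gives componentwise $M(f_j\chi_{(3Q)^c})(x)\leq C_n\inf_{y\in Q}Mf_j(y)$ and hence
\[
F_2(x)\leq C_n\inf_{y\in Q}\overline{M}_q\bm{f}(y)\qquad (x\in Q),
\]
so that a suitable choice of constant $c_Q$ turns the $F_2$-contribution to the oscillation into an error controlled by $\langle|\bm{f}|_q\rangle_{3Q}^{\,q}$. For the near part, the vector-valued weak type $(1,1)$ inequality for $\overline{M}_q$ recalled in the introduction gives
\[
(F_1\chi_Q)^{*}(\lambda|Q|)\leq \frac{c_{n,q}}{\lambda}\langle|\bm{f}|_q\rangle_{3Q},
\]
and raising to the $q$-th power produces the desired bound on $\omega_\lambda(G;Q)$.

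Substituting back yields $G(x)\leq c_{n,q}\sum_{k}\sum_{Q\in\mathcal{S}_k}\langle|\bm{f}|_q\rangle_{3Q}^{\,q}\chi_Q(x)$. Taking $q$-th roots and using that for $q\geq 1$ the function $t\mapsto t^{1/q}$ is subadditive, so that $\bigl(\sum_{k=1}^{3^n}a_k\bigr)^{1/q}\leq \sum_{k=1}^{3^n}a_k^{1/q}$, one arrives at
\[
\overline{M}_q\bm{f}(x)\leq c_{n,q}\sum_{k=1}^{3^n}\Bigl(\sum_{Q\in\mathcal{S}_k}\langle|\bm{f}|_q\rangle_{3Q}^{\,q}\chi_Q(x)\Bigr)^{1/q}.
\]
The dilations $3Q$ are then absorbed by the standard three-lattice trick, which embeds each $3Q$ into a dyadic cube of comparable size in one of finitely many shifted grids, possibly adjusting the sparsity constant to recover the $\frac16$-sparse families $\mathcal{S}_k\subset\mathcal{D}_k$ of the statement.

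The main obstacle is the vector-valued oscillation estimate: one must combine Minkowski in $\ell^q$, the geometric ``essentially constant'' control of the far part $F_2$, and the weak type $(1,1)$ bound for the near part $F_1$ in such a way that the right-hand side emerges as $\langle|\bm{f}|_q\rangle_{3Q}^{\,q}$ (rather than as a weaker mixed-norm average), so that after taking $q$-th roots the conclusion has the sharp $\ell^q$-sparse form $\mathcal{A}^q_{\mathcal{S}_k}|\bm{f}|_q$.
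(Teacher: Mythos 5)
There is a genuine gap at the heart of your argument: the claimed oscillation bound $\omega_{\lambda}\bigl((\overline{M}_{q}\bm{f})^{q};Q\bigr)\leq c_{n,q}\langle|\bm{f}|_{q}\rangle_{3Q}^{q}$ is false for the \emph{non-dyadic} operator. Your far-part estimate $F_{2}(x)\leq C_{n}\inf_{y\in Q}\overline{M}_{q}\bm{f}(y)$ on $Q$ is correct, but it only says that $F_{2}$ is comparable to a constant within a dimensional factor $C_{n}>1$; it does not make its oscillation small. Subtracting \emph{any} constant $c_{Q}$ leaves a deviation of size up to $(C_{n}^{q}-1)\inf_{Q}(\overline{M}_{q}\bm{f})^{q}$ (and the sum decomposition $\overline{M}_q\bm f\le F_1+F_2$ raised to the power $q$ produces further losses of this type), and $\inf_{Q}\overline{M}_{q}\bm{f}$ is in no way controlled by $\langle|\bm{f}|_{q}\rangle_{3Q}$: take a single scalar $f$ supported far from $Q$, so that $\langle|f|\rangle_{3Q}=0$ while $Mf$ is strictly varying (hence has positive oscillation on every subset of $Q$ of positive measure). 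So the step ``a suitable choice of $c_{Q}$ turns the $F_{2}$-contribution into an error controlled by $\langle|\bm{f}|_{q}\rangle_{3Q}^{q}$'' cannot be carried out, and with it the whole Lerner--Nazarov application to $G=(\overline{M}_{q}\bm{f})^{q}$ collapses.

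The paper's proof avoids exactly this obstruction by reversing the order of your two reductions: it first uses Remark \ref{Rem} (the three-lattice trick) to get the pointwise bound $\overline{M}_{q}\bm{f}\leq c_{n}\sum_{k=1}^{3^{n}}\overline{M}_{q}^{\mathcal{D}_{k}}\bm{f}$, and only then applies Lerner--Nazarov to each dyadic object $\bigl(\overline{M}_{q}^{\mathcal{D}_{k}}\bm{f}\bigr)^{q}$. For a dyadic cube $Q$ the contribution of dyadic cubes strictly containing $Q$ is an \emph{exact} constant on $Q$, and the dyadic maximal function is the maximum (not the sum) of the local and far parts, so the constant can be subtracted with no loss and the weak $(1,1)$ bound for $\overline{M}_{q}$ applied to $\bm{f}\chi_{Q}$ gives the oscillation estimate $\tilde{w}_{\lambda}\bigl((\overline{M}_{q}^{\mathcal{D}}\bm{f})^{q};Q\bigr)\leq \frac{c_{n,q}}{\lambda^{q}}\langle|\bm{f}|_{q}\rangle_{Q}^{q}$ (this is \cite[Lemma 8.1]{CUMP3}, quoted in Section \ref{sec:ProofsMaxSparse}), with averages over $Q$ itself rather than $3Q$. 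This also explains a secondary defect of your outline: even if your oscillation bound held, passing from $3Q$ to dyadic cubes afterwards would degrade the sparsity constant (roughly to $\frac{1}{6\cdot 9^{n}}$), whereas the theorem claims, and the paper's order of operations delivers, genuinely $\frac{1}{6}$-sparse families $\mathcal{S}_{k}\subseteq\mathcal{D}_{k}$; note also that Lerner--Nazarov by itself produces one sparse family in one lattice, not $3^{n}$ of them as asserted at the start of your proposal. Your treatment of the near part via the weak type $(1,1)$ inequality and the final $q$-th root step are fine.
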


Using the pointwise control, as a consequence of Theorems \ref{ThmFuerteAp}
and \ref{ThmDebilAp} (cf. Section \ref{subsec:ApAinfty} for the
statements and the notation)  we have the following mixed estimates
\begin{thm}
\label{ThmEstMq}Let $1<p,q<\infty$ and let $w$ and $\sigma$ be a
pair of weights. Then
\[
\|\overline{M}_{q}(\sigma \bm{f})\|_{L^{p}(w)}\lesssim[w,\sigma]_{A_{p}}^{\frac{1}{p}}\left([w]_{A_{\infty}}^{\left(\frac{1}{q}-\frac{1}{p}\right)_{+}}+[\sigma]_{A_{\infty}}^{\frac{1}{p}}\right)\||\bm{f}|_{q}\|_{L^{p}(\sigma)}.
\]
If we also have that $p\not=q$, then
\[
\|\overline{M}_{q}(\sigma \bm{f})\|_{L^{p,\infty}(w)}\lesssim[w,\sigma]_{A_{p}}^{\frac{1}{p}}[w]_{A_{\infty}}^{\left(\frac{1}{q}-\frac{1}{p}\right)_{+}}\||\bm{f}|_{q}\|_{L^{p}(\sigma)}.
\]

\end{thm}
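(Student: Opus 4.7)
The plan is to combine the pointwise sparse domination of Theorem \ref{ThmSparseMq} with the scalar mixed $A_p$-$A_\infty$ sparse bounds provided by Theorems \ref{ThmFuerteAp} and \ref{ThmDebilAp}. The first step exploits the fact that $\sigma$ is a scalar weight acting coordinatewise on the sequence $\bm{f}$, so that $|\sigma\bm{f}|_q = \sigma|\bm{f}|_q$ pointwise. After a standard truncation / monotone convergence argument which guarantees the decay hypothesis of Theorem \ref{ThmSparseMq} for the truncations, one obtains
\[
|\overline{M}_q(\sigma\bm{f})(x)| \leq c_{n,q}\sum_{k=1}^{3^n}\mathcal{A}_{\mathcal{S}_k}^q\bigl(\sigma|\bm{f}|_q\bigr)(x).
\]
Taking $L^p(w)$ or $L^{p,\infty}(w)$ norms of both sides and summing the $3^n$ contributions reduces the vector-valued statement to the scalar mixed estimate for the $q$-power sparse operator $\mathcal{A}_\mathcal{S}^q$ applied to the positive scalar function $h:=|\bm{f}|_q$.

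The second step is to feed this scalar $h$ into Theorems \ref{ThmFuerteAp} and \ref{ThmDebilAp}, which are stated precisely for the $q$-power sparse operators and yield
\[
\|\mathcal{A}_\mathcal{S}^q(\sigma h)\|_{L^p(w)}\lesssim [w,\sigma]_{A_p}^{1/p}\Bigl([w]_{A_\infty}^{(1/q-1/p)_+}+[\sigma]_{A_\infty}^{1/p}\Bigr)\|h\|_{L^p(\sigma)},
\]
together with the weak analogue valid for $p\neq q$. Substituting $h=|\bm{f}|_q$ and combining with the sparse domination above completes the proof of Theorem \ref{ThmEstMq}.

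The delicate point, handled inside Theorems \ref{ThmFuerteAp} and \ref{ThmDebilAp} rather than here, is to produce the sharp exponent $(1/q-1/p)_+$ on $[w]_{A_\infty}$. When $p\leq q$ this exponent is zero and the bound follows painlessly from the pointwise comparison $\mathcal{A}_\mathcal{S}^q h\leq \mathcal{A}_\mathcal{S} h$ (valid whenever $q\geq 1$, by the embedding $\ell^1\subset \ell^q$) combined with the known mixed estimate for the ordinary linear sparse operator. When $p>q$, however, this naive comparison yields the wasteful exponent $1/p'$ instead of $1/q-1/p$, and one must work with the $q$-power directly: raising the $L^p(w)$ norm to the $q$-th power rewrites the problem as controlling
\[
\Bigl\|\sum_{Q\in\mathcal{S}}\langle \sigma h\rangle_Q^q\chi_Q\Bigr\|_{L^{p/q}(w)},
\]
after which one dualizes the outer norm (possible precisely because $p/q>1$) and runs a principal-cubes Carleson-type argument in which the Fujii--Wilson definition of $[w]_{A_\infty}$ supplies the power $1-q/p$, which becomes the desired $1/q-1/p$ upon extracting the final $q$-th root.
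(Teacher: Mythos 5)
Your proposal is correct and follows essentially the same route as the paper: the pointwise sparse bound of Theorem \ref{ThmSparseMq} applied to $\sigma\bm{f}$ (using $|\sigma\bm{f}|_{q}=\sigma|\bm{f}|_{q}$) reduces both the strong and the weak norm to the operators $\mathcal{A}_{\mathcal{S}_{k}}^{q}(\sigma|\bm{f}|_{q})$, after which Theorems \ref{ThmFuerteAp} and \ref{ThmDebilAp} are invoked exactly as the paper does in Section \ref{sec:ProofsMax}. Your closing sketch of how those two cited theorems are themselves proved is not needed for (and not part of) this argument, so its slight imprecision in the range $p\le q$ does not affect the proof.
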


This result reflects the interesting fact that $\overline{M}_{q}$ behaves like a Calder\'on-Zygmund operator as $q\rightarrow1$
while it behaves like the (scalar) Hardy-Littlewood maximal function as $q\rightarrow \infty$. This observation is also  reflected in some of the main results in \cite{P}.

We observe that the strong-type estimate is an improvement of \cite[Theorem 1.12]{CUMP3}
since we obtain here mixed $A_{p}-A_{\infty}$ constants. Also as
far as we are concerned, no dependence on the $A_{p}$ constant of
the weak type estimate of $\overline{M}_{q}$ appears to have been considered before.

We would like to point out that the case $p=1$ was established in \cite{P}, namely for every weight $w$ the following estimate holds
\[
\|\overline{M}_q(\bm{f})\|_{L^{1,\infty}(w)}\leq c_{n,q}\||\bm{f}|_q\|_{L^1(Mw)}
\]
and consequently, if $w\in A_1$ the dependence on the $A_1$ constant is linear. We do not know whether such a precise estimate follows from the sparse control for $\overline{M}_q$ provided in Theorem \ref{ThmSparseMq} above.

\begin{rem}
During the final stage of the elaboration of this work we have learned that some of  the results we provide for the Hardy-Littlewood maximal function have been independently obtained and extended in \cite{HaLo}.
\end{rem}

\subsection{Estimates for vector-valued Calder\'on-Zygmund operators}

Let $0<q<\infty$, $T$ be an $\omega$-
Calder\'on-Zygmund operator (see
Section \ref{subsec:Notation} for the precise definition) and $\bm{f}=\{f_{j}\}_{j=1}^{\infty}$.
We define the vector-valued $\omega$-
Calder\'on-Zygmund operator $\overline{T}_{q}$
as
\[
\overline{T}_{q}\bm{f}(x)=\left(\sum_{j=1}^{\infty}|Tf_{j}(x)|^{q}\right)^{\frac{1}{q}}.
\]
$L^{p}$-boundedness for these operators is obtained,
for example, in \cite{CF}. In that paper the authors point out the
fact that their result yields a new proof of a particular case of
the corresponding result in \cite{BCP} (in that work operators are
defined over Banach space valued functions).
Weighted estimates for the kind of vector-valued Calder\'on-Zygmund operators that we consider
in the current paper were obtained in \cite{CUP} and \cite{PT}.

In \cite{L} A.
Lerner devised a method to control  pointwise Calder\'on-Zygmund operators by sparse
operators that can be extended to other operators provided that their
grand maximal operator is of weak type $(1,1)$ (see Section \ref{subsec:LemmaSparseCZO}
for the details). Following ideas from \cite{L} we are going to establish
a pointwise domination for $\overline{T}_{q}$ which is a natural
analogous to the scalar case.
\begin{thm}
\label{ThmSparseTq}Let $T$ be an $\omega$-
Calder\'on-Zygmund operator with $\omega$ satisfying the Dini condition
and $1<q<\infty$. If $\bm{f}=\{f_{j}\}$ and $|\bm{f}|_{q}\in L^{1}(\mathbb{R}^{n})$
is a compactly supported function, then there exist $3^{n}$ dyadic
lattices $\mathcal{D}_{k}$ and $3^{n}$ $\frac{1}{2}$-sparse families
$\mathcal{S}_{k}\subseteq\mathcal{D}_{k}$. such that
\[
\left|\overline{T}_{q}\bm{f}(x)\right|\leq c_{n}C_{T}\sum_{k=1}^{3^{n}}\mathcal{A}_{\mathcal{S}_{k}}|\bm{f}|_{q}(x),
\]
where
$$\mathcal{A}_{\mathcal{S}}f(x)=\sum_{Q\in\mathcal{S}}\langle |f| \rangle_Q\chi_{Q}(x)$$
and $C_{T}=C_{K}+\|\omega\|_{\text{Dini}}+\|T\|_{L^{2}\rightarrow L^{2}}$.
\end{thm}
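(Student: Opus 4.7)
The plan is to follow Lerner's scheme \cite{L} for scalar $\omega$-Calder\'on-Zygmund operators, with the scalar grand maximal truncation replaced by the $\ell^q$-valued analogue
\[
\mathcal{M}^{\#}_{\overline{T}_q}\bm{f}(x):=\sup_{Q\ni x}\esssup_{\xi\in Q}\bigl|\overline{T}_q(\bm{f}\chi_{\mathbb{R}^n\setminus 3Q})(\xi)\bigr|.
\]
The backbone will be the weak-type estimate
\[
\bigl\|\mathcal{M}^{\#}_{\overline{T}_q}\bm{f}\bigr\|_{L^{1,\infty}(\mathbb{R}^n)}\leq c_n C_T\,\||\bm{f}|_q\|_{L^1(\mathbb{R}^n)},
\]
from which a stopping-time construction will yield the sparse family.

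To obtain this weak-type bound I would start from the componentwise kernel-smoothness estimate
\[
|T(f_k\chi_{\mathbb{R}^n\setminus 3Q})(\xi)-T(f_k\chi_{\mathbb{R}^n\setminus 3Q})(y)|\leq c_n\|\omega\|_{\text{Dini}}Mf_k(y),\qquad\xi,y\in Q,
\]
take the $\ell^q$-norm in $k$ (via the $\ell^q$-triangle inequality), and then bound the ``bare'' piece $|T(f_k\chi_{\mathbb{R}^n\setminus 3Q})(y)|$ by the maximal truncation $T^{\ast}f_k(y)$. The upshot is the pointwise control
\[
\mathcal{M}^{\#}_{\overline{T}_q}\bm{f}(x)\leq c_n C_T\,\overline{M}_q\bm{f}(x)+\overline{T}_q^{\,\ast}\bm{f}(x),
\]
where $\overline{T}_q^{\,\ast}\bm{f}:=\bigl(\sum_k|T^{\ast}f_k|^q\bigr)^{1/q}$. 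Both summands are of weak-type $(1,1)$ with the $L^1$-norm of $|\bm{f}|_q$: the Fefferman--Stein inequality handles $\overline{M}_q$, while $\overline{T}_q^{\,\ast}$ is a Banach-valued Calder\'on--Zygmund maximal operator with values in $\ell^q$, and its weak-type $(1,1)$ follows from a standard CZ decomposition of the scalar function $|\bm{f}|_q$.

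With this in hand, fix a cube $Q_0$ and set $\lambda:=c_n C_T\langle|\bm{f}|_q\rangle_{3Q_0}$ with $c_n$ large enough that the exceptional set
\[
E:=\{x\in Q_0:\,M(|\bm{f}|_q\chi_{3Q_0})(x)+\mathcal{M}^{\#}_{\overline{T}_q}(\bm{f}\chi_{3Q_0})(x)>\lambda\}
\]
satisfies $|E|\leq\frac{1}{2^{n+2}}|Q_0|$. A local Calder\'on--Zygmund decomposition of $\chi_E$ at height $\frac{1}{2^{n+1}}$ produces a pairwise disjoint family $\{P_j\}\subset\mathcal{D}(Q_0)$ covering $E$ up to a null set, with $\sum_j|P_j|\leq\frac{1}{2}|Q_0|$ and $|P_j\setminus E|\geq\frac{1}{2}|P_j|$. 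Picking $\xi_j\in P_j\setminus E$ and evaluating $\mathcal{M}^{\#}_{\overline{T}_q}(\bm{f}\chi_{3Q_0})(\xi_j)\leq\lambda$ against the cube $P_j$ yields $|\overline{T}_q(\bm{f}\chi_{3Q_0\setminus 3P_j})(x)|\leq\lambda$ for a.e.\ $x\in P_j$. Combining with the $\ell^q$-triangle inequality $|\overline{T}_q(\bm{f}\chi_{3Q_0})|\leq|\overline{T}_q(\bm{f}\chi_{3Q_0\setminus 3P_j})|+|\overline{T}_q(\bm{f}\chi_{3P_j})|$ gives the recursion
\[
|\overline{T}_q(\bm{f}\chi_{3Q_0})|\chi_{Q_0}\leq c_n C_T\langle|\bm{f}|_q\rangle_{3Q_0}\chi_{Q_0}+\sum_j|\overline{T}_q(\bm{f}\chi_{3P_j})|\chi_{P_j},
\]
where on $Q_0\setminus\bigcup_j P_j$ the control comes from $x\notin E$ together with Lebesgue differentiation.

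Iterating the recursion inside each $P_j$ produces a $\frac{1}{2}$-sparse family $\mathcal{S}\subset\mathcal{D}(Q_0)$ with $|\overline{T}_q(\bm{f}\chi_{3Q_0})|\chi_{Q_0}\leq c_n C_T\sum_{Q\in\mathcal{S}}\langle|\bm{f}|_q\rangle_{3Q}\chi_Q$. Choosing $Q_0$ large enough that $3Q_0$ contains the support of $\bm{f}$ and then invoking the three-lattice trick (each $3Q$ sits inside a dyadic cube from one of the $3^n$ shifted grids of comparable size) converts averages over $3Q$ into averages over cubes drawn from the $3^n$ standard lattices, delivering the stated decomposition with $3^n$ $\frac{1}{2}$-sparse families. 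The main technical obstacle will be the weak-type $(1,1)$ estimate for $\mathcal{M}^{\#}_{\overline{T}_q}$ with the correct linear dependence on $C_T$: one must verify that the Banach-valued CZ decomposition for $\overline{T}_q^{\,\ast}$ genuinely produces a constant of the form $c_n C_T$ uniformly in $q$, and that the Dini hypothesis on $\omega$ is consumed at precisely the step where the componentwise kernel smoothness is invoked before passing to the $\ell^q$-norm.
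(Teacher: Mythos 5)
Your skeleton is exactly the paper's: Lerner's grand maximal truncation $\mathcal{M}_{\overline{T}_{q}}$, a pointwise bound of it by $c_{n,q}C_{T}\overline{M}_{q}\bm{f}+\overline{T^*}_{q}\bm{f}$, the resulting weak type $(1,1)$ with constant $c_{n,q}C_{T}$, a local stopping-time recursion on $\mathcal{D}(Q_{0})$ producing a $\frac{1}{2}$-sparse family, and the three-lattice trick. But two of your intermediate steps are not correct as written. First, the ``bare'' piece $T(f_{k}\chi_{\mathbb{R}^{n}\setminus3Q})(y)$ is \emph{not} dominated by $T^{*}f_{k}(y)$: $\mathbb{R}^{n}\setminus3Q$ is not the exterior of a ball centred at $y$, so this is not a truncation at $y$. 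Similarly, your componentwise smoothness bound with constant $c_{n}\|\omega\|_{\text{Dini}}$ is only justified for $z$ with $|z-y|\geq2|\xi-y|$, which fails for $z$ just outside $3Q$ when $\xi,y$ are arbitrary points of $Q$. The paper's Lemma \ref{LemmaTec} repairs both points at once by introducing the ball $B_{x}$ of radius $2\,\diam Q$ centred at the chosen point $x$ and splitting $\bm{f}\chi_{\mathbb{R}^{n}\setminus3Q}=\bm{f}\chi_{\mathbb{R}^{n}\setminus B_{x}}+\bm{f}\chi_{B_{x}\setminus3Q}$: the Dini smoothness is applied only outside $B_{x}$ (where $|z-x|\geq2|\xi-x|$ always holds), the genuine truncation $|\overline{T}_{q}(\bm{f}\chi_{\mathbb{R}^{n}\setminus B_{x}})(x)|\leq\overline{T^*}_{q}\bm{f}(x)$ is taken at the centre $x$, and the annulus $B_{x}\setminus3Q$ is handled by the size condition, contributing an extra $c_{n}C_{K}\overline{M}_{q}\bm{f}(x)$. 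These corrections are harmless for the final constant $C_{T}$, but they cannot be skipped.

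Second, on the good set $Q_{0}\setminus\bigcup_{j}P_{j}$, Lebesgue differentiation alone does not control $|\overline{T}_{q}(\bm{f}\chi_{3Q_{0}})(x)|$: for a small cube $Q\ni x$ one has $|\overline{T}_{q}(\bm{f}\chi_{3Q_{0}})(x)|\leq|\overline{T}_{q}(\bm{f}\chi_{3Q_{0}\setminus3Q})(x)|+|\overline{T}_{q}(\bm{f}\chi_{3Q})(x)|$, and the second term does not tend to zero as $Q$ shrinks. What is needed is part \eqref{eq:LemaTec1} of Lemma \ref{LemmaTec}, namely $|\overline{T}_{q}(\bm{f}\chi_{3Q_{0}})(x)|\leq c_{n}\|\overline{T}_{q}\|_{L^{1}\rightarrow L^{1,\infty}}|\bm{f}|_{q}(x)+\mathcal{M}_{\overline{T}_{q},Q_{0}}\bm{f}(x)$ a.e., proved at points of approximate continuity using the weak type $(1,1)$ of $\overline{T}_{q}$; the factor $|\bm{f}|_{q}(x)$ is then controlled off your set $E$ precisely because you included $M(|\bm{f}|_{q}\chi_{3Q_{0}})$ in its definition. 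Finally, the weak type $(1,1)$ of $\overline{T^*}_{q}$ with constant $c_{n,q}C_{T}$, which you only flag, is the genuinely nontrivial input: the paper does not run a Banach-valued CZ decomposition for the maximal operator, but proves a vector-valued Cotlar inequality bounding $\overline{T^*}_{q}\bm{f}$ by $\overline{M}_{q/\delta}(\{|Tf_{j}|^{\delta}\})^{1/\delta}+c\,C_{T}\overline{M}_{q}\bm{f}$ and combines it with the weak $(1,1)$ of $\overline{T}_{q}$ (Proposition \ref{Prop:TqFullyQuant}, via a vector-valued CZ decomposition) and the $L^{p,\infty}$ boundedness of $\overline{M}_{q}$ (Theorem \ref{Thm:WeakppMq}); see Theorem \ref{Thm:MaxTq}. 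Your direct route for $\overline{T^*}_{q}$ can be made to work, but the truncations cutting through the bad cubes require exactly this kind of extra argument, so as written this is the one real gap in the proposal.
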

With that pointwise domination at our disposal we can derive several
corollaries. Relying upon results in \cite{HL} (see Theorems \ref{ThmFuerteAp} and \ref{ThmDebilAp} in Section \ref{subsec:ApAinfty}) we can obtain
the following weighted estimates.
\begin{thm}\label{Thm:ApT}
Let $1<p,\,q<\infty$, $w$ and $\sigma$ be a pair of weights, and $T$ be
an $\omega$-Calder\'on-Zygmund operator with $\omega$ satisfying the Dini condition. Then
\[
\|\overline{T}_{q}(\sigma \bm{f})\|_{L^{p}(w)}\leq c_{n,p,q}C_T[w,\sigma]_{A_{p}}^{\frac{1}{p}}\left([w]_{A_{\infty}}^{\frac{1}{p'}}+[\sigma]_{A_{\infty}}^{\frac{1}{p}}\right)\||\bm{f}|_{q}\|_{L^{p}(\sigma)}
\]
and also
\[
\|\overline{T}_{q}(\sigma \bm{f})\|_{L^{p,\infty}(w)}\leq c_{n,p,q}C_T[w,\sigma]_{A_{p}}^{\frac{1}{p}}[w]_{A_{\infty}}^{\frac{1}{p'}}\||\bm{f}|_{q}\|_{L^{p}(\sigma)}.
\]
\end{thm}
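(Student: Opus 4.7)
The strategy is to combine the pointwise sparse domination of Theorem \ref{ThmSparseTq} with the two-weight sparse estimates of Theorems \ref{ThmFuerteAp} and \ref{ThmDebilAp}, which is why those two ingredients were set up beforehand.

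First, I would assume (by a standard truncation and monotone convergence argument) that only finitely many components $f_j$ are nonzero and that each is compactly supported and bounded, so that $|\bm{f}|_q$ is a compactly supported $L^1$ function. Applying Theorem \ref{ThmSparseTq} to $\sigma\bm{f}=\{\sigma f_j\}_j$ and using that $|\sigma\bm{f}|_q=\sigma|\bm{f}|_q$ pointwise (since $\sigma\ge 0$), I obtain
\[
|\overline{T}_{q}(\sigma\bm{f})(x)|\le c_nC_T\sum_{k=1}^{3^n}\mathcal{A}_{\mathcal{S}_k}(\sigma|\bm{f}|_q)(x),
\]
where each $\mathcal{S}_k$ is a $\tfrac12$-sparse family in a dyadic lattice $\mathcal{D}_k$.

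Next, I would take $L^p(w)$ norms (respectively, $L^{p,\infty}(w)$ norms) of both sides and invoke Theorem \ref{ThmFuerteAp} (resp.\ Theorem \ref{ThmDebilAp}) applied to the scalar function $g:=|\bm{f}|_q$ and to each sparse operator $\mathcal{A}_{\mathcal{S}_k}$. This directly produces the two-weight bounds
\[
\|\mathcal{A}_{\mathcal{S}_k}(\sigma g)\|_{L^p(w)}\le c_{n,p}[w,\sigma]_{A_p}^{1/p}\bigl([w]_{A_\infty}^{1/p'}+[\sigma]_{A_\infty}^{1/p}\bigr)\|g\|_{L^p(\sigma)},
\]
\[
\|\mathcal{A}_{\mathcal{S}_k}(\sigma g)\|_{L^{p,\infty}(w)}\le c_{n,p}[w,\sigma]_{A_p}^{1/p}[w]_{A_\infty}^{1/p'}\|g\|_{L^p(\sigma)}.
\]
Summing the $3^n$ contributions (using the triangle inequality for $L^p(w)$ and the quasi-triangle inequality with constant depending only on $p$ for $L^{p,\infty}(w)$) and absorbing $3^n$ into $c_{n,p,q}$ yields both estimates in the statement.

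Finally, I would remove the extra integrability hypothesis by the usual limiting procedure: truncate $\bm{f}$ to $\bm{f}^{(N)}=\{f_j\chi_{\{|x|\le N\}}\chi_{\{|f_j|\le N\}}\}_{j\le N}$, apply the estimate (which is uniform in $N$ since the right-hand side only uses $\||\bm{f}|_q\|_{L^p(\sigma)}$), and let $N\to\infty$ via monotone convergence on the left-hand side. There is no genuine obstacle here: the whole proof is an assembly of the earlier results, and the only mildly delicate point is keeping track of the compact-support assumption in Theorem \ref{ThmSparseTq}, which the truncation argument neutralises since the resulting bound is uniform in the truncation parameter.
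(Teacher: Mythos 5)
Your proposal is correct and is exactly the paper's route: the paper proves Theorem \ref{Thm:ApT} by combining the pointwise sparse domination of Theorem \ref{ThmSparseTq} (applied so that the sparse operators act on $\sigma|\bm{f}|_q$) with the two-weight mixed $A_p$--$A_\infty$ bounds of Theorems \ref{ThmFuerteAp} and \ref{ThmDebilAp} with $r=1$, summing the $3^n$ sparse families. Your handling of the compact-support hypothesis by truncation and a limiting argument (better phrased via Fatou than ``monotone convergence,'' since the left-hand side is not monotone) is a standard refinement the paper does not even spell out.
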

We observe that the sharp $A_2$ constant had been obtained before in \cite{HaHy} in a more general setting and that J. Scurry \cite{Scu} established the sharp $A_p$ dependence for the strong type estimate for Calder\'on-Zygmund operators satisfying a H\"older-Lipschitz condition.  Our result deals with Dini kernels and as far as we are concerned the mixed $A_p-A_\infty$ has not appeared before. The quantitative weak type estimate is also new.

Using the main result in \cite{DSLR} for sparse operators, we can
also derive an endpoint estimate for arbitrary weights.
\begin{thm}
Let $w$ be a weight and $q>1$. Then for every Young function $\Phi$
we have that
\[
\|\overline{T}_{q}(\bm{f})\|_{L^{1,\infty}(w)}
\lesssim c_{\Phi} \, \int_{\mathbb{R}^{n}}|\bm{f}(x)|_{q}M_{\Phi}w(x)dx,
\]
where $c_{\Phi}=\int_{1}^{\infty}\frac{\Phi^{-1}(t)}{t^{2}\log(e+t)}dt$.
\end{thm}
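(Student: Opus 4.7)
The plan is to reduce the vector-valued endpoint estimate to the corresponding scalar estimate for sparse operators by invoking the pointwise sparse domination from Theorem \ref{ThmSparseTq}, and then to apply the main result of \cite{DSLR} directly. Since the sparse domination bounds $|\overline{T}_q\bm{f}(x)|$ by a finite sum of scalar sparse operators acting on the nonnegative function $|\bm{f}|_q$, no new vector-valued machinery is needed at this point.

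First, I would assume without loss of generality that $|\bm{f}|_q \in L^1(\mathbb{R}^n)$ with compact support so that Theorem \ref{ThmSparseTq} applies; the general case follows by a standard truncation and monotone convergence argument on the right-hand side. Then Theorem \ref{ThmSparseTq} provides $3^n$ sparse families $\mathcal{S}_k$ such that
\[
|\overline{T}_q \bm{f}(x)| \leq c_n C_T \sum_{k=1}^{3^n} \mathcal{A}_{\mathcal{S}_k} |\bm{f}|_q(x),
\]
and using the quasi-triangle inequality in $L^{1,\infty}(w)$ this gives
\[
\|\overline{T}_q \bm{f}\|_{L^{1,\infty}(w)} \leq c_n C_T \sum_{k=1}^{3^n} \|\mathcal{A}_{\mathcal{S}_k}|\bm{f}|_q\|_{L^{1,\infty}(w)}.
\]
It therefore suffices to prove, for an arbitrary sparse family $\mathcal{S}$ and an arbitrary nonnegative scalar function $g$, the endpoint bound
\[
\|\mathcal{A}_{\mathcal{S}} g\|_{L^{1,\infty}(w)} \lesssim c_\Phi \int_{\mathbb{R}^n} g(x)\, M_\Phi w(x)\,dx,
\]
and then apply it with $g = |\bm{f}|_q$.

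This scalar endpoint estimate is precisely the main theorem of \cite{DSLR}, so there is essentially nothing further to verify: plugging it in for each $k$, summing over $k = 1,\dots,3^n$, and absorbing the factors $c_n$, $C_T$ and $3^n$ into the implicit constant yields the claimed inequality. There is no genuine obstacle once the sparse domination is in hand, the one point worth checking carefully is simply the reduction to compactly supported $|\bm{f}|_q \in L^1$ needed to justify the use of Theorem \ref{ThmSparseTq}, which is routine since $M_\Phi w > 0$ makes the right-hand side well-defined and stable under the usual truncations $\bm{f}\chi_{B(0,R)}$ as $R\to\infty$.
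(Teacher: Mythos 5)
Your proposal is correct and follows exactly the route the paper takes: the pointwise sparse domination of Theorem \ref{ThmSparseTq} applied to $|\bm{f}|_q$, followed by the scalar endpoint estimate for sparse operators from \cite{DSLR}, summed over the $3^n$ families. The remark about reducing to compactly supported $|\bm{f}|_q\in L^1$ and passing to the limit is the only technical point, and you handle it the standard way.
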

See Section \ref{subsec:Notation} for the definition of Young function
and $M_{\Phi}$.  The first result of this sort was obtained in \cite{P0}, with a bad control in the bound, recently improved in \cite{HP}.

From the preceding result we can derive the following
Corollary.
\begin{cor}
\label{ThmA1Weak}Let $w$ be a weight and $1<q<\infty$. Then the following
statements hold

\begin{eqnarray}
 \|\overline{T}_{q}(\bm{f})\|_{L^{1,\infty}(w)}
& \lesssim &(1+\log r') \,  \int_{\mathbb{R}^{n}}|\bm{f}(x)|_{q}M_{r}w(x)dx\qquad r>1. \label{eq:Cor1}
\\
\label{eqThmA1Weak2} \|\overline{T}_{q}(\bm{f})\|_{L^{1,\infty}(w)}
&\lesssim&\frac{1}{\varepsilon} \, \int_{\mathbb{R}^{n}}|\bm{f}(x)|_{q}M_{L(\log\log L)^{1+\varepsilon}}w(x)dx\qquad 0<\varepsilon <1.
\end{eqnarray}
Furthermore if $w\in A_{1}$ then
\begin{equation}\label{eq:A1AinfCor1}
\|\overline{T}_{q}(\bm{f})\|_{L^{1,\infty}(w)}
\lesssim[w]_{A_{1}}\log(e+[w]_{A_{\infty}}) \, \int_{\mathbb{R}^{n}}|\bm{f}(x)|_{q}w(x)dx.
\end{equation}

\end{cor}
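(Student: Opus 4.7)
The plan is to derive all three inequalities from the preceding theorem (the Young function estimate for $\overline{T}_q$) by judicious choices of $\Phi$, together with a sharp reverse H\"older inequality for the $A_1$ case.

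For \eqref{eq:Cor1} I take $\Phi(t)=t^r$. This is clearly a Young function with $\Phi^{-1}(t)=t^{1/r}$, and by definition $M_\Phi=M_r$. The main task is to estimate
\[
c_\Phi=\int_1^\infty \frac{t^{1/r}}{t^2\log(e+t)}\,dt=\int_1^\infty\frac{dt}{t^{1+1/r'}\log(e+t)}.
\]
After the change of variables $u=t/r'$, the integral is dominated by $\int_{1/r'}^\infty \frac{du}{u\,e^{u}}$ up to an absolute constant; splitting this integral at $u=1$ and using $\int_{1/r'}^1 du/u=\log r'$ together with the exponential tail, one gets $c_\Phi\lesssim 1+\log r'$. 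Plugging this into the preceding theorem gives \eqref{eq:Cor1}.

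For \eqref{eqThmA1Weak2} I use (a suitable smooth extension of) $\Phi(t)=t\bigl(\log\log(e^e+t)\bigr)^{1+\varepsilon}$. For large $t$ one has $\Phi^{-1}(t)\simeq t/(\log\log t)^{1+\varepsilon}$, and therefore $M_\Phi\simeq M_{L(\log\log L)^{1+\varepsilon}}$. The constant becomes
\[
c_\Phi\lesssim \int_3^\infty \frac{dt}{t\log(e+t)(\log\log t)^{1+\varepsilon}},
\]
which after setting $u=\log\log t$ reduces to $\int_0^\infty \frac{du}{(1+u)^{1+\varepsilon}}=\tfrac1\varepsilon$ up to a universal constant. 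Again the preceding theorem yields \eqref{eqThmA1Weak2}.

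For the $A_1$ estimate \eqref{eq:A1AinfCor1} I combine \eqref{eq:Cor1} with the sharp reverse H\"older property of $A_\infty$ weights (as in \cite{HPR}): there exists a dimensional constant $c_n$ such that, with $r_w:=1+\tfrac{1}{c_n[w]_{A_\infty}}$, one has $\bigl(\fint_Q w^{r_w}\bigr)^{1/r_w}\le 2\fint_Q w$ for every cube $Q$, hence $M_{r_w}w\le 2Mw$. Since $w\in A_1$ gives $Mw\le[w]_{A_1}w$, applying \eqref{eq:Cor1} with $r=r_w$ and noting that $r_w'\simeq 1+c_n[w]_{A_\infty}$, so that $1+\log r_w'\lesssim \log(e+[w]_{A_\infty})$, yields the claimed bound.

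The only slightly delicate step is the estimate $c_\Phi\lesssim 1+\log r'$ in part \eqref{eq:Cor1}: the argument must track the behavior of the integral as $r\to 1^+$ (equivalently $r'\to\infty$), since a naive bound dropping the $\log(e+t)$ in the denominator would only give $r'$, which is far too large and would spoil \eqref{eq:A1AinfCor1}. Once this precise asymptotic is in hand, the rest of the corollary is a direct substitution.
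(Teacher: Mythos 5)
Your derivation is exactly the intended one: the paper offers no separate proof of this corollary, deducing it from the Young-function theorem precisely as you do, namely $\Phi(t)=t^{r}$ for \eqref{eq:Cor1}, $\Phi(t)=t\bigl(\log\log(e^{e}+t)\bigr)^{1+\varepsilon}$ for \eqref{eqThmA1Weak2}, and then \eqref{eq:Cor1} with the sharp reverse H\"older exponent $r_{w}=1+\frac{1}{\tau_{n}[w]_{A_{\infty}}}$ (Theorem \ref{Thm:RHI}) together with $M_{r_w}w\le 2Mw\le 2[w]_{A_{1}}w$ and $1+\log r_{w}'\lesssim\log(e+[w]_{A_{\infty}})$ for \eqref{eq:A1AinfCor1}; the second and third parts are correct as written. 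The one step you should repair is your justification of $c_{\Phi}\lesssim 1+\log r'$: the linear substitution $u=t/r'$ cannot dominate $\int_{1}^{\infty}\frac{dt}{t^{1+1/r'}\log(e+t)}$ by $\int_{1/r'}^{\infty}\frac{du}{u e^{u}}$, since after that substitution the integrand still decays only polynomially in $u$ while the proposed majorant decays exponentially, so the claimed domination fails for large $u$. The bound itself is true and follows, for instance, by splitting at $t=e^{r'}$: for $1\le t\le e^{r'}$ drop $t^{-1/r'}\le 1$ and compute $\int_{1}^{e^{r'}}\frac{dt}{t\log(e+t)}\lesssim 1+\log r'$, while for $t\ge e^{r'}$ use $\log(e+t)\ge r'$ to bound the tail by $\frac{1}{r'}\int_{e^{r'}}^{\infty}t^{-1-1/r'}dt=e^{-1}$; equivalently, the exponential substitution $t=e^{r'u}$ (rather than $u=t/r'$) produces the $\int\frac{du}{ue^{u}}$-type integral you had in mind. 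With that correction the argument is complete and yields the asymptotics needed for \eqref{eq:A1AinfCor1}.
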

At this point we would like to stress the fact that \eqref{eqThmA1Weak2} is an improvement
of \cite[Theorem 1.4]{PT} in terms of the maximal operator on the
right hand side of the estimate and also in terms of the quantitative
control of the blow up when $\varepsilon\rightarrow0$ that we obtain.
On the other hand we also would like to observe that the mixed $A_{1}-A_{\infty}$
is the analogous result to the scalar case (\cite{LOP1}) but doesn't
seem to have appeared in the literature before for the vector-valued
setting.

Another interesting result is the $A_{1}$ estimate for $1<p<\infty$,
which is the natural counterpart for the sharp result obtained in
the scalar case in \cite{LOP1}.
\begin{thm}
\label{TheoremA1}Let $p,q\in(1,\infty)$, $T$ be an $\omega$-Calder\'on-Zygmund operator with $\omega$ satisfying the Dini condition and $w$ a weight. Then the following inequality holds
\begin{equation}
||\overline{T}_{q}\bm{f}||_{L^{p}(w)}\leq c_{n,q}C_{T}pp'(r')^{\frac{1}{p'}}\||\bm{f}|_{q}\|_{L^{p}\left(M_{r}w\right)}\qquad r>1.\label{eq:TwoWeightsA1}
\end{equation}
Furthermore, if $w\in A_{1}$ then we have the following estimate
\begin{equation}
||\overline{T}_{q}\bm{f}||_{L^{p}(w)}\leq c_{n,q}C_{T}pp'[w]_{A_{1}}^{\frac{1}{p}}[w]_{A_{\infty}}^{\frac{1}{p'}}\||\bm{f}|_{q}\|_{L^{p}(w)},\label{eq:A1Lp}
\end{equation}
and if $w\in A_{s}$ , with $1\leq s<p<\infty$.  Furthermore, it follows from \cite[Corollary 4.3]{D},
\begin{equation}
||\overline{T}_{q}\bm{f}||_{L^{p}(w)}\leq c_{n,q,p,T}[w]_{A_{s}}\||\bm{f}|_{q}\|_{L^{p}(w)}.\label{eq:DepAs}
\end{equation}
\end{thm}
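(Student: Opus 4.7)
The plan is to reduce each of the three inequalities to a scalar estimate for the sparse operator via the pointwise control of Theorem~\ref{ThmSparseTq}, then invoke the sharp scalar sparse-operator bounds and specialize or extrapolate as needed. By Theorem~\ref{ThmSparseTq},
\[
|\overline{T}_q \bm{f}(x)| \leq c_n C_T \sum_{k=1}^{3^n} \mathcal{A}_{\mathcal{S}_k} g(x), \qquad g := |\bm{f}|_q,
\]
so it is enough to prove the corresponding scalar inequalities for $\mathcal{A}_\mathcal{S} g$ in place of $\overline{T}_q \bm{f}$ and $g$ in place of $|\bm{f}|_q$.

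For \eqref{eq:TwoWeightsA1} the core ingredient is the scalar two-weight estimate
\[
\|\mathcal{A}_\mathcal{S} g\|_{L^p(w)} \lesssim pp'(r')^{1/p'} \|g\|_{L^p(M_r w)},
\]
the sparse analogue of the Lerner--Ombrosi--P\'erez two-weight theorem for Calder\'on--Zygmund operators. A direct route is to dualize against $h \geq 0$ with $\|h\|_{L^{p'}(w)} = 1$, expand
\[
\int \mathcal{A}_\mathcal{S} g \cdot hw\, dx = \sum_{Q \in \mathcal{S}} \langle g \rangle_Q \langle hw \rangle_Q |Q|,
\]
use sparseness $|Q| \leq 2|E_Q|$ and pointwise control of the averages by $M$ on $E_Q$, and then split by H\"older with the multiplier $(M_r w)^{\pm 1/p}$. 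The resulting weighted maximal norms are controlled via Coifman--Rochberg ($M_r w \in A_1$ with $[M_r w]_{A_1} \lesssim r'$) and the sharp LOP/Buckley bounds for $M$ on $A_1$ and $A_{p'}$ weights; the cleanest way to extract the precise exponent $1/p'$ is typically by Rubio--de Francia extrapolation from a weighted weak-type $(1,1)$ endpoint of $\mathcal{A}_\mathcal{S}$, which is where $(r')^{1/p'}$ naturally arises.

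Given the scalar bound, \eqref{eq:A1Lp} follows by choosing $r$ optimally. For $w \in A_1 \subset A_\infty$, the sharp reverse H\"older inequality of Hyt\"onen--P\'erez supplies $r = 1 + c_n/[w]_{A_\infty}$ satisfying $M_r w \leq 2 Mw \leq 2[w]_{A_1} w$ and $r' \leq c_n [w]_{A_\infty}$; inserting these in \eqref{eq:TwoWeightsA1} yields the stated mixed $A_1$-$A_\infty$ bound. Estimate \eqref{eq:DepAs} is a direct citation of \cite[Corollary 4.3]{D}, whose extrapolation principle converts the pointwise sparse domination of Theorem~\ref{ThmSparseTq} (equivalently, the scalar $A_p$ bound for $\mathcal{A}_\mathcal{S}$) into the claimed $A_s$ bound with $1 \leq s < p$.

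The main obstacle is establishing the scalar two-weight sparse inequality with the precise constant $pp'(r')^{1/p'}$: recovering the power $1/p'$, rather than a weaker exponent that a naive double application of $M$ to both $g$ and $hw$ yields, requires either a finer splitting in the duality argument or the extrapolation route from the sharp logarithmic weak-$(1,1)$ endpoint of the sparse operator.
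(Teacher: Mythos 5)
Your route is the one the paper itself intends: the paper gives no proof of Theorem \ref{TheoremA1}, saying only that it suffices to follow the arguments of \cite{HP} based on sparse domination, and your reduction via Theorem \ref{ThmSparseTq} to the scalar two-weight bound $\|\mathcal{A}_{\mathcal{S}}g\|_{L^{p}(w)}\lesssim pp'(r')^{1/p'}\|g\|_{L^{p}(M_{r}w)}$, followed by the sharp reverse H\"older choice $r=1+c_{n}/[w]_{A_{\infty}}$ (so $M_{r}w\leq 2Mw\leq 2[w]_{A_{1}}w$ and $r'\simeq[w]_{A_{\infty}}$) for \eqref{eq:A1Lp} and the citation of \cite[Corollary 4.3]{D} for \eqref{eq:DepAs}, is exactly that argument. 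The scalar sparse estimate you single out as the main obstacle is precisely the content of \cite{HP}, so it can simply be quoted rather than re-derived.
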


We will not provide a proof of this result. It suffices to follow, for instance, the arguments in \cite{HP} where \eqref{eq:TwoWeightsA1} is obtained using a sparse domination result in norm.

\begin{rem}
Although (\ref{eq:TwoWeightsA1}) can be generalized replacing $M_{r}$
by a general $M_{\Phi}$ and replacing the constants depending on
$p$ and $r'$ in the right hand side of the inequality by suitable
ones in terms of $M_{\Phi}$ and $p$ (a qualitative version of that
kind of result was obtained in \cite[Theorem 1.1]{PT}), for the sake
of clarity, we prefer to include the proof and the statement of this
estimate just in terms of $M_{r}$. We also remark that we can improve  \eqref{eq:DepAs} to the mixed $A_s-A_\infty$ estimate, see \cite{Li17}.
\end{rem}

\subsection{Estimates for vector-valued commutators}

Let $1<q<\infty$, $T$ be an $\omega$-Calder\'on-Zygmund operator  with $\omega$ satisfying the Dini condition (see
Section \ref{subsec:Notation} for the precise definition). Let $b$ be a
locally integrable function and $\bm{f}=\{f_{j}\}_{j=1}^{\infty}$. We
define the vector-valued commutator $\overline{[b,T]}_{q}$ as
\[
\overline{[b,T]}_{q}\bm{f}(x)=\left(\sum_{j=1}^{\infty}|\overline{[b,T]}f_{j}(x)|^{q}\right)^{\frac{1}{q}}.
\]
This operator was considered in \cite{PT} where some weighted estimates
were obtained.  In this work we will obtain quantitative versions of some of the results
of \cite{PT}. Exploiting the tecnhniques devised by A. Lerner in
\cite{L}, and also A. Lerner, S. Ombrosi and the fourth author in
\cite{LORR} we are able to obtain a pointwise domination for $\overline{[b,T]}_{q}$
by sparse operators in terms of the function and of the symbol which
is a natural analogous to the scalar case.
\begin{thm}
\label{ThmSparseConmm}Let $T$ be an $\omega$-Calder\'on-Zygmund operator with $\omega$ satisfying the Dini condition and $1<q<\infty$. If $\bm{f}=\{f_{j}\}$ and $|\bm{f}|_{q}\in L^{\infty}(\mathbb{R}^{n})$
is a compactly supported function and $b\in L_{\text{loc}}^{1}$,
then there exist $3^{n}$ dyadic lattices $\mathcal{D}_{k}$ and $3^{n}$
$\frac{1}{2}$-sparse families $\mathcal{S}_{k}\subseteq\mathcal{D}_{k}$
such that
\[
|\overline{[b,T]}_{q}\bm{f}(x)|\leq c_{n}C_{T}\sum_{j=1}^{3^{n}}\left(\mathcal{T}_{\mathcal{S},b}|\bm{f}|_{q}(x)+\mathcal{T}_{\mathcal{S},b}^{*}|\bm{f}|_{q}(x)\right),
\]
where
\[
\begin{split}\mathcal{T}_{\mathcal{S},b}f(x) & =\sum_{Q\in\mathcal{S}}|b(x)-b_{Q}|\langle |f| \rangle_{Q}\chi_{Q}(x),\\
\mathcal{T}_{\mathcal{S},b}^{*}f(x) & =\sum_{Q\in\mathcal{S}}\langle |b-b_{Q}||f|\rangle_{Q}\chi_{Q}(x),
\end{split}
\]
 and $C_{T}=C_{K}+\|\omega\|_{\text{Dini}}+\|T\|_{L^{2}\rightarrow L^{2}}$.
\end{thm}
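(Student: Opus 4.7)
The plan is to adapt to the vector-valued setting the argument for the scalar commutator from \cite{LORR}, substituting the already-proved Theorem \ref{ThmSparseTq} for the scalar sparse domination of Calder\'on-Zygmund operators. Via the $3^n$ adjacent dyadic lattices trick (used already in the proof of Theorem \ref{ThmSparseTq}), it is enough to show that for every cube $Q_0\supset\supp|\bm{f}|_q$ there exists a $\frac{1}{2}$-sparse family $\mathcal{F}\subset\mathcal{D}(Q_0)$ such that for a.e.\ $x\in Q_0$
$$\overline{[b,T]}_q(\bm{f}\chi_{3Q_0})(x)\leq c_n C_T\sum_{Q\in\mathcal{F}}\Big(|b(x)-b_{3Q}|\langle|\bm{f}|_q\rangle_{3Q}+\langle|b-b_{3Q}||\bm{f}|_q\rangle_{3Q}\Big)\chi_Q(x),$$
after which each $3Q$-average is absorbed into an average over a dyadic cube of one of the $3^n$ shifted lattices.

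The recursive step is a standard Calder\'on-Zygmund stopping time. Minkowski's inequality in $\ell^q$ applied to the identity $[b,T]f_j=(b-b_{3Q_0})Tf_j-T((b-b_{3Q_0})f_j)$ gives the pointwise bound
$$\overline{[b,T]}_q(\bm{f}\chi_{3Q_0})(x)\leq |b(x)-b_{3Q_0}|\,\overline{T}_q(\bm{f}\chi_{3Q_0})(x)+\overline{T}_q\big((b-b_{3Q_0})\bm{f}\chi_{3Q_0}\big)(x).$$
Define the exceptional set $E\subset Q_0$ as the union of the level sets where the Hardy-Littlewood maximal operator $M$ and the grand maximal truncation
$$\mathcal{M}_{\overline{T}_q}\bm{g}(x):=\sup_{Q\ni x}\esssup_{\xi\in Q}\overline{T}_q(\bm{g}\chi_{\mathbb{R}^n\setminus 3Q})(\xi),$$
applied to $\bm{f}\chi_{3Q_0}$, exceed a large constant $A$ times $\langle|\bm{f}|_q\rangle_{3Q_0}$, together with the analogous level sets for $(b-b_{3Q_0})\bm{f}\chi_{3Q_0}$ with threshold $A\langle|b-b_{3Q_0}||\bm{f}|_q\rangle_{3Q_0}$. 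The hypothesis $|\bm{f}|_q\in L^\infty$ with compact support is exactly what ensures $(b-b_{3Q_0})|\bm{f}|_q\in L^1(3Q_0)$, so that the $L^1$-based weak-type bounds are applicable. Choosing $A$ large enough yields $|E|\leq\frac{1}{2^{n+2}}|Q_0|$, and the maximal cubes $\{P_j\}\subset\mathcal{D}(Q_0)$ of the Calder\'on-Zygmund decomposition of $E$ satisfy $\sum_j|P_j|\leq\frac{1}{2}|Q_0|$; on $Q_0\setminus\bigcup_j P_j$ the two-term decomposition combined with the four maximal bounds gives the desired inequality with $Q=Q_0$ as the only contributing cube. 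Adding $Q_0$ to $\mathcal{F}$ and iterating on each $P_j$ produces the full sparse family.

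The main obstacle is the weak-$(1,1)$ estimate for the vector-valued grand maximal truncation $\mathcal{M}_{\overline{T}_q}$. It is obtained through a pointwise comparison of the form
$$\mathcal{M}_{\overline{T}_q}\bm{g}(x)\lesssim \overline{T}_q\bm{g}(x)+\overline{M}_q\bm{g}(x)+c_n\|\omega\|_{\text{Dini}}M(|\bm{g}|_q)(x),$$
which follows by applying Minkowski's inequality in $\ell^q$ to the scalar CZ Dini-type estimate for $|T(g_j\chi_{\mathbb{R}^n\setminus 3Q})(\xi)-T(g_j\chi_{\mathbb{R}^n\setminus 3Q})(x)|$ inside each cube $Q$, followed by the weak-$(1,1)$ bounds for $\overline{T}_q$ (a consequence of Theorem \ref{ThmSparseTq}), $\overline{M}_q$ (Theorem \ref{ThmSparseMq}), and $M$. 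Once this is in hand, every remaining step is a componentwise transcription of the scalar argument, with the constant $C_T$ tracked through each application of Theorem \ref{ThmSparseTq}.
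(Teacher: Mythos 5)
Your overall architecture coincides with the paper's (which follows \cite{L} and \cite{LORR}): localize to a cube $Q_0$, use $[b,T]=[b-c,T]$ to split into the two sparse forms, run a recursive stopping-time estimate whose exceptional set is controlled by weak-$(1,1)$ bounds, and iterate. The genuine gap is in your treatment of the grand maximal truncation. The pointwise comparison you claim, $\mathcal{M}_{\overline{T}_q}\bm{g}(x)\lesssim \overline{T}_q\bm{g}(x)+\overline{M}_q\bm{g}(x)+\|\omega\|_{\text{Dini}}M(|\bm{g}|_q)(x)$, is not correct and cannot be obtained by the route you sketch: after comparing $\overline{T}_q(\bm{g}\chi_{\mathbb{R}^n\setminus 3Q})(\xi)$ with $\overline{T}_q(\bm{g}\chi_{\mathbb{R}^n\setminus 3Q})(x)$ via the Dini estimate, you are left with $\overline{T}_q(\bm{g}\chi_{\mathbb{R}^n\setminus 3Q})(x)$, a family of truncated singular integrals at $x$, and this is \emph{not} pointwise dominated by $\overline{T}_q\bm{g}(x)$ plus maximal functions (the local piece $\overline{T}_q(\bm{g}\chi_{3Q})(x)$ is not controlled by $M$; this is the same reason why $T^*f\le c(|Tf|+Mf)$ fails pointwise and why Cotlar's inequality carries $M_\delta(Tf)$ rather than $|Tf|$). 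The correct bound, as in Lemma \ref{LemmaTec}, replaces the truncation region $\mathbb{R}^n\setminus 3Q$ by the complement of a ball $B_x$ centered at $x$ (the error being absorbed by the size condition into $C_K\overline{M}_q\bm{g}(x)$), which produces $\overline{T^*}_q\bm{g}(x)$ on the right. Consequently you must prove a weak $(1,1)$ bound for the vector-valued \emph{maximal truncated} operator $\overline{T^*}_q$; this does not follow from Theorem \ref{ThmSparseTq}, which only controls $\overline{T}_q$, and is exactly the extra ingredient the paper supplies through a vector-valued Cotlar inequality and Theorem \ref{Thm:MaxTq} in the Appendix. Without it, the smallness of your exceptional set is unjustified.

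A second, smaller issue concerns your choice of the constant $b_{3Q}$ in the local claim. Passing from $3Q$-averages to averages over the lattice cube $R_Q$ is immediate for the terms $\langle|\bm{f}|_q\rangle_{3Q}$ and $\langle|b-b_{3Q}||\bm{f}|_q\rangle_{3Q}$ only after the reference constant is changed from $b_{3Q}$ to $b_{R_Q}$, and the resulting error term $|b_{3Q}-b_{R_Q}|\,\langle|\bm{f}|_q\rangle_{3Q}\le c_n\langle|b-b_{R_Q}|\rangle_{R_Q}\langle|\bm{f}|_q\rangle_{R_Q}$ is a product of two averages that is not pointwise dominated by $\langle|b-b_{R_Q}||\bm{f}|_q\rangle_{R_Q}$; since $b$ is only assumed to be in $L^1_{\text{loc}}$, no John--Nirenberg/BMO argument is available to absorb it. The fix is the one the paper uses: run the decomposition with $c=b_{R_{Q_0}}$ from the outset, so that the local claim is already expressed in terms of the oscillations $b-b_{R_Q}$ and the final passage to the $3^n$ dyadic lattices is a mere enlargement of the averaging cubes.
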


Having the sparse domination in Theorem \ref{ThmSparseConmm} at our
disposal, the same proof of \cite[Theorem 1.1]{LORR} allows us to
derive an endpoint estimate for $\overline{[b,T]}_{q}$ and arbitrary
weights that reads as follows.
\begin{thm}
Let $w$ be a weight and $1<q<\infty$ and $b\in \BMO$. Then for every
Young function $\varphi$ we have that
\[
w\left(\left\{ x\in\mathbb{R}^{n}\,:\,\overline{[b,T]}_{q}\bm{f}(x)>t\right\} \right)\lesssim c_{T}\frac{c_{\varphi}}{t}\int_{\mathbb{R}^{n}}\Phi\left(\|b\|_{\BMO}\frac{|\bm{f}(x)|_{q}}{t}\right)M_{(\Phi\circ\varphi)(L)}w(x)dx,
\]
where $\Phi(t)=t\log(e+t)$ and $c_{\varphi}=\int_{1}^{\infty}\frac{\varphi^{-1}(t)}{t^{2}\log(e+t)}dt$.
\end{thm}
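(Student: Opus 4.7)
The plan is to use the pointwise sparse domination of Theorem \ref{ThmSparseConmm} as a transfer mechanism, reducing the vector-valued weak-type endpoint to the scalar endpoint estimate for sparse commutator operators proved in \cite[Theorem 1.1]{LORR}.

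\textbf{Step 1 (Reduction to the scalar sparse problem).} Set $g:=|\bm{f}|_q\geq 0$. Theorem \ref{ThmSparseConmm} yields
\[
\overline{[b,T]}_q\bm{f}(x)\leq c_n C_T\sum_{k=1}^{3^n}\bigl(\mathcal{T}_{\mathcal{S}_k,b}g(x)+\mathcal{T}_{\mathcal{S}_k,b}^{*}g(x)\bigr).
\]
Distributing the level $t$ uniformly among the $2\cdot 3^n$ summands, the set $\{\overline{[b,T]}_q\bm{f}>t\}$ is contained in the union of the corresponding level sets for the scalar operators $\mathcal{T}_{\mathcal{S}_k,b}g$ and $\mathcal{T}_{\mathcal{S}_k,b}^{*}g$. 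Hence it suffices to establish, for any sparse family $\mathcal{S}$ and any $g\geq 0$,
\[
w\bigl(\bigl\{\mathcal{T}_{\mathcal{S},b}g+\mathcal{T}_{\mathcal{S},b}^{*}g>t\bigr\}\bigr)\lesssim \frac{c_\varphi}{t}\int_{\mathbb{R}^n}\Phi\!\left(\|b\|_{\BMO}\frac{g(x)}{t}\right)M_{(\Phi\circ\varphi)(L)}w(x)\,dx.
\]

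\textbf{Step 2 (Scalar endpoint from \cite{LORR}).} This last inequality is exactly \cite[Theorem 1.1]{LORR} applied to the non-negative scalar function $g$. The proof there runs as follows. One performs a Calder\'on--Zygmund stopping-time decomposition of $\Phi(\|b\|_{\BMO}g/t)$ at level $1$, producing principal cubes $\{P_j\}$ and a partition of $\mathcal{S}$ subordinate to them. On $\bigcup P_j$ the measure $w$ is bounded directly by its maximal average against the right-hand side. On the complementary \emph{good} set, the sum over $Q\in\mathcal{S}$ is controlled by combining (i) the John--Nirenberg inequality, which turns the factor $|b-b_Q|$ into an $L(\log L)$-Orlicz average of $g$, producing the outer Young function $\Phi(t)=t\log(e+t)$; (ii) a generalized H\"older inequality with the Young pair $\bigl(\Phi\circ\varphi,\,(\Phi\circ\varphi)^{*}\bigr)$, which absorbs the extra Young function $\varphi$ into the maximal operator $M_{(\Phi\circ\varphi)(L)}w$ on the right-hand side; and (iii) the sparse Carleson-type embedding for Young functions whose quantitative constant is precisely $c_\varphi$.

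\textbf{Step 3 (Main obstacle and bookkeeping).} The substantive difficulty has already been handled at the level of the pointwise control, since Theorem \ref{ThmSparseConmm} absorbs the vector-valued structure into the scalar $\ell^q$-norm $|\bm{f}|_q$ inside the sparse expressions; consequently, no vector-valued adaptation of the argument in \cite{LORR} is required. The remaining overhead is dimensional bookkeeping (summing over the $3^n$ shifted dyadic lattices and rescaling $t$), which only multiplies the implicit constant by a harmless factor and produces the $C_T$ from Theorem \ref{ThmSparseConmm}. The most delicate technical ingredient in Step 2 is the interaction between the $L(\log L)$ average generated by $|b-b_Q|$ and the auxiliary Young function $\varphi$ on the right-hand side; this is resolved by composing Young functions via $\Phi\circ\varphi$ and invoking the generalized H\"older inequality in the Orlicz scale, which is also the source of the sharp quantification by $c_\varphi$.
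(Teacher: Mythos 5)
Your proposal is correct and follows essentially the same route as the paper: the authors also deduce the result by combining the pointwise sparse domination of Theorem \ref{ThmSparseConmm}, applied to $g=|\bm{f}|_q$, with the argument of \cite[Theorem 1.1]{LORR} for the scalar sparse operators $\mathcal{T}_{\mathcal{S},b}$ and $\mathcal{T}^{*}_{\mathcal{S},b}$. The only cosmetic difference is that you spell out the steps of the LORR proof, whereas the paper simply cites it.
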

See Section \ref{subsec:Notation} for the definition of Young function
and $M_{\Phi}$. From the preceding result we can derive the following
Corollary.
\begin{cor}
\label{ThmA1Weak-1}Let $w$ be a weight and $1<q<\infty$. Then for every $ 0<\varepsilon<1$,
\begin{eqnarray}\label{eq:Cor2}
w\left(\left\{ x\in\mathbb{R}^{n}\,:\,\overline{[b,T]}_{q}\bm{f}>t\right\} \right)& \lesssim &\frac{1}{\varepsilon}\int_{\mathbb{R}^{n}}
\Phi\left(\frac{|\bm{f}|_q\|b\|_{\BMO}}{t}  \right)  M_{L(\log L)^{1+\varepsilon}}wdx.\qquad\qquad\qquad \nonumber \\
w\left(\left\{ x\in\mathbb{R}^{n}\,:\,\overline{[b,T]}_{q}\bm{f}>t\right\} \right) & \lesssim &\frac{1}{\varepsilon}\int_{\mathbb{R}^{n}}
\Phi\left(\frac{|\bm{f}|_q\|b\|_{\BMO}}{t}  \right) M_{L(\log L)(\log\log L)^{1+\varepsilon}}wdx\label{eq:ThmA1Weak-1}.
\end{eqnarray}
Furthermore if $w\in A_{1}$ then
\begin{equation}
\label{eq:Cor2A1Ainfty}
w\left(\left\{ x\in\mathbb{R}^{n}\,:\,\overline{[b,T]}_{q}\bm{f}(x)>t\right\} \right)\lesssim[w]_{A_{1}} [w]_{A_{\infty}}\log(e+[w]_{A_{\infty}})\int_{\mathbb{R}^{n}}\Phi\left(\frac{|\bm{f}|_q\|b\|_{\BMO}}{t}  \right) wdx.
\end{equation}
\end{cor}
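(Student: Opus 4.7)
The plan is to deduce the three inequalities from the preceding theorem by plugging three different choices of the auxiliary Young function $\varphi$ into its right-hand side, in exact parallel to the way Corollary \ref{ThmA1Weak} is obtained from the weak-type endpoint bound for $\overline{T}_{q}$.

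For the first estimate in \eqref{eq:ThmA1Weak-1}, I would take $\varphi(t)\simeq t(\log(e+t))^{\varepsilon}$. Since $\Phi(s)=s\log(e+s)$ and $\log(e+\varphi(t))\asymp \log(e+t)$, one verifies that $(\Phi\circ\varphi)(t)\lesssim t(\log(e+t))^{1+\varepsilon}$, so $M_{(\Phi\circ\varphi)(L)}w\lesssim M_{L(\log L)^{1+\varepsilon}}w$. The inverse satisfies $\varphi^{-1}(t)\asymp t/(\log(e+t))^{\varepsilon}$, hence
\[
c_{\varphi}\lesssim \int_{1}^{\infty}\frac{dt}{t(\log(e+t))^{1+\varepsilon}}\lesssim \frac{1}{\varepsilon}.
\]
For the second bound the analogous choice $\varphi(t)\simeq t(\log\log(e^{e}+t))^{1+\varepsilon}$ produces $(\Phi\circ\varphi)(t)\lesssim t\log(e+t)(\log\log(e^{e}+t))^{1+\varepsilon}$, and after the substitution $u=\log\log t$ one again finds $c_{\varphi}\lesssim 1/\varepsilon$.

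For the $A_{1}$ inequality \eqref{eq:Cor2A1Ainfty}, I would start from the first inequality just proved and combine it with the standard Fujii--Wilson type control for $A_{\infty}$ weights, $M_{L(\log L)^{\alpha}}w(x)\lesssim c_{n}[w]_{A_{\infty}}^{\alpha}Mw(x)$ for $\alpha>0$, together with $Mw\le [w]_{A_{1}}w$ when $w\in A_{1}$. This converts the first inequality into
\[
w\bigl(\{\overline{[b,T]}_{q}\bm{f}>t\}\bigr)\lesssim \frac{[w]_{A_{\infty}}^{1+\varepsilon}[w]_{A_{1}}}{\varepsilon}\int_{\mathbb{R}^{n}}\Phi\!\left(\frac{|\bm{f}|_{q}\|b\|_{\BMO}}{t}\right)w\, dx,
\]
and the choice $\varepsilon=1/\log(e+[w]_{A_{\infty}})$ (taking $\varepsilon=1$ in the degenerate range $[w]_{A_{\infty}}<e$) collapses the prefactor to $[w]_{A_{1}}[w]_{A_{\infty}}\log(e+[w]_{A_{\infty}})$, as required. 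The only technical point that needs care is verifying that the implicit constant in the $M_{L(\log L)^{\alpha}}$-to-$M$ inequality for $A_{\infty}$ weights does not degenerate as $\alpha\to 0^{+}$; this is well known, and everything else reduces to routine Young function bookkeeping and the single-variable optimisation performed above.
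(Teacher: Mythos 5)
Your proposal is correct and follows exactly the route the paper intends: the corollary is deduced from the preceding endpoint theorem by taking $\varphi(t)\simeq t(\log(e+t))^{\varepsilon}$ and $\varphi(t)\simeq t(\log\log(e^{e}+t))^{1+\varepsilon}$ (giving $c_{\varphi}\lesssim 1/\varepsilon$ and the stated Orlicz maximal functions), and then, for the $A_{1}$ case, combining the first bound with $M_{L(\log L)^{1+\varepsilon}}w\lesssim [w]_{A_{\infty}}^{1+\varepsilon}Mw\le [w]_{A_{\infty}}^{1+\varepsilon}[w]_{A_{1}}w$ and optimizing with $\varepsilon\simeq 1/\log(e+[w]_{A_{\infty}})$. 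The only nitpick is your closing remark: what is actually needed is uniformity of the Orlicz comparison constant for exponents $1+\varepsilon\in(1,2)$, not behaviour as the exponent tends to $0^{+}$, and that uniformity is immediate from the sharp reverse H\"older inequality.
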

We would like to draw the attention to the fact that \eqref{eq:ThmA1Weak-1} is an improvement
of \cite[Theorem 1.8]{PT} in terms of the maximal operator on the
right hand side of the estimate and also in terms of the quantitative
control of the blow up when $\varepsilon\rightarrow0$ that we obtain.
On the other hand we also would like to observe that the mixed $A_{1}-A_{\infty}$
is the analogous result to the scalar case (\cite{LORR}, see also
\cite{OC,PRRR}) but doesn't seem to have appeared in the literature
before for the vector-valued setting.

Relying upon a generalization of the so called conjugation method that appeared for the first time in \cite{CRW} and was further exploited in \cite{CPP}
(also in \cite{HPAinfty})  we can obtain the following estimate for $\overline{[b,T]}_q$.
\begin{thm}\label{Thm:ApComm}
Let $1<p,q<\infty$, $T$ be an $\omega$-Calder\'on-Zygmund operator with $\omega$ satisfying the Dini condition and $b\in \BMO$. If $w\in A_p$ then
\[\|\overline{[b,T]}_q\bm{f}\|_{L^p(w)}\leq c_{n,p,q}C_T[w]_{A_{p}}^{\frac{1}{p}}\left([w]_{A_{\infty}}^{\frac{1}{p'}}+[\sigma]_{A_{\infty}}^{\frac{1}{p}}\right)\left([w]_{A_{\infty}}+[\sigma]_{A_{\infty}}\right)\|\bm{f}\|_{L^p(w)}.\]
\end{thm}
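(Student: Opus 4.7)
The plan is to adapt to the vector-valued setting the conjugation method of Coifman--Rochberg--Weiss, following the quantitative implementation from \cite{CPP} and \cite{HPAinfty}. For fixed $b\in \BMO$ and complex $z$, the elementary identity $[b,T]f_j = \tfrac{d}{dz}\bigl(e^{-zb}\,T(e^{zb}f_j)\bigr)\bigl|_{z=0}$ together with Cauchy's integral formula on the circle $|z|=r$ yields
\[
[b,T]f_j(x)=\frac{1}{2\pi i}\oint_{|z|=r}\frac{e^{-zb(x)}\,T(e^{zb}f_j)(x)}{z^{2}}\,dz.
\]
Taking $\ell^{q}$-norm in $j$ and invoking Minkowski's integral inequality gives the pointwise bound
\[
\bigl|\overline{[b,T]}_{q}\bm{f}(x)\bigr|\le \frac{1}{2\pi r^{2}}\oint_{|z|=r}|e^{-zb(x)}|\,\overline{T}_{q}(e^{zb}\bm{f})(x)\,|dz|,
\]
where $e^{zb}\bm{f}$ is interpreted componentwise.

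Next, introduce the perturbed weight $w_{z}:=|e^{-zb}|^{p}w=e^{-p\re(z)b}w$ and its dual $\sigma_{z}:=w_{z}^{1-p'}$, so that $\||e^{-zb}|\,\overline{T}_{q}(e^{zb}\bm{f})\|_{L^{p}(w)}=\|\overline{T}_{q}(e^{zb}\bm{f})\|_{L^{p}(w_{z})}$. Since $|e^{zb}|\,|e^{-zb}|=1$, the one-weight specialization of Theorem~\ref{Thm:ApT} (applied to the pair $(w_{z},\sigma_{z})$ after the substitution $\bm{g}=e^{zb}\bm{f}$) gives
\[
\|\overline{T}_{q}(e^{zb}\bm{f})\|_{L^{p}(w_{z})}\le c_{n,p,q}C_{T}\,[w_{z}]_{A_{p}}^{\frac{1}{p}}\bigl([w_{z}]_{A_{\infty}}^{\frac{1}{p'}}+[\sigma_{z}]_{A_{\infty}}^{\frac{1}{p}}\bigr)\,\||\bm{f}|_{q}\|_{L^{p}(w)}.
\]

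The heart of the argument is to select the radius $r$ so that all three characteristics $[w_z]_{A_p}$, $[w_z]_{A_\infty}$, $[\sigma_z]_{A_\infty}$ stay comparable to those of $w$ and $\sigma=w^{1-p'}$. Combining the sharp reverse H\"older inequality of \cite{HPAinfty} with the John--Nirenberg inequality, this uniformity is known to hold for $|z|\le r$ provided
\[
r\;\simeq\;\frac{1}{c_n\,\|b\|_{\BMO}\bigl([w]_{A_{\infty}}+[\sigma]_{A_{\infty}}\bigr)}.
\]
With such $r$ one has $[w_{z}]_{A_{p}}\lesssim[w]_{A_{p}}$, $[w_{z}]_{A_{\infty}}\lesssim[w]_{A_{\infty}}$ and $[\sigma_{z}]_{A_{\infty}}\lesssim[\sigma]_{A_{\infty}}$. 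Taking the $L^{p}(w)$-norm in the pointwise inequality above and using the display from Theorem~\ref{Thm:ApT}, a prefactor $\frac{1}{r}\simeq \|b\|_{\BMO}\bigl([w]_{A_{\infty}}+[\sigma]_{A_{\infty}}\bigr)$ appears in front; absorbing $\|b\|_{\BMO}$ into the constant $c_{n,p,q}$ yields exactly the announced estimate.

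The main obstacle is this last John--Nirenberg step: one has to track quantitatively how the $A_{p}$ and $A_\infty$ characteristics behave under the complex multiplicative perturbation $e^{-p\re(z)b}$, and the explicit dependence on $[w]_{A_{\infty}}$ and $[\sigma]_{A_{\infty}}$ that arises is precisely what forces the particular admissible radius $r$ and hence generates the extra factor $[w]_{A_{\infty}}+[\sigma]_{A_{\infty}}$ in the final bound. No further input on the vector-valued operator itself is needed; Theorem~\ref{Thm:ApT} already supplies the required weighted estimate.
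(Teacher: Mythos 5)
Your proposal is correct and follows essentially the same route as the paper, which proves Theorem \ref{Thm:ApComm} by combining Theorem \ref{Thm:ApT} with Lemma \ref{Thm:ConjMethodVec}: the Cauchy-integral (conjugation) representation of $[b,T]$, Minkowski's inequality in $\ell^q$ and in the contour integral, application of the weighted bound to the perturbed weight $e^{p\re(zb)}w$, and the quantitative stability of the $A_p$ and $A_\infty$ characteristics for $|z|\lesssim \bigl(\|b\|_{\BMO}([w]_{A_\infty}+[\sigma]_{A_\infty})\bigr)^{-1}$, which produces the extra factor $[w]_{A_\infty}+[\sigma]_{A_\infty}$. The only cosmetic caveat is that the resulting factor $\|b\|_{\BMO}$ cannot literally be absorbed into $c_{n,p,q}$ (it appears explicitly in the paper's Lemma \ref{Thm:ConjMethodVec}), but this does not affect the substance of your argument.
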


Another interesting consequence of the sparse representation is an
extension of a two weights estimate for the commutator. In \cite{Bl}
Bloom obtained a two-weighted result for the commutator of the Hilbert
transform $H$. Given $\mu,\lambda\in A_{p}$ $1<p<\infty$, $\nu=\left(\frac{\mu}{\lambda}\right)^{\frac{1}{p}}$
and $b\in \BMO_{\nu}$, then
\[
\|[b,H]f\|_{L^{p}(\lambda)}\leq c(p,\mu,\lambda)\|b\|_{\BMO_{\nu}}\|f\|_{L^{p}(\mu)},
\]
where $\BMO_{\nu}$ is the space of locally integrable functions such
that
\[
\|b\|_{\BMO_{\nu}}=\sup_{Q}\frac{1}{\nu(Q)}\int_{Q}|b-b_{Q}|dx<\infty.
\]
Quite recently I. Holmes, M. Lacey and B. Wick \cite{HLW} extended
Bloom's estimate to Calder\'on-Zygmund operators satisfying a H\"older-Lipschitz
condition. In the particular case when $\mu=\lambda=w\in A_{2}$ their
approach (see \cite{HW}) allowed then to recover the following estimate
\[
\|[b,T]f\|_{L^{2}(w)}\leq c[w]_{A_{2}}^{2}\|f\|_{L^{2}(w)}
\]
due to D. Chung, C. Pereyra and the third author \cite{CPP}.

Even more recently a quantitative version of Bloom-Holmes-Lacey-Wick
estimate was obtained in \cite{LORR}. Here we extend it to the vector-valued
setting.
\begin{thm}
Let $T$ be an $\omega$-Calder\'on Zygmund operator with $\omega$ satisfying
the Dini condition. Let $1<q<\infty$, $\mu,\lambda\in A_{p}$, and
$\nu=\left(\frac{\mu}{\lambda}\right)^{\frac{1}{p}}$. If $b\in \BMO_{\nu}$,
then
\[
\|\overline{[b,T]}_{q}\bm{f}\|_{L^{p}(\lambda)}\leq c_{n,p,q}C_{T}\max\left\{ [\mu]_{A_{p}}[\lambda]_{A_{p}}\right\} ^{\max\left\{ 1,\,\frac{1}{p-1}\right\} }\|b\|_{\BMO_{\nu}}\||\bm{f}|_{q}\|_{L^{p}(\mu)}.
\]
\end{thm}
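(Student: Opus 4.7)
The plan is to reduce the vector-valued Bloom-type estimate to its scalar counterpart via the pointwise sparse domination in Theorem \ref{ThmSparseConmm}. By a standard approximation and monotone convergence argument, we may assume that $|\bm{f}|_{q}\in L^{\infty}(\mathbb{R}^{n})$ is compactly supported, so that Theorem \ref{ThmSparseConmm} applies and produces $3^{n}$ dyadic lattices $\mathcal{D}_{k}$ and $3^{n}$ $\tfrac{1}{2}$-sparse families $\mathcal{S}_{k}\subseteq \mathcal{D}_{k}$ such that
\[
|\overline{[b,T]}_{q}\bm{f}(x)| \leq c_{n}C_{T}\sum_{k=1}^{3^{n}}\bigl(\mathcal{T}_{\mathcal{S}_{k},b}|\bm{f}|_{q}(x)+\mathcal{T}^{*}_{\mathcal{S}_{k},b}|\bm{f}|_{q}(x)\bigr).
\]
Taking $L^{p}(\lambda)$-norms of both sides and applying the triangle inequality reduces the task to proving the scalar two-weight estimate
\[
\|\mathcal{T}_{\mathcal{S},b}g\|_{L^{p}(\lambda)}+\|\mathcal{T}^{*}_{\mathcal{S},b}g\|_{L^{p}(\lambda)}\lesssim \max\bigl\{[\mu]_{A_{p}},[\lambda]_{A_{p}}\bigr\}^{\max\{1,\,1/(p-1)\}}\|b\|_{\BMO_{\nu}}\|g\|_{L^{p}(\mu)}
\]
for every $\tfrac{1}{2}$-sparse family $\mathcal{S}$ and every nonnegative scalar $g$, and then applying it with $g=|\bm{f}|_{q}$.

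The estimate for $\mathcal{T}_{\mathcal{S},b}$ is precisely the quantitative Bloom-type bound for sparse commutators established in \cite{LORR}, and the estimate for $\mathcal{T}^{*}_{\mathcal{S},b}$ is also contained there (it can alternatively be obtained by a duality argument, since the formal adjoint of $\mathcal{T}_{\mathcal{S},b}$ with respect to Lebesgue measure is a sparse operator of the same type, and the symmetry $\mu\leftrightarrow\lambda^{1-p'}$, $\lambda\leftrightarrow\mu^{1-p'}$ preserves both $A_{p}$-constants and $\BMO_{\nu}$ up to the sign of the exponent in $\nu$). Inserting $g=|\bm{f}|_{q}$ and absorbing the dimensional factor $3^{n}$ into $c_{n,p,q}$ yields the asserted inequality.

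The main technical issue is purely bookkeeping: one must check that the $A_{p}$- and $\BMO_{\nu}$-dependencies transform correctly when passing to the adjoint sparse operator, so that the final power of $\max\{[\mu]_{A_{p}},[\lambda]_{A_{p}}\}$ indeed matches $\max\{1,1/(p-1)\}$. Once the scalar Bloom estimate for both $\mathcal{T}_{\mathcal{S},b}$ and $\mathcal{T}^{*}_{\mathcal{S},b}$ is in hand, the vector-valued statement drops out with no further effort beyond the pointwise reduction already supplied by Theorem \ref{ThmSparseConmm}.
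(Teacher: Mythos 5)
Your overall strategy --- reduce to scalars via the pointwise sparse bound of Theorem \ref{ThmSparseConmm} and then feed $g=|\bm{f}|_{q}$ into the quantitative Bloom estimates for the sparse objects $\mathcal{T}_{\mathcal{S},b}$ and $\mathcal{T}^{*}_{\mathcal{S},b}$ from \cite{LORR} --- is exactly the route the paper intends (it gives no separate proof precisely because this reduction is immediate once Theorem \ref{ThmSparseConmm} is available), and your duality bookkeeping for $\mathcal{T}^{*}_{\mathcal{S},b}$ is sound: under $\mu\mapsto\lambda^{1-p'}$, $\lambda\mapsto\mu^{1-p'}$ the weight $\nu$ is unchanged and the resulting constant is symmetric.

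There is, however, a concrete error in the scalar input you quote, and as stated it is false. The bound established in \cite{LORR} for $\mathcal{T}_{\mathcal{S},b}$ and $\mathcal{T}^{*}_{\mathcal{S},b}$ carries the product $\bigl([\mu]_{A_{p}}[\lambda]_{A_{p}}\bigr)^{\max\{1,\,1/(p-1)\}}$, not $\max\{[\mu]_{A_{p}},[\lambda]_{A_{p}}\}^{\max\{1,\,1/(p-1)\}}$: the argument factors $\mathcal{T}_{\mathcal{S},b}f\lesssim\|b\|_{\BMO_{\nu}}\,\mathcal{A}_{\widetilde{\mathcal{S}}}\bigl(\nu\,\mathcal{A}_{\widetilde{\mathcal{S}}}f\bigr)$ and, since $\nu^{p}\lambda=\mu$, pays one sparse-operator norm on $L^{p}(\lambda)$ and a second one on $L^{p}(\mu)$, hence the product of the two $A_{p}$ powers. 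Your max-form claim would, in the one-weight case $\mu=\lambda=w$ (so $\nu\equiv 1$), yield $\|[b,T]\|_{L^{p}(w)\to L^{p}(w)}\lesssim\|b\|_{\BMO}[w]_{A_{p}}^{\max\{1,\,1/(p-1)\}}$, contradicting the sharp quadratic bound $[w]_{A_{2}}^{2}$ of \cite{CPP} recalled in the paper; so that intermediate estimate is not merely unproved in \cite{LORR}, it cannot hold. The statement's displayed constant should be read as the product $[\mu]_{A_{p}}[\lambda]_{A_{p}}$ raised to $\max\{1,\,1/(p-1)\}$ (the $\max$ there is a typo), and once you quote the scalar sparse estimate in that form, your reduction does prove the theorem exactly as the paper intends.
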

Another interesting result is the $A_{1}$ estimate for $1<p<\infty$,
which is the natural counterpart for the sharp result obtained in
the scalar case in \cite{OC}.
\begin{thm}
\label{TheoremA1-1}Let $1<p,\,q<\infty$, $T$ be an $\omega$-Calder\'on-Zygmund
operator, $b\in \BMO$ and $w$ be a weight. Then the following inequality
holds
\begin{equation}
||\overline{[b,T]}_{q}\bm{f}||_{L^{p}(w)}\leq c_{n,q}C_{T}\|b\|_{\BMO}\left(pp'\right)^{2}(r')^{1+\frac{1}{p'}}\||\bm{f}|_{q}\|_{L^{p}\left(M_{r}w\right)}\qquad r>1.\label{eq:TwoWeightsA1-1}
\end{equation}
Furthermore, if $w\in A_{1}$ then we have the following estimate
\begin{equation}
||\overline{[b,T]}_{q}\bm{f}||_{L^{p}(w)}\leq c_{n,q}C_{T}\|b\|_{\BMO}\left(pp'\right)^{2}[w]_{A_{1}}^{\frac{1}{p}}[w]_{A_{\infty}}^{1+\frac{1}{p'}}\||\bm{f}|_{q}\|_{L^{p}(w)},\label{eq:A1Lp-1}
\end{equation}
and if $w\in A_{s}$ , with $1\leq s<p<\infty$. Futhermore, it follows from \cite[Corollary 4.3]{D},
\begin{equation}
||\overline{[b,T]}_{q}\bm{f}||_{L^{p}(w)}\leq c_{n,p,q,T}\|b\|_{\BMO}[w]_{A_{s}}^{2}\||\bm{f}|_{q}\|_{L^{p}(w)}.\label{eq:DepAs-1}
\end{equation}
\end{thm}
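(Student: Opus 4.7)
The plan is to invoke the pointwise sparse domination of Theorem \ref{ThmSparseConmm} and reduce to scalar weighted $L^{p}$ estimates for the commutator sparse forms $\mathcal{T}_{\mathcal{S},b}$ and $\mathcal{T}_{\mathcal{S},b}^{*}$ applied to $|\bm{f}|_{q}$. By Theorem \ref{ThmSparseConmm} and the triangle inequality in $L^{p}(w)$, \eqref{eq:TwoWeightsA1-1} follows once we establish, for any sparse family $\mathcal{S}$ and any nonnegative scalar $h$, the scalar estimate
\[
\|\mathcal{T}_{\mathcal{S},b} h\|_{L^{p}(w)} + \|\mathcal{T}_{\mathcal{S},b}^{*} h\|_{L^{p}(w)} \leq c_{n} \|b\|_{\BMO} (pp')^{2} (r')^{1+1/p'} \|h\|_{L^{p}(M_{r}w)},
\]
which is the scalar counterpart of \eqref{eq:TwoWeightsA1-1} and is essentially proved in \cite{OC,PRRR,HP}. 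Setting $h=|\bm f|_q$ and summing over the $3^{n}$ sparse families then yields the vector-valued bound with the claimed constant.

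The scalar estimate I would prove by duality. Pairing with a unit-norm $g \in L^{p'}(w)$ and writing
\[
\int \mathcal{T}_{\mathcal{S},b} h \cdot g w \, dx = \sum_{Q \in \mathcal{S}} \langle h \rangle_{Q} \int_{Q} (b-b_{Q}) g w \, dx,
\]
the generalized Hölder inequality in the Orlicz duality $L \log L \leftrightarrow \exp L$ together with the John--Nirenberg bound $\|b - b_{Q}\|_{\exp L,Q} \lesssim \|b\|_{\BMO}$ controls each inner integral by $\|b\|_{\BMO} |Q| \|gw\|_{L \log L, Q}$. Using sparsity ($|Q| \leq 2|E_{Q}|$ with disjoint $E_{Q} \subset Q$) to extract a disjoint sum converts this into the Carleson-type bound
\[
\int \mathcal{T}_{\mathcal{S},b} h \cdot g w \, dx \lesssim \|b\|_{\BMO} \int_{\mathbb{R}^{n}} M h(x)\, M_{L \log L}(gw)(x) \, dx,
\]
with the analogous inequality for $\mathcal{T}_{\mathcal{S},b}^{*}$. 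A final Hölder step against the weight $M_{r}w$, combined with the bound $\|Mh\|_{L^{p}(M_{r}w)} \lesssim pp' \|h\|_{L^{p}(M_{r}w)}$ (since $M_{r}w$ is $A_{1}$ with constant $\lesssim r'$) and the two-weight inequality $\|M_{L \log L}(gw)\|_{L^{p'}((M_{r}w)^{1-p'})} \lesssim pp'(r')^{1/p'} \|g\|_{L^{p'}(w)}$ (via the embedding $L \log L \subset L^{r}$ with constant $\sim r'$), produces the asserted constants; the factor $(pp')^{2}$ is the natural commutator analogue of the $pp'$ loss present in the scalar sparse $L^{p}$ bound.

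For \eqref{eq:A1Lp-1}, I would apply \eqref{eq:TwoWeightsA1-1} with the sharp reverse Hölder exponent $r = 1 + 1/(\tau_{n}[w]_{A_{\infty}})$, so that $r' \sim [w]_{A_{\infty}}$ and the reverse Hölder inequality gives $M_{r}w \lesssim Mw \leq [w]_{A_{1}} w$ pointwise; substitution produces the mixed factor $[w]_{A_{1}}^{1/p}[w]_{A_{\infty}}^{1+1/p'}$. Finally, \eqref{eq:DepAs-1} is immediate from \eqref{eq:TwoWeightsA1-1} combined with \cite[Corollary 4.3]{D} applied to the $A_{s}$ weight. The main obstacle in this scheme is the precise bookkeeping of constants in the two-weight bound for $M_{L \log L}$ with target weight $M_{r}w$: one must isolate the contributions of the Orlicz Hölder inequality, the $L^{r}$-embedding of $L \log L$, and the iterated maximal bounds so that the resulting exponent $1+1/p'$ on $r'$ is sharp and balances correctly against the two maximal losses producing $(pp')^{2}$.
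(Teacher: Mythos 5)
Your overall route --- reduce via Theorem \ref{ThmSparseConmm} to scalar bounds for $\mathcal{T}_{\mathcal{S},b}$ and $\mathcal{T}_{\mathcal{S},b}^{*}$ applied to $h=|\bm{f}|_{q}$, prove those by duality using the $L\log L$--$\exp L$ generalized H\"older inequality, John--Nirenberg and the sparse/Carleson step, then deduce \eqref{eq:A1Lp-1} by choosing $r=1+1/(\tau_{n}[w]_{A_{\infty}})$ in the reverse H\"older inequality and \eqref{eq:DepAs-1} from \cite[Corollary 4.3]{D} --- is exactly the route the paper intends: the paper gives no proof and simply remarks that it suffices to use the arguments of \cite{PRRR}, which are the scalar versions of what you describe. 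The reduction to the scalar sparse forms and the two corollaries \eqref{eq:A1Lp-1}, \eqref{eq:DepAs-1} are fine.

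The gap is in closing the quantitative scalar estimate. First, your claim $\|Mh\|_{L^{p}(M_{r}w)}\lesssim pp'\|h\|_{L^{p}(M_{r}w)}$ with constant independent of $r$ is unjustified and in fact false: via Fefferman--Stein plus Coifman--Rochberg one only gets $c_{n}p'(r')^{1/p}$, and one-dimensional examples with $w$ an approximate point mass (so $M_{r}w\simeq|x|^{-n/r}$ away from it) and $h=\chi_{[1,2]}$ show that a positive power of $r'$ is unavoidable. Second, your two stated factors multiply to $(pp')^{2}(r')^{1/p'}$, which is strictly better than the theorem's $(r')^{1+1/p'}$; this signals that the intermediate two-weight bound for $M_{L\log L}(gw)$ in $L^{p'}((M_{r}w)^{1-p'})$ is stated too optimistically --- it is precisely the $L\log L$ bumps that generate the extra full power of $r'$, i.e.\ the extra $[w]_{A_{\infty}}$ in \eqref{eq:A1Lp-1}. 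Third, $\mathcal{T}_{\mathcal{S},b}^{*}$ is not ``analogous'' in the bookkeeping: there the generalized H\"older inequality places the $L\log L$ average on $h$, so after the sparse step you face $\int M_{L\log L}(h)\,M(gw)$ and need a bound for $\|M_{L\log L}h\|_{L^{p}(M_{r}w)}$ (or a different splitting), which you never address; this is exactly where the arguments of \cite{PRRR} do the extra work. So the skeleton is correct and matches the intended proof, but as written the constants do not assemble into $(pp')^{2}(r')^{1+1/p'}$, and the key two-weight maximal lemmas must be quoted or proved with their precise constants as in \cite{PRRR,HP}.
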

We don't provide a proof for the preceding result since it suffices to use arguments in \cite{PRRR}.
\begin{rem}
We would like to observe that it is possible
to derive a quantitative estimate similar to \ref{eq:TwoWeightsA1-1}
with $M_{L(\log L)^{2p-1+\delta}}$ on the right hand side in the
spirit of the estimate obtained in \cite{PRRR}, but for the sake
of clarity we state just the case $M_{r}$. We note as well that using the
ideas in \cite{RR2017} it is possible to provide $A_{s}-A_\infty$ estimates.
\end{rem}

\subsection{Estimates for vector-valued rough singular integral operators and commutators}\label{rough}

Let $T_\Omega$ be a rough singular integral operator defined as
\begin{equation*}
T_{\Omega}f(x)=\text{p.v.}\int_{\mathbb{R}^{n}}\frac{\Omega(x')}{|x|^{n}}f(x-y)dy,
\end{equation*}
where $x'=x/{|x|}$ and  $\Omega \in L^1(\mathbb{S}^{n-1})$ satisfies $\int_{\mathbb{S}^{n-1}}\Omega=0$.
We can define the associated maximal operator by
\begin{equation*}
T^*_{\Omega}f(x)=\sup_{\varepsilon>0}\left|\int_{|x-y|>\varepsilon}\frac{\Omega(x')}{|x|^{n}}f(x-y)dy\right|.
\end{equation*}
$T_{\Omega}$ was introduced and also was proved
to be bounded on $L^{p}$ in \cite{CZ}. Later on J. Duoandikoetxea
and J. L. Rubio de Francia \cite{DRdF} established the weighted $L^{p}$
boundedness of $T_{\Omega}$ for $A_{p}$ weights.  It is also a central
fact, due to A. Seeger \cite{See}, that $T_{\Omega}$ is of weak
type $(1,1)$. Fan and Sato \cite{FanSato} settled the weighted endpoint estimate.

Coming back to quantitative weighted estimates, the first works considering
that question for $T_{\Omega}$ are quite recent. The first work in this direction
is due to T. P. Hytönen, L. Roncal and O. Tapiola \cite{HRT}. Their approach relies upon
an elaboration of a classical decomposition of these operators into Dini pieces  combined with interpolation with change of measure (see also \cite{DRdF, DRough, HRT, W}). However, up until now the best known bound, namely
\begin{equation}\label{eq:ApRough}
\begin{split}
\|T_{\Omega}\|_{L^{p}(w)\rightarrow L^{p}(w)}&\leq c_{n,p}\|\Omega\|_{L^{\infty}}[w]_{A_{p}}^{\frac{1}{p}}\left([w]_{A_{\infty}}^{1+\frac{1}{p'}}+[\sigma]_{A_{\infty}}^{1+\frac{1}{p}}\right)\min \left\{[w]_{A_{\infty}},[\sigma]_{A_{\infty}}\right\}\\
&\leq c_{n,p}\|\Omega\|_{L^{\infty}}[w]_{A_p}^{p'},
\end{split}
\end{equation}
was established in \cite{LPRRR}.

Also the first quantitative $A_{1}$
estimate for that class of operators was obtained quite recently by
the third author, L. Roncal and the fourth author \cite{PRRR} and was even more recently improved in \cite{LPRRR} where it was established that the dependence is linear.

In the case of commutators, the conjugation method yields that the weighted $L^p$ estimate is the same as the dependence in \eqref{eq:ApRough} with an extra $[w]_{A_\infty}$ term. For the sake of simplicity, just in terms of the $A_p$ constant we have that
\[
\|[b,T_{\Omega}]\|_{L^{p}(w)\rightarrow L^{p}(w)}\leq c_{n,p}\|\Omega\|_{L^{\infty}}\|b\|_{\BMO}[w]_{A_p}^{1+p'}.
\]
In the case of quantitative $A_1$ estimates the dependence was shown to be at most cubic in \cite{PRR} and very  recently it was proved in \cite{RR2017} that the dependence is quadratic as in the case of Calder\'on-Zygmund operators.

We also recall that the operator $B_{(n-1)/2}$, the Bochner--Riesz multiplier at the critical index,  which is defined by
\begin{equation*}
\widehat{B_{(n-1)/2}(f)}(\xi)= (1-|\xi|^2)_+^{(n-1)/2}\hat f(\xi).
\end{equation*}
satisfies the same estimates mentioned above for $T_\Omega$ (see \cite{LPRRR}).

In a recent paper by Conde-Alonso, Culiuc, Plinio and Ou \cite{CCDO}, the sparse domination formula of integral type for $T_\Omega$ and $B_{(n-1)/2}$ are obtained (see also an alternative proof by Lerner \cite{L2017}). We reformulate it as follows
\begin{thm}[{\cite[Theorems A and B]{CCDO}}]
\label{Thm:Sparse}Let $T$ be $T_\Omega$ or $B_{(n-1)/2}$. Then for all $1<p<\infty$, $f\in L^{p}(\mathbb{R}^{n})$ and $g\in L^{p'}(\mathbb{R}^{n})$,
we have that
\[
\Big|\int_{\mathbb{R}^{n}}T(f)gdx\Big|\leq c_n C_Ts'\sum_{Q\in\mathcal{S}}\langle |f| \rangle_Q\langle |g|\rangle_{s,Q}|Q|,
\]
where $\mathcal{S}$ is a sparse family of some dyadic lattice $\mathcal{D}$,
\[
\begin{cases}
1<s<\infty & \text{if }T=B_{(n-1)/2}\text{ or }T=T_{\Omega}\text{ with }\Omega\in L^{\infty}(\mathbb{S}^{n-1})\\
q' \le s<\infty& \text{if }T=T_{\Omega}\text{ with }\Omega\in L^{q,1}\log L(\mathbb{S}^{n-1})
\end{cases}
\] and
\begin{equation}
\label{eq:CT}
C_{T}=\begin{cases}
\|\Omega\|_{L^{\infty}(\mathbb{S}^{n-1})}, & \text{if }T=T_{\Omega}\text{ with }\Omega\in L^{\infty}(\mathbb{S}^{n-1})\\
\|\Omega\|_{L^{q,1}\log L(\mathbb{S}^{n-1})} & \text{if } \Omega\in L^{q,1}\log L(\mathbb{S}^{n-1})\\
1 & \text{if }T=B_{(n-1)/2}.
\end{cases}
\end{equation}
\end{thm}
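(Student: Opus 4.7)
The plan is to establish the bilinear sparse form
\[
|\langle Tf,g\rangle|\leq c_{n}C_{T}\,s'\sum_{Q\in\mathcal{S}}\langle |f|\rangle_{Q}\langle |g|\rangle_{s,Q}|Q|
\]
by the stopping-time strategy of Conde-Alonso--Culiuc--Di Plinio--Ou \cite{CCDO}. Because the kernels of $T_{\Omega}$ (for rough $\Omega$) and of $B_{(n-1)/2}$ are too singular to admit a pointwise sparse domination, one works with the bilinear form directly and dualizes the test function in an $L^{s}$-averaged sense; this is exactly what produces the $\langle |g|\rangle_{s,Q}$ factor in the conclusion, rather than the usual $L^{1}$ average.

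The technical heart is an endpoint weak-type bound for a suitable grand maximal truncation associated to $T$. For $s$ in the admissible range, introduce
\[
\mathcal{M}_{T}^{\#,s'}f(x)=\sup_{Q\ni x}\left(\frac{1}{|3Q|}\int_{3Q}\bigl|T(f\chi_{\mathbb{R}^{n}\setminus 3Q})(\xi)\bigr|^{s'}\,d\xi\right)^{1/s'},
\]
and prove the endpoint bound $\|\mathcal{M}_{T}^{\#,s'}f\|_{L^{1,\infty}(\mathbb{R}^{n})}\leq c_{n}C_{T}\|f\|_{L^{1}(\mathbb{R}^{n})}$, with $C_{T}$ as in \eqref{eq:CT}. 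The proof depends on the operator. For $T_{\Omega}$ with $\Omega\in L^{\infty}(\mathbb{S}^{n-1})$ one combines a Calder\'on-Zygmund decomposition of $f$ with the $L^{s'}\to L^{s'}$ boundedness of $T_{\Omega}^{*}$, available for every $s'>1$. For $\Omega\in L^{q,1}\log L(\mathbb{S}^{n-1})$ a Seeger-type decomposition of the kernel into Dini-smooth pieces produces weak-type estimates whose norms are summable only when $s\geq q'$, accounting for the restriction on $s$ in the statement. For $B_{(n-1)/2}$ the bound follows from known decay estimates for the critical-index Bochner-Riesz kernel together with interpolation.

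With this in hand, the sparse family is built recursively. Fix a cube $Q_{0}\supset \supp f$; given $Q\in\mathcal{S}$, take its children to be the maximal dyadic sub-cubes $Q'\subsetneq Q$ on which either $\langle |f|\rangle_{Q'}>\kappa\langle |f|\rangle_{Q}$ or $\mathcal{M}_{T}^{\#,s'}(f\chi_{3Q})(x)>\kappa C_{T}\langle |f|\rangle_{Q}$ for some $x\in Q'$, where $\kappa$ is a sufficiently large absolute constant. The Calder\'on-Zygmund inequality and the endpoint weak-type bound above give $\bigl|\bigcup Q'\bigr|\leq|Q|/2$, yielding the $\frac{1}{2}$-sparseness of $\mathcal{S}$. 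On the good complement $Q\setminus\bigcup Q'$ the stopping controls $\|T(f\chi_{3Q})\|_{L^{s'}(Q)}\lesssim C_{T}\langle |f|\rangle_{Q}|Q|^{1/s'}$, and H\"older's inequality against $g$ produces the local contribution $c_{n}C_{T}\langle |f|\rangle_{Q}\langle |g|\rangle_{s,Q}|Q|$. Summing over the sparse family and absorbing the far-field piece $T(f\chi_{\mathbb{R}^{n}\setminus 3Q})$ via the telescoping structure of the recursion yields the stated bilinear inequality; the factor $s'$ in the final constant arises from a Kolmogorov-type conversion between the weak-$L^{1}$ and $L^{1}$ norms of $\mathcal{M}_{T}^{\#,s'}$. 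The main obstacle throughout is the weak-type endpoint for $\mathcal{M}_{T}^{\#,s'}$ with sharp $s'$-dependence in the rough-kernel regime; once that is established, the stopping-time iteration is essentially routine.
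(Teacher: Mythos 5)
First, note that the paper does not prove Theorem \ref{Thm:Sparse} at all: it is quoted verbatim (up to reformulation) from \cite{CCDO}, with the alternative proof of Lerner \cite{L2017} mentioned. So your proposal is to be measured against those external arguments, and in fact your route — a grand maximal truncation with local $L^{s'}$ averages plus a stopping-time iteration — is Lerner's route, not the CCDO one (which decomposes $T_\Omega$ and $B_{(n-1)/2}$ into Dini-smooth pieces with $L^p$-decay and sums sparse bounds for each piece).

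The genuine gap is that your key lemma, the endpoint bound $\|\mathcal{M}_{T}^{\#,s'}f\|_{L^{1,\infty}}\leq c_{n}C_{T}\|f\|_{L^{1}}$, is asserted rather than proved, and it carries essentially all of the difficulty of the theorem. For $\Omega\in L^{\infty}(\mathbb{S}^{n-1})$ you propose to get it from a Calder\'on--Zygmund decomposition of $f$ together with the $L^{s'}$-boundedness of $T_{\Omega}^{*}$; that scheme works only when the kernel has some H\"ormander/Dini smoothness to handle the bad part off its support, which is exactly what rough kernels lack. Even the weak $(1,1)$ bound for $T_{\Omega}$ itself requires Seeger's microlocal/Fourier-decay decomposition and Christ--Rubio de Francia almost-orthogonality, and upgrading this to the grand maximal truncation with $L^{s'}$ averages, while tracking how the constant grows as $s\to 1$, is precisely the content of \cite{L2017}. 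Relatedly, your claimed weak-type constant is uniform in $s'$ (with the $s'$ in the final bound coming only from a Kolmogorov-type conversion); this is stronger than what is known — the $s'$ (indeed a power of $s'$) enters the weak-type constant itself, and a bound uniform in $s'$ would be highly suspect since the operator increases as $s'\to\infty$. The same criticism applies to the Bochner--Riesz case, where ``known decay estimates together with interpolation'' does not yield the required endpoint estimate; in \cite{CCDO} this case is handled through localized $L^1\to L^{s'}$ improving bounds for the dyadic kernel pieces, not through a weak-type bound for a maximal truncation. Once that lemma is granted, your stopping-time construction and the absorption of the far-field term are indeed routine, but as written the proposal reduces the theorem to an unproved statement that is at least as deep as the theorem itself.
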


For $T^*_\Omega$ with $\Omega\in L^\infty(\mathbb{S}^{n-1})$ the following sparse domination was provided in \cite{DiPHLi}.

\begin{equation}\label{eq:SparseRoughMax}
\left|\int_{\mathbb{R}^n}T_{\Omega}^*(f)g\right|\leq c_n \|\Omega\|_{L^{\infty}(\mathbb{S}^{n-1})}s'\sum_{Q\in\mathcal{S}}\langle |f| \rangle_{s,Q}\langle |g|\rangle_{s,Q}|Q|\qquad 1<s<\infty.
\end{equation}

In the case of commutators, the following result was recently obtained in \cite{RR2017}, hinging upon techniques in  \cite{L2017}.

\begin{thm}\label{Thm:SparseCommRough} Let $T_{\Omega}$ be a rough homogeneous singular integral with $\Omega\in L^{\infty}(\mathbb{S}^{n-1})$.
Then, for every compactly supported $f,g\in \mathcal{C}^\infty({\mathbb R}^n)$ every $b\in \BMO$ and $1<p<\infty$, there exist $3^n$ dyadic lattices $\mathcal{D}_j$ and $3^n$ sparse families ${\mathcal S}_j\subset\mathcal{D}_j$ such that
\begin{equation}
|\langle [b,T_\Omega]f,g \rangle|\le C_ns'\|\Omega\|_{L^{\infty}(\mathbb{S}^{n-1})}\sum_{j=1}^\infty\left(\mathcal{T}_{\mathcal{S}_j,1,s}(b,f,g)+\mathcal{T}^*_{\mathcal{S}_j,1,s}(b,f,g)\right),
\end{equation}
where
\[
\begin{split}
\mathcal{T}_{\mathcal{S}_j,r,s}(b,f,g)&=\sum_{Q\in {\mathcal S}_j}\langle |f|\rangle_{r,Q}\langle |(b-b_Q)g|\rangle_{s,Q}|Q|\\
\mathcal{T}^*_{\mathcal{S}_j,r,s}(b,f,g)&=\sum_{Q\in {\mathcal S}_j}\langle|(b-b_Q)f|\rangle_{r,Q}\langle |g|\rangle_{s,Q}|Q|.
\end{split}
\]
\end{thm}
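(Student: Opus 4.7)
The plan is to follow Lerner's approach from \cite{L2017} for rough homogeneous singular integrals, combined with the commutator-oriented refinements from \cite{LORR}. The sparse family is built by an iterative stopping-time procedure driven by a commutator-adapted grand maximal truncation, and the two sparse forms $\mathcal{T}_{\mathcal{S},1,s}$ and $\mathcal{T}^*_{\mathcal{S},1,s}$ arise naturally from the algebraic identity $[b,T_\Omega]h=(b-b_Q)T_\Omega h-T_\Omega((b-b_Q)h)$ applied on each cube $Q$ of the selected family.

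By the standard three-grid trick one reduces to producing a single sparse family inside one fixed dyadic lattice $\mathcal{D}$, and by exhaustion one may assume that $f$ and $g$ are supported in a common cube $Q_0\in\mathcal{D}$. For every $Q\in\mathcal{D}$ with $Q\subseteq Q_0$ the pairing against $g\chi_Q$ splits as
\[
\bigl|\langle [b,T_\Omega](f\chi_{3Q}),g\chi_Q\rangle\bigr|\leq \bigl|\langle T_\Omega(f\chi_{3Q}),(b-b_Q)g\chi_Q\rangle\bigr|+\bigl|\langle T_\Omega((b-b_Q)f\chi_{3Q}),g\chi_Q\rangle\bigr|,
\]
which is the mechanism by which the two sparse forms will be assembled. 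I would then introduce a commutator grand maximal truncation
\[
\mathcal{M}_{T_\Omega,b,s}(f,g)(x)=\sup_{Q\ni x}\frac{1}{|Q|}\Bigl(\bigl|\langle T_\Omega(f\chi_{\mathbb{R}^n\setminus 3Q}),(b-b_Q)g\chi_Q\rangle\bigr|+\bigl|\langle T_\Omega((b-b_Q)f\chi_{\mathbb{R}^n\setminus 3Q}),g\chi_Q\rangle\bigr|\Bigr),
\]
which captures the non-local contribution. Combining Theorem \ref{Thm:Sparse} for $T_\Omega$ applied locally on each cube with the $L^s$ form of the John--Nirenberg inequality $\langle |b-b_Q|^s\rangle_Q^{1/s}\lesssim s\|b\|_{\BMO}$ yields the key weak estimate: $\mathcal{M}_{T_\Omega,b,s}(f,g)$ is dominated by $c_n s'\|\Omega\|_{L^\infty(\mathbb{S}^{n-1})}$ times the averages appearing in the two sparse forms.

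The sparse family is then constructed recursively. Starting from $Q_0$, select the maximal dyadic subcubes $\{Q_j\}\subset\mathcal{D}$ on which $\mathcal{M}_{T_\Omega,b,s}(f,g)$ exceeds a sufficiently large multiple of the corresponding averages taken over $Q_0$; the weak bound above forces $\sum_j|Q_j|\leq\tfrac12|Q_0|$, giving the required $\tfrac12$-sparsity. On $Q_0\setminus\bigcup_j Q_j$ the contribution to $\langle [b,T_\Omega](f\chi_{3Q_0}),g\chi_{Q_0}\rangle$ is absorbed into a single sparse-form term at level $Q_0$, namely
\[
c_n s'\|\Omega\|_{L^\infty(\mathbb{S}^{n-1})}\Bigl(\langle |f|\rangle_{1,Q_0}\langle |(b-b_{Q_0})g|\rangle_{s,Q_0}+\langle |(b-b_{Q_0})f|\rangle_{1,Q_0}\langle |g|\rangle_{s,Q_0}\Bigr)|Q_0|.
\]
Iterating on each $Q_j$ produces $\mathcal{S}\subset\mathcal{D}$, and taking the union over the $3^n$ lattices of the three-grid theorem yields the required $\mathcal{D}_j$ and $\mathcal{S}_j$.

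The main obstacle is precisely the weak bound for $\mathcal{M}_{T_\Omega,b,s}$. The roughness of $\Omega$ rules out the pointwise kernel decay that makes the Dini arguments of \cite{L} and \cite{LORR} work, so one cannot separate kernel oscillations from the BMO factor in the traditional way. The way around this is to use the sparse domination of $T_\Omega$ itself (Theorem \ref{Thm:Sparse}) as an off-diagonal device applied to $(b-b_Q)f$ or $(b-b_Q)g$ in place of a smooth function; this is simultaneously the source of the exponent $s'$ and of the $L^s$ average on the second factor. The BMO input is then absorbed via John--Nirenberg with quantitative $L^s$ bounds, which keeps $\|b\|_{\BMO}$ inside the averages and avoids any extra $\|b\|_{\BMO}$ outside the sparse form, delivering precisely the bilinear estimate in the statement.
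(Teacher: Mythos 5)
First, a point of comparison: the paper does not prove Theorem \ref{Thm:SparseCommRough} at all — it is quoted from \cite{RR2017}, which adapts Lerner's scheme from \cite{L2017} to the commutator. So there is no in-paper argument to measure you against; your outline must stand on its own. Its skeleton (three-lattice reduction, exhaustion, the splitting $[b,T_\Omega]h=(b-b_Q)T_\Omega h-T_\Omega((b-b_Q)h)$, a stopping-time recursion producing a $\frac12$-sparse family) is indeed the architecture of \cite{RR2017} and of \cite{LORR}, and you correctly identify where the difficulty lies. But the step you identify as the crux — the weak-type bound for your bilinear grand maximal truncation $\mathcal{M}_{T_\Omega,b,s}(f,g)$ — is asserted, not proved, and the mechanism you propose for it does not work as described. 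Applying Theorem \ref{Thm:Sparse} to the pair $(f\chi_{\mathbb{R}^n\setminus 3Q},(b-b_Q)g\chi_Q)$ produces a sparse sum over cubes $P$ of sidelength comparable to or larger than $\ell(Q)$ meeting $Q$, with averages $\langle|(b-b_Q)g\chi_Q|\rangle_{s,P}$; these carry the recentering mismatch $b_Q$ versus $b_P$, do not collapse to the single averages $\langle|f|\rangle_{3Q_0}$ and $\langle|(b-b_{Q_0})g|\rangle_{s,3Q_0}$ that the stopping condition on the ambient cube $Q_0$ requires, and taking the supremum over all $Q\ni x$ of such quantities is precisely a new maximal operator whose weak-type behaviour is what has to be proved. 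In \cite{RR2017} this is not obtained by recycling the sparse bound for $T_\Omega$: the engine is Lerner's genuinely new quantitative weak $(1,1)$ estimate for the $L^s$-averaged grand maximal truncation $\mathcal{M}_{T_\Omega,s}$, applied inside the fixed-cube recursion to $f$ and to $(b-b_{R_{Q_0}})f$ separately, with the factor $b(x)-b_{R_{Q_0}}$ kept outside and paired with $g$ only at the end; the constant $s'$ comes from that weak-type bound (equivalently from the growth of the $L^{s'}$ operator norms of $T_\Omega$), not from where you place it.

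There is also an internal inconsistency in your use of John--Nirenberg. Invoking $\langle|b-b_Q|^s\rangle_Q^{1/s}\lesssim s\|b\|_{\BMO}$ necessarily decouples $b-b_Q$ from $g$ (or from $f$) and pulls $\|b\|_{\BMO}$ outside, which would yield a bound of the shape $\|b\|_{\BMO}\langle|f|\rangle_Q\langle|g|\rangle_{\tilde s,Q}$ — a different and formally weaker statement than the one to be proved, in which no $\|b\|_{\BMO}$ appears and the factors $(b-b_Q)g$, $(b-b_Q)f$ must remain coupled inside the averages. Yet you simultaneously claim that this step "keeps $\|b\|_{\BMO}$ inside the averages"; both cannot hold. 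Without a proof of a weak-type estimate of the form $\bigl|\{x\in Q_0:\ \mathcal{M}_{T_\Omega,b,s,Q_0}(f,g)(x)>C_ns'\|\Omega\|_{L^\infty(\mathbb{S}^{n-1})}\,A_{Q_0}\}\bigr|\le\frac{1}{2^{n+2}}|Q_0|$, with $A_{Q_0}$ the product of the $Q_0$-level averages appearing in $\mathcal{T}_{\mathcal{S},1,s}+\mathcal{T}^*_{\mathcal{S},1,s}$, the selection of the stopping cubes has no size control, the $\frac12$-sparsity claim is unsupported, and the recursion does not close. To repair the argument you should replace your bilinear maximal operator by the $L^s$-averaged truncation operator of \cite{L2017}, prove (or quote) its weak $(1,1)$ bound with constant $c_ns'\|\Omega\|_{L^\infty(\mathbb{S}^{n-1})}$, and run the recursion as in \cite{LORR} with the symbol recentered at $R_{Q_0}$ at each stage, applying that operator to $f$ and to $(b-b_{R_{Q_0}})f$.
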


Analogously as we did in the preceding sections, if $1<q<\infty$ and $T$ is $T_\Omega$ or $B_{(n-1)/2}$ and $b\in L^1_{\text{loc}}$, we consider the corresponding vector-valued versions of $T$ and $[b,T]$ that are defined as follows
\[
\begin{split}
\overline T_q \mathbf f (x)&= \Big( \sum_{j=1}^\infty |T(f_j)|^q \Big)^{\frac 1q}\\
\overline{[b,T]}_q \mathbf f (x)&= \Big( \sum_{j=1}^\infty |[b,T](f_j)|^q \Big)^{\frac 1q}.
\end{split}
\]
Exploiting a beautiful observation due to  Culiuc, Di Plinio and Ou \cite{CDO17} we are able to extend the preceding results to vector-valued setting.
\begin{thm}\label{thm:sparserough}
Let $\Omega\in L^\infty(\mathbb{S}^{n-1})$.
If $T$ is $T_\Omega$ or $B_{(n-1)/2}$ and $1\le s<\frac {q'+1}2$, then there exists a sparse collection $\mathcal S$ such that
\[
\Big|\sum_{j\in\mathbb Z}\int_{\mathbb{R}^{n}}T(f_j)g_jdx \Big|\le c_{n,q} C_Ts' \sum_{Q\in \mathcal S} \langle |\mathbf f|_q \rangle_Q  \langle |\mathbf g|_{q'} \rangle_{s, Q} |Q|.
\]
If $1<s<\frac{\min\{q',q\}+1}{2}$, then there exists a sparse collection $\mathcal S$ such that
\[
\Big|\sum_{j\in\mathbb Z}\int_{\mathbb{R}^{n}}T^*_\Omega(f_j)g_jdx \Big|\le c_{n,q} \|\Omega\|_{L^{\infty}(\mathbb{S}^{n-1})}s' \sum_{Q\in \mathcal S} \langle |\mathbf f|_q \rangle_{s,Q}  \langle |\mathbf g|_{q'} \rangle_{s, Q} |Q|.
\]
If $1<s<\frac{q'+1}{2}$, $1<r<{\frac{q+1}2}$ and $b\in\BMO$ then
\[
\Big|\sum_{j\in\mathbb Z}\int_{\mathbb{R}^{n}}[b,T_\Omega](f_j)g_jdx \Big|\le c_{n,q} \|b\|_{\BMO} \|\Omega\|_{L^{\infty}(\mathbb{S}^{n-1})} s'\max\{r',s'\} \sum_{Q\in \mathcal S} \langle |\mathbf f|_q \rangle_{r,Q}  \langle |\mathbf g|_{q'} \rangle_{s, Q} |Q|.
\]
\end{thm}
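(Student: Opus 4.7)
The plan is to establish all three vector-valued sparse-form estimates by combining the scalar sparse domination results (Theorem \ref{Thm:Sparse}, estimate \eqref{eq:SparseRoughMax}, and Theorem \ref{Thm:SparseCommRough}) with the observation of Culiuc, Di Plinio and Ou \cite{CDO17} that upgrades a scalar $(r,s)$ sparse form on a pair $(f,g)$ to an $\ell^{q}$--$\ell^{q'}$ vector-valued sparse form on $(\mathbf{f},\mathbf{g})$, at the cost of restricting to the exponent ranges $s<(q'+1)/2$ and $r<(q+1)/2$ that appear in the statement.

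First I would fix compactly supported sequences $\mathbf{f}=\{f_j\}$ and $\mathbf{g}=\{g_j\}$ and a cube $Q_0$ containing all supports, and perform a Lerner-type principal-cube stopping-time construction driven by the scalar functions $|\mathbf{f}|_q$ and $|\mathbf{g}|_{q'}$. Starting from $\mathcal{F}=\{Q_0\}$, append iteratively to $\mathcal{F}$ the maximal dyadic subcubes $Q'\subset Q$ of $Q\in\mathcal{F}$ on which at least one of the relevant averages of $|\mathbf{f}|_q$ or $|\mathbf{g}|_{q'}$ (namely $\langle\cdot\rangle_{Q}$ in the first case, $\langle\cdot\rangle_{s,Q}$ in the maximal truncation case, and analogously for the commutator) exceeds a fixed large multiple of the corresponding average on $Q$, or on which an appropriate grand-maximal quantity for the scalar operator exceeds its scale-$Q$ value. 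Weak-type bounds for the relevant maximal operators together with the Carleson packing principle would yield that $\mathcal{F}$ is $\tfrac12$-sparse.

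The main technical step is the local estimate: for each $Q\in\mathcal{F}$, writing $E_Q=Q\setminus\bigcup_{Q'\in\ch_{\mathcal{F}}(Q)}Q'$, I would show in the first case
\[
\Bigl|\sum_{j}\int_{E_Q}T(f_j\chi_{3Q})\,g_j\,dx\Bigr|\le c_{n,q}C_T\,s'\,\langle|\mathbf{f}|_q\rangle_{Q}\langle|\mathbf{g}|_{q'}\rangle_{s,Q}|Q|,
\]
and the analogous inequalities for $T_{\Omega}^{*}$ and $[b,T_{\Omega}]$. To achieve this, I would apply the relevant scalar sparse bound (Theorem \ref{Thm:Sparse}, \eqref{eq:SparseRoughMax}, or Theorem \ref{Thm:SparseCommRough}) componentwise to each scalar pair $(f_j\chi_{3Q},g_j\chi_{E_Q})$, and then invoke the CDO17 Hölder trick to collapse the sum over $j$ against the $\ell^{q}$--$\ell^{q'}$ pairing, using the stopping conditions to replace averages on subcubes of $Q$ by the fixed averages on $Q$. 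The exponent constraints $s<(q'+1)/2$, $r<(q+1)/2$, and $s<(q+1)/2$ in the relevant cases are precisely those under which this Hölder collapse closes while preserving the loss factor $s'$ (respectively $s'\max\{r',s'\}$) of the underlying scalar bound; the extra $\|b\|_{\BMO}$ gain in the commutator case comes from John--Nirenberg exactly as in the scalar case.

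Summing the local estimates over $Q\in\mathcal{F}$ and using the sparseness yields the claimed bilinear sparse-form bound; the three-lattice trick then accounts for the passage from $3Q$ to dyadic cubes and produces the $3^n$ sparse families implicit in the statement. The main obstacle will be executing the CDO17 index calculus in the local step so that the scalar loss factor $s'$ is retained while lifting to vector-valued, and verifying that the stated ranges $s<(q'+1)/2$ and $r<(q+1)/2$ are exactly those for which the Hölder combination of the scalar $L^{s}$-average control and the $\ell^{q'}$ (respectively $\ell^{q}$) structure balances.
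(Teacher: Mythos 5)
You assemble the right ingredients (the scalar sparse bounds of Theorem \ref{Thm:Sparse}, of \eqref{eq:SparseRoughMax} and of Theorem \ref{Thm:SparseCommRough}, the Culiuc--Di Plinio--Ou observation, and the correct exponent thresholds), but the step that carries all the weight is missing, and the mechanism you propose for it does not work. After applying the scalar sparse bound componentwise to the pairs $(f_j\chi_{3Q},g_j\chi_{E_Q})$ you are left with $\sum_j\sum_{P\in\mathcal S_j}\langle|f_j|\rangle_P\langle|g_j|\rangle_{s,P}|P|$, where the families $\mathcal S_j$ depend on $j$ and live at all scales; your stopping conditions, phrased in terms of $|\mathbf f|_q$, $|\mathbf g|_{q'}$ and a grand maximal function, give no control on these $j$-dependent averages, so there is no way to ``collapse the sum over $j$'' into the single product $\langle|\mathbf f|_q\rangle_Q\langle|\mathbf g|_{q'}\rangle_{s,Q}|Q|$. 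The only generic way to merge $j$-dependent sparse forms is to dominate each one by $\int\mathcal M_{1,s}(f_j,g_j)$, and that intermediate quantity is already too large for your claimed local bound: in the scalar case with $g\equiv1$ it is comparable to $\int_Q M(f\chi_Q)\sim\|f\|_{L\log L(Q)}$, which is not $O(\langle|f|\rangle_Q|Q|)$. Moreover, for rough $T_\Omega$ there is no weak $(1,1)$ grand-maximal control available to run a Lerner-type single-scale local estimate at all (this is precisely why \cite{CCDO} and \cite{L2017} prove only a bilinear form bound by a more involved argument), so establishing your local inequality would amount to reproving their theorem in vector-valued form rather than invoking it as a black box.

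The paper's proof avoids local estimates for $T$ entirely and is genuinely simpler: using the disjoint sets $E_P$ of each scalar sparse family, the form for each pair $(f_j,g_j)$ is dominated by $2\int_{\mathbb R^n}\mathcal M_{1,s}(f_j,g_j)\,dx$ (this is the whole content of the CDO observation), which reduces the theorem to a sparse domination of the vector-valued bisublinear maximal function $\overline{(\mathcal M_{1,s})}_1(\mathbf f,\mathbf g)=\sum_j\mathcal M_{1,s}(f_j,g_j)$; this is Lemma \ref{lem:sparsemaxi}, proved via the Lerner--Nazarov formula (Theorem \ref{LernerFormula-1}). There the local mean oscillation is a rearrangement quantity, so \emph{weak-type} inputs suffice: the Fefferman--Stein inequality $\|\overline M_q\mathbf f\|_{L^{1,\infty}}\le c_nq'\||\mathbf f|_q\|_{L^1}$ together with H\"older in weak spaces, and it is exactly the computation $\left(\frac qr\right)'\left(\frac{q'}s\right)'\le c\,qq'$ that produces the restrictions $r<\frac{q+1}2$ and $s<\frac{q'+1}2$ (the commutator case is first reduced to an $(r,s)$-form by the John--Nirenberg/H\"older step preceding the lemma). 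Your sketch never reaches the point where these weak-type estimates and exponent computations enter; asserting that the stated ranges are ``precisely those under which the H\"older collapse closes'' is not a substitute for that derivation. If you keep your outline, replace the per-cube local estimate by this reduction to $\overline{(\mathcal M_{1,s})}_1$ and prove its sparse bound through oscillation estimates, which tolerate weak-type information, whereas your local $L^1$-type estimate does not.
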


The preceding result combined with arguments  in \cite{DiPHLi} and in \cite{LPRRR} allow us to establish the following result that provides the natural counterparts to the results mentioned above.

\begin{thm}\label{v-valued-rough}
Let $\Omega\in L^\infty(\mathbb{S}^{n-1})$  and $T=T_\Omega\text{ or }B_{(n-1)/2}$. Then
\begin{enumerate}
\item For any $1<p<\infty$,
\[\begin{split}
\|\overline{T}_q\|_{L^p(w)}&\leq c_{n,p,q} \ c_T[w]_{A_p}^\frac{1}{p}([w]_{A_\infty}^\frac{1}{p'}+[\sigma]_{A_\infty}^\frac{1}{p})\min\{[\sigma]_{A_\infty},[w]_{A_\infty}\},\\
\|\overline{(T^*_\Omega)}_q\|_{L^p(w)}&\leq c_{n,p,q}\|\Omega\|_{L^\infty(\mathbb{S}^{n-1})}[w]_{A_p}^\frac{1}{p}([w]_{A_\infty}^\frac{1}{p'}+[\sigma]_{A_\infty}^\frac{1}{p})\max\{[\sigma]_{A_\infty},[w]_{A_\infty}\},\\
 \|\overline{[b,T_\Omega]}_q\|_{L^p(w)}&\leq c_{n,p,q}\|\Omega\|_{L^\infty(\mathbb{S}^{n-1})}\|b\|_{\BMO}[w]_{A_p}^\frac{1}{p}([w]_{A_\infty}^\frac{1}{p'}+[\sigma]_{A_\infty}^\frac{1}{p})\max\{[\sigma]_{A_\infty},[w]_{A_\infty}\}^2.
\end{split}
\]
Consequently
\[
\begin{split}
\|\overline{T}_q\|_{L^p(w)}&\leq c_{n,p,q}\ c_T[w]_{A_p}^{p'},\\
\|\overline{(T^*_\Omega)}_q\|_{L^p(w)}&\leq c_{n,p,q}\|\Omega\|_{L^\infty(\mathbb{S}^{n-1})}[w]_{A_p}^{2\max\left\{1,\frac{1}{p-1}\right\}},\\
 \|\overline{[b,T_\Omega]}_q\|_{L^p(w)}&\leq c_{n,p,q}\|\Omega\|_{L^\infty(\mathbb{S}^{n-1})}\|b\|_{\BMO}[w]_{A_p}^{3\max\left\{1,\frac{1}{p-1}\right\}}.
\end{split}
\]
\item The following Fefferman-Stein type inequalities for $1<r$ small enough hold
\[
\begin{split}
\|\overline{T}_q(\bm{f})\|_{L^p(w)}&\leq c_{n,p,q} c_T(r')^\frac{1}{p'}\||\bm{f}|_q\|_{L^p(M_rw)},\\
\|\overline{(T^*_\Omega)}_q(\bm{f})\|_{L^p(w)}&\leq c_{n,p,q}\|\Omega\|_{L^\infty(\mathbb{S}^{n-1})}(r')^{1+\frac{1}{p'}}\||\bm{f}|_q\|_{L^p(M_rw)},\\
 \|\overline{[b,T_\Omega]}_q\|_{L^p(w)}&\leq c_{n,p,q}\|b\|_{\BMO}\|\Omega\|_{L^\infty(\mathbb{S}^{n-1})}(r')^{2+\frac{1}{p'}}\||\bm{f}|_q\|_{L^p(M_rw)}.\end{split}
\]
\item If $1<s<p$ then
\[
\begin{split}
\|\overline{T}_q(\bm{f})\|_{L^p(w)}&\leq c_{n,p,q}[w]_{A_s}^{\frac{1}{p}}[w]_{A_\infty}^{\frac{1}{p'}} \||\bm{f}|_q\|_{L^p(w)},\\
\|\overline{(T^*_\Omega)}_q(\bm{f})\|_{L^p(w)}&\leq c_{n,p,q}\|\Omega\|_{L^\infty(\mathbb{S}^{n-1})}[w]_{A_s}^{\frac{1}{p}}[w]_{A_\infty}^{1+\frac{1}{p'}} \||\bm{f}|_q\|_{L^p(w)},\\
 \|\overline{[b,T_\Omega]}_q(\bm{f})\|_{L^p(w)}&\leq c_{n,p,q}\|b\|_{\BMO}\|\Omega\|_{L^\infty(\mathbb{S}^{n-1})}[w]_{A_s}^{\frac{1}{p}}[w]_{A_\infty}^{2+\frac{1}{p'}} \||\bm{f}|_q\|_{L^p(w)}.
 \end{split}
\]

\end{enumerate}

\end{thm}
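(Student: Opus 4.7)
The approach is to combine Theorem \ref{thm:sparserough} with the scalar weighted estimates for bilinear sparse forms developed in \cite{LPRRR} and \cite{DiPHLi} (and the commutator variants from \cite{RR2017}). The point is that once Theorem \ref{thm:sparserough} delivers a scalar bilinear sparse form in the nonnegative functions $|\mathbf{f}|_q$ and $|\mathbf{g}|_{q'}$, the vector-valued structure disappears and one inherits the existing scalar machinery almost verbatim.

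\textbf{Execution of (1).} I would start with vector-valued $L^p(w;\ell^q)$ duality:
\[
\|\overline T_q\mathbf{f}\|_{L^p(w)}=\sup_{\mathbf g}\Big|\sum_j\int T(f_j)\,g_j\,dx\Big|,\qquad \sigma=w^{1-p'},
\]
with the supremum over $\mathbf g$ satisfying $\||\mathbf g|_{q'}\|_{L^{p'}(\sigma)}\le 1$. Applying Theorem \ref{thm:sparserough} controls the right-hand side by $c_{n,q}c_T\,s'\sum_{Q\in\mathcal S}\langle|\mathbf f|_q\rangle_Q\langle|\mathbf g|_{q'}\rangle_{s,Q}|Q|$ for any admissible $s\in[1,(q'+1)/2)$. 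I would then invoke the mixed $A_p$--$A_\infty$ bound for the scalar $(1,s)$-sparse form from \cite{LPRRR}, whose constant is of the form $[w]_{A_p}^{1/p}\bigl([w]_{A_\infty}^{1/p'}+[\sigma]_{A_\infty}^{1/p}\bigr)[w]_{A_\infty}^{1/s'}$ (and symmetrically in $w,\sigma$). Optimizing the product $s'\cdot[w]_{A_\infty}^{1/s'}$ by choosing $s'\sim 1+\log\min\{[w]_{A_\infty},[\sigma]_{A_\infty}\}$ then produces the $\min\{[w]_{A_\infty},[\sigma]_{A_\infty}\}$ factor appearing in the statement. The same scheme treats $\overline{(T^*_\Omega)}_q$ via the $(s,s)$-sparse form \eqref{eq:SparseRoughMax}, where both arguments contribute $A_\infty$ powers and hence $\min$ is replaced by $\max$; and $\overline{[b,T_\Omega]}_q$ via Theorem \ref{Thm:SparseCommRough}, where the commutator sparse form estimate of \cite{RR2017} carries the extra $\max\{[w]_{A_\infty},[\sigma]_{A_\infty}\}$ factor.

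\textbf{Parts (2), (3) and the main obstacle.} For (2), I would repeat the optimization against the pair $(M_r w,w)$ rather than $(w,\sigma)$: the localized testing inequality holds universally for $M_r$, so no $A_\infty$ constants enter and the residual $r'$-dependence is absorbed into the prefactors $(r')^{1/p'}$, $(r')^{1+1/p'}$, $(r')^{2+1/p'}$ listed in the statement. Part (3) is deduced either directly from (1) by specializing $w\in A_s$ and using $[w]_{A_p}\le[w]_{A_s}$, $[w]_{A_\infty}\le[w]_{A_s}$, together with a suitable power bound $[\sigma]_{A_\infty}\lesssim[w]_{A_s}^{\alpha(p,s)}$, or equivalently from (2) via the sharp reverse H\"older estimate $M_{1+\epsilon}w\lesssim[w]_{A_\infty}w$ with $\epsilon\sim 1/[w]_{A_\infty}$ of Hyt\"onen--P\'erez. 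The main technical point is the balancing of $s'$ against the $A_\infty$-dependent factor in the sparse form bound: taking $s$ too close to $1$ blows up $s'$, while taking $s$ larger degrades the $A_\infty$ control. This optimization produces the precise $\min$/$\max$ constants, and carrying it through for the commutator—where one must simultaneously optimize over both $r$ and $s$—is the most delicate step.
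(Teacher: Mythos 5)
Your overall plan coincides with the paper's: the paper proves Theorem \ref{thm:sparserough} and then simply invokes the scalar arguments of \cite{DiPHLi} and \cite{LPRRR} (and \cite{RR2017} for the commutator) applied to the scalarized sparse form in $|\bm{f}|_q$ and $|\bm{g}|_{q'}$. The problem is in how you claim those scalar arguments produce the constants. The intermediate estimate you attribute to \cite{LPRRR} --- a $(1,s)$-form bound with constant $[w]_{A_p}^{1/p}\bigl([w]_{A_\infty}^{1/p'}+[\sigma]_{A_\infty}^{1/p}\bigr)[w]_{A_\infty}^{1/s'}$ valid for every $s$ --- is false. After dualizing you must feed $g=hw$ into the form, and already the single-cube test $f=h=\chi_Q$ shows that such a bound forces $\langle w\rangle_{s,Q}\le C\langle w\rangle_Q$ uniformly, i.e.\ a reverse H\"older condition of order $s$; for fixed $s$ beyond the sharp exponent $1+1/(\tau_n[w]_{A_\infty})$ this fails (for suitable power weights the $s$-average is even infinite), while $[w]_{A_\infty}^{1/s'}$ stays finite. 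Hence your optimization $s'\sim 1+\log\min\{[w]_{A_\infty},[\sigma]_{A_\infty}\}$ is not available; had it been, it would yield a logarithmic factor, strictly better than the stated theorem, which should have been a red flag. In the actual argument there is nothing to optimize: one is forced to take $s-1\simeq 1/(\tau_n[w]_{A_\infty})$ (or the analogous choice with $\sigma$ on the dual side), so that Theorem \ref{Thm:RHI} removes the inner exponent at the cost of a factor $2$, while the prefactor $s'$ coming from the sparse domination becomes comparable to $[w]_{A_\infty}$ (resp.\ $[\sigma]_{A_\infty}$); this is precisely the source of the linear $\min$, and of the $\max$ with the extra powers for $T^*_\Omega$ and $[b,T_\Omega]$. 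Note also that this choice of $s$ is near $1$ and therefore compatible with the restriction $s<\frac{q'+1}{2}$ in Theorem \ref{thm:sparserough}, a point your write-up should record.

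Part (3) as you sketch it also fails. Specializing part (1) to $w\in A_s$ only gives $[w]_{A_s}^{1/p}\bigl([w]_{A_s}^{1/p'}+[w]_{A_s}^{(p'-1)/p}\bigr)[w]_{A_s}$, which is at least quadratic in $[w]_{A_s}$ and cannot produce the stated $[w]_{A_s}^{1/p}[w]_{A_\infty}^{1/p'}$, where the possibly large constant enters only to the power $1/p$. The alternative route through part (2) rests on the inequality $M_{1+\epsilon}w\lesssim[w]_{A_\infty}w$, which is not what sharp reverse H\"older gives: Theorem \ref{Thm:RHI} yields $M_{1+\epsilon}w\le 2Mw$, and a pointwise bound $Mw\lesssim w$ requires $w\in A_1$, not $w\in A_s$ with $s>1$. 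The mixed $A_s$--$A_\infty$ estimates of (3) require the dedicated one-supremum bounds for the scalarized sparse forms, as in \cite{DiPHLi} and \cite{Li17}, applied to $|\bm{f}|_q$ and $|\bm{g}|_{q'}$. Part (2) is right in spirit, but there too the content lies in repeating the scalar Fefferman--Stein argument of \cite{LPRRR} on the scalarized form; as written, a reader could not reconstruct from your ``universal testing inequality'' where the exponents $(r')^{1/p'}$, $(r')^{1+1/p'}$ and $(r')^{2+1/p'}$ come from.
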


\subsection{Coifman-Fefferman estimates and the $C_p$ condition} \label{Cp}

Calder\'on-Zygmund principle states that for each singular operator there exists
a maximal operator that ``controls'' it. A paradigmatic example
of that principle is the Coifman-Fefferman estimate, namely, for each
$0<p<\infty$ and every $w\in A_{\infty}$ there exists $c=c_{n,w,p}>0$
such that
\begin{equation}\label{eq:CF}
\|T^*f\|_{L^p(w)}\leq c\|Mf\|_{L^p(w)}.
\end{equation}
where $T^{*}$ stands for the maximal Calder\'on-Zygmund operator.  This kind of estimate plays a central role in modern euclidean Harmonic Analysis. In particular we emphasize the key role in the main result in \cite{LOP2}. \eqref{eq:CF}  raises a natural question,
is the $A_\infty$ condition necessary for \eqref{eq:CF} to hold? B. Muckenhoupt \cite{M} provided a negative answer to the question.
 He proved that in the case when $T$ is the Hilbert transform, \eqref{eq:CF} does not imply that $w$ satisfies the $A_{\infty}$ condition. He showed that if \eqref{eq:CF} holds with $p>1$ and $T$ is the Hilbert transform,  then $w\in C_p$, that is if  there exist $c,\delta>0$ such that for every cube $Q$ and every subset $E\subseteq Q$ we have that
\begin{equation}\label{eq:CpIntro}
w(E)\leq c\left(\frac{|E|}{|Q|}\right)^\delta \int_{\mathbb{R}^n}M(\chi_Q)^pw.
\end{equation}
Observe that that $A_\infty\subset C_p$ for every $p>0$. B. Muckenhoupt showed, in dimension one, that if $w\in A_p$, $1<p<\infty$, then $w\chi_{ [0,\infty) } \in C_p$.  In the same paper it was conjectured that the $C_p$ condition is also sufficient for \eqref{eq:CF} to hold, question which remains still open. Not much later, the necessity of the  $C_p$ condition was extended to arbitrary  dimension and a converse result was provided by E.T. Sawyer \cite{S}. More precisely
he proved the following result.

\begin{thm}[{E. Sawyer} \cite{S}] \label{sawyer}
Let $1<p<\infty$ and let  $w\in C_{p+\epsilon}$ for some $\epsilon>0$. Then
\begin{equation}\label{sawyerCp}
\|T^*(f)\|_{L^p(w)} \leq c\|Mf\|_{L^p(w)}
\end{equation}
\end{thm}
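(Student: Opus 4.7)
The strategy, due to Muckenhoupt in dimension one and Sawyer in general dimension, is to couple the classical Coifman-Fefferman good-$\lambda$ inequality with the quantitative $C_{p+\epsilon}$ condition and then integrate against the distribution function of $T^{*}f$.

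First I would record the pointwise good-$\lambda$ inequality: for $0<\gamma<\gamma_{0}$ sufficiently small and some $\eta=\eta(T)>0$ coming from the Dini modulus of $T$,
\[
|\{x\in Q_{j}:\,T^{*}f(x)>2\lambda,\,Mf(x)\le\gamma\lambda\}|\le C_{n}\gamma^{\eta}|Q_{j}|,
\]
where $\{Q_{j}\}$ is the Whitney decomposition of $\Omega_{\lambda}=\{T^{*}f>\lambda\}$. This is classical: split $f=f\chi_{2Q_{j}}+f\chi_{(2Q_{j})^{c}}$, apply the weak-$(1,1)$ bound of $T^{*}$ to the local piece and the kernel Dini smoothness to the tail piece, and use the existence of a ``good'' point in $Q_{j}$ where $T^{*}f\le\lambda$.

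Next I would feed each set $E_{j}=\{T^{*}f>2\lambda,\,Mf\le\gamma\lambda\}\cap Q_{j}\subseteq Q_{j}$ into the $C_{p+\epsilon}$ definition \eqref{eq:CpIntro}, obtaining
\[
w(E_{j})\le c\left(\frac{|E_{j}|}{|Q_{j}|}\right)^{\delta}\int_{\mathbb{R}^{n}}M(\chi_{Q_{j}})^{p+\epsilon}w\le c\,\gamma^{\eta\delta}\int_{\mathbb{R}^{n}}M(\chi_{Q_{j}})^{p+\epsilon}w.
\]
Summing over $j$ and invoking the bounded-overlap estimate $\sum_{j}M(\chi_{Q_{j}})(x)^{p+\epsilon}\le C_{n}M(\chi_{\Omega_{\lambda}})(x)^{p+\epsilon}$, valid because $p+\epsilon>1$ and the $Q_{j}$ are disjoint Whitney cubes, one obtains
\[
w(\{T^{*}f>2\lambda,\,Mf\le\gamma\lambda\})\le c\,\gamma^{\eta\delta}\int_{\mathbb{R}^{n}}M(\chi_{\{T^{*}f>\lambda\}})^{p+\epsilon}w.
\]
Now the layer-cake representation applied to $(T^{*}f)^{p}$, together with the splitting $\{T^{*}f>2\lambda\}\subseteq\{Mf>\gamma\lambda\}\cup\{T^{*}f>2\lambda,\,Mf\le\gamma\lambda\}$, Fubini, and the elementary pointwise bound $M(\chi_{\{g>\lambda\}})(x)\le\min\{1,Mg(x)/\lambda\}$ (applied with $g=T^{*}f$), which since $p+\epsilon>p$ yields
\[
\int_{0}^{\infty}\lambda^{p-1}M(\chi_{\{T^{*}f>\lambda\}})(x)^{p+\epsilon}d\lambda\le C_{p,\epsilon}\,M(T^{*}f)(x)^{p},
\]
leads to
\[
\int(T^{*}f)^{p}w\le C\gamma^{-p}\int(Mf)^{p}w+C'\gamma^{\eta\delta}\int M(T^{*}f)^{p}w.
\]

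The main obstacle is to absorb the last term: since $w$ is not assumed to be in $A_{\infty}$, the $L^{p}(w)$-boundedness of $M$ is unavailable. The remedy is to work a priori with bounded, compactly supported $f$ and a truncated operator $T^{*}_{N}$ (so that $\int(T_{N}^{*}f)^{p}w<\infty$), and couple the displayed inequality with the localized pointwise control
\[
M(T_{N}^{*}f)(x)\le C\bigl(T_{N}^{*}f(x)+Mf(x)\bigr)
\]
in a Kolmogorov/$L^{1/2}$ averaged sense, obtained by splitting $f=f\chi_{2Q}+f\chi_{(2Q)^{c}}$ and combining the weak-$(1,1)$ bound for the local piece with the Dini smoothness of the kernel for the tail. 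Choosing $\gamma$ so small that $C'\gamma^{\eta\delta}<1/2$ then allows the absorption, and letting $N\to\infty$ via Fatou delivers \eqref{sawyerCp}. The delicate point is that the constant in the Kolmogorov/truncation step must be tracked explicitly and must be compatible with the $C_{p+\epsilon}$ exponents $\delta,\eta$, so that the absorption is quantitative and not just qualitative.
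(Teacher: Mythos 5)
Your proposal reconstructs Sawyer's original good-$\lambda$ scheme rather than the paper's route, but it has a genuine gap at its central step. The ``bounded-overlap estimate'' $\sum_{j}M(\chi_{Q_{j}})(x)^{p+\epsilon}\le C_{n}M(\chi_{\Omega_{\lambda}})(x)^{p+\epsilon}$ is false, and disjointness of the Whitney cubes together with $p+\epsilon>1$ does not rescue it. Take $\Omega=\bigcup_{k=1}^{K}B(z_{k},2^{k}/100)$ with $|x-z_{k}|=2^{k}$: each component contributes a central Whitney cube of side $\simeq 2^{k}/100$ at distance $\simeq 2^{k}$ from $x$, so $M(\chi_{Q})(x)\gtrsim c_{n}$ for $K$ distinct cubes and the left-hand side is $\gtrsim K c_{n}^{p+\epsilon}$, while $M(\chi_{\Omega})(x)\simeq c_{n}$ uniformly in $K$; such multi-scale configurations do occur as superlevel sets of $Mf$ or $T^{*}f$ (take $f$ a sum of bumps at scales $2^{k}$). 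What disjointness really gives is only the comparison of $\sum_{j}M(\chi_{Q_{j}})^{q}$ with the Marcinkiewicz-type quantity $M_{k,p,q}$ of the paper, and controlling $\int\sum_{j}M(\chi_{Q_{j}})^{q}w$ uniformly over all levels is precisely the hard core of Sawyer's theorem: in the paper this is Lemma \ref{Lem:Tec1} (the iteration $S(k)\le C2^{kp}w(\Omega_{k-N})+\tfrac12 S(k-N)$) together with Lemma \ref{Lem:Tec2}, and even then the control is by $\|Mf\|_{L^{p,\infty}(w)}^{p}$, which is why the paper derives weak-type conclusions along this path (Theorem \ref{Thm:Cp}). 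Your step collapses this difficulty into an elementary claim that fails.

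There are also secondary problems in the absorption: the layer-cake computation produces $\int M(T^{*}f)^{p}w$ with the full maximal function, and the proposed pointwise control $M(T^{*}_{N}f)\le C(T^{*}_{N}f+Mf)$ is false for $M$ itself; only $M_{\delta}$ with $\delta<1$ obeys such a bound (via Kolmogorov and the Dini tail estimate), so one must rerun the layer-cake step with $\chi_{\{g>\lambda\}}\le(g/\lambda)^{\delta}$ and $\delta(p+\epsilon)>p$ --- repairable, but not as written. Note finally that the paper's own proof of this statement (through the more general Theorem \ref{strong p}) avoids good-$\lambda$ altogether: one writes $|T^{*}f|\le M(|T^{*}f|^{\delta})^{1/\delta}$ with $\delta$ chosen so that $p<p/\delta<p+\epsilon$, applies Yabuta's Lemma \ref{yabuta} in $L^{p/\delta}(w)$, and concludes with the pointwise $(D)$-estimate $M^{\#}_{\delta}(T^{*}f)\lesssim Mf$. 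If you want to keep the Whitney/good-$\lambda$ architecture, you must import Sawyer's key lemma (Lemmas \ref{Lem:Tec1} and \ref{Lem:Tec2} here) in place of the false overlap bound.
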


Also, relying upon Sawyer's techniques, K. Yabuta \cite[Theorem 2]{Y} established the following result extending the classical result of C. Fefferman and E. Stein relating $M$ and the sharp maximal $M^{\#}$ function \cite{FS2,Jo} that we state as a lemma.

\begin{lem}  [{K. Yabuta} \cite{Y}]   \label{yabuta}
Let $1<p<\infty$ and let  $w\in C_{p+\epsilon}$ for some $\epsilon>0$. Then
\begin{equation}\label{eq:Ya}
\|M(f)\|_{L^p(w)} \leq c\|M^{\#}f\|_{L^p(w)}
\end{equation}
\end{lem}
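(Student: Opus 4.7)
The plan is to adapt the Sawyer-type good-$\lambda$ scheme used in the proof of Theorem \ref{sawyer}, with the pair $(T^*,M)$ replaced by $(M,M^{\#})$. Begin from the layer-cake identity
\[
\|Mf\|_{L^p(w)}^p = p\,2^p \int_0^\infty \lambda^{p-1}\,w(\{Mf>2\lambda\})\,d\lambda,
\]
and, for a parameter $0<\gamma<1$ to be chosen later, perform the good-$\lambda$ split
\[
w(\{Mf>2\lambda\}) \leq w(\{M^{\#}f>\gamma\lambda\}) + w(\{Mf>2\lambda,\ M^{\#}f\leq \gamma\lambda\}).
\]
After integrating in $\lambda$, the weak-type term will contribute $C\gamma^{-p}\|M^{\#}f\|_{L^p(w)}^p$; the task is then to absorb the second term into a small multiple of $\|Mf\|_{L^p(w)}^p$, which is where the $C_{p+\epsilon}$ hypothesis enters.

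For each $\lambda>0$ I would decompose the open set $\Omega_\lambda := \{Mf>\lambda\}$ into a pairwise disjoint Whitney family $\{Q_j^\lambda\}_j$, and then establish the local good-$\lambda$ inequality
\[
\bigl|\{x\in Q_j^\lambda:\ Mf(x)>2\lambda,\ M^{\#}f(x)\leq \gamma\lambda\}\bigr| \leq C_n\,\gamma\,|Q_j^\lambda|.
\]
This is the usual Fefferman-Stein localization: bound $Mf$ on $Q_j^\lambda$ by $M(f\chi_{cQ_j^\lambda})+c\lambda$ (the second term coming from the fact that $Q_j^\lambda$ touches $\Omega_\lambda^c$), apply the weak $(1,1)$ of $M$ to the first piece, and use $\frac{1}{|cQ_j^\lambda|}\int_{cQ_j^\lambda}|f-f_{cQ_j^\lambda}|\leq c\,M^{\#}f(x_0)\leq c\gamma\lambda$ for any witness $x_0\in Q_j^\lambda$ with $M^{\#}f(x_0)\leq \gamma\lambda$. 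Applying the $C_{p+\epsilon}$ condition to the set above (as $E\subset Q= Q_j^\lambda$) and summing in $j$ yields
\[
w(\{Mf>2\lambda,\ M^{\#}f\leq \gamma\lambda\}) \leq C\gamma^\delta \sum_j \int_{\mathbb{R}^n} M(\chi_{Q_j^\lambda})^{p+\epsilon}\,w\,dx.
\]

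The final step, and the main obstacle, is to show
\[
\int_0^\infty \lambda^{p-1}\sum_j \int_{\mathbb{R}^n} M(\chi_{Q_j^\lambda})^{p+\epsilon}\,w\,dx\,d\lambda \leq C\int_{\mathbb{R}^n} (Mf)^p\,w\,dx.
\]
Under the $C_{p+\epsilon}$ hypothesis alone there is no $A_p$-type boundedness of $M$ on $L^p(w)$ available, so one cannot reduce this to a weighted boundedness statement for an iterated maximal function. Instead, one argues as in Sawyer: discretize $\lambda=2^k$, replace the Whitney family at scale $2^k$ by a dyadic Calderón-Zygmund family so that $\frac{1}{|Q_j^{2^k}|}\int_{Q_j^{2^k}}|f|\sim 2^k$, and exploit the resulting pointwise comparison $2^k M(\chi_{Q_j^{2^k}})(y)\lesssim Mf(y)$ (valid because any cube realizing the supremum in $M(\chi_{Q_j^{2^k}})(y)$ may be taken to contain $Q_j^{2^k}$). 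Combined with $M(\chi_{Q_j^{2^k}})\leq 1$ and the Whitney bounded-overlap property across scales, this telescopes the double sum into $\int (Mf)^p w$. After a standard truncation ensuring $\|Mf\|_{L^p(w)}<\infty$ a priori, choose $\gamma$ so small that $C\gamma^\delta<1/2$ and absorb to conclude $\|Mf\|_{L^p(w)}\leq C\gamma^{-1}\|M^{\#}f\|_{L^p(w)}$.
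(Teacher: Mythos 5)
You cannot be matched against an in-paper argument here, because the paper does not prove this lemma: it is quoted from Yabuta \cite{Y}, whose proof (as the paper notes) rests on the key lemma of Sawyer \cite{S}. Measured against that known argument, the first part of your scheme is fine: the layer-cake/good-$\lambda$ split, the Whitney decomposition of $\Omega_\lambda=\{Mf>\lambda\}$, the Fefferman--Stein local estimate $|\{x\in Q_j^\lambda: Mf>A\lambda,\ M^{\#}f\le\gamma\lambda\}|\le C\gamma|Q_j^\lambda|$ (with a dimensional constant $A$ rather than $2$), and the application of the $C_{p+\epsilon}$ condition on each Whitney cube are all standard. The proof stands or falls with your final step, namely
\[
\sum_{k}2^{kp}\sum_j\int_{\mathbb{R}^n} M(\chi_{Q_j^{2^k}})^{p+\epsilon}w\,dx\ \lesssim\ \int_{\mathbb{R}^n}(Mf)^p\,w\,dx ,
\]
and this is where there is a genuine gap.

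The telescoping justification you give does not work. The pointwise comparison $2^kM(\chi_{Q_j^{2^k}})(y)\lesssim Mf(y)$ is only valid for cubes on which $\langle|f|\rangle_{Q}\sim 2^k$ (Calder\'on--Zygmund cubes), whereas the $C_{p+\epsilon}$ condition was applied on the Whitney cubes of $\{Mf>\lambda\}$, which contain the bad set; this mismatch is repairable, but even granting it, the comparison only converts $2^{kp}M(\chi_{Q_j^{2^k}})^{p+\epsilon}$ into $(Mf)^p\,M(\chi_{Q_j^{2^k}})^{\epsilon}$, so you would still need $\sum_k\sum_j M(\chi_{Q_j^{2^k}})(y)^{\epsilon}\lesssim 1$. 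That is false: for every $k$ with $2^k<Mf(y)$ the cube of the decomposition containing $y$ already contributes $1$, so the cross-scale sum diverges at every point where $Mf>0$ (and even at a fixed level the sum over $j$ can pile up at boundary points); bounded overlap of Whitney cubes at a single scale says nothing across scales. In fact no inequality of the displayed form is obtainable by such soft means --- it is precisely the hard core of the $C_p$ theory. What is actually available (Sawyer's key lemma; in this paper, Lemmas \ref{Lem:Tec2} and \ref{Lem:Tec1}) is only the bound $2^{kp}\sum_j\int M(\chi_{Q_j^{2^k}})^{p+\epsilon}w\lesssim \|Mf\|_{L^{p,\infty}(w)}^p$ \emph{uniformly in} $k$, proved by an induction on levels of the type $S(k)\le C2^{kp}w(\Omega_{k-N})+\tfrac12 S(k-N)$ together with a nontrivial a priori finiteness argument; and to exploit it one needs an additional decay in the level gap before summing in $k$ (this is the role of the exponential factors in the proof of Theorem \ref{thm:CpSparse}), not a plain summation. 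The a priori finiteness of $\|Mf\|_{L^p(w)}$ that you invoke for the absorption is likewise not automatic under $C_{p+\epsilon}$ alone and must be argued. So the decisive summation step is unproved, and repairing it amounts to importing the Sawyer--Yabuta machinery rather than the one-line telescoping proposed.
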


The proof of this result, although based on a key lemma from \cite{S}, is simpler than the proof of  \eqref{sawyerCp} by Sawyer. In this paper we will present a different approach for proving \eqref{sawyerCp} based on Yabuta's lemma  which is conceptually much simpler and  much flexible. Furthermore, we extend estimate \eqref{sawyerCp} to the full expected range, namely $0<p<\infty$ and to some vector-valued operators.  We remark that in the last case, the classical good-$\lambda$ seems not be applicable. None of the known  methods yield this result.

\begin{rem}
We remark that we do not know how to extend Lemma \ref{yabuta} to the full range  $0<p<\infty$ as in Theorems \ref{strong p} and \ref{strong pmultilinear} below.  However, this lemma is key for proving these theorems in the full range.
\end{rem}

We remark that more recently, A. Lerner \cite{L3} provided another proof of Yabuta's result \eqref{eq:Ya}  improving it slightly. He established, using a different argument, that if a weight $w$ satisfies the following estimate
\[w(E)\leq\left(\frac{|E|}{|Q|}\right)^\delta \int_{\mathbb{R}^n}\varphi_p\left(M(\chi_Q)\right)^pw,\]
where
\[\int_{0}^1\varphi_p(t)\frac{dt}{t^{p+1}}<\infty\]
then \eqref{eq:Ya} holds.

Let us now turn attention to our contribution.
We say that an operator $T$ satisfies the
condition (D) if  there are  some constants, $\delta \in (0,1)$ and  $c>0$ such
that for all $f$,
\[
\tag{$D$}
M_\delta^\#(Tf)(x) \leq c Mf(x).
\]

Some examples of operators satisfying condition ($D$) are:
\begin{itemize}
\item
{\bf Calder\'on-Zygmund operators} These operators are
generalization of the regular singular integral operators as
defined above. This was observed in
\cite{AP}.

\item {\bf Weakly strongly singular integral operators}
These operators were considered by C. Fefferman in \cite{F}.

\item {\bf Pseudo-differential operators}. To be more precise, the
pseudo-differential operators satisfying condition (D) are those that
belong to the H\"ormander class (\cite{Ho}).

\item {\bf  Oscillatory integral operators}
These operators were by introduced by Phong and Stein \cite{PS}.
\end{itemize}
The proof that last three cases satisfy condition $(D)$ can be found in \cite{AHP}.

It is also possible to consider a suitable variant of condition $(D)$ which will allow us to treat some vector-valued operators. Given an operator $T$ we say that it satisfies the $(D_q)$ condition with $1<q<\infty$ if for every $0<\delta<1$ there exists a finite constant
$c=C_{\delta,q,T}>$ such that

\begin{equation}\tag{$D_q$}
M^\#_\delta \Big( \overline{T}_{q} \bm{f}\Big )(x)
\leq
c\, M( |\bm{f}|_q )(x).
\end{equation}

Two examples of operators satisfying the $(D_q)$ condition are the Hardy-Littlewood maximal operator  (\cite{CGMP} Proposition 4.4)
and any  Calder\'on-Zygmund operator (\cite{PT} Lemma 3.1).

Next theorems extend and improve  the main result from  \cite{S} since we are able to provide some answers for the range $0<p\leq 1$ and to consider vector-valued extensions.
It is not clear that the method of \cite{S} can be extended to cover both situations. Furthermore, we can extend this result to the multilinear context and other operators like fractional integrals.

\begin{thm} \label{strong p} Let $T$ be an operator satisfying the (D) condition. Let $0< p<\infty$ and  let  $w \in C_{\max\{1,p\}+\epsilon}$
for some $\epsilon>0$. Then
\begin{equation}\label{extended strong Cp}
\|Tf\|_{L^p(w)} \leq c\|Mf\|_{L^p(w)}.
\end{equation}
Additionally, if $1<q<\infty$ and $T$ satisfies the $(D_q)$ condition then
\begin{equation*}
\|\overline{T}_q\bm{f}\|_{L^p(w)} \leq c\|M(|\bm{f}|_q)\|_{L^p(w)}.
\end{equation*}
\end{thm}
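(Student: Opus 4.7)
The plan is to prove \eqref{extended strong Cp} through the chain
\[
\|Tf\|_{L^p(w)} \;\leq\; \|M_\delta(Tf)\|_{L^p(w)} \;\lesssim\; \|M^\#_\delta(Tf)\|_{L^p(w)} \;\lesssim\; \|Mf\|_{L^p(w)},
\]
where $M_\delta f:=(M|f|^\delta)^{1/\delta}$. The first inequality is Lebesgue differentiation, since $|Tf|^\delta\leq M(|Tf|^\delta)$ a.e.; the second is Yabuta's Lemma \ref{yabuta} applied at exponent $p/\delta$ to the function $g=|Tf|^\delta$, after raising the resulting inequality $\|Mg\|_{L^{p/\delta}(w)}\lesssim \|M^\#g\|_{L^{p/\delta}(w)}$ to the power $1/\delta$; and the third is exactly condition (D). So the entire argument is a short Fefferman-Stein style chain, with Yabuta's Lemma replacing the usual $L^p(w)$ boundedness of $M^\#$.

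The core issue is then the choice of $\delta\in(0,1)$. To invoke Yabuta's Lemma we need $p/\delta>1$ together with $w\in C_{p/\delta+\eta}$ for some $\eta>0$; given the hypothesis $w\in C_{\max\{1,p\}+\epsilon}$, this reduces to $p/\delta<\max\{1,p\}+\epsilon$. When $p\geq 1$ this is the non-empty range $\delta\in(p/(p+\epsilon),1)$; when $0<p<1$ it is the non-empty range $\delta\in(p/(1+\epsilon),p)$ (note that here $p/\delta$ is forced to be strictly greater than $1$, which is the reason why the $C_p$ hypothesis must jump up to $C_{1+\epsilon}$ rather than $C_{p+\epsilon}$, explaining the $\max\{1,p\}$ in the statement). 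For all the listed examples (Calder\'on--Zygmund, weakly strongly singular, pseudodifferential, and oscillatory integral operators) condition $(D)$ holds throughout an open range of $\delta$, so a $\delta$ meeting both requirements can be selected. For the vector-valued extension, the same three-step chain applies verbatim with $Tf$ replaced by $\overline{T}_q\bm{f}$, $Mf$ by $M(|\bm{f}|_q)$, and $(D)$ by $(D_q)$; here the choice of $\delta$ is immediate because $(D_q)$ is already formulated to hold for every $\delta\in(0,1)$.

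The main technical obstacle is that applying Yabuta's Lemma presupposes the a priori finiteness of $\|M_\delta(Tf)\|_{L^p(w)}$, which is essentially the quantity we wish to control. The remedy I would use is to first run the argument with $T$ replaced by a truncated variant, e.g.\ $\min\{|Tf|,N\}\chi_{B(0,N)}$, for which all quantities in the chain are trivially finite and for which condition $(D)$ is still available, and then let $N\to\infty$ by monotone convergence; the constant coming from Yabuta's Lemma is independent of the truncation. Apart from this qualitative step and the bookkeeping on the range of $\delta$, the argument is essentially a two-line application of Yabuta's Lemma combined with $(D)$ (or $(D_q)$), which is precisely the gain of flexibility that the authors advertised over the original good-$\lambda$ based approach of Sawyer.
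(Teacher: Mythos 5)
Your argument is correct and is essentially the paper's own proof: Lebesgue differentiation, the choice of $\delta$ with $\max\{1,p\}<p/\delta<\max\{1,p\}+\epsilon$, Yabuta's Lemma~\ref{yabuta} applied at exponent $p/\delta$ to $|Tf|^{\delta}$, and then condition $(D)$ (resp.\ $(D_q)$), exactly as in Section~\ref{Sec:ProofThmCp}. The only ingredient you add beyond the paper --- the a priori finiteness/truncation step, which the paper omits --- is reasonable in spirit, but note that the sharp spatial cutoff $\chi_{B(0,N)}$ can create large sharp-function values near $\partial B(0,N)$ that are not controlled by $M^{\#}_{\delta}(Tf)$, so it is safer to truncate only vertically, using that $M^{\#}_{\delta}(\min\{|Tf|,N\})\leq c\,M^{\#}_{\delta}(Tf)\leq c\,Mf$, or to run the limiting argument inside the good-$\lambda$ proof of Yabuta's lemma rather than claiming $(D)$ for the spatially truncated function.
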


\begin{rem}
We don't know how to extend \eqref{extended strong Cp} to rough singular integral operators or to the Bochner--Riesz multiplier at the critical index as considered in Section \ref{rough}. Indeed, it is not known whether any of these operators satisfies condition  ($D$) above.
\end{rem}

\begin{rem}  \label{fractional case }
Following a similar strategy as in the proof of \eqref{extended strong Cp} the following result holds. Let $I_{{\alpha}}$, $0<\alpha<n$, be a  fractional operator and let $1<p<\infty$. Let  $w \in C_{p+\epsilon}$
for some $\epsilon>0$. Then
\begin{equation}\label{FractionalCp}
\|I_{\alpha}f\|_{L^p(w)} \leq c\|M_{\alpha}f\|_{L^p(w)}.
\end{equation}
\end{rem}

It is possible to extend these kind of results to the multilinear setting as follows.  Following \cite{GT1}, we say that $T$ is an $m$-linear
Calder\'on-Zygmund operator if, for some $1\le q_j<\infty$, it
extends to a bounded multilinear operator from
$L^{q_1}\times\dots\times L^{q_m}$ to $L^q$, where
$\frac1q=\frac{1}{q_1}+\dots+\frac{1}{q_m}$, and if
 there exists a function
$K$, defined
 off the diagonal $x=y_1=\dots=y_m$ in $({\mathbb
R}^n)^{m+1}$,  satisfying
$$T(f_1,\dots,f_m)(x) =\int\limits_{({\mathbb R}^{n})^m}
K(x,y_1,\dots,y_m)f_1(y_1)\dots f_m(y_m)\,dy_1\dots dy_m
$$
for all $x \notin \cap_{j=1}^{m } \supp f_j$; and also satisfies similar size and regularity conditions as that in Section \ref{subsec:Notation}.

It was shown in \cite{LOPTT}, following the Calder\'on-Zygmund principle mentioned above, that the right maximal operator
that ``controls'' these $m$-linear
Calder\'on-Zygmund operators is defined by

$$\mathcal M(\vec f\,)(x)=\sup_{Q\ni
x}\prod_{i=1}^m\frac{1}{|Q|}\int_Q|f_i(y_i)|dy_i,$$
where  $\vec f=(f_1,\dots,f_m)$ and where the supremum is taken over all cubes $Q$ containing $x$. In fact, these  $m$-linear Calder\'on-Zygmund operators satisfy a version of the $(D)$ condition mentioned above as can be found in \cite[Theorem 3.2]{LOPTT}.

\begin{lem}
Let $T$ be an m-linear Calder\'on-Zygmund operator and $\delta \in (0,\frac1m)$. Then, there is a constant $c$ such that
\begin{equation}\label{pointw}
M_{\delta}^{\#}(T(\vec f\,))(x)\le c\, {\mathcal M}(\vec f\,)(x).
\end{equation}
\end{lem}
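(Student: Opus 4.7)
The plan is to mimic the proof of the classical scalar pointwise estimate $M_\delta^\#(Tf)\le c\,Mf$ (valid for any $0<\delta<1$), carrying it out in the $m$-linear setting. The restriction $\delta<1/m$ appears precisely because the appropriate endpoint for an $m$-linear Calder\'on--Zygmund operator is $L^1\times\cdots\times L^1\to L^{1/m,\infty}$.

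Fix $x\in\mathbb{R}^n$ and a cube $Q\ni x$; write $Q^*=3Q$. I would decompose each argument as $f_j=f_j^0+f_j^\infty$ with $f_j^0=f_j\chi_{Q^*}$ and $f_j^\infty=f_j-f_j^0$. By multilinearity,
\[
T(\vec f\,)=\sum_{\alpha\in\{0,\infty\}^m}T(f_1^{\alpha_1},\dots,f_m^{\alpha_m})
=T(\vec f^{\,0})+\sum_{\alpha\ne 0}T(\vec f^{\,\alpha}).
\]
Now choose the constant
\[
c_Q=\sum_{\alpha\ne 0}T(\vec f^{\,\alpha})(x_0)
\]
for some fixed $x_0\in Q$, and estimate
\[
\Big(\tfrac{1}{|Q|}\!\int_Q ||T(\vec f\,)|^\delta-|c_Q|^\delta|\Big)^{1/\delta}
\le C\Big(\tfrac{1}{|Q|}\!\int_Q |T(\vec f^{\,0})|^\delta\Big)^{1/\delta}
+C\sum_{\alpha\ne 0}\Big(\tfrac{1}{|Q|}\!\int_Q |T(\vec f^{\,\alpha})-T(\vec f^{\,\alpha})(x_0)|^\delta\Big)^{1/\delta},
\]
using that $||a|^\delta-|b|^\delta|\le|a-b|^\delta$ since $\delta<1$.

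For the local term $T(\vec f^{\,0})$, I would invoke the $m$-linear endpoint $T:L^1\times\cdots\times L^1\to L^{1/m,\infty}$ together with Kolmogorov's inequality (valid because $\delta<1/m$):
\[
\Big(\tfrac{1}{|Q|}\!\int_Q |T(\vec f^{\,0})|^\delta\Big)^{1/\delta}
\lesssim \tfrac{1}{|Q|}\|T(\vec f^{\,0})\|_{L^{1/m,\infty}(Q,dx/|Q|)}
\lesssim \prod_{j=1}^m \tfrac{1}{|Q|}\!\int_{Q^*}|f_j|
\lesssim \mathcal{M}(\vec f\,)(x).
\]
For each mixed tuple $\alpha\ne 0$, at least one $f_j^{\alpha_j}$ is supported outside $Q^*$, so for any $y\in Q$ one can write $T(\vec f^{\,\alpha})(y)-T(\vec f^{\,\alpha})(x_0)$ as an integral over $\big(\bigcup_k Q_k^*\big)$-type annuli and apply the size/H\"older regularity of the $m$-linear kernel $K$. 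A standard geometric-series argument in the annular decomposition $(2^{k+1}Q\setminus 2^kQ)$ gives
\[
|T(\vec f^{\,\alpha})(y)-T(\vec f^{\,\alpha})(x_0)|
\lesssim \sum_{k\ge 1} 2^{-k\varepsilon}\prod_{j=1}^m\tfrac{1}{|2^kQ|}\!\int_{2^kQ}|f_j|
\lesssim \mathcal{M}(\vec f\,)(x),
\]
uniformly in $y\in Q$, which after taking the $L^\delta(Q,dx/|Q|)$ norm gives the same bound. Summing over the finitely many $\alpha\ne 0$ and taking the supremum over cubes $Q\ni x$ yields \eqref{pointw}.

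The main technical obstacle is the kernel estimate for the mixed terms: one must show that each of the $2^m-1$ configurations in which some arguments are ``local'' and others are ``global'' still telescopes nicely in the annular decomposition against the product $\prod_j \frac{1}{|2^kQ|}\!\int_{2^kQ}|f_j|$. This is essentially bookkeeping once one has the multilinear H\"older-type regularity of $K$, but it is the step where the specific hypotheses on the $m$-linear Calder\'on--Zygmund kernel (encoded in the reference to \cite{LOPTT}) are used in full.
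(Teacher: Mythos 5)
Your argument is exactly the standard proof of this estimate: the paper itself does not prove the lemma but cites it from \cite[Theorem 3.2]{LOPTT}, and the proof there proceeds precisely as you propose, splitting $f_j=f_j^{0}+f_j^{\infty}$, treating the all-local term by Kolmogorov's inequality together with the endpoint bound $L^1\times\cdots\times L^1\to L^{1/m,\infty}$ (which is where $\delta<1/m$ enters), and handling the $2^m-1$ mixed terms by subtracting the value at a fixed point and using the multilinear kernel regularity over dyadic annuli. Apart from a harmless normalization slip in the Kolmogorov step (the prefactor $1/|Q|$ should not appear once the weak norm is taken with respect to $dx/|Q|$), the proposal is correct and coincides with the cited proof.
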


This estimate is very sharp since it is false in the case $\delta=\frac1m$. Also this estimate is quite useful since one can deduce the following multilinear version of Coifman-Fefferman estimate \eqref{eq:CF},
\begin{equation*}
\| T(\vec f) \|_{L^p(w)} \leq c\, \|\mM(\vec f)\|_{L^p(w)} \qquad 0<p<\infty, \, w\in A_{\infty},
\end{equation*}
which can be found in \cite{LOPTT}  leading to the characterization of the class of (multilinear) weights for which any mulitilinear Calder\'on-Zygmund operators are bounded.

Relying upon the pointwise estimate \eqref{pointw} it is possible to establish the following extension of \eqref{extended strong Cp}.
\begin{thm} \label{strong pmultilinear}

Let $T$ be an m-linear Calder\'on-Zygmund operator, and let $0< p<\infty$. Also let  $w \in C_{m\max\{1,p\}+\epsilon}$
for some $\epsilon>0$. Then
\begin{equation}\label{strongcf}
\| T(\vec f) \|_{L^p(w)} \leq c\, \|\mM(\vec f)\|_{L^p(w)}.
\end{equation}
\end{thm}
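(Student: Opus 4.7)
The plan is to follow the same blueprint as in the scalar case (Theorem \ref{strong p}): dominate $\|T(\vec f)\|_{L^p(w)}$ by $\|M^{\#}_{\delta}(T(\vec f))\|_{L^p(w)}$ through Yabuta's Lemma \ref{yabuta}, and then apply the multilinear sharp-function pointwise estimate \eqref{pointw} to conclude. The only real novelty in the $m$-linear setting is that \eqref{pointw} is restricted to $\delta<\tfrac{1}{m}$, which forces the $C_p$ threshold in the hypothesis to be $m\max\{1,p\}$ rather than $\max\{1,p\}$.

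\textbf{Choice of parameters.} I would first pick $\delta\in(0,\tfrac{1}{m})$ and $\epsilon'>0$ so that $p/\delta>1$ and $p/\delta+\epsilon'\le m\max\{1,p\}+\epsilon$. Taking $\epsilon'=\epsilon/2$ together with $\delta=p/(mp+\epsilon/2)$ for $p\ge 1$, and $\delta=p/(m+\epsilon/2)$ for $p<1$, a direct check shows $\delta<\tfrac{1}{m}$, $\delta<p$, and $p/\delta+\epsilon'=m\max\{1,p\}+\epsilon$. By nesting of the $C_s$ classes (larger $s$ is stronger since $M(\chi_Q)\le 1$), the hypothesis $w\in C_{m\max\{1,p\}+\epsilon}$ then implies $w\in C_{p/\delta+\epsilon'}$.

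\textbf{Main chain of inequalities.} After a standard truncation argument (taking the $f_i$ bounded and compactly supported, and then passing the uniform bound to the limit), all quantities below may be assumed finite. Writing $g=T(\vec f)$, Lebesgue differentiation gives $|g|\le M_\delta g$ a.e., so
$$\|g\|_{L^p(w)}\le\|M_\delta g\|_{L^p(w)}=\bigl\|M(|g|^\delta)\bigr\|_{L^{p/\delta}(w)}^{1/\delta}.$$
Since $p/\delta>1$ and $w\in C_{p/\delta+\epsilon'}$, Yabuta's Lemma \ref{yabuta} applied to $|g|^\delta$ at exponent $p/\delta$ gives $\|M(|g|^\delta)\|_{L^{p/\delta}(w)}\le c\,\|M^{\#}(|g|^\delta)\|_{L^{p/\delta}(w)}$, hence $\|g\|_{L^p(w)}\le c\,\|M^{\#}_{\delta}g\|_{L^p(w)}$. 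Finally, since $\delta<\tfrac{1}{m}$, the multilinear pointwise bound \eqref{pointw} yields
$$\|T(\vec f)\|_{L^p(w)}\le c\,\|\mathcal{M}(\vec f)\|_{L^p(w)},$$
which is \eqref{strongcf}.

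\textbf{Main obstacle.} The delicate point is reconciling the two constraints $p/\delta>1$ (from Yabuta) and $\delta<\tfrac{1}{m}$ (from the multilinear sharp-function estimate), while keeping $p/\delta$ below the $C_s$ threshold $m\max\{1,p\}+\epsilon$ allowed by the hypothesis. The specific form $w\in C_{m\max\{1,p\}+\epsilon}$ is precisely what makes this three-way constraint feasible; the range $0<p<1$ is covered because the extra requirement $\delta<p$ is compatible with $\delta<\tfrac{1}{m}$ once $p/\delta$ is forced only below $m+\epsilon$. The truncation needed to justify a priori finiteness before invoking Lemma \ref{yabuta} is routine and follows standard lines.
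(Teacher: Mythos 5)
Your proposal is correct and takes essentially the same route as the paper: bound $|T(\vec f)|$ by $M_{\delta}(T(\vec f))$ via Lebesgue differentiation, pick $\delta\in(0,\tfrac1m)$ so that $p/\delta$ falls strictly between $m\max\{1,p\}$ and $m\max\{1,p\}+\epsilon$, apply Lemma \ref{yabuta} at the exponent $p/\delta$, and conclude with the multilinear pointwise estimate \eqref{pointw}. The only difference is cosmetic: you exhibit explicit values of $\delta$ (and mention the routine a priori truncation), whereas the paper simply asserts the existence of such a $\delta$.
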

We emphasize that the method of Sawyer in \cite{S} does not produce the preceding result even for the case $p>1$.

For commutators, the following estimates are known (see \cite{P95,PT}). For every $0<\varepsilon<\delta<1$,
\begin{eqnarray}
M^{\#}_{\varepsilon}([b,T]f)(x)&\le& c_{\delta,T} \|b\|_{BMO} \left( M_{\delta}(Tf)   +M^2(f)(x) \right), \label{Pointwise}\\
M^{\#}_{\varepsilon}(\overline{[b,T]}_q\bm{f})(x)&\le& c_{\delta,T} \|b\|_{BMO} \left( M_{\delta}(\overline{T}_q\bm{f})   +M^2(|\bm{f}|_q)(x) \right), \qquad  1<q<\infty , \label{Pointwiseq}
\end{eqnarray}
where $T$ is a Calder\'on-Zygmund operator satisfying a log-Dini condition. Relying upon them we obtain the following result.
\begin{thm} \label{commutator} Let $T$ be an $\omega$-Calder\'on-Zygmund operator with $\omega$ satisfying a log-Dini condition and let $b\in \BMO$.  Let  $0<p<\infty$ and  let  $w \in C_{\max\{1,p\}+\epsilon}$
for some $\epsilon>0$. Then there is a constant $c$ depending on the  $C_{\max\{1,p\}+\epsilon}$ condition such that
\begin{equation*}
\|[b,T]f\|_{L^p(w)} \leq c\,\|b\|_{\BMO}\, \|M^2f\|_{L^p(w)}.
\end{equation*}
Additionally, if $1<q<\infty$ then Then there is a constant $c$ depending on the  $C_{\max\{1,p\}+\epsilon}$ condition such that
\begin{equation*}
\|\overline{[b,T]}_q\bm{f}\|_{L^p(w)} \leq c\,\|b\|_{\BMO}\, \|M^2(|\bm{f}|_q)\|_{L^p(w)}.
\end{equation*}
\end{thm}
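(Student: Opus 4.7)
The plan is to run the same Fefferman--Stein/Yabuta strategy that underlies the proof of Theorem \ref{strong p}, replacing the pointwise bound coming from condition $(D)$ by the pointwise commutator estimates \eqref{Pointwise} and \eqref{Pointwiseq}. In particular the theorem will be reduced to two ingredients: (i) a Yabuta-type inequality of the form $\|M_\varepsilon g\|_{L^p(w)}\le c\|M^\#_\varepsilon g\|_{L^p(w)}$ for a suitable $\varepsilon\in(0,1)$, and (ii) the bound $\|M_\delta(Tf)\|_{L^p(w)}\lesssim \|Mf\|_{L^p(w)}$, which is in effect the intermediate step of Theorem \ref{strong p} for the operator $T$.

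First I fix $0<\varepsilon<\delta<1$ so that the Yabuta-type inequality is available. Writing $M_\varepsilon g=M(|g|^\varepsilon)^{1/\varepsilon}$ and $M^\#_\varepsilon g=M^\#(|g|^\varepsilon)^{1/\varepsilon}$, the desired inequality reduces to the classical Lemma \ref{yabuta} applied to $|g|^\varepsilon$ in $L^{p/\varepsilon}(w)$, provided $p/\varepsilon>1$ and $w\in C_{p/\varepsilon+\epsilon''}$ for some $\epsilon''>0$. When $p>1$ this is achieved by taking $\varepsilon$ close enough to $1$; when $0<p\le 1$ we take $\varepsilon\in(p/(1+\epsilon),p)$. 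In both cases the hypothesis $w\in C_{\max\{1,p\}+\epsilon}$ supplies the required $C_{p/\varepsilon+\epsilon''}$ condition, and then $\delta$ is picked freely in $(\varepsilon,1)$.

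Since $|[b,T]f|\le M_\varepsilon([b,T]f)$ a.e., combining the Yabuta-type inequality with \eqref{Pointwise} yields
\[
\|[b,T]f\|_{L^p(w)}\le c\|M^\#_\varepsilon([b,T]f)\|_{L^p(w)}\le c\|b\|_{\BMO}\bigl(\|M_\delta(Tf)\|_{L^p(w)}+\|M^2f\|_{L^p(w)}\bigr).
\]
Applying the same Yabuta-type inequality once more with $\delta$ in place of $\varepsilon$, followed by condition $(D)$ for $T$, gives
\[
\|M_\delta(Tf)\|_{L^p(w)}\le c\|M^\#_\delta(Tf)\|_{L^p(w)}\le c\|Mf\|_{L^p(w)}\le c\|M^2f\|_{L^p(w)},
\]
which closes the scalar case. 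The vector-valued statement follows along identical lines, using \eqref{Pointwiseq} in place of \eqref{Pointwise} and the $(D_q)$ condition (available for $\omega$-Calder\'on--Zygmund operators) in place of $(D)$, to bound $\|M^\#_\delta(\overline T_q\bm f)\|_{L^p(w)}\le c\|M(|\bm f|_q)\|_{L^p(w)}$.

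The main obstacle is the range $0<p\le 1$: Lemma \ref{yabuta} as stated requires $p>1$, so one must go through the device $\|M_\varepsilon g\|_{L^p(w)}^\varepsilon=\|M(|g|^\varepsilon)\|_{L^{p/\varepsilon}(w)}$ described above. This demands a tight simultaneous choice of $\varepsilon$ (small enough to have $\varepsilon<\delta<1$ so that \eqref{Pointwise}--\eqref{Pointwiseq} apply, large enough so that $p/\varepsilon+\epsilon''\le\max\{1,p\}+\epsilon$) which is precisely where the hypothesis $w\in C_{\max\{1,p\}+\epsilon}$ is consumed. As usual, to make all norm manipulations rigorous one should first argue with a truncation of $[b,T]f$ that is a priori in $L^p(w)$, and then pass to the limit by Fatou's lemma.
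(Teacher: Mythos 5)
Your argument is, in substance, the paper's own proof of Theorem \ref{commutator}: dominate $|[b,T]f|$ by $M_\varepsilon([b,T]f)$ via Lebesgue differentiation, apply Lemma \ref{yabuta} in $L^{p/\varepsilon}(w)$ with $\max\{1,p\}<p/\varepsilon<\max\{1,p\}+\epsilon$, insert the pointwise bounds \eqref{Pointwise} (resp.\ \eqref{Pointwiseq}), and dispose of the term $\|M_{\delta}(Tf)\|_{L^p(w)}$ by a second application of Yabuta's lemma together with condition $(D)$ (resp.\ $(D_q)$), exactly as in the proof of Theorem \ref{strong p}; the paper writes this with a pair $\delta<\delta_1$ playing the roles of your $\varepsilon<\delta$.

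One step is stated too loosely and, as written, would fail precisely in the new range $0<p\le 1$: the larger exponent $\delta$ cannot be ``picked freely in $(\varepsilon,1)$''. The second application of Lemma \ref{yabuta} takes place in $L^{p/\delta}(w)$, so besides $w\in C_{p/\delta+\epsilon''}$ you also need $p/\delta>1$, i.e.\ $\delta<p$ when $p\le 1$; if $\delta\ge p$ then $p/\delta\le 1$ and no Yabuta-type inequality is available (the paper explicitly remarks that it does not know how to extend Lemma \ref{yabuta} below exponent $1$). The $C_q$ requirement is automatic from $\delta>\varepsilon$, since $M(\chi_Q)\le 1$ makes the classes $C_q$ decrease as $q$ grows, but the constraint $p/\delta>1$ must be imposed; this is exactly how the paper pins down both exponents at once, demanding $\max\{1,p\}<p/\delta_1<p/\delta<\max\{1,p\}+\epsilon$. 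Since your $\varepsilon$ already satisfies $\varepsilon<p$ when $p\le1$, choosing $\delta\in(\varepsilon,p)$ repairs the argument immediately. A final cosmetic point: condition $(D)$ is stated for ``some'' $\delta\in(0,1)$, while you (and the paper) use it at the particular exponent selected; this is harmless because for an $\omega$-Calder\'on--Zygmund operator with (log-)Dini kernel one has $M^{\#}_{\delta}(Tf)\le c_{\delta}Mf$ for every $\delta\in(0,1)$ (cf.\ Proposition \ref{Prop:MdeltaSharpT}), and $(D_q)$ is stated for all $\delta$ from the outset.
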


\begin{rem} 
We remark that a similar estimate can be derived for the general k-th iterated commutator: let $0<p<\infty$ and  let  $w \in C_{\max\{1,p\}+\epsilon}$ for some $\epsilon>0$. Then  there is a constant $c$ depending on the $C_{\max\{1,p\}+\epsilon}$ condition such that
$$
\|T_b^{k}f\|_{L^p(w)} \leq c\,\|b\|^k_{\BMO}\, \|M^{k+1}f\|_{L^p(w)}.
$$
\end{rem}

In the following results we observe that rephrasing Sawyer's method \cite{S} in combination with sparse domination we will be able to obtain  estimates like \eqref{sawyerCp} where the strong norm $\|\cdot\|_{L^p(w)}$ is replaced by the weak norm $\|\cdot\|_{L^{p,\infty}(w)}$.   However, we can only get the result only within the range $1<p<\infty$.

\begin{thm}\label{Thm:Cp}Let $T$ be an $\omega$-Calder\'on-Zygmund operator with $\omega$ satisfying the Dini condition. Let $1< p<\infty$ and let  $w\in C_{p+\epsilon}$ for some $\epsilon>0$. Then there exists $c=c_{T,p,\epsilon,w}$ such that
\begin{equation*}
\| T f\|_{L^{p,\infty}(w)}\leq c\|Mf\|_{L^{p,\infty}(w)}.
\end{equation*}
If aditionally $1<q<\infty$ then
\begin{equation*}
\|\overline{T}_q \bm{f}\|_{L^{p,\infty}(w)} \leq c\|M(|\bm{f}|_q)\|_{L^{p,\infty}(w)}.
\end{equation*}
\end{thm}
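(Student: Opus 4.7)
The plan is to combine the pointwise sparse domination already established in this paper with a weak-norm adaptation of Sawyer's original argument. By Theorem \ref{ThmSparseTq},
$$|Tf(x)|\lesssim C_T\sum_{k=1}^{3^{n}}\mathcal{A}_{\mathcal{S}_{k}}|f|(x),\qquad |\overline{T}_{q}\bm{f}(x)|\lesssim C_T\sum_{k=1}^{3^{n}}\mathcal{A}_{\mathcal{S}_{k}}|\bm{f}|_{q}(x),$$
so both the scalar and the vector-valued statements reduce to proving $\|\mathcal{A}_{\mathcal{S}}h\|_{L^{p,\infty}(w)}\leq c\,\|Mh\|_{L^{p,\infty}(w)}$ for an arbitrary sparse family $\mathcal{S}$, a nonnegative measurable $h$, and $w\in C_{p+\epsilon}$ with $1<p<\infty$, after the standard reduction to compactly supported $h$ and removal of that restriction by monotone convergence.

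For this sparse estimate, fix $\lambda>0$, set $E_{\lambda}=\{\mathcal{A}_{\mathcal{S}}h>\lambda\}$, and let $\{P_{j}\}$ be the maximal $Q\in\mathcal{S}$ with $\langle h\rangle_{Q}>c_{0}\lambda$, where $c_0$ is a small constant to be chosen. Since $\bigcup_j P_j\subset\{Mh>c_0\lambda\}$, one immediately obtains $w(\bigcup_j P_j)\leq c_0^{-p}\lambda^{-p}\|Mh\|_{L^{p,\infty}(w)}^p$, which handles the contribution of $E_{\lambda}\cap\bigcup_j P_j$. On the remainder $F=E_{\lambda}\setminus\bigcup_j P_j$, every $Q\in\mathcal{S}$ meeting $F$ satisfies $\langle h\rangle_{Q}\leq c_{0}\lambda$, and a localized Chebyshev estimate coming from the $L^p$-boundedness of the sparse operator (for which $p>1$ is essential) yields $|F\cap P_j|\leq c\,c_0^{\eta}|P_j|$ for some $\eta>0$. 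Applying the $C_{p+\epsilon}$ condition inside each $P_j$ gives
$$w(F\cap P_{j})\leq c\left(\frac{|F\cap P_{j}|}{|P_{j}|}\right)^{\delta}\int_{\mathbb{R}^{n}}M(\chi_{P_{j}})^{p+\epsilon}w\,dx\leq c\,c_{0}^{\eta\delta}\int_{\mathbb{R}^{n}}M(\chi_{P_{j}})^{p+\epsilon}w\,dx,$$
and summing in $j$, using sparseness of $\{P_j\}$ to pass to $\int M(\chi_{\Omega})^{p+\epsilon}w\,dx$ with $\Omega=\bigcup_j P_j$, produces the overall bound $c\,c_0^{\eta\delta}\int M(\chi_{\Omega})^{p+\epsilon}w\,dx$.

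The final step converts this strong-type integral into $\|Mh\|_{L^{p,\infty}(w)}^p\lambda^{-p}$ through a layer-cake splitting at height $1$: the upper range collapses to $w(\Omega)$ and is controlled by the a priori bound of the previous paragraph, while the lower range is absorbed by the weak-$L^p$ norm of $Mh$ through the $\epsilon$-room. Choosing $c_0$ sufficiently small to absorb the constants closes the argument. The principal technical obstacle is precisely this last conversion, in which the strong-type $L^{p+\epsilon}$ quantity delivered by the $C_{p+\epsilon}$ hypothesis must be matched against a weak-$L^p$ norm; the $\epsilon$-room is used simultaneously to provide geometric decay in the dyadic layering of the stopping cubes and to absorb the fractional-power loss $c_0^{\eta\delta}$. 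The restriction $p>1$ in the theorem enters exactly in the local Chebyshev estimate on $F$, which has no useful analogue below the $L^1$ endpoint, accounting for the stated range.
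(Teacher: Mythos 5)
Your reduction via Theorem \ref{ThmSparseTq} to the single estimate $\|\mathcal{A}_{\mathcal{S}}h\|_{L^{p,\infty}(w)}\leq c\|Mh\|_{L^{p,\infty}(w)}$ is exactly how the paper proceeds (this is Theorem \ref{thm:CpSparse}), and the first half of your stopping-time argument is fine: the maximal cubes $P_j$ with $\langle h\rangle_{P_j}>c_0\lambda$ sit inside $\{Mh>c_0\lambda\}$, so their contribution is trivially $\lesssim (c_0\lambda)^{-p}\|Mh\|_{L^{p,\infty}(w)}^{p}$. But from there the proposal has a genuine gap, signalled already by a definitional inconsistency: you set $F=E_\lambda\setminus\bigcup_j P_j$ and then estimate $|F\cap P_j|$ and $w(F\cap P_j)$, which are empty sets, so the localization on which you intend to apply the $C_{p+\epsilon}$ condition is never actually specified. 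More seriously, the step you describe as "the final conversion" is not a routine layer-cake computation but is precisely the heart of the matter. After applying $C_{p+\epsilon}$ you are left with $\sum_j\int M(\chi_{P_j})^{p+\epsilon}w$, i.e.\ essentially $\int M(\chi_\Omega)^{p+\epsilon}w$ with $\Omega\subset\{Mh>c_0\lambda\}$, and this must be bounded by $\lambda^{-p}\|Mh\|_{L^{p,\infty}(w)}^{p}$. Splitting at height $1$ does not do this: on the lower range the only pointwise information available is $\{M\chi_\Omega>t\}\subset\{M(Mh)>c_0t\lambda\}$, which produces the weak norm of $M^2h$, not of $Mh$; the $\epsilon$-room in the exponent does not repair this. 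Likewise, "choosing $c_0$ small to absorb the constants" has nothing to absorb into, since your argument is a direct estimate at each $\lambda$ rather than a good-$\lambda$ recursion, and a genuine recursion would in addition require an a priori finiteness of $\sup_\lambda\lambda^p w(E_\lambda)$, which is itself nontrivial here.

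What is missing is exactly Sawyer's key lemma, which the paper isolates as Lemma \ref{Lem:Tec1}: for $1<p<q$ and $w\in C_q$, $\sup_k\int\bigl(M_{k,p,q}(Mh)\bigr)^pw\lesssim\|Mh\|_{L^{p,\infty}(w)}^{p}$, equivalently $2^{kp}\sum_j\int M(\chi_{Q_j^k})^{q}w\lesssim\|Mh\|_{L^{p,\infty}(w)}^{p}$ for the Whitney cubes of $\{Mh>2^k\}$, together with Lemma \ref{Lem:Tec2} to pass from arbitrary disjoint (stopping) cubes inside the level set to the Whitney cubes. Its proof is the bulk of the work (the estimate $|\Omega_k\cap 3Q_i^{k-N}|\lesssim 2^{-N}|Q_i^{k-N}|$, the self-improving inequality $S(k)\le C2^{kp}w(\Omega_{k-N})+\tfrac12 S(k-N)$, and a separate argument giving the a priori finiteness), following \cite{S}. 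The paper also organizes the sparse estimate differently from your single threshold $c_0\lambda$: it splits $\mathcal{S}=\bigcup_m\mathcal{S}_m$ according to $\langle h\rangle_Q\sim 2^m$, distributes the level $2^k$ over the scales with weights $2^{(m-k)/2}$, and uses the exponential John--Nirenberg-type bound for $\sum_{R\in\mathcal{S},R\subset Q}\chi_R$ on sparse families (from \cite{OCPR}) inside the maximal cubes $\mathcal{S}_m^*$ before invoking $C_q$ and Lemmas \ref{Lem:Tec1}--\ref{Lem:Tec2}; your "localized Chebyshev from $L^p$ boundedness" does not by itself provide the localization of the small-average cubes to the stopping cubes, since those cubes need not be contained in any $P_j$. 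In short, the skeleton is in the right spirit, but the decisive estimate on $\sum_j\int M(\chi_{P_j})^{p+\epsilon}w$ is asserted rather than proved, and the proposed shortcut around it does not work.
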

We also obtain some results for commutators which are completely new in both the scalar and the vector valued case.
\begin{thm}\label{Thm:CpComm}Let $T$ be an $\omega$-Calder\'on-Zygmund operator with $\omega$ satisfying a Dini condition and $b\in \BMO$.
Let $1< p<\infty$ and $w\in C_{p+\epsilon}$ for some $\epsilon>0$. Then there exists $c=c_{T,p,\epsilon,w}$ such that
\[\begin{split}
\| [b,T] f\|_{L^p(w)}&\leq c\|b\|_{\BMO}\|M^2f\|_{L^p(w)},\\
\| [b,T] f\|_{L^{p,\infty}(w)}&\leq c\|b\|_{\BMO}\|M^2f\|_{L^{p,\infty}(w)}.
\end{split}\]
If aditionally $1<q<\infty$ then
\[\begin{split}
\|\overline{[b,T]}_q \bm{f}\|_{L^p(w)}&\leq c\|b\|_{\BMO}\|M^2(|\bm{f}|_q)\|_{L^p(w)},\\
\|\overline{[b,T]}_q \bm{f}\|_{L^{p,\infty}(w)}&\leq c\|b\|_{\BMO}\|M^2(|\bm{f}|_q)\|_{L^{p,\infty}(w)}.
\end{split}\]
\end{thm}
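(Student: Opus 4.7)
The plan for the strong-type half of Theorem~\ref{Thm:CpComm} is to combine the pointwise sharp maximal estimates \eqref{Pointwise}--\eqref{Pointwiseq} with Yabuta's Lemma~\ref{yabuta}, extended below $p=1$ via the standard $M_\delta$ trick. Writing $g=[b,T]f$, I would use $|g|\le M_\varepsilon g$ a.e. together with the identities $M_\varepsilon g=M(|g|^\varepsilon)^{1/\varepsilon}$ and $M^{\#}_\varepsilon g=M^{\#}(|g|^\varepsilon)^{1/\varepsilon}$; applying Lemma~\ref{yabuta} at exponent $p/\varepsilon$ to $|g|^\varepsilon$ yields
\[
\|g\|_{L^p(w)}\le c\,\|M^{\#}_\varepsilon g\|_{L^p(w)}
\]
provided $w\in C_{p/\varepsilon+\epsilon'}$. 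Since the $C_p$ scale is decreasing in $p$ and the hypothesis is $w\in C_{\max\{1,p\}+\epsilon}$, the parameter $\varepsilon\in(0,\min\{1,p\})$ can always be chosen close enough to $\min\{1,p\}$ for this to hold (near $1$ when $p>1$, near $p$ when $p\le 1$). Inserting \eqref{Pointwise} then reduces the strong-type claim to
\[
\|M_\delta(Tf)\|_{L^p(w)}\lesssim \|Mf\|_{L^p(w)}\le \|M^2 f\|_{L^p(w)},
\]
which follows from the same Yabuta-extension argument applied to $Tf$, combined with condition~$(D)$: $M^{\#}_\delta(Tf)\lesssim Mf$. The vector-valued case is identical, with \eqref{Pointwiseq} in place of \eqref{Pointwise} and $(D_q)$ in place of $(D)$.

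For the weak-type half with $1<p<\infty$, I would adapt the sparse-plus-Sawyer scheme that underlies Theorem~\ref{Thm:Cp}, replacing its sparse-domination input by the sparse commutator domination of Theorem~\ref{ThmSparseConmm}. The problem thus reduces to showing that
\[
\|\mathcal{T}_{\mathcal{S},b}f\|_{L^{p,\infty}(w)}+\|\mathcal{T}^{*}_{\mathcal{S},b}f\|_{L^{p,\infty}(w)}\le c\,\|b\|_{\BMO}\|M^2f\|_{L^{p,\infty}(w)},
\]
and similarly with $|\bm{f}|_q$ in the vector-valued case. My plan is to linearise the weak norm via testing against indicators of level sets, carry out a Whitney decomposition of $\{M^2 f>\lambda\}$, use the John--Nirenberg inequality on $b$ to absorb the factors $|b(x)-b_Q|$ and $\langle |b-b_Q||f|\rangle_Q$ into an extra copy of $M$, and transfer the bad-cube contribution to the right-hand side via the $C_{p+\epsilon}$ hypothesis. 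The BMO factor is exactly what forces the second copy of $M$, and hence $M^2f$ on the right.

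The main obstacle will be the weak-type estimate. The strong-type part, once Yabuta is extended via the $M_\delta$ trick, is essentially a plug-in of the pointwise sharp maximal control. By contrast, the weak-type $C_p$ estimate for $[b,T]$ is new: the Sawyer--Whitney argument must handle the symbol $|b(x)-b_Q|$ carefully, and while John--Nirenberg provides the needed exponential integrability of $b$, the balance between that gain and the $C_{p+\epsilon}$ condition on the bad cubes requires a delicate tuning of exponents not present in the scalar $T$ case of Theorem~\ref{Thm:Cp}.
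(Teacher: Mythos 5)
Your strong-type half has a genuine gap relative to the statement. You base it on the pointwise estimates \eqref{Pointwise}--\eqref{Pointwiseq} together with Yabuta's Lemma \ref{yabuta}, but those sharp-maximal bounds are only available when the modulus of continuity $\omega$ satisfies the \emph{log}-Dini condition, whereas Theorem \ref{Thm:CpComm} assumes only the Dini condition. This is precisely the point of the theorem: the remark following it states that the result extends the $M^{\sharp}$-based estimates (which are the content of Theorem \ref{commutator} and do require log-Dini) to operators satisfying just Dini. Accordingly, the paper does not prove the strong bounds through \eqref{Pointwise} at all; both the strong and the weak inequalities are deduced from the sparse domination of Theorem \ref{ThmSparseConmm} (valid under Dini) combined with Theorem \ref{thm:CpSparseComm}, i.e.\ the Sawyer-type bounds for the sparse objects $\mathcal{T}_{\mathcal{S},b}$ and $\mathcal{T}^{*}_{\mathcal{S},b}$. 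As written, your argument proves (the $1<p<\infty$ case of) Theorem \ref{commutator}, not Theorem \ref{Thm:CpComm}; the fix is simply to run the strong-type case through the same sparse reduction you propose for the weak type. (A minor additional point: the Yabuta reduction applied to $M_{\delta}(Tf)$ needs two parameters $0<\delta<\delta_1<1$ with $\max\{1,p\}<p/\delta_1<p/\delta<\max\{1,p\}+\epsilon$, which your sketch glosses over.)

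The weak-type half is the correct strategy and coincides in outline with the paper's: reduce via Theorem \ref{ThmSparseConmm} to
\[
\|\mathcal{T}_{\mathcal{S},b}f\|_{L^{p,\infty}(w)}+\|\mathcal{T}^{*}_{\mathcal{S},b}f\|_{L^{p,\infty}(w)}\lesssim \|b\|_{\BMO}\|M^{2}f\|_{L^{p,\infty}(w)},
\]
and run Sawyer's scheme with the key Lemmas \ref{Lem:Tec1} and \ref{Lem:Tec2}. One caveat about the mechanism: your plan to ``use John--Nirenberg to absorb $|b(x)-b_Q|$ into an extra copy of $M$'' is not how the symbol term is actually handled. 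For $\mathcal{T}^{*}_{\mathcal{S},b}$ the generalized H\"older inequality \eqref{eq:HGen} converts $\langle|b-b_Q||f|\rangle_Q$ into $\|f\|_{L\log L,Q}$, and it is this term alone that produces $M_{L\log L}\simeq M^{2}$. For $\mathcal{T}_{\mathcal{S},b}$, after splitting $\mathcal{S}$ according to the size of $\langle f\rangle_Q$, the factor $|b(x)-b_Q|$ is dominated by \cite[Lemma 5.1]{LORR} by a sum of indicators over an enlarged sparse family; this yields a \emph{squared} counting function, hence only subexponential decay $\exp(-c\sqrt{t})$ in the level-set estimate, which is still summable against the $C_{p+\epsilon}$ condition, and the resulting bound involves only $Mf$. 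So the balance you worry about is resolved by the sparse $\BMO$ domination plus the Carleson exponential-decay estimate, not by an additional maximal function; with that substitution your outline matches the paper's proof.
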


We would like to note that the preceding result extends results based in the $M^\sharp$ approach that hold for Calder\'on-Zygmund operators satisfying a $\log$-Dini condition to operators satisfying just a Dini condition.

\subsection{Local exponential decay estimates for vector valued operators}
In the previous section we recalled the classical Coifman-Fefferman estimate, namely that if $w\in A_\infty$ then $\eqref{eq:CF}$ holds. To obtain such an estimate Coifman and Fefferman used the good-$\lambda$
technique introduced by Burkholder and Gundy \cite{BG}. The use of
that technique was based on the following estimate
\[
\left|\left\{ x\in\mathbb{R}^{n}\,:\,T^{*}f(x)>2\lambda,\,Mf(x)\leq\lambda\gamma\right\} \right|\leq c\gamma\left|\left\{ x\in\mathbb{R}^{n}\,:\,T^{*}f(x)>\lambda \right\} \right|.
\]
The first step to prove such an inequality is to localize the level set $\left\{ x\in\mathbb{R}^{n}\,:\,T^{*}f(x)>\lambda \right\} $
by means of a Whitney decomposition. Then the problem is reduced to studying a
local estimate, namely
\[
\left|\left\{ x\in Q\,:\,T^{*}f(x)>2\lambda,\,Mf(x)\leq\lambda\gamma\right\} \right|\leq c\gamma\left|Q\right|
\]
where each $Q$ is a Whitney cube and $f$ is supported on $Q$.

In \cite{B}, trying to obtain a quantitative weighted estimate for
Calder\'on-Zygmund operators by means of the good-$\lambda$ technique,
Buckley obtained an exponential decay in $\gamma$ that can be stated
as follows,
\begin{equation}\label{expoDecayGoodlambda}
\left|\left\{ x\in Q\,:\,T^{*}f(x)>2\lambda,\,Mf(x)\leq\lambda\gamma\right\} \right|\leq ce^{-\frac{c}{\gamma}}\left|Q\right|.
\end{equation}
This result is interesting for many reasons. One of these reasons is because it yields an estimate like
\begin{equation*}
\|Tf\|_{L^p(w)} \leq cp[w]_{A_{\infty}}\|Mf\|_{L^p(w)},
\end{equation*}
which is optimal in both $p$ and $[w]_{A_{\infty}}$. These estimates play a major role in the $L \log L$ estimate derived in  \cite{LOP2}.

Later on, Karagulyan \cite{Ka} obtained an improved version of  \eqref{expoDecayGoodlambda}, namely, he proved that there exist constants $\alpha,c>0$ depending just on  $T$ and $n$ such that
\[
\left|\left\{ x\in Q\,:\,T^{*}f(x)>tMf(x)\right\} \right|\leq ce^{-\alpha t}\left|Q\right| \qquad t>0.
\]
This estimate was later generalized, using other methods, to several operators  by C. Ortiz-Caraballo, the third author and E. Rela in \cite{OCPR}.

Now we come to our contribution in this paper. Our first result is the following.
\begin{thm}
\label{Thm:LocalDecay}Let $1<q<\infty$, $T$ be an $\omega$-Calder\'on-Zygmund
operator with $\omega$ satisfying the Dini condition and $b\in \BMO$. Assume also that $\supp|f|_{q}\subseteq Q$.
Then
\begin{align}
\left|\left\{ x\in Q\,:\,\overline{M_{q}}\bm{f}(x)>tM\left(|\bm{f}|_{q}\right)(x)\right\} \right| & \leq c_{1}e^{-c_{2}t^{q}}|Q|,\label{eq:MqLoc}\\
\left|\left\{ x\in Q\,:\,\overline{T_{q}}\bm{f}(x)>tM\left(|\bm{f}|_{q}\right)(x)\right\} \right| & \leq c_{1}e^{-c_{2}t}|Q|,\label{eq:TqLoc}\\
\left|\left\{ x\in Q\,:\,\left|\overline{[b,T]_{q}}\bm{f}(x)\right|>tM^{2}\left(|\bm{f}|_{q}\right)(x)\right\} \right| & \leq c_{1}e^{-\sqrt{c_{2}\frac{t}{\|b\|_{\BMO}}}}|Q|.\label{eq:bTqLoc}
\end{align}
\end{thm}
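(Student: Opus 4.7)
The plan is to combine the sparse dominations furnished by Theorems \ref{ThmSparseMq}, \ref{ThmSparseTq} and \ref{ThmSparseConmm} with a Karagulyan-type combinatorial argument on the sparse family, together with a John--Nirenberg balancing in the commutator case, in the spirit of \cite{OCPR}. After invoking the sparse domination and absorbing into a harmless $c_n M(|\bm{f}|_q)(x)$ term the finitely many dyadic ancestors of $Q$ (the contribution of cubes $R$ with $R\cap\supp|\bm f|_q=\emptyset$ is zero, and the tower of cubes strictly containing $Q$ gives a geometric series), it is enough to prove, for each $\eta$-sparse $\mathcal S\subseteq\{R:R\subseteq 3Q\}$ and $g:=|\bm f|_q$,
\begin{align*}
\bigl|\{x\in Q:\mathcal A^{q}_{\mathcal S}g(x)>t Mg(x)\}\bigr| & \leq c_1 e^{-c_2 t^{q}}|Q|,\\
\bigl|\{x\in Q:\mathcal A_{\mathcal S}g(x)>t Mg(x)\}\bigr| & \leq c_1 e^{-c_2 t}|Q|,\\
\bigl|\{x\in Q:(\mathcal T_{\mathcal S,b}+\mathcal T^{*}_{\mathcal S,b})g(x)>t M^{2}g(x)\}\bigr| & \leq c_1 e^{-\sqrt{c_2 t/\|b\|_{\BMO}}}|Q|.
\end{align*}

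For the first two estimates the key observation is that $\langle g\rangle_R\le Mg(x)$ whenever $x\in R$. Writing $N(x)=\#\{R\in\mathcal S:x\in R\}$, this gives at once $\mathcal A_{\mathcal S}g(x)\le Mg(x)\,N(x)$, while factoring one power of the average and iterating yields
\[(\mathcal A^{q}_{\mathcal S}g(x))^{q}=\sum_{R\ni x}\langle g\rangle_R^{q}\le Mg(x)^{q-1}\sum_{R\ni x}\langle g\rangle_R\le Mg(x)^{q}N(x).\]
Hence the bad sets are contained in $\{N>ct\}$ and $\{N>(ct)^{q}\}$ respectively, and the cardinality $N$ has exponential distribution: by a standard layer-by-layer induction (the maximal sub-cubes of any $R\in\mathcal S$ cover at most $(1-\eta)|R|$), $|\{x\in Q:N(x)\ge k\}|\le (1-\eta)^{k-1}|Q|$. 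This produces the claimed $e^{-c_2 t}$ and $e^{-c_2 t^{q}}$ decays.

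For \eqref{eq:bTqLoc}, the adjoint-type piece $\mathcal T^{*}_{\mathcal S,b}$ is controlled by generalized H\"older with the Orlicz pair $(L\log L,\exp L)$ and the John--Nirenberg estimate $\|b-b_R\|_{\exp L,R}\le c\|b\|_{\BMO}$: one obtains $\langle|b-b_R|g\rangle_R\le c\|b\|_{\BMO}M_{L\log L}g(x)\le c\|b\|_{\BMO}M^{2}g(x)$ for $x\in R$, hence $\mathcal T^{*}_{\mathcal S,b}g(x)\le c\|b\|_{\BMO}M^{2}g(x)\cdot N(x)$, which decays like $e^{-ct/\|b\|_{\BMO}}$ (faster than required). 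For the principal piece, $\mathcal T_{\mathcal S,b}g(x)\le M^{2}g(x)\,\Sigma(x)$ with $\Sigma(x):=\sum_{R\in\mathcal S,\,x\in R}|b(x)-b_R|$, and splitting according to $N$,
\[\bigl|\{\Sigma>ct\}\bigr|\le|\{N>k\}|+\bigl|\{\Sigma>ct,\,N\le k\}\bigr|\le c_1\bigl(e^{-c_2 k}+e^{-c_3 t/(k\|b\|_{\BMO})}\bigr)|Q|,\]
where the second summand is a John--Nirenberg moment-generating-function estimate for a sum of at most $k$ BMO oscillations. Optimizing in $k\sim\sqrt{t/\|b\|_{\BMO}}$ yields the sub-exponential tail $e^{-\sqrt{c_2 t/\|b\|_{\BMO}}}$.

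The main obstacle is precisely this BMO/sparseness balance for $\Sigma$, since a naive count of $N(x)$ BMO-oscillations could a priori degenerate with the depth of the chain. It is handled by writing $|b(x)-b_R|\le|b(x)-b_{3Q}|+M((b-b_{3Q})\chi_{3Q})(x)$ for $x\in R$ and observing that both $b-b_{3Q}$ and its maximal function belong to $\exp L(3Q)$ with norm $\lesssim\|b\|_{\BMO}$; Chebyshev with the appropriate exponential then controls $\Sigma$ restricted to $\{N\le k\}$ with a constant of the form $C^{k}$, producing the claimed sub-exponential tail exactly as in the scalar argument of \cite{OCPR}.
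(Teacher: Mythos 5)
Your overall architecture coincides with the paper's: Theorem \ref{Thm:LocalDecay} is deduced there from the sparse dominations (Theorems \ref{ThmSparseMq}, \ref{ThmSparseTq}, \ref{ThmSparseConmm}) plus a decay lemma for sparse objects tested against $M$ and $M^{2}$ (Lemma \ref{Lem:Decay}), whose engine is the exponential distribution of the sparse counting function from \cite[Theorem 2.1]{OCPR} together with John--Nirenberg. For \eqref{eq:MqLoc} and \eqref{eq:TqLoc} your argument is essentially the paper's: the paper covers $Q$ by $c_{n}$ dyadic cubes $Q_{j}$ comparable to $Q$, adjoins them to $\mathcal{S}$, and splits each sparse sum into $P\subsetneq Q_{j}$ (controlled by the counting function) and $P\supseteq Q_{j}$ (a bounded geometric tail); your reduction to cubes inside $3Q$ plays the same role. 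For \eqref{eq:bTqLoc} your route genuinely differs: the paper dominates $|b(x)-b_{P}|\chi_{P}$ pointwise by an auxiliary sparse counting function via \cite[Lemma 5.1]{LORR}, so the bad set lands in $\{(\sum_{P}\chi_{P})^{2}>ct\}$ and the $\sqrt{t}$ appears at once, while you balance the counting function $N$ against the $\exp L$ function $|b-b_{3Q}|+M((b-b_{3Q})\chi_{3Q})$ on $\{N\le k\}$ and optimize $k\sim\sqrt{t/\|b\|_{\BMO}}$. That alternative is valid (the exponential integrability of the maximal-function term follows from $\|M\|_{L^{p}\rightarrow L^{p}}\lesssim 1$ for $p\ge 2$ and John--Nirenberg), avoids the LORR lemma, and yields the same sub-exponential bound; the paper's choice buys a shorter derivation once that lemma is granted, and your adjoint-piece treatment via the $(L\log L,\exp L)$ H\"older inequality matches the paper's.

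Two steps need repair. First, your justification of the tail bound for $N$ is wrong as stated: it is not true that the maximal $\mathcal{S}$-subcubes of a cube $R\in\mathcal{S}$ cover at most $(1-\eta)|R|$, and the bound $|\{N\ge k\}|\le(1-\eta)^{k-1}|Q|$ fails. For instance $\{[0,1),[0,\tfrac12),[\tfrac12,1)\}$ is $\tfrac12$-sparse (take $E_{[0,1/2)}=[0,\tfrac14)$, $E_{[1/2,1)}=[\tfrac12,\tfrac34)$, $E_{[0,1)}=[\tfrac14,\tfrac12)\cup[\tfrac34,1)$), yet the two children cover all of $[0,1)$ and $N\equiv 2$ there. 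What is true, and all you need, is exponential decay with constants depending on $\eta$ and $n$; this is exactly \cite[Theorem 2.1]{OCPR} (or can be obtained by iterating the Carleson packing condition over blocks of roughly $2/\eta$ generations, which halves the measure of the successive level sets).

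Second, your opening reduction (``absorb the cubes not contained in $3Q$ into $c_{n}M(|\bm{f}|_{q})$'') is correct for $\mathcal{A}_{\mathcal{S}}$, $\mathcal{A}_{\mathcal{S}}^{q}$ and for $\mathcal{T}_{\mathcal{S},b}^{*}$, but not as stated for the principal piece $\mathcal{T}_{\mathcal{S},b}$: for cubes $R$ with $\ell(R)>\ell(Q)$ the factor $|b(x)-b_{R}|$ is unbounded, and with $g=|\bm{f}|_{q}$ the tower only gives the pointwise bound $c\,Mg(x)\bigl(|b(x)-b_{R_{0}}|+\|b\|_{\BMO}\bigr)$, where $R_{0}$ is the smallest such cube, not $c\,M^{2}g(x)$. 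This contribution must be disposed of by John--Nirenberg as an extra exponentially small level set (this is exactly the role of the term $|b(x)-b_{5Q_{j}}|$ in the paper's proof of Lemma \ref{Lem:Decay}); your own $\exp L$ machinery handles it, but it is an additional level-set estimate, not an absorption, and should be stated as such.
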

That theorem is a direct consequence of the combination of the sparse
domination results obtained in the previous section and the following
Lemma.
\begin{lem}
\label{Lem:Decay}Let $\mathcal{S}$ be an $\eta$-sparse family and $b\in \BMO$.
Then if $\supp f\subseteq Q$ we have that
\[
\begin{split}\left|\left\{ x\in Q\,:\,\mathcal{A}_{\mathcal{S}}^{r}|f|>tMf(x)\right\} \right| & \leq c_{1}e^{-c_{2}t^{r}}|Q|\quad r>0,\\
\left|\left\{ x\in Q\,:\,\mathcal{T}_{\mathcal{S},b}f(x)+\mathcal{T}_{\mathcal{S},b}^{*}f(x)>tM^{2}f(x)\right\} \right| & \leq c_{1}e^{-\sqrt{c_{2}\frac{t}{\|b\|_{\BMO}}}}|Q|.
\end{split}
\]
where $c_{1},c_{2}$ only depend on $n$ and $\eta$.
\end{lem}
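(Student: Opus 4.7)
The plan is to derive both inequalities from a single geometric fact about the \emph{depth function} of $\mathcal{S}$ restricted to $Q$, namely
\[
N(x):=\#\{P\in\mathcal{S}\,:\,P\subseteq Q,\ x\in P\}.
\]
First I would observe that since $\supp f\subseteq Q$, the cubes $P\in\mathcal{S}$ with $P\supsetneq Q$ that contribute to $\mathcal{A}_{\mathcal{S}}^{r}|f|(x)$, $\mathcal{T}_{\mathcal{S},b}f(x)$ and $\mathcal{T}_{\mathcal{S},b}^{*}f(x)$ form a nested chain of ancestors of $Q$, and $\eta$-sparseness forces their sizes to grow at least linearly in the chain index. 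Telescoping the averages $\langle|f|\rangle_{P}=\frac{|Q|}{|P|}\langle|f|\rangle_{Q}$ along this chain shows that the combined contribution of these outer cubes is bounded by $C_{n,\eta}Mf(x)$ in the first case, and by $C_{n,\eta}\|b\|_{\BMO}M^{2}f(x)$ in the commutator case, reducing the problem to $\mathcal{S}|_{Q}:=\{P\in\mathcal{S}:P\subseteq Q\}$.

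Next, I would show that $N\in\BMO$ with norm depending only on $\eta$ and $n$. For any sparse ancestor $R\subseteq Q$ the Carleson packing that follows from sparseness yields
\[
\int_{R}N\,=\sum_{P\in\mathcal{S}|_{Q},\ P\subseteq R}|P|\,\leq\,\eta^{-1}\sum_{P\subseteq R}|E_{P}|\,\leq\,\eta^{-1}|R|,
\]
and the cubes strictly containing $R$ contribute a quantity that is \emph{constant} on $R$, so that $\frac{1}{|R|}\int_{R}|N-N_{R}|\leq 2\eta^{-1}$. The general cube case follows from the standard $3^{n}$-dyadic grid device, and an application of the John-Nirenberg inequality gives
\[
\bigl|\{x\in Q:N(x)>s\}\bigr|\leq c_{1}e^{-c_{2}s}|Q|,\qquad s>0.
\]

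The first estimate of the lemma now follows from the pointwise envelope
\[
\mathcal{A}_{\mathcal{S}}^{r}|f|(x)\leq Mf(x)\bigl(N(x)^{1/r}+C_{n,\eta}\bigr),
\]
which is obtained from $\langle|f|\rangle_{P}\leq Mf(x)$ whenever $x\in P$, by inclusion of super-level sets into $\{N>(t-C)^{r}\}$. For the commutator estimate I would decompose $b(x)-b_{P}=(b(x)-b_{Q})+(b_{Q}-b_{P})$ and apply the generalized H\"older inequality on Orlicz spaces, $\langle|b-b_{P}||f|\rangle_{P}\leq C\|b\|_{\BMO}\|f\|_{L\log L,P}$, together with the standard bound $\|f\|_{L\log L,P}\leq CM^{2}f(x)$ for $x\in P$, to obtain a schematic envelope of the form
\[
\mathcal{T}_{\mathcal{S},b}f(x)+\mathcal{T}_{\mathcal{S},b}^{*}f(x)\leq C\|b\|_{\BMO}M^{2}f(x)\Bigl(N(x)+\frac{|b(x)-b_{Q}|}{\|b\|_{\BMO}}N(x)+C_{n,\eta}\Bigr).
\]
Since $(b-b_{Q})/\|b\|_{\BMO}$ also has exponential tails on $Q$ by John-Nirenberg, the product $|b-b_{Q}|N/\|b\|_{\BMO}$ exceeds $s$ only if one of its factors exceeds $\sqrt{s}$, and a union bound gives a super-level set of measure at most $c_{1}e^{-c_{2}\sqrt{s}}|Q|$; substituting $s=t/\|b\|_{\BMO}$ delivers the claimed square-root-exponential decay. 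The main technical obstacle will be the careful bookkeeping for the shift $|b_{Q}-b_{P}|$ in $\mathcal{T}_{\mathcal{S},b}f$, which for deep cubes may grow like $\|b\|_{\BMO}\log(|Q|/|P|)$; this contribution must be absorbed into the $L\log L$ average along the chain, which is precisely what forces the extra $M$ factor yielding $M^{2}f$ on the right-hand side.
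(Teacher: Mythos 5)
Your treatment of the first inequality is essentially sound and is in fact the same mechanism as the paper's: for cubes $P\subseteq Q$ one bounds $\langle|f|\rangle_P\leq Mf(x)$ and is left with the counting function $N$, whose exponential tails you obtain from the Carleson packing plus John--Nirenberg (the paper instead quotes this distributional estimate from Ortiz-Caraballo--P\'erez--Rela), and the ancestors are absorbed into a constant by a geometric sum. Two small caveats: since $Q$ need not belong to the dyadic lattice carrying $\mathcal{S}$, the contributing cubes outside $\mathcal{S}|_Q$ are not literally a chain of ancestors of $Q$ (there are cubes straddling $\partial Q$); the paper fixes this by covering $Q$ with $c_n$ dyadic cubes $Q_j$ of comparable size, splitting $f=\sum_j f\chi_{Q_j}$ and adjoining the $Q_j$ to the family, and you would need the same reduction. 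Also your constants inevitably depend on $r$ through the geometric sum, as do the paper's.

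The genuine gap is in the commutator estimate, specifically in $\mathcal{T}_{\mathcal{S},b}$. Your envelope
\[
\mathcal{T}_{\mathcal{S},b}f(x)+\mathcal{T}^{*}_{\mathcal{S},b}f(x)\leq C\|b\|_{\BMO}M^{2}f(x)\Bigl(N(x)+\tfrac{|b(x)-b_{Q}|}{\|b\|_{\BMO}}N(x)+C_{n,\eta}\Bigr)
\]
is false: after writing $b(x)-b_P=(b(x)-b_Q)+(b_Q-b_P)$, the shift terms satisfy only $|b_Q-b_P|\lesssim\|b\|_{\BMO}\log(|Q|/|P|)$, and because of gaps between scales this is not controlled by $N(x)$ (a single sparse cube $P$ of very small size gives $N(x)=1$ but an arbitrarily large shift). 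Nor can it be ``absorbed into the $L\log L$ average'': in $\mathcal{T}_{\mathcal{S},b}$ the factor $|b(x)-b_P|$ sits outside any integral, and in any case $\log(|Q|/|P|)\,\langle|f|\rangle_P\not\lesssim M_{L\log L}f(x)$ (test $f=\chi_P$). The generalized H\"older/$L\log L$ device you invoke is the right tool only for $\mathcal{T}^{*}_{\mathcal{S},b}$, where it indeed gives a bound by $\|b\|_{\BMO}M_{L\log L}f(x)\,(N(x)+C)$ and hence full exponential decay, as in the paper. For $\mathcal{T}_{\mathcal{S},b}$ the paper's key ingredient, which your argument is missing, is \cite[Lemma 5.1]{LORR}: $|b(x)-b_P|\chi_P(x)\lesssim\|b\|_{\BMO}\sum_{R\in\widetilde{\mathcal{S}},\,R\subseteq P}\chi_R(x)$ for an augmented sparse family $\widetilde{\mathcal{S}}\supseteq\mathcal{S}$. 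Summing over $P$ this converts the oscillation into a second counting factor, so the relevant quantity is bounded by $\bigl(\sum_{P\in\widetilde{\mathcal{S}},\,P\subseteq Q_j}\chi_P\bigr)^{2}$, and it is precisely this square that produces the $e^{-c\sqrt{t}}$ decay (your union-bound trick handles the product $|b(x)-b_Q|N(x)$ but not the shift sum). Without this lemma or an equivalent substitute, the second estimate of the lemma is not proved.
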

At this point we would like to point out that both (\ref{eq:MqLoc})
and (\ref{eq:TqLoc}) were obtained in \cite{OCPR}.   Here we provide
a unified proof of those estimates. In the case of (\ref{eq:bTqLoc})
the estimate is new. We would also like to remark that combining Lemma
\ref{Lem:Decay} and the sparse domination of $[b,T]$ we also obtain
a new proof for the corresponding estimate already obtained in \cite{OCPR}.

We will finish this section with a similar type result for rough singular integrals.
\begin{thm}
Let $\Omega\in L^\infty(\mathbb{S}^{n-1})$  and $T=T_\Omega\text{ or }B_{(n-1)/2}$. Let also $Q$ be a cube and $f$ such that
$\supp f\subseteq Q$, then there exist some constants $c, \alpha >0$ such that
$$
\left|\left\{ x\in Q\,:|Tf(x)|>tMf(x)\right\} \right|\leq ce^{-\sqrt{\alpha t}}\left|Q\right|, \qquad t>0.
$$

\end{thm}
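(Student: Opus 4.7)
The plan is to combine the bilinear sparse domination for $T_{\Omega}$ and $B_{(n-1)/2}$ from Theorem \ref{Thm:Sparse} with the arguments behind Lemma \ref{Lem:Decay}, carried out at the level of sparse forms with $L^s$ averages, and then to optimize the free parameter $s\in(1,\infty)$; the factor $s'$ in the sparse bound, balanced against the $L^s$ averaging gain, is what produces the square-root exponent.

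Set $E_t=\{x\in Q:|Tf(x)|>tMf(x)\}$; we may assume $t$ is large, since for bounded $t$ the estimate is trivial by adjusting $c$. Testing Theorem \ref{Thm:Sparse} against $g=\chi_{E_t}\operatorname{sgn}(Tf)$ yields, for every $s\in(1,\infty)$, a sparse family $\mathcal{S}$ with
$$\int_{E_t}|Tf|\,dx\leq c_n C_T\,s'\sum_{Q'\in\mathcal{S}}\langle|f|\rangle_{Q'}\langle\chi_{E_t}\rangle_{s,Q'}|Q'|.$$
Since $\supp f\subseteq Q$, a standard geometric argument (treating the chain of cubes strictly containing $Q$ as a tail, controlled by the $L^{s'}$ boundedness of $T$) shows that up to an inessential multiplicative constant the sum can be restricted to cubes $Q'\subseteq 3Q$.

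Using the pointwise bound $\langle|f|\rangle_{Q'}\leq\inf_{Q'}Mf$ together with the sparsity of $\mathcal{S}$ (pairwise disjoint sets $E(Q')\subseteq Q'$ with $|E(Q')|\geq\eta|Q'|$), a Carleson embedding combined with Hölder's inequality in conjugate exponents $s,s'$ collapses the sparse sum into
$$\sum_{Q'\in\mathcal{S}}\langle|f|\rangle_{Q'}\langle\chi_{E_t}\rangle_{s,Q'}|Q'|\;\lesssim\; \|Mf\|_{L^{s'}(Q)}\,|E_t|^{1/s},$$
where the $L^s$-boundedness of $M_s$ is used to control $\|M_s\chi_{E_t}\|_{L^s(Q)}$ by $|E_t|^{1/s}$. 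Combining this with the pointwise relation $|Tf|>tMf$ on $E_t$, and with the elementary bound $\|f\|_{L^1(Q)}\leq |Q|\inf_{Q}Mf$, a rearrangement produces an inequality of the shape
$$\frac{|E_t|}{|Q|}\leq \Big(\frac{c_n C_T\,s'}{t}\Big)^{s'}.$$

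Finally, the free parameter is optimized by choosing $s'=\sqrt{\alpha t}$, for a sufficiently small $\alpha>0$ depending only on $n$ and $C_T$; an elementary logarithmic computation gives
$$\log\frac{|E_t|}{|Q|}\leq s'\log\frac{c_n C_T\,s'}{t}\leq -\sqrt{\alpha t},$$
which is the desired bound $|E_t|\leq c\,e^{-\sqrt{\alpha t}}|Q|$. The principal technical obstacle lies in the passage from the bilinear sparse form to the scalar inequality $|E_t|/|Q|\leq(c_nC_Ts'/t)^{s'}$: the loss $s'$ inherent to the sparse bound for rough kernels must be traded carefully against the gain $|E_t|^{1/s}$ coming from the $L^s$ average, and it is precisely this trade-off that forces the square-root exponent, paralleling the $e^{-\sqrt{t}}$ decay for commutators derived from Lemma \ref{Lem:Decay}.
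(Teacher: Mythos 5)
Your overall strategy is the right one, and indeed the one this theorem is designed to be proved by: test the bilinear sparse bound of Theorem \ref{Thm:Sparse} against $g=\chi_{E_t}\operatorname{sgn}(Tf)$, aim at an inequality of the shape $|E_t|/|Q|\le (c\,C_T s'/t)^{s'}$, and choose $s'\simeq \sqrt{\alpha t}$; your final optimization step is correct, and the large-cube tail is indeed harmless (though it is handled by a plain geometric series, since $\langle |f|\rangle_P\le \frac{|Q|}{|P|}\langle |f|\rangle_Q$ and $\langle\chi_{E_t}\rangle_{s,P}\le (|E_t|/|P|)^{1/s}$ for $P\cap Q\neq\emptyset$, $\ell(P)>\ell(Q)$ -- the ``$L^{s'}$ boundedness of $T$'' plays no role there). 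Note also that the version of the paper you were given does not actually write out a proof of this statement (Section \ref{Sec:ProofLocalDecay} only proves Lemma \ref{Lem:Decay}), so what follows compares your argument with what is needed to make this route work.

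The genuine gap is the central collapse of the sparse sum. First, the inequality $\|M_s\chi_{E_t}\|_{L^s(Q)}\lesssim |E_t|^{1/s}$ is false: $M_s$ is not bounded on $L^s$, since $\|M_s\chi_{E_t}\|_{L^s(Q)}^s=\int_Q M\chi_{E_t}$, which by the Stein $L\log L$ phenomenon is of size $|E_t|\log(e+|Q|/|E_t|)$ rather than $|E_t|$ (and any substitute $L^u$ bound degenerates as $u\downarrow s$). Second, and fatally, even granting some version of your intermediate bound $\sum_{Q'}\langle|f|\rangle_{Q'}\langle\chi_{E_t}\rangle_{s,Q'}|Q'|\lesssim \|Mf\|_{L^{s'}(Q)}|E_t|^{1/s}$, it cannot be ``rearranged'' into $|E_t|/|Q|\le(c_nC_Ts'/t)^{s'}$: dividing by $t\langle|f|\rangle_Q|E_t|$ leaves the ratio $\|Mf\|_{L^{s'}(Q)}/\bigl(\langle|f|\rangle_Q|Q|^{1/s'}\bigr)$ raised to the power $s'\simeq\sqrt t$, and with $s'$ large this ratio is not controlled by any absolute constant (take $f$ an approximate delta inside $Q$: then $\int_Q(Mf)^{s'}$ blows up while $\langle|f|\rangle_Q$ stays fixed). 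In fact the cleaner estimate that your rearrangement implicitly requires, namely $\sum_{P\in\mathcal S,\,P\subseteq 3Q}\langle|f|\rangle_P\langle\chi_{E_t}\rangle_{s,P}|P|\lesssim \langle|f|\rangle_Q|Q|\,(|E_t|/|Q|)^{1/s}$, is false for arbitrary sparse families and arbitrary $f$ supported in $Q$: if $f=\chi_{Q_0}$ for a tiny cube $Q_0$, $E=Q_0$, and $\mathcal S$ is the chain of dyadic ancestors of $Q_0$ inside $Q$, the single term $P=Q_0$ already gives $|Q_0|$, while the right-hand side is $|Q_0|\,(|Q_0|/|Q|)^{1/s}$. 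So the argument must use the definition of $E_t$ (or the structure of $f$) beyond the single inequality $\int_{E_t}|Tf|\ge t\int_{E_t}Mf$: the standard ways to close this are either a local Calder\'on--Zygmund decomposition of $f$ at height $\langle|f|\rangle_Q$, so that the relevant averages of the good part are uniformly $\lesssim\langle|f|\rangle_Q$ and the sparse sum can then be estimated through the Carleson property and the exponential integrability of $\sum_{P\in\mathcal S,\,P\subseteq 3Q}\chi_P$ from \cite[Theorem 2.1]{OCPR} (the same ingredient used in the proof of Lemma \ref{Lem:Decay}), or a splitting of $\mathcal S$ according to the size of $\langle|f|\rangle_P$, as in the proof of Theorem \ref{thm:CpSparse}, exploiting that cubes with large average live inside small level sets of $Mf$. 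Without one of these ingredients the passage from the sparse form to $|E_t|/|Q|\le(c_nC_Ts'/t)^{s'}$ does not go through, even though that inequality and the choice $s'\simeq\sqrt{\alpha t}$ are exactly the right target.
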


\begin{rem}
We believe that the preceding estimate is not sharp, we conjecture that the decay should be exponential instead of subexponential.
\end{rem}

The rest of the paper is organized as follows. In
Section \ref{sec:Preliminaries} we gather some basic definitions
and properties and fix some notation. Section \ref{sec:ProofsMax}
is devoted to the the proofs of the results for the vector-valued
extensions of Hardy-Littlewood maximal operator. Section \ref{sec:Sparse}
contains the proof of the sparse domination for both vector-valued
Calder\'on-Zygmund operators and commutators. A proof of Theorem \ref{Thm:ApComm} is provided in section \ref{Sec:ProofApComm}.
In section \ref{Sec:ProofThmCp} we give the proofs of the results related to the $C_p$ condition. Section \ref{Sec:ProofLocalDecay} is devoted to establish the local exponential and sub-exponential estimates. We end this paper with an Appendix that contains precise statements of some quantitative unweighted estimates that are essentially implicit
in the literature and that are needed for our fully-quantitative sparse
estimates.

\section{Preliminaries\label{sec:Preliminaries}}

\subsection{\label{subsec:Notation}Notations and basic definitions}

In this Section we fix the notation that we will use in the rest of
the paper. First we recall the definition of $\omega$-Calder\'on-Zygmund
operator.
\begin{defn}
A Calder\'on-Zygmund operator $T$ is a linear operator bounded
on $L^{2}(\mathbb{R}^{n})$ that admits the following representation
\[
Tf(x)=\int K(x,y)f(y)dy
\]
with $f\in\mathcal{C}_{c}^{\infty}(\mathbb{R}^{n})$ and $x\not\in\text{supp }f$
and where $K:\mathbb{R}^{n}\times\mathbb{R}^{n}\setminus\{(x,x)\,:\,x\in\mathbb{R}^{n}\}\longrightarrow\mathbb{R}$
has the following properties

\begin{description}
\item [{Size~condition}] $|K(x,y)|\leq C_{K}\frac{1}{|x-y|^{n}}$, $\qquad x\not=0$.
\item [{Smoothness~condition}] Provided that $|y-z|<\frac{1}{2}|x-y|$, then
\[
|K(x,y)-K(x,z)|+|K(x,y)-K(z,y)|\leq\frac{1}{|x-y|^{n}}\omega\left(\frac{|y-z|}{|x-y|}\right),
\]
where the modulus of continuity $\omega:[0,\infty)\rightarrow[0,\infty)$
is a subaditive, increasing function such that $\omega(0)=0$.
\end{description}
\end{defn}

It is possible to impose different conditions on the modulus of continuity $\omega$. The most general one is the
Dini condition. We say that a modulus of continuity $\omega$ satisfies a Dini condition if \[\|\omega\|_{\text{Dini}}=\int_{0}^{1}\omega(t)\frac{dt}{t}<\infty.\]
We will say that the modulus of continuity $\omega$ satisfies a log-Dini condition if
\[\|\omega\|_{\text{log-Dini}}=\int_{0}^{1}\omega(t)\log\left(\frac{1}{t}\right)\frac{dt}{t}<\infty.\]
Clearly $\|\omega\|_{\text{Dini}}\leq \|\omega\|_{\text{log-Dini}}$
We recall also that if $\omega(t)=ct^{\delta}$ we are in the case of the classical H\"older-Lipschitz
condition.


\begin{defn}
Let $\omega$ be a modulus of continuity and $K$ be a kernel satisfying
the properties in the preceding definition. We define the maximal
 Calder\'on-Zygmund operator $T^{*}$ as
\[
T^{*}f(x)=\sup_{\varepsilon>0}\left|\int_{|x-y|>\varepsilon}K(x,y)f(y)dy\right|.
\]
\end{defn}
Given $1<q<\infty$ and $\bm{f}=\{f_{j}\}_{j=1}^{\infty}$ we will call
$|\bm{f}|_{q}$ and we will denote indistinctly $|\bm{f}(x)|_{q}=|\bm{f}|_{q}(x)$
the function defined as
\[
|\bm{f}(x)|_{q}=\left(\sum_{j=1}^{\infty}|f_{j}(x)|^{q}\right)^{\frac{1}{q}}.
\]
Similarly we will define extensions for the Hardy-Littlewood maximal
operator and for Calder\'on-Zygmund operators.
\begin{defn}
Let  $\mathcal{D}$ be a dyadic lattice and $1<q<\infty$. We
define the vector-valued Hardy-Littlewood maximal operator $\overline{M}_{q}^{\mathcal{D}}$
as
\[
\overline{M}_{q}^{\mathcal{D}}\bm{f}(x)=\left(\sum_{j=1}^{\infty}M^{\mathcal{D}}f_{j}(x)^{q}\right)^{\frac{1}{q}},
\]
where
\[
M^{\mathcal{D}}f(x)=\sup_{x\in Q\in\mathcal{D}}\frac{1}{|Q|}\int_{Q}|f(y)|dy.
\]
\end{defn}

\begin{defn}
Let $1<q<\infty$, $T$ be an $\omega$-Calder\'on-Zygmund operator with $\omega$ satisfying the Dini condition and
$\bm{f}=\{f_{j}\}$. We define the vector-valued maximal $\omega$-Calder\'on-Zygmund
operator $\overline{T^*}_{q}$ as
\[
\overline{T^*}_{q}\bm{f}(x)=\left(\sum_{j=1}^{\infty}|T^{*}f_{j}(x)|^{q}\right)^{\frac{1}{q}}.
\]
\end{defn}

To end the Section we would like to recall also the definitions of
some variants and ge\-ne\-ra\-li\-za\-tions of the Hardy-Littlewood maximal
function. We will denote $M_{s}f(x)=M(|f|^{s})(x)^{\frac{1}{s}}$
where $s>0$.

Now we recall that we say that $\Phi$ is a Young function if it is
a continuous, convex increasing function that satisfies $\Phi(0)=0$
and such that $\Phi(t)\rightarrow\infty$ as $t\rightarrow\infty$.

Let $f$ be a measurable function defined on a set $E\subset\mathbb{R}^{n}$
with finite Lebesgue measure. The $\Phi$-norm of $f$ over $E$ is
defined by
\[
\|f\|_{\Phi(L),E}:=\inf\left\{ \lambda>0\,:\,\frac{1}{|E|}\int_{E}\Phi\left(\frac{|f(x)|}{\lambda}\right)dx\leq1\right\}.
\]
Using this $\Phi$-norm we define, in the natural way, the Orlicz
maximal operator $M_{\Phi(L)}$ by
\[
M_{\Phi(L)}f(x)=\sup_{x\in Q}\|f\|_{\Phi(L),Q}.
\]
Some particular cases of interest are
\begin{itemize}
\item $M_{r}$ for $r>1$ given by the Young function $\Phi(t)=t^{r}$.
\item $M_{L(\log L)^{\delta}}$ with $\delta>0$  given
by the Young function $\Phi(t)=t\log(e+t)^{\delta}$. It is a well known fact that
\[M^{(k+1)}f\simeq M_{L(\log L)^k}f,\]where $M^k=M\circ \stackrel{(k)}{\cdots}\circ M$.
\item $M_{L(\log\log L)^{\delta}}$ with $\delta>0$
given by the Young function $\Phi(t)=t(\log\log(e^{e}+t))^{\delta}.$
\item $M_{L(\log L)(\log\log L)^{\delta}}$ with $\delta>0$
given by the function $\Phi(t)=t\log(e+t)(\log\log(e^{e}+t))^{\delta}.$
\end{itemize}
One basic fact about this kind of maximal operators that follows from
the definition of the norm is the following. Given $\Psi$ and $\Phi$
Young functions such that for some $\kappa,c>0$ $\Psi(t)\leq\kappa\Phi(t)$,
then
\[
\|f\|_{\Psi(L),Q}\leq(\Psi(c)+\kappa)\|f\|_{\Phi(L),Q},
\]
and consequently
\[
M_{\Psi(L)}f(x)\leq(\Psi(c)+\kappa)M_{\Phi(L)}f(x).
\]

Associated to each Young function $A$ there exists a complementary function $\bar{A}$ that can be  defined as follows
\[\bar{A}(t)=\sup_{s>0}\{st-A(s)\}.\]
That complementary function is  a Young function as well and it satisfies the following pointwise estimate
\[t\leq A^{-1}(t)\bar{A}^{-1}(t)\leq 2t.\]
An interesting property of this associated function is that the following estimate holds
\begin{equation*}
\frac{1}{|Q|}\int_Q|fg|dx\leq 2\|f\|_{A,Q}\|g\|_{\bar{A},Q}.
\end{equation*}
A case of interest for us is the case $A(t)=t\log(e+t)$. In that case we have that
\begin{equation*}
\frac{1}{|Q|}\int_Q|fg|dx\leq c\|f\|_{L\log L,Q}\|g\|_{\exp(L),Q}.
\end{equation*}
From that estimate taking into account John-Nirenberg's theorem, if $b\in \BMO$, then
\begin{equation}\label{eq:HGen}
\frac{1}{|Q|}\int_Q|f(b-b_Q)|dx\leq c\|f\|_{L\log L,Q}\|b-b_Q\|_{\exp(L),Q}\leq c\|f\|_{L\log L,Q}\|b\|_{\BMO}.
\end{equation}

For a detailed account about the ideas presented in the end of this
Section we refer the reader to \cite{PKJ,RR}.

\subsection{\label{subsec:Lerner's-Formula}Lerner-Nazarov formula}

In this Section we recall the definitions of the local oscillation and the Lerner-Nazarov oscillation and we show that the latter is controlled
by the former. Built upon Lerner-Nazarov oscillation we will also
introduce formula, which will be a quite useful tool for us. Most
of the ideas covered in this Section are borrowed from \cite{LN}.
Among them, we start with the definition of dyadic lattice.

Let us call $\mathcal{D}(Q)$ the dyadic grid obtained repeatedly
subdividing $Q$ and its descendents in $2^{n}$ cubes with the same side length.
\begin{defn}
A dyadic lattice $\mathcal{D}$ in $\mathbb{R}^{n}$ is a family of
cubes that satisfies the following proporties

\begin{enumerate}
\item If $Q\in\mathcal{D}$ then each descendant of $Q$ is in $\mathcal{D}$
as well.
\item For every 2 cubes $Q_{1},Q_{2}$ we can find a common ancestor, that
is, a cube $Q\in\mathcal{D}$ such that $Q_{1},Q_{2}\in\mathcal{D}(Q)$.
\item For every compact set $K$ there exists a cube $Q\in\mathcal{D}$
such that $K\subseteq Q$.
\end{enumerate}
\end{defn}
A way to build such a structure is to consider an increasing sequence of cubes
$\{Q_{j}\}$ expanding each time from a different vertex. That choice
of cubes gives that $\mathbb{R}^{n}=\cup_{j}Q_{j}$ and it is not hard
to check that
\[
\mathcal{D}=\bigcup_{j}\{Q\in\mathcal{D}(Q_{j})\}
\]
is a dyadic lattice.

\begin{lem}
Given a dyadic lattice $\mathcal{D}$ there exist $3^{n}$ dyadic
lattices $\mathcal{D}_{j}$ such that
\[
\{3Q\,:\,Q\in\mathcal{D}\}=\bigcup_{j=1}^{3^{n}}\mathcal{D}_{j}
\]
and for every cube $Q\in\mathcal{D}$ we can find a cube $R_{Q}$
in each $\mathcal{D}_{j}$ such that $Q\subseteq R_{Q}$ and $3l_{Q}=l_{R_{Q}}$
\end{lem}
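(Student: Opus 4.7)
My plan is to construct the $3^{n}$ dyadic lattices $\mathcal{D}_{\alpha}$ indexed by $\alpha \in \{0,1,2\}^{n}$ explicitly and then verify both the partition equality and the three-ancestor property by direct computation.

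After a translation, I may assume that $\mathcal{D}$ is the standard dyadic lattice, whose cubes take the form $2^{k} m + [0, 2^{k}]^{n}$ for $k \in \mathbb{Z}$, $m \in \mathbb{Z}^{n}$. For each $\alpha \in \{0,1,2\}^{n}$ I would define $\mathcal{D}_{\alpha}$ to consist, at each scale $3 \cdot 2^{k}$ (with $k \in \mathbb{Z}$), of the cubes
\[
3 \cdot 2^{k} m' + 2^{k} \alpha_{k} + [0, 3 \cdot 2^{k}]^{n}, \qquad m' \in \mathbb{Z}^{n},
\]
where the scale-dependent phase vector $\alpha_{k} \in \{0,1,2\}^{n}$ is determined by the recursion $\alpha_{k-1} \equiv 2 \alpha_{k} \pmod{3}$ (componentwise) anchored at $\alpha_{0} = \alpha$. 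Since $2$ is invertible modulo $3$ (with $2 \cdot 2 \equiv 1$), this recursion is well-defined in both directions and produces a unique sequence $\{\alpha_{k}\}_{k \in \mathbb{Z}}$ from the initial data $\alpha$.

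To verify that each $\mathcal{D}_{\alpha}$ is a dyadic lattice in the sense of the paper, I would compute directly that the $2^{n}$ children obtained by halving a cube at scale $3 \cdot 2^{k}$ with vertex $3 \cdot 2^{k} m' + 2^{k} \alpha_{k}$ have vertices of the form $3 \cdot 2^{k-1} m'' + 2^{k-1} \alpha_{k-1}$, which is precisely the form dictated by the recursion for $\alpha_{k-1}$; this yields closure under the operation of taking dyadic children. The common-ancestor and cover-compact axioms then follow from the observation that the cubes at each scale of $\mathcal{D}_{\alpha}$ tile $\mathbb{R}^{n}$ disjointly, so any two cubes lie inside a sufficiently large common cube at a coarser scale.

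For the partition equality, given $Q \in \mathcal{D}$ of sidelength $2^{k}$ and vertex $2^{k} m$, the cube $3Q$ has vertex $2^{k}(m - \mathbf{1})$; this lies in $\mathcal{D}_{\alpha}$ at scale $k$ if and only if $m - \mathbf{1} \equiv \alpha_{k} \pmod{3}$, which singles out a unique $\alpha$. Conversely, every cube of $\mathcal{D}_{\alpha}$ of sidelength $3 \cdot 2^{k}$ is of the form $3Q$ for some $Q \in \mathcal{D}$ by the same identity. For the three-ancestor property, since the scale-$k$ cubes of $\mathcal{D}_{\alpha}$ tile $\mathbb{R}^{n}$ disjointly and each has sidelength $3 \ell_{Q}$, there is exactly one that contains $Q$, and we take this as $R_{Q}$. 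The main obstacle is maintaining consistent phase information across all scales, and this is resolved by the invertibility of $2$ modulo $3$.
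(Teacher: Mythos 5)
The paper itself does not prove this lemma (it is quoted from \cite{LN}), so your argument has to stand on its own, and it has a genuine gap at the very first step. You reduce to the case where $\mathcal{D}$ is the standard dyadic grid $\{2^{k}m+[0,2^{k}]^{n}\}$, but under the paper's definition this family is \emph{not} a dyadic lattice: no cube of it contains a neighbourhood of the origin, so the second property (common ancestor of any two cubes) and the third (every compact set inside some cube) fail, and translating does not repair this — it only moves the bad point. Conversely, a genuine dyadic lattice is not a translate of the standard grid: its grids at different scales are offset in a scale-dependent way precisely to avoid such a persistent boundary point. So your reduction both loses generality and replaces $\mathcal{D}$ by an object to which the lemma does not even apply.

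The same issue resurfaces when you verify that each $\mathcal{D}_{\alpha}$ is a dyadic lattice: the claim that ``the cubes at each scale tile $\mathbb{R}^{n}$ disjointly, so any two cubes lie inside a sufficiently large common cube at a coarser scale'' is a non sequitur — the standard grid tiles at every scale and yet has no common ancestors across the origin. Concretely, for $\alpha=0$ your recursion gives $\alpha_{k}=0$ for all $k$, the family is $\{3\cdot 2^{k}m+[0,3\cdot2^{k}]^{n}\}$, and already in dimension one the cubes $[-3,0]$ and $[0,3]$ have no common ancestor in it, nor does any of its cubes contain the compact set $[-1,1]$; so this $\mathcal{D}_{\alpha}$ fails the lattice axioms. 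Your mod-$3$ bookkeeping (corner phase multiplied by $2$ modulo $3$ when passing to children) is the right combinatorial core and is essentially the Lerner--Nazarov splitting, but the proof must keep a general $\mathcal{D}$ and define the $3^{n}$ subfamilies of $\{3Q\}$ relative to $\mathcal{D}$'s own grid at each scale. Each tile of such a subfamily of sidelength $3\cdot 2^{k}$ is then a union of $3^{n}$ grid cubes of $\mathcal{D}$ of sidelength $2^{k}$, whence every $Q\in\mathcal{D}$ lies in exactly one cube $R_{Q}$ of each subfamily with $l_{R_{Q}}=3l_{Q}$, and the common-ancestor and compact-containment properties of each $\mathcal{D}_{j}$ are inherited from the corresponding properties of $\mathcal{D}$ through $K\subseteq Q\subseteq R_{Q}$. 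That inheritance is exactly what your reduction discards.
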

\begin{rem}
\label{Rem}Fix $\mathcal{D}$. For an arbitrary cube $Q\subseteq\mathbb{R}^{n}$
there is a cube $Q'\in\mathcal{D}$ such that $\frac{l_{Q}}{2}<l_{Q'}\leq l_{Q}$
and $Q\subseteq3Q'$ . It suffices to take the cube $Q'$ that contains
the center of $Q$ . From the lemma above it follows that $3Q'=P\in\mathcal{D}_{j}$
for some $j\in\{1,\dots,3^{n}\}$. Therefore, for every cube $Q\subseteq\mathbb{R}^{n}$
there exists $P\in\mathcal{D}_{j}$ such that $Q\subseteq P$ and
$l_{P}\leq3l_{Q}$. From this follows that $|Q|\leq|P|\leq3^{n}|Q|$

\end{rem}
\begin{defn}
$\mathcal{S}\subseteq\mathcal{D}$ is a $\eta$-sparse family with
$\eta\in(0,1)$ if for each $Q\in\mathcal{S}$ we can find a measurable
subset $E_{Q}\subseteq Q$ such that
\[
\eta|Q|\leq|E_{Q}|
\]
and all the $E_{Q}$ are pairwise disjoint.
\end{defn}
We also recall here the definition of Carleson family.
\begin{defn}
We say that a family $\mathcal{S}\subseteq\mathcal{D}$ is $\varLambda$-Carleson
with $\varLambda>1$ if for each $Q\in\mathcal{S}$ we have that
\[
\sum_{P\in\mathcal{S},\,P\subseteq Q}|P|\leq\varLambda|Q|.
\]
The following result that establishes the relationship between Carleson
and sparse families was obtained in \cite{LN} and reads as follows.
\end{defn}
\begin{lem}
\label{Lem:CarlesonSparse}If $\mathcal{S}\subseteq\mathcal{D}$ is
a $\eta$-sparse family then it is a $\frac{1}{\eta}$-Carleson family.
Conversely if $\mathcal{S}$is $\varLambda$-Carleson then it is $\frac{1}{\varLambda}$-sparse.
\end{lem}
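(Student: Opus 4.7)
The forward direction is immediate from the definitions. Suppose $\mathcal{S}$ is $\eta$-sparse, with associated pairwise disjoint subsets $\{E_P\}_{P\in\mathcal{S}}$ satisfying $E_P\subseteq P$ and $|E_P|\geq \eta|P|$. Fix any $Q\in\mathcal{S}$. Since $\{E_P : P\in\mathcal{S},\,P\subseteq Q\}$ is a disjoint collection of subsets of $Q$,
$$\sum_{P\in\mathcal{S},\,P\subseteq Q}|P|\leq \frac{1}{\eta}\sum_{P\in\mathcal{S},\,P\subseteq Q}|E_P|\leq \frac{1}{\eta}|Q|,$$
so $\mathcal{S}$ is $\tfrac{1}{\eta}$-Carleson.

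For the converse, assume $\mathcal{S}$ is $\Lambda$-Carleson. My plan is to produce pairwise disjoint measurable sets $E_Q\subseteq Q$ with $|E_Q|\geq |Q|/\Lambda$ for every $Q\in\mathcal{S}$ by rephrasing the problem as a measure-theoretic marriage lemma. One seeks densities $\varphi_Q:\mathbb{R}^{n}\to [0,1]$, each supported in $Q$, satisfying $\sum_{Q\in\mathcal{S}}\varphi_Q\leq 1$ almost everywhere and $\int\varphi_Q\,dx\geq |Q|/\Lambda$; standard measure-theoretic disaggregation then yields disjoint sets $E_Q\subseteq Q$ of the prescribed mass. By a Hall/max-flow principle on the natural bipartite setup (cubes of $\mathcal{S}$ versus Lebesgue measure on $\mathbb{R}^n$), such densities exist provided that for every finite subfamily $\mathcal{F}\subseteq\mathcal{S}$ one has
$$\sum_{Q\in\mathcal{F}}|Q|\leq \Lambda\,\Bigl|\bigcup_{Q\in\mathcal{F}}Q\Bigr|.$$

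Verifying this Hall-type inequality is where the dyadic Carleson hypothesis enters. Given a finite $\mathcal{F}$, let $\{R_i\}$ be the maximal cubes of $\mathcal{F}$ under inclusion; by the dyadic lattice structure they are pairwise disjoint and $\bigcup_{Q\in\mathcal{F}}Q=\bigsqcup_i R_i$. Since each $R_i\in\mathcal{S}$, the $\Lambda$-Carleson condition applied at $R_i$ gives
$$\sum_{Q\in\mathcal{F},\,Q\subseteq R_i}|Q|\leq \sum_{P\in\mathcal{S},\,P\subseteq R_i}|P|\leq \Lambda|R_i|,$$
and summing over $i$ produces the required bound.

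The main technical point I anticipate is the passage from arbitrary finite subfamilies to the (possibly infinite) full family $\mathcal{S}$. I would exhaust $\mathcal{S}$ by an increasing sequence $\mathcal{F}_1\subseteq\mathcal{F}_2\subseteq\cdots$ of finite subfamilies, construct the corresponding densities $\varphi_Q^{(k)}$ for each $k$, and extract a subsequential weak-$*$ limit in $L^{\infty}(\mathbb{R}^{n})$ (using that the admissible densities form a weakly-$*$ compact convex subset of the unit ball of $L^\infty$). Both the pointwise constraint $\sum_Q\varphi_Q\leq 1$ and each lower mass bound $\int\varphi_Q\geq |Q|/\Lambda$ pass to the limit, the former by duality against nonnegative $L^1$ functions and the latter by testing against $\chi_Q\in L^1$. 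This compactness step is the only part of the argument that requires real care; everything else is essentially a reformulation of the Carleson condition in the language of matchings.
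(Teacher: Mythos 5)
Your forward direction is correct, and your verification of the Hall-type condition for finite subfamilies (pass to the maximal cubes, which are pairwise disjoint by dyadic structure, and apply the Carleson bound on each) is also correct; that computation is indeed the only place where the Carleson hypothesis enters. Note that the paper does not prove this lemma at all: it quotes it from Lerner--Nazarov, whose argument for the hard implication is a construction tailored to dyadic cubes. Your route is genuinely different: it is the ``measurable marriage'' approach (finite Hall condition plus a continuous Hall/max-flow theorem), which is how the equivalence is proved for families of arbitrary measurable sets via the Bollob\'as--Varopoulos theorem (this is essentially H\"anninen's proof of the general-sets version).

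There is, however, a genuine gap, and it is not where you locate it. The weak-$*$ compactness step is routine and works as you describe ($\mathcal{S}$ is countable, and the constraints $\sum_{Q}\varphi_{Q}\le 1$, $\supp\varphi_{Q}\subseteq Q$, $\int\varphi_{Q}\ge |Q|/\Lambda$ all pass to the limit). The unjustified step is the phrase ``standard measure-theoretic disaggregation then yields disjoint sets $E_{Q}\subseteq Q$ of the prescribed mass.'' For a \emph{finite} family this rounding is easy: on each cell of the finite partition generated by the cubes one carves disjoint pieces of prescribed measure using non-atomicity of Lebesgue measure. For the full countable family that argument collapses (the partition generated by all dyadic cubes has only null cells), and no soft argument converts a fractional allocation $\{\varphi_{Q}\}$ with $\sum_{Q}\varphi_{Q}\le 1$ into pairwise disjoint sets $E_{Q}\subseteq Q$ with $|E_{Q}|\ge\int\varphi_{Q}$: the natural ``stacking'' construction only produces disjoint sets in $Q\times[0,1)$, i.e.\ sparseness in the lifted sense, not subsets of $Q$. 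This fractional-to-integral rounding for infinitely many sets is precisely the nontrivial content of the infinite measurable Hall theorem (the Bollob\'as--Varopoulos theorem), i.e.\ essentially the statement you are trying to prove; nor can you sidestep it by taking limits of the honest sets produced for each finite $\mathcal{F}_{k}$, since weak-$*$ limits of indicator functions are again only densities. To close the argument, either invoke the Bollob\'as--Varopoulos theorem in its version for arbitrary families (which renders the densities and the compactness step unnecessary), or supply a proof of the countable rounding step, or follow the dyadic-specific construction of Lerner--Nazarov that the paper cites.
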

Now we turn to recall the definition of the local oscillation \cite{L1}
which is given in terms of decreasing rearrangements.
\begin{defn}
[Local oscillation]Given $\lambda\in(0,1)$, a measurable function
$f$ and a cube $Q$. We define
\[
\tilde{w}_{\lambda}(f;Q):=\inf_{c\in\mathbb{R}}\left((f-c)\chi_{Q}\right)^{*}(\lambda|Q|).
\]
\end{defn}
For any function $g$, its decreasing rearrangement $g^{*}$ is given
by
\[
g^{*}(t)=\inf\left\{ \alpha>0\,:\,\left|\left\{ x\in\mathbb{R}^{n}\,:\,|g|>\alpha\right\} \right|\leq t\right\} .
\]
In particular,
\[
\left((f-c)\chi_{Q}\right)^{*}(\lambda|Q|)=\inf\left\{ \alpha>0\,:\,\left|\left\{ x\in Q\,:\,|f-c|>\alpha\right\} \right|\leq\lambda|Q|\right\} .
\]

Now we define Lerner-Nazarov oscillation \cite{LN}. We would like
to observe that decreasing rearrangements are not involved in the
definition.
\begin{defn}
[Lerner-Nazarov oscillation]Given $\lambda\in(0,1)$, a measurable
function $f$ and a cube $Q$. We define the $\lambda$-oscillation
of $f$ on $Q$ as
\[
w_{\lambda}(f;Q):=\inf\left\{ w(f;E)\,:\,E\subseteq Q,\,|E|\geq(1-\lambda)|Q|\right\},
\]
where
\[
w(f;E)=\sup_{E}f-\inf_{E}f.
\]
\end{defn}
Now as we announced we are going to prove that the local oscillation
controls Lerner-Nazarov oscillation.
\begin{lem}
\label{Lem:Oscil}Given a measurable function $f$ we have that for
every $\lambda\in(0,1)$,
\[
w(f;Q)\leq2\tilde{w}_{\lambda}(f;Q).
\]
\end{lem}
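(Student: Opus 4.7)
The plan is to unwind the definitions directly; the factor $2$ will arise from the gap between measuring the distance to a single best constant (as in $\tilde w_\lambda$) and measuring the full spread $\sup - \inf$ (as in $w_\lambda$).

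First, set $\alpha := \tilde{w}_\lambda(f;Q)$. By the definition of the infimum in $\tilde w_\lambda$, for every $\varepsilon > 0$ I can choose a constant $c=c(\varepsilon)\in\mathbb{R}$ satisfying $((f-c)\chi_Q)^*(\lambda|Q|) < \alpha+\varepsilon$. Translating the definition of the decreasing rearrangement, this is the same as
\[
|\{x\in Q : |f(x)-c| > \alpha+\varepsilon\}| \leq \lambda|Q|.
\]
Hence the complementary set $E := \{x\in Q : |f(x)-c| \leq \alpha+\varepsilon\}$ satisfies $|E| \geq (1-\lambda)|Q|$, so $E$ is admissible in the infimum defining the Lerner--Nazarov oscillation $w_\lambda(f;Q)$.

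Next, on $E$ one has the two-sided bound $c-(\alpha+\varepsilon)\leq f \leq c+(\alpha+\varepsilon)$, whence $w(f;E) = \sup_E f - \inf_E f \leq 2(\alpha+\varepsilon)$. Combining this with the admissibility of $E$ yields
\[
w_\lambda(f;Q) \leq w(f;E) \leq 2\tilde{w}_\lambda(f;Q) + 2\varepsilon,
\]
and letting $\varepsilon\downarrow 0$ finishes the argument.

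There is no genuine technical obstacle: the content of the lemma is simply that, once one has passed to a subset of measure at least $(1-\lambda)|Q|$ on which $f$ stays within distance $\tilde w_\lambda(f;Q)$ of a common center $c$, those values of $f$ automatically lie in an interval of length at most $2\tilde w_\lambda(f;Q)$. The only point that requires a bit of care is the strict vs.\ non-strict inequality in the definition of $f^*$, which is the reason we introduce the $\varepsilon$-slack and then send $\varepsilon\to 0$ at the end to ensure that $E$ indeed has measure at least $(1-\lambda)|Q|$ and is thus admissible for $w_\lambda$.
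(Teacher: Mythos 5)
Your proof is correct and follows essentially the same route as the paper: both arguments pick a near-optimal center $c$, pass to the subset of $Q$ of measure at least $(1-\lambda)|Q|$ on which $|f-c|$ is controlled by $\tilde w_{\lambda}(f;Q)$ (up to slack), and use $\sup_E f-\inf_E f\leq 2\sup_E|f-c|$ to get the factor $2$. The only cosmetic difference is that the paper packages the rearrangement step through the identity $f^{*}(t)=\inf_{|E|\leq t}\|f\chi_{E^{c}}\|_{L^{\infty}}$ and takes infima at the end, whereas you handle it by an explicit $\varepsilon$-argument directly from the definition of $f^{*}$, which in fact sidesteps the essential-supremum versus supremum issue cleanly.
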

\begin{proof}
We start the proof of this lemma recalling a useful
identity that appears stated in \cite{H}:
\[
f^{*}(t)=\inf_{|E|\leq t}\|f\chi_{E^{c}}\|_{L^{\infty}},
\]
where $E$ is any measurable set contained in $\mathbb{R}^n$. Taking that identity into account it is clear that
\[
\tilde{w}_{\lambda}(f;Q)=\inf_{c\in\mathbb{R}}\inf_{E\subseteq Q,\,|E|\leq\lambda|Q|}\left\Vert \left(f-c\right)\chi_{Q\setminus E}\right\Vert _{L^{\infty}},
\]
since it allows us to write
\[
\inf_{E\subseteq Q,\,|E|\leq\lambda|Q|}\left\Vert \left(f-c\right)\chi_{Q\setminus E}\right\Vert _{L^{\infty}}=\inf\left\{ \alpha>0\,:\,\left|\left\{ x\in Q\,:\,|f-c|>\alpha\right\} \right|\leq\lambda|Q|\right\}.
\]
Now we observe that
\[
w_{\lambda}(f;Q)=\inf\left\{ w(f;Q\setminus E)\,:\,E\subseteq Q,\,\lambda|Q|\geq|E|\right\} .
\]
Let $c>0$. We see that
\[
\begin{split}w(f;Q\setminus E) & =\sup_{Q\setminus E}f-\inf_{Q\setminus E}f=\sup_{Q\setminus E}f-c+c-\inf_{Q\setminus E}f\\
 & =\sup_{Q\setminus E}(f-c)+\sup_{Q\setminus E}(-f+c)\leq2\|(f-c)\chi_{Q\setminus E}\|_{L^{\infty}}.
\end{split}
\]
And taking infimum on both sides of the inequality
\[
w_{\lambda}(f;Q)\leq2\tilde{w}_{\lambda}(f;Q).
\]
To end this Section we introduce Lerner-Nazarov formula (cf. \cite{LN}).
\end{proof}
\begin{thm}
[Lerner-Nazarov formula]\label{LernerFormula-1}Let $f:\mathbb{R}^{n}\rightarrow\mathbb{R}$
be a measurable function such that for each $\varepsilon>0$
\[
\left|\left\{ x\in[-R,R]^{n}\,:\,|f(x)|>\varepsilon\right\} \right|=o(R^{n})\,\, \text{as}\,\,R\rightarrow \infty.
\]
Then for each dyadic lattice $\mathcal{D}$ and every $\lambda\in(0,2^{-n-2}]$
we can find a regular $\frac{1}{6}$-sparse family of cubes $\mathcal{S\subseteq D}$
(depending on $f$) such that
\[
|f(x)|\leq\sum_{Q\in\mathcal{S}}w_{\lambda}(f;Q)\chi_{Q}(x)\qquad\text{a.e.}
\]
\end{thm}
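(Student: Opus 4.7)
The plan is to produce $\mathcal S$ via a Calder\'on-Zygmund-type stopping time: one starts from disjoint initial dyadic cubes supplied by the growth hypothesis, attaches to every selected cube $Q$ a near-median constant $c_Q$ and a small exceptional set $F_Q\subseteq Q$ of measure at most $\lambda|Q|$ where $|f-c_Q|>w_\lambda(f;Q)$, and iterates by stopping whenever a dyadic descendant meets $F_Q$ with large density. The pointwise estimate is then obtained by telescoping the $c_Q$'s along the descending chain through $x$, and sparseness drops out of the standard measure estimate on maximal stopping cubes.

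For the initial step, I would use the hypothesis $|\{|f|>\varepsilon\}\cap[-R,R]^n|=o(R^n)$: fixing $\varepsilon>0$, for a.e.\ $x\in\mathbb R^n$ one can choose a maximal cube $Q_0\in\mathcal D$ with $x\in Q_0$ and $|\{y\in Q_0:|f(y)|>\varepsilon\}|\le\lambda|Q_0|$, since the density of $\{|f|>\varepsilon\}$ in ever larger dyadic cubes tends to zero. The collection of these maximal cubes is pairwise disjoint and covers $\mathbb R^n$ up to a null set, and on each the value $0$ serves as a near-median. Running $\varepsilon$ through a countable sequence $\varepsilon_\ell\to 0$, the sparse families obtained at each scale can be concatenated into a single family (the concatenation remains Carleson by Lemma~\ref{Lem:CarlesonSparse} at the cost of a harmless constant), so one may assume $c_{Q_0}\approx 0$ on the initial generation.

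For the recursion, given $Q\in\mathcal S$ I choose $E_Q\subseteq Q$ with $|E_Q|\ge(1-\lambda)|Q|$ and $w(f;E_Q)$ arbitrarily close to $w_\lambda(f;Q)$, set $c_Q\in[\inf_{E_Q}f,\sup_{E_Q}f]$, and let $F_Q=Q\setminus E_Q$. The stopping children of $Q$ are the maximal dyadic $P\subsetneq Q$ with $|P\cap F_Q|>\alpha|P|$; disjointness and maximality yield $\sum_P|P|\le|F_Q|/\alpha\le(\lambda/\alpha)|Q|$. With $\lambda\le 2^{-n-2}$ the threshold $\alpha$ can be chosen so that both $\lambda/\alpha\le 5/6$ and $2^n\alpha+\lambda<1$, which simultaneously gives $\tfrac16$-sparseness with witnesses $E^*_Q=Q\setminus\bigcup_P P$ and ensures that for any stopping child $Q_{k+1}$ of $Q_k$ the intersection $Q_{k+1}\cap E_{Q_k}\cap E_{Q_{k+1}}$ has positive measure. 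Picking a point $y$ there, one gets $|c_{Q_k}-c_{Q_{k+1}}|\le w_\lambda(f;Q_k)+w_\lambda(f;Q_{k+1})$, and telescoping along the chain $Q_0\supsetneq Q_1\supsetneq\cdots$ through $x$, together with $|f(x)-c_{Q_k}|\le w_\lambda(f;Q_k)$ whenever $x\in E_{Q_k}$ (applicable almost surely by the Lebesgue differentiation theorem on medians), yields $|f(x)|\lesssim\sum_k w_\lambda(f;Q_k)$. A final mild rescaling of $\lambda$ absorbs the implicit constant and produces the stated bound.

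The main obstacle is the simultaneous calibration of the three small parameters at play: the sparsity bound requires $\lambda/\alpha\le 5/6$, the comparability of consecutive $c_Q$'s requires $2^n\alpha+\lambda<1$, and the slack in $w(f;E_Q)-w_\lambda(f;Q)$ must be summable along every (possibly infinite) chain, forcing one to let these approximation errors shrink with the depth of the cube. Reconciling this calibration with the assembly of a single sparse family from the $\varepsilon_\ell\to 0$ approximations of the initial step is precisely where the regime $\lambda\in(0,2^{-n-2}]$ is needed and where Lemma~\ref{Lem:CarlesonSparse} plays its bookkeeping role.
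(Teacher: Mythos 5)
The paper itself does not prove this theorem (it is quoted from \cite{LN}), so I am judging your argument on its own merits. Your stopping-time recursion on a fixed cube is the right mechanism, and your calibration of $\alpha$ against $\lambda$ for the sparseness bookkeeping is fine, but the argument breaks exactly at the global assembly, which is where the hypothesis $|\{x\in[-R,R]^n:|f(x)|>\varepsilon\}|=o(R^n)$ must be used. First, your initial generation is ill-defined: a \emph{maximal} cube $Q_0\in\mathcal D$ containing $x$ with $|\{y\in Q_0:|f(y)|>\varepsilon\}|\le\lambda|Q_0|$ does not exist, precisely because of the reason you invoke --- the cubes of $\mathcal D$ containing $x$ form an increasing chain with side lengths tending to infinity, and the decay hypothesis makes the smallness condition hold for \emph{all} sufficiently large cubes in that chain, so there is no maximal one. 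Maximal-cube selection works for the reverse inequality (the maximal cubes on which the density of $\{|f|>\varepsilon\}$ \emph{exceeds} $\lambda$; these are disjoint, and off their union $|f|\le\varepsilon$ a.e.). Second, and more seriously, the merging over $\varepsilon_\ell\to0$ is not legitimate: each family $\mathcal S_\ell$ may be $\tfrac16$-sparse, but a countable union of sparse families need not satisfy \emph{any} Carleson condition (e.g.\ taking $\mathcal S_\ell$ to be the partition of a fixed cube into its generation-$\ell$ descendants, each $\mathcal S_\ell$ is $1$-sparse while the union is all of $\mathcal D(Q_0)$), and Lemma \ref{Lem:CarlesonSparse} only converts a finite Carleson constant of a single family into a sparseness constant; it says nothing about unions. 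This is the crux of the global statement: in \cite{LN} one proves the estimate on a fixed cube with a median on the left, $|f-m_f(Q_0)|\le\sum_{Q\in\mathcal S(Q_0)}w_\lambda(f;Q)\chi_Q$ a.e.\ on $Q_0$, and then uses the decay hypothesis to make the medians of large cubes vanish within a single construction, rather than concatenating infinitely many sparse families.

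A second, smaller defect: with near-medians $c_Q$ and the comparison through a common point, $|c_{Q_k}-c_{Q_{k+1}}|\le w_\lambda(f;Q_k)+w_\lambda(f;Q_{k+1})$, the telescoping counts each oscillation twice and additionally accumulates the positive slack $w(f;E_Q)-w_\lambda(f;Q)$, so at best you obtain $|f|\le 2\sum_{Q\in\mathcal S}w_\lambda(f;Q)\chi_Q+\mathrm{error}$. The proposed ``final mild rescaling of $\lambda$'' cannot repair this: decreasing $\lambda$ only increases $w_\lambda(f;Q)$, and the statement asserts constant $1$ for the \emph{same} $\lambda$ on both sides. The standard fix is to use genuine medians: any median satisfies $|\{x\in Q:|f(x)-m_f(Q)|>w_\lambda(f;Q)\}|\le\lambda|Q|$ with no slack (pass to the limit along near-optimal sets $E$), and for each stopping child $P$ chosen so that $|P\cap F_Q|<\tfrac12|P|$ one has $|m_f(P)-m_f(Q)|\le w_\lambda(f;Q)$, so each cube's oscillation enters the telescope exactly once and the terminal estimate $|f(x)-m_f(Q_N)|\le w_\lambda(f;Q_N)$ holds a.e.\ by the density argument you indicate. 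In short: the local decomposition is essentially right once medians replace near-medians, but the passage to $\mathbb R^n$ --- the only place the $o(R^n)$ hypothesis is needed --- is missing a correct argument.
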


\subsection{\label{subsec:ApAinfty}Mixed $A_{p}-A_{\infty}$ estimates for generalized
dyadic square functions}

The content of this Section is essentially borrowed from \cite{HL}.
We begin with the definition of a generalization of dyadic square
functions.
\begin{defn}
Let $r>0$ and $\eta\in(0,1)$. Let $\mathcal{D}$ be a dyadic lattice
and $\mathcal{S}\subseteq\mathcal{D}$ be an $\eta$-sparse family. We
define the operator $\mathcal{A}_{\mathcal{S}}^{r}$ as
\[
\mathcal{A}_{\mathcal{S}}^{r}f(x)=\left(\sum_{Q\in\mathcal{S}}\left(\frac{1}{|Q|}\int_{Q}|f(y)|dy\right)^{r}\chi_{Q}(x)\right)^{\frac{1}{r}}.
\]
\end{defn}
We will also follow the notation of \cite{HL} for the $A_{p}$ and
related weights. Given a pair of weights $w$ and $\sigma$ we denote
\[
[w,\sigma]_{A_{p}}=\sup_{Q}\left(\frac{1}{|Q|}\int_{Q}w\right)\left(\frac{1}{|Q|}\int_{Q}\sigma\right)^{p-1}
\]
and
\[
[w]_{A_{\infty}}=\sup_{Q}\frac{1}{w(Q)}\int_{Q}M(w\chi_{Q}).
\]
We recall that $A_{\infty}=\bigcup_{p\geq1}A_{p}$ and that $w\in A_{\infty}$
if and only if $[w]_{A_{\infty}}<\infty$. We also note that if $w\in A_{p}$
and $\sigma=w^{1-p'}$ then $[w,\sigma]_{A_{p}}=[w]_{A_{p}}$.

The definition of the $A_{\infty}$ constant $[w]_{A_{\infty}}$
was shown to be very suitable in \cite{HPAinfty} being one of the main results of that work the following optimal reverse
H\"older inequality (see also \cite{HPR} for a different proof).
\begin{thm}[Reverse H\"older inequality]
 \label{Thm:RHI}Let $w\in A_{\infty}$. Then there exists a dimensional
constant $\tau_{n}$ such that
\[
\left(\frac{1}{|Q|}\int_{Q}w^{r_{w}}\right)^{\frac{1}{r_{w}}}\leq\frac{2}{|Q|}\int_{Q}w,
\]
where
\[
r_{w}=1+\frac{1}{\tau_{n}[w]_{A_{\infty}}}.
\]

\end{thm}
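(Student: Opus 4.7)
My plan is to follow the approach of Hyt\"onen--P\'erez in \cite{HPAinfty}, combining a layer-cake identity for the dyadic maximal function with a \emph{self-improvement step} that exploits the Fujii--Wilson definition of $[w]_{A_\infty}$ on each stopping cube, and closes by absorption. First I would use the standard one-third trick to reduce to a fixed dyadic lattice $\mathcal{D}$, a cube $Q\in\mathcal{D}$, and the local dyadic maximal function $M^{d,Q}$; up to a dimensional constant this is enough. With $\epsilon:=r_w-1=1/(\tau_n[w]_{A_\infty})$ and $\Omega_t:=\{x\in Q:M^{d,Q}w(x)>t\}$, the inequality $w\leq M^{d,Q}w$ a.e.\ on $Q$ and the layer-cake formula give
\[
\int_Q w^{1+\epsilon}\,dx \;\leq\; \int_Q w\,(M^{d,Q}w)^\epsilon\,dx \;=\; w_Q^{\epsilon}\,w(Q) + \epsilon\int_{w_Q}^\infty t^{\epsilon-1}w(\Omega_t)\,dt.
\]
For $t>w_Q$, the Calder\'on--Zygmund structure of $\Omega_t=\bigsqcup_j R_j$ (maximal dyadic cubes with $t<w_{R_j}\leq 2^n t$) yields $w(\Omega_t)\leq 2^n t\,|\Omega_t|$, so the problem reduces to controlling $\int_{w_Q}^\infty t^\epsilon|\Omega_t|\,dt$.

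The core of the argument, and the main obstacle, is the following self-improvement. Setting $F(t):=\int_t^\infty|\Omega_s|\,ds$, one notes that on each stopping cube $R_j$ the local maximal function $M^{d,R_j}w$ agrees with $M^{d,Q}w$ (ancestors of $R_j$ in $\mathcal{D}(Q)$ have average $\leq t<w_{R_j}$), so by the Fujii--Wilson $A_\infty$ condition applied inside $R_j$,
\[
\int_{R_j} M^{d,Q}w \;=\; \int_{R_j} M^{d,R_j}w \;\leq\; c_n[w]_{A_\infty}\,w(R_j) \;\leq\; c_n' [w]_{A_\infty}\,t\,|R_j|.
\]
Summing in $j$ and observing that $\int_{\Omega_t}M^{d,Q}w \geq F(t)$ (since $\{M^{d,Q}w>s\}\subseteq\Omega_t$ for $s\geq t$), I obtain the key bound
\[
F(t) \;\leq\; c_n''[w]_{A_\infty}\,t\,|\Omega_t|,\qquad t>w_Q.
\]

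With this in hand I would integrate by parts:
\[
\int_{w_Q}^\infty t^\epsilon|\Omega_t|\,dt \;=\; w_Q^{\epsilon}F(w_Q) + \epsilon\int_{w_Q}^\infty t^{\epsilon-1}F(t)\,dt \;\leq\; w_Q^\epsilon F(w_Q) + \epsilon\,c_n''[w]_{A_\infty}\int_{w_Q}^\infty t^\epsilon|\Omega_t|\,dt.
\]
Choosing $\tau_n$ large enough that $\epsilon\,c_n''[w]_{A_\infty}=c_n''/\tau_n<\tfrac12$ allows me to absorb the tail onto the left, yielding $\int_{w_Q}^\infty t^\epsilon|\Omega_t|\,dt\leq 2 w_Q^\epsilon F(w_Q)$. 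Combining this with $F(w_Q)=\int_Q M^{d,Q}w-w_Q|Q|\leq c_n[w]_{A_\infty}w_Q|Q|$ and plugging back, I arrive at
\[
\int_Q w^{1+\epsilon} \;\leq\; w_Q^{1+\epsilon}|Q|\bigl(1+C_n\,\epsilon[w]_{A_\infty}\bigr)\;=\;w_Q^{1+\epsilon}|Q|\,\bigl(1+C_n/\tau_n\bigr).
\]
Enlarging $\tau_n$ once more so that $(1+C_n/\tau_n)^{1/(1+\epsilon)}\leq 2$ gives the claimed inequality. The delicate point is balancing the two distinct appearances of $[w]_{A_\infty}$ (once through absorption and once through $F(w_Q)$) against the free parameter $\epsilon$; this balance is precisely what forces the sharp scaling $r_w-1\sim 1/[w]_{A_\infty}$ rather than the weaker $1/\log[w]_{A_\infty}$ one gets from a naive iteration.
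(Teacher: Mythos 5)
Your argument is correct, but note that the paper itself does not prove this theorem: it is imported verbatim from \cite{HPAinfty} (with \cite{HPR} cited for an alternative proof), and what you have written is essentially a reconstruction of that Hyt\"onen--P\'erez(--Rela) argument rather than something the paper re-derives. The key steps all check out: for a stopping cube $R_j$ of $\Omega_t$ the dyadic ancestors inside $Q$ have averages at most $t<w_{R_j}$, so $M^{d,Q}w=M^{d}(w\chi_{R_j})\leq M(w\chi_{R_j})$ on $R_j$, and the Fujii--Wilson constant together with $w_{R_j}\leq 2^n t$ gives $F(t)\leq\int_{\Omega_t}M^{d,Q}w\leq 2^n[w]_{A_\infty}\,t\,|\Omega_t|$; the absorption with $\epsilon=1/(\tau_n[w]_{A_\infty})$ then yields $\int_Q w^{1+\epsilon}\leq w_Q^{1+\epsilon}|Q|\bigl(1+2^{n+1}/\tau_n\bigr)$, which is the claimed bound once $\tau_n$ is dimensional and large. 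Two small points to tighten: (i) the one-third trick is unnecessary here, since the whole proof lives inside the fixed cube $Q$ with its own dyadic children, and the comparison $M^{d,Q}\leq M(\cdot\,\chi_Q)$ is all that is needed to invoke $[w]_{A_\infty}$; (ii) the absorption step presupposes $\int_{w_Q}^{\infty}t^{\epsilon}|\Omega_t|\,dt<\infty$, which is not automatic (the weak $(1,1)$ bound only gives $t^{\epsilon}|\Omega_t|\lesssim t^{\epsilon-1}w(Q)$), so you should either truncate the integration at a finite level $L$ (the boundary term $-L^{\epsilon}F(L)$ has the right sign and can be dropped, and the truncated integral is trivially finite) and let $L\to\infty$ by monotone convergence, or run the argument for $\min(w,N)$ first. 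With that standard limiting remark added, the proof is complete and gives the stated constant $2$ and the sharp scaling $r_w-1\simeq[w]_{A_\infty}^{-1}$.
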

To end this Section we borrow the main results from \cite{HL}. They
will be crucial for our purposes. In both statements
\[
(\alpha)_{+}=\begin{cases}
\alpha & \text{if }\alpha>0,\\
0 & \text{otherwise.}
\end{cases}
\]

\begin{thm}
\label{ThmFuerteAp}Let $1<p<\infty$ and $r>0$. Let $w,\sigma$ be
a pair of weights. Then
\[
\|\mathcal{A}_{\mathcal{S}}^{r}f\|_{L^{p}(w)}\lesssim[w,\sigma]_{A_{p}}^{\frac{1}{p}}\left([w]_{A_{\infty}}^{\left(\frac{1}{r}-\frac{1}{p}\right)_{+}}+[\sigma]_{A_{\infty}}^{\frac{1}{p}}\right)\|f\|_{L^{p}(w)}.
\]
\end{thm}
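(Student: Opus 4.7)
The plan is to adapt the two-weight Carleson-embedding strategy of \cite{HL}, splitting into the cases $r\ge p$ and $r<p$ since the exponent $(\tfrac1r-\tfrac1p)_{+}$ vanishes in the first one. When $r\ge p$ I would first use the pointwise monotonicity $\mathcal{A}_{\mathcal{S}}^{r}f\le\mathcal{A}_{\mathcal{S}}^{p}f$ (a consequence of $\ell^{p}\hookrightarrow\ell^{r}$ at each point) to reduce to $r=p$, in which case
\[
\|\mathcal{A}_{\mathcal{S}}^{p}(f\sigma)\|_{L^{p}(w)}^{p}=\sum_{Q\in\mathcal{S}}\langle f\sigma\rangle_{Q}^{p}\,w(Q).
\]
Writing $\langle f\sigma\rangle_{Q}=\langle\sigma\rangle_{Q}\langle f\rangle_{Q}^{\sigma}$ with $\langle f\rangle_{Q}^{\sigma}:=\sigma(Q)^{-1}\int_{Q}f\sigma$, the $A_{p}$-identity $\langle\sigma\rangle_{Q}^{p}w(Q)=\langle\sigma\rangle_{Q}^{p-1}\langle w\rangle_{Q}\sigma(Q)\le[w,\sigma]_{A_{p}}\sigma(Q)$ reduces matters to estimating $\sum_{Q}(\langle f\rangle_{Q}^{\sigma})^{p}\sigma(Q)$. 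This last sum is controlled by the weighted Carleson embedding; the Carleson constant of a Lebesgue-sparse family tested against $\sigma$ is $\lesssim[\sigma]_{A_{\infty}}$ by the Hytönen-Pérez type lemma (which rests on Theorem~\ref{Thm:RHI} to convert Lebesgue sparseness into $\sigma$-sparseness with constant depending on $[\sigma]_{A_{\infty}}$). This yields the estimate in this regime with only the $[\sigma]_{A_{\infty}}^{1/p}$ factor.

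For $r<p$ I would dualize against $g\ge 0$ with $\|g\|_{L^{(p/r)'}(w)}\le 1$,
\[
\|\mathcal{A}_{\mathcal{S}}^{r}(f\sigma)\|_{L^{p}(w)}^{r}=\sup_{g}\sum_{Q\in\mathcal{S}}\langle f\sigma\rangle_{Q}^{r}\,\langle g\rangle_{Q}^{w}\,w(Q),
\]
and split the $A_{p}$-factor via the elementary convex-combination inequality
\[
\langle\sigma\rangle_{Q}^{r}w(Q)\le[w,\sigma]_{A_{p}}^{r/p}\,\sigma(Q)^{r/p}\,w(Q)^{1-r/p}.
\]
A Hölder application over the $Q$-sum with exponents $p/r$ and $(p/r)'=p/(p-r)$ then separates the expression into two weighted Carleson sums, one involving $(\langle f\rangle_{Q}^{\sigma})^{p}\sigma(Q)$ and the other $(\langle g\rangle_{Q}^{w})^{(p/r)'}w(Q)$. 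Each is estimated by the weighted Carleson embedding as above, producing factors $[\sigma]_{A_{\infty}}\|f\|_{L^{p}(\sigma)}^{p}$ and $[w]_{A_{\infty}}\|g\|_{L^{(p/r)'}(w)}^{(p/r)'}$ respectively. Taking the outer roots, the exponent on $[w]_{A_{\infty}}$ becomes $(1-r/p)/r=1/r-1/p$, matching the claimed $(1/r-1/p)_{+}$.

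The hard part, and what \cite{HL} actually addresses, is that the argument above directly yields only the \emph{product} bound $[w]_{A_{\infty}}^{(1/r-1/p)_{+}}\,[\sigma]_{A_{\infty}}^{1/p}$, while the theorem claims the strictly sharper \emph{sum} bound $[w]_{A_{\infty}}^{(1/r-1/p)_{+}}+[\sigma]_{A_{\infty}}^{1/p}$. To obtain the sum form one runs the dualized estimate twice with two different principal-cube (parallel corona) decompositions: in one, stopping is performed only with respect to $\sigma$, placing the entire $A_{\infty}$ cost on $\sigma$ and incurring no $[w]_{A_{\infty}}$ factor; in the other, stopping is performed only with respect to $w$, doing the opposite. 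The minimum of the two bounds then dominates the sum. The technical core is organizing these two stopping-time constructions so that sparseness and the $A_{p}$-pairing survive intact, and this parallel-corona bookkeeping is where the main effort of the proof lies.
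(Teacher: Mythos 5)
A preliminary remark: the paper itself offers no proof of Theorem \ref{ThmFuerteAp}; it is quoted from \cite{HL}, so your proposal has to be measured against the argument there. The parts of your sketch that you carry out in detail are correct: for $r\ge p$ the reduction to $r=p$ via $\mathcal{A}_{\mathcal{S}}^{r}\le\mathcal{A}_{\mathcal{S}}^{p}$, the identity $\langle\sigma\rangle_Q^{p}w(Q)\le[w,\sigma]_{A_p}\sigma(Q)$, and the weighted Carleson embedding with the fact that a Lebesgue-sparse family is $\sigma$-Carleson with constant $\lesssim[\sigma]_{A_\infty}$ do give the claimed bound in that regime (and you rightly read the statement in its two-weight form, with $f\sigma$ inside and $\|f\|_{L^p(\sigma)}$ on the right). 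Likewise, for $r<p$ your dualization, the splitting $\langle\sigma\rangle_Q^{r}w(Q)\le[w,\sigma]_{A_p}^{r/p}\sigma(Q)^{r/p}w(Q)^{1-r/p}$, H\"older in $Q$, and two Carleson embeddings correctly yield the \emph{product} bound $[w,\sigma]_{A_p}^{1/p}[w]_{A_\infty}^{1/r-1/p}[\sigma]_{A_\infty}^{1/p}$.

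The gap is in the last step, which is exactly where the content of the theorem lies. You propose to obtain the sum by running the dualized estimate twice, once ``stopping only with respect to $\sigma$'' so that no $[w]_{A_\infty}$ appears, once ``stopping only with respect to $w$'' so that no $[\sigma]_{A_\infty}$ appears, and then taking the minimum. If each of those two single-factor global bounds were provable, you would in fact have the strictly stronger ``min'' estimate, which is not what \cite{HL} prove, and you give no argument for either bound; the difficulty is that after dualization the sum $\sum_{Q}\langle\sigma\rangle_Q^{r}(\langle f\rangle_Q^{\sigma})^{r}\langle g\rangle_Q^{w}w(Q)$ contains $w$-averages of $g$ no matter how you choose stopping cubes for $f$, and controlling the resulting $w$-Carleson sum is precisely where $[w]_{A_\infty}$ enters (and symmetrically for $\sigma$), so no single corona can ``absorb the entire $A_\infty$ cost.'' What actually happens in \cite{HL} is different: the norm is dominated by a \emph{sum} of two Sawyer-type quantities (direct and dual testing constants, via the two-weight testing theorems for positive dyadic/$\ell^r$-valued operators, whose proof is the parallel-corona argument), or equivalently one introduces the principal cubes for $(f,\sigma)$ and for $(g,w)$ simultaneously in a single decomposition and splits the sum over $Q$ according to which of the two principal ancestors is smaller; the two complementary pieces of this one decomposition produce the two summands $[w]_{A_\infty}^{(1/r-1/p)_{+}}$ and $[\sigma]_{A_\infty}^{1/p}$. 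So the sum form comes from two parts of one stopping-time argument, not from a minimum of two independent complete estimates, and as written your proposal does not bridge the gap between the product bound and the stated result.
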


\begin{thm}
\label{ThmDebilAp}Let $r>0$ and $1<p<\infty$ such that $p\not=r$.
Let $w,\sigma$ be a pair of weights. Then
\[
\|\mathcal{A}_{\mathcal{S}}^{r}f\|_{L^{p,\infty}(w)}\lesssim[w,\sigma]_{A_{p}}^{\frac{1}{p}}[w]_{A_{\infty}}^{\left(\frac{1}{r}-\frac{1}{p}\right)_{+}}\|f\|_{L^{p}(w)}.
\]
\end{thm}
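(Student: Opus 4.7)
To prove Theorem \ref{ThmDebilAp}, I would follow the two-weight sparse-operator strategy of \cite{HL}. The first step is to use Lorentz-space duality (the pairing between $L^{p,\infty}(w)$ and $L^{p',1}(w)$, valid for $p>1$) to reduce the weak-type inequality to the linearized testing bound
\[
\int_{E} \mathcal{A}_{\mathcal{S}}^{r} f \, dw \lesssim [w,\sigma]_{A_{p}}^{\frac{1}{p}}\, [w]_{A_{\infty}}^{\left(\frac{1}{r} - \frac{1}{p}\right)_{+}}\, w(E)^{\frac{1}{p'}}\, \|f\|_{L^{p}(w)},
\]
for every measurable $E$ with $0<w(E)<\infty$. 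This converts the weak-type question into a bilinear form that can be attacked by dyadic stopping-time methods.

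The argument then splits on the sign of $\frac{1}{r}-\frac{1}{p}$. When $r\ge p$ the exponent $(\frac{1}{r}-\frac{1}{p})_{+}$ vanishes, and the elementary monotonicity $\|\cdot\|_{\ell^{r}}\le \|\cdot\|_{\ell^{p}}$ gives $\mathcal{A}_{\mathcal{S}}^{r} f \le \mathcal{A}_{\mathcal{S}}^{\tilde p} f$ pointwise for any $\tilde p\in(p,r]$, so it suffices to argue with such a $\tilde p$. When $r<p$, the needed factor $[w]_{A_{\infty}}^{\frac{1}{r}-\frac{1}{p}}$ is produced by the sharp reverse H\"older inequality (Theorem \ref{Thm:RHI}): I would choose $\epsilon\asymp 1/[w]_{A_{\infty}}$ so that the averages of $w^{1+\epsilon}$ and $w$ are comparable up to a universal constant, apply the previous case with the enlarged weight, and then interpolate. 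The gain at the level of the exponent $r$ translates precisely into the claimed $A_{\infty}$-power.

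The technical core is a Calder\'on--Zygmund principal-cube decomposition of $\mathcal{S}$ relative to $f$ and $E$. Starting from the maximal $\mathcal{S}$-cubes meeting $E$, one recursively selects the maximal descendants $Q\in\mathcal{S}$ on which $\langle f\rangle_{Q}$ has doubled with respect to the previously selected ancestor. This produces a subfamily $\mathcal{F}\subseteq\mathcal{S}$ that is again sparse, together with a projection $\pi:\mathcal{S}\to\mathcal{F}$ onto the smallest principal ancestor. Since $\langle f\rangle_{Q}\lesssim \langle f\rangle_{\pi(Q)}$, the sparse sum collapses to
\[
\int_{E} \mathcal{A}_{\mathcal{S}}^{r} f \, dw \lesssim \sum_{F\in\mathcal{F}}\langle f\rangle_{F}\, w(F\cap E),
\]
and the $[w,\sigma]_{A_{p}}$ hypothesis combined with H\"older's inequality on the pairwise disjoint stopping skeletons $E_{F}\subset F$ yields the desired testing bound.

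The main obstacle I anticipate is the quantitative bookkeeping in the case $r<p$. A naive application of the reverse H\"older inequality only yields a prefactor of order $1/\epsilon \simeq [w]_{A_{\infty}}$, and to extract the correct fractional exponent $\frac{1}{r}-\frac{1}{p}$ one must interpolate sharply between two estimates at nearby $\ell^{r}$-levels while simultaneously verifying that the stopping-time construction survives the change of measure. The hypothesis $p\neq r$ is used precisely here: at $p=r$ the two endpoints of the interpolation collapse and the estimate degenerates, consistent with the fact that at this critical scaling the operator $\mathcal{A}_{\mathcal{S}}^{p}$ generically requires the extra $[\sigma]_{A_{\infty}}^{1/p}$ factor that already appears in the companion strong-type bound of Theorem \ref{ThmFuerteAp}.
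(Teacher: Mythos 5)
A preliminary remark on the comparison you were asked to make: the paper contains no proof of Theorem \ref{ThmDebilAp} at all — Section \ref{subsec:ApAinfty} explicitly imports it (together with Theorem \ref{ThmFuerteAp}) from \cite{HL}, so what you are really attempting is a reconstruction of the Hyt\"onen--Li argument. Your opening reduction is legitimate: for $p>1$ one has $\|g\|_{L^{p,\infty}(w)}\simeq\sup_E w(E)^{-1/p'}\int_E|g|\,dw$, so the weak bound is equivalent to the testing estimate over sets $E$, and stopping cubes plus the sharp reverse H\"older inequality are indeed part of the toolkit in \cite{HL}. (As in \cite{HL}, the statement should be read with $\mathcal{A}_{\mathcal{S}}^{r}(f\sigma)$ and $\|f\|_{L^{p}(\sigma)}$; with $\sigma$ entering only through the constant the literal formulation degenerates, but this does not affect the assessment below.)

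The plan, however, has genuine gaps exactly where the sharp constants must be extracted. The collapse $\int_E\mathcal{A}_{\mathcal{S}}^{r}f\,dw\lesssim\sum_{F\in\mathcal{F}}\langle f\rangle_F\,w(F\cap E)$ does not follow from $\langle f\rangle_Q\lesssim\langle f\rangle_{\pi(Q)}$: the cubes sharing a principal ancestor $F$ overlap with unbounded multiplicity, and bounding their total $w$-mass requires a $w$-Carleson embedding for the sparse family, which costs a full factor $[w]_{A_\infty}$. Already for $r=1$, $f=\chi_{Q_0}$, $E=Q_0$ the left side is $\sum_{Q\in\mathcal{S}}w(Q)$, which can be of size $[w]_{A_\infty}w(Q_0)$, while your right side is $w(Q_0)$; so the inequality is false with absolute constants. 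Arguing this way (in particular after linearizing $\mathcal{A}_{\mathcal{S}}^{r}\le\mathcal{A}_{\mathcal{S}}$, which discards the $\ell^r$ gain the theorem quantifies) produces an $A_\infty$ power independent of $r$, contradicting the claimed absence of any $A_\infty$ factor when $r\ge p$ — note that your reduction to $\tilde p\in(p,r]$ does not help here, since the full strength of the $r>p$ case is still needed for $\mathcal{A}_{\mathcal{S}}^{\tilde p}$. The $r<p$ mechanism is also not workable as described: replacing $w$ by $w^{1+\epsilon}$ via Theorem \ref{Thm:RHI} changes the $A_p$ characteristic and the underlying spaces, and ``interpolating between nearby $\ell^{r}$-levels'' is not a standard interpolation because the operator itself changes with $r$. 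In the actual proof the exponent $(\frac1r-\frac1p)_+$ emerges from a sharp dyadic computation inside the sum (a Carleson-type embedding in which the reverse H\"older inequality is applied at the level of the local averages), not from interpolating two cases. So the skeleton (duality, stopping cubes, reverse H\"older) is the right one, but the steps that make the theorem sharp are missing or incorrect as stated; if you want a complete argument you should follow \cite{HL} directly.
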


\section{Proofs of the results for the vector-valued Hardy-Littlewood maximal
operators.\label{sec:ProofsMax}}

\subsection{Sparse domination for vector-valued Hardy-Littlewood maximal operators}
\label{sec:ProofsMaxSparse}
We are going to prove
\begin{equation}
\overline{M}_{q}\bm{f}(x)\leq c_{n,q}\sum_{k=1}^{3^{n}}\left(\sum_{Q\in\mathcal{S}_{k}}\left(\frac{1}{|Q|}\int_{Q}|\bm{f}|_{q}\right)^{q}\chi_{Q}(x)\right)^{\frac{1}{q}}.\label{eq:SparseMax}
\end{equation}
First we observe that from Remark \ref{Rem} it readily follows that
\[
Mf(x)\leq c_{n}\sum_{k=1}^{3^{n}}M^{\mathcal{D}_{k}}f(x).
\]
Taking that into account it is clear that
\begin{equation}
\overline{M}_{q}\bm{f}(x)\leq c_{n}\sum_{k=1}^{3^{n}}\overline{M}_{q}^{\mathcal{D}_{k}}\bm{f}(x).\label{eq:MqMDk}
\end{equation}
The following estimate for local oscillations
\[
\tilde{w}_{\lambda}\left(\left(\overline{M}_{q}^{\mathcal{D}}\bm{f}\right)^{q};Q\right)\leq\frac{c_{n,q}}{\lambda^{q}}\left(\frac{1}{|Q|}\int_{Q}|\bm{f}|_{q}\right)^{q},
\]
was established in \cite[Lemma 8.1]{CUMP3}. Now we recall that by Lemma
\ref{Lem:Oscil}
\[
w_{\lambda}\left(\left(\overline{M}_{q}^{\mathcal{D}}\bm{f}\right){}^{q};Q\right)\leq2\tilde{w}_{\lambda}\left(\left(\overline{M}_{q}^{\mathcal{D}}\bm{f}\right)^{q};Q\right).
\]
Then
\[
w_{\lambda}\left(\left(\overline{M}_{q}^{\mathcal{D}}\bm{f}\right)^{q};Q\right)\leq\frac{c_{n,q}}{\lambda^{q}}\left(\frac{1}{|Q|}\int_{Q}|\bm{f}|_{q}\right)^{q}.
\]
Using now Lerner-Nazarov formula (Theorem \ref{LernerFormula-1}) there
exists a $\frac{1}{6}$-sparse family $\mathcal{S}\subset\mathcal{D}$
such that
\[
\begin{split}\overline{M}_{q}^{\mathcal{D}}\bm{f}(x)^{q} & \leq\sum_{Q\in\mathcal{S}}w_{\lambda}\left(\left(\overline{M}_{q}^{\mathcal{D}}\bm{f}\right)^{q};Q\right)\chi_{Q}(x)\\
 & \leq\frac{2c_{n,q}}{\lambda^{q}}\sum_{Q\in\mathcal{S}}\left(\frac{1}{|Q|}\int_{Q}|\bm{f}|_{q}\right)^{q}\chi_{Q}(x).
\end{split}
\]
Consequently
\[
\overline{M}_{q}^{\mathcal{D}}\bm{f}(x)\leq c_{n,q}\left(\sum_{Q\in\mathcal{S}}\left(\frac{1}{|Q|}\int_{Q}|\bm{f}|_{q}\right)^{q}\chi_{Q}(x)\right)^{\frac{1}{q}}.
\]
Applying this to each $\overline{M}_{q}^{\mathcal{D}_{k}}\bm{f}(x)$ in
(\ref{eq:MqMDk}) we obtain the desired estimate.

\subsection{Weighted estimates for vector-valued Hardy-Littlewood maximal operators}

The weighted estimates in Theorem \ref{ThmEstMq} are a direct consequence
of Theorems \ref{ThmFuerteAp} and \ref{ThmDebilAp} (cf. Section
\ref{subsec:ApAinfty}) applied to $\mathcal{A}_{\mathcal{S}_{k}}^{q}(|\bm{f}|_{q}).$

\section{Sparse domination for vector-valued Calder\'on-Zygmund operators and
vector-valued commutators\label{sec:Sparse}}

\subsection{\label{subsec:LemmaSparseCZO}A technical lemma}

Let $T$ be an $\omega$-CZO with $\omega$ satisfying Dini condition
and $1<q<\infty$. We define the grand maximal truncated operator
$\mathcal{M}_{T_{q}}$ by
\[
\mathcal{M}_{\overline{T}_{q}}\bm{f}(x)=\sup_{Q\ni x}\underset{{\scriptscriptstyle \xi\in Q}}{\esssup}\left|\overline{T}_{q}(\bm{f}\chi_{\mathbb{R}^{n}\setminus3Q})(\xi)\right|.
\]
We also consider a local version of this operator
\[
\mathcal{M}_{\overline{T}_{q},Q_{0}}\bm{f}(x)=\sup_{x\in Q\subseteq Q_{0}}\underset{{\scriptscriptstyle \xi\in Q}}{\esssup}\left|\overline{T}_{q}(\bm{f}\chi_{3Q_{0}\setminus3Q})(\xi)\right|.
\]

\begin{lem}
\label{LemmaTec}Let $T$ be an $\omega$-CZO with $\omega$ satisfying
Dini condition and $1<q<\infty$. The following pointwise estimates
hold:

\begin{enumerate}
\item For a.e. $x\in Q_{0}$
\begin{equation}
\label{eq:LemaTec1}
|\overline{T}_{q}(\bm{f}\chi_{3Q_{0}})(x)|\leq c_{n}\|\overline{T}_{q}\|_{L^{1}\rightarrow L^{1,\infty}}|\bm{f}|_{q}(x)+\mathcal{M}_{\overline{T}_{q},Q_{0}}f(x).
\end{equation}
\item For all $x\in\mathbb{R}^{n}$
\begin{equation}\label{eq:LemaTec2}
\mathcal{M}_{\overline{T}_{q}}\bm{f}(x)\leq c_{n,q}(\|\omega\|_{\text{Dini}}+C_{K})M_{q}f(x)+\overline{T^{*}}_{q}\bm{f}(x).
\end{equation}
Furthermore
\begin{equation}\label{eq:LemaTec3}
\left\Vert \mathcal{M}_{\overline{T}_{q}}\right\Vert _{L^{1}\rightarrow L^{1,\infty}}\leq c_{n,q}C_{T},
\end{equation}
where $C_{T}=C_{K}+\|\omega\|_{\text{Dini}}+\|T\|_{L^{2}\rightarrow L^{2}}$.
\end{enumerate}
\end{lem}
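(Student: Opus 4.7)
The plan is to prove the three parts in order, adapting Lerner's scalar approach (cf.~\cite{L}) to the vector-valued setting by replacing scalar triangle inequalities with Minkowski's inequality in $\ell^q$.

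For part~(1), I would fix a Lebesgue point $x \in Q_0$ of $|\bm{f}|_q$ and, for each small cube $Q$ with $x \in Q \subseteq Q_0$, decompose $\bm{f}\chi_{3Q_0} = \bm{f}\chi_{3Q} + \bm{f}\chi_{3Q_0 \setminus 3Q}$. The triangle inequality in $\ell^q$ yields pointwise $|\overline{T}_q(\bm{f}\chi_{3Q_0})(y)| \leq |\overline{T}_q(\bm{f}\chi_{3Q})(y)| + |\overline{T}_q(\bm{f}\chi_{3Q_0 \setminus 3Q})(y)|$, and the second summand is bounded by $\mathcal{M}_{\overline{T}_q, Q_0}\bm{f}(x)$ for a.e.~$y \in Q$ straight from the definition. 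For the first summand, the weak-$(1,1)$ bound on $\overline{T}_q$ gives $|\{y \in Q : |\overline{T}_q(\bm{f}\chi_{3Q})(y)| > \alpha\}| \leq \alpha^{-1} \|\overline{T}_q\|_{L^1\to L^{1,\infty}} \int_{3Q}|\bm{f}|_q$; choosing $\alpha = 2\cdot 3^n \|\overline{T}_q\|_{L^1\to L^{1,\infty}} \langle |\bm{f}|_q\rangle_{3Q}$ forces the right-hand side to be at most $|Q|/2$, so every median of $|\overline{T}_q(\bm{f}\chi_{3Q_0})|$ on $Q$ is controlled by $\alpha + \mathcal{M}_{\overline{T}_q, Q_0}\bm{f}(x)$. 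Since medians on shrinking cubes converge a.e.~to the pointwise value and $\langle |\bm{f}|_q\rangle_{3Q} \to |\bm{f}|_q(x)$ by Lebesgue differentiation, letting $Q \searrow x$ delivers \eqref{eq:LemaTec1}.

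For part~(2), I would fix a cube $Q \ni x$ with center $x_Q$, set $\varepsilon = 3\sqrt{n}\,\ell(Q)$, and for each $\xi \in Q$ and each index $j$ split $T(f_j \chi_{\mathbb{R}^n \setminus 3Q})(\xi)$ into the principal part $\int_{|x_Q - y|>\varepsilon} K(\xi,y) f_j(y)\,dy$ and the annular part $\int_{(\mathbb{R}^n \setminus 3Q) \cap \{|x_Q - y|\leq\varepsilon\}} K(\xi,y) f_j(y)\,dy$. The annular integral is dominated by $c_n C_K M f_j(x)$ via the size estimate on $K$ together with $|\xi - y| \gtrsim \ell(Q)$ on this region. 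In the principal part, replacing $K(\xi,y)$ by $K(x_Q,y)$ costs at most $c_n \|\omega\|_{\text{Dini}} M f_j(x)$ by the Dini-smoothness condition, while the remaining piece equals $T^*_\varepsilon f_j(x_Q)$ and is thus dominated by $T^* f_j(x)$ after a standard comparison. Taking $\ell^q_j$ norms in $j$ and applying Minkowski's inequality yields \eqref{eq:LemaTec2}.

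Part~(3) is then immediate: combining \eqref{eq:LemaTec2} with the weak-$(1,1)$ boundedness of the Fefferman-Stein vector-valued maximal operator (with constant $c_{n,q}$) and of $\overline{T^*}_q$ (with constant $\lesssim_{n,q} C_T$, obtained from the scalar weak-$(1,1)$ bound $\|T^*\|_{L^1\to L^{1,\infty}}\lesssim C_T$ via a standard Banach-valued extension) gives \eqref{eq:LemaTec3}. The main obstacle lies in part~(1): the weak-$(1,1)$ information is intrinsically a control on \emph{level-set measures}, and to promote it to a pointwise upper bound at a.e.~$x$ one must route through a median-convergence argument combined with Lebesgue differentiation for $|\bm{f}|_q$, including the verification that $\overline{T}_q \bm{f}$ is a.e.~finite so that Fujii-type median theorems apply.
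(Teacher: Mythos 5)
Parts (1) and (2) of your proposal are correct and close in spirit to the paper's argument. For \eqref{eq:LemaTec1} the paper fixes a point of approximate continuity of $\overline{T}_{q}(\bm{f}\chi_{3Q_{0}})$ and bounds an essential infimum over the good set $E_{s}(x)$ using the weak $(1,1)$ bound, whereas you run a median-convergence argument on shrinking cubes; both devices serve the same purpose of upgrading level-set information to a pointwise bound at a.e.\ $x$, your level-set computation with $\alpha=2\cdot 3^{n}\|\overline{T}_{q}\|_{L^{1}\to L^{1,\infty}}\langle|\bm{f}|_{q}\rangle_{3Q}$ is right, and the a.e.\ finiteness you correctly flag is again a consequence of the weak $(1,1)$ bound. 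For \eqref{eq:LemaTec2} the paper truncates with the ball $B_{x}$ centered at $x$ itself, so the far piece is literally a truncation at $x$ and is bounded by $\overline{T^{*}}_{q}\bm{f}(x)$ with no extra step; your truncation around the center $x_{Q}$ forces the additional ``standard comparison'' between $T_{\varepsilon}f_{j}(x_{Q})$ and $T^{*}f_{j}(x)$, which is indeed standard but produces extra error terms of size $c_{n}(C_{K}+\|\omega\|_{\text{Dini}})Mf_{j}(x)$ that you should note are absorbed into the first term of \eqref{eq:LemaTec2}.

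The genuine gap is in part (3). You assert that the weak $(1,1)$ bound $\|\overline{T^{*}}_{q}\|_{L^{1}\to L^{1,\infty}}\lesssim_{n,q}C_{T}$ follows from the scalar bound $\|T^{*}\|_{L^{1}\to L^{1,\infty}}\lesssim C_{T}$ ``via a standard Banach-valued extension.'' No such principle exists: a weak-type $(1,1)$ bound for a (sub)linear operator does not transfer to its $\ell^{q}$-valued extension; the Benedek--Calder\'on--Panzone machinery requires a kernel with a H\"ormander/Dini condition, and the natural linearization of the truncations, namely the $L^{\infty}_{\varepsilon}$-valued kernel $\{K(x,y)\chi_{|x-y|>\varepsilon}\}_{\varepsilon>0}$, fails it: for a.e.\ $x$ the supremum over $\varepsilon$ of the differences picks up the jump of the truncation indicator, producing an error of size $C_{K}|x-y|^{-n}$, which is not integrable at infinity. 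This is exactly why the paper proves the needed estimate separately in the appendix (Theorem \ref{Thm:MaxTq}), via the vector-valued Cotlar inequality obtained by applying Lemma \ref{Lem:CotlarEscalar} componentwise, combined with either the $L^{p,\infty}$-boundedness of $\overline{M}_{q}$ (Theorem \ref{Thm:WeakppMq}) or an extrapolation argument, and with the vector-valued Calder\'on--Zygmund decomposition giving $\|\overline{T}_{q}\|_{L^{1}\to L^{1,\infty}}\lesssim c_{n}C_{T}$ (Proposition \ref{Prop:TqFullyQuant}). To close part (3) you must either prove (or correctly cite) such a Cotlar-based weak $(1,1)$ bound for $\overline{T^{*}}_{q}$; as written, the justification is not valid, while the rest of the deduction (combining \eqref{eq:LemaTec2} with the Fefferman--Stein weak $(1,1)$ bound for $\overline{M}_{q}$) is fine.
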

\begin{proof}
First we prove the estimate in \eqref{eq:LemaTec1}. Fix $x\in\inte Q_{0}$, and let
$x$ be a point of approximate continuity of $\overline{T}_{q}(\bm{f}\chi_{3Q_{0}})$
(see \cite[p. 46]{EG}). For every $\varepsilon>0$, set
\[
E_{s}(x)=\left\{ y\in B(x,s)\ :\ |\overline{T}_{q}(\bm{f}\chi_{3Q_{0}})(y)-\overline{T}_{q}(\bm{f}\chi_{3Q_{0}})(x)|<\varepsilon\right\}.
\]
Then we have that $\lim_{s\rightarrow0}\frac{|E_{s}(x)|}{|B(x,s)|}=1,$
where $B(x,s)$ is the open ball centered at $x$ of radius $s$.

Denote by $Q(x,s)$ the smallest cube centered at $x$ and containing
$B(x,s)$. Let $s>0$ be so small that $Q(x,s)\subset Q_{0}$. Then
for a.e. $y\in E_{s}(x)$,
\[
|\overline{T}_{q}(\bm{f}\chi_{3Q_{0}})(x)|\leq|\overline{T}_{q}(\bm{f}\chi_{3Q_{0}})(y)|+\varepsilon\leq|\overline{T}_{q}(\bm{f}\chi_{3Q(x,s)})(y)|+\mathcal{M}_{\overline{T}_{q},Q_{0}}\bm{f}(x)+\varepsilon.
\]
Now we can apply the weak type $(1,1)$ estimate of $\overline{T}_{q}$.
Then
\[
\begin{split} & |\overline{T}_{q}(\bm{f}\chi_{3Q_{0}})(x)|\\
 & \leq\underset{y\in E_{s}(x)}{\essinf}|\overline{T}_{q}(\bm{f}\chi_{3Q(x,s)})(y)|+\mathcal{M}_{\overline{T}_{q},Q_{0}}\bm{f}(x)+\varepsilon\\
 & \leq\|\overline{T}_{q}\|_{L^{1}\rightarrow L^{1,\infty}}\frac{1}{|E_{s}(x)|}\int_{3Q(x,s)}|\bm{f}|_{q}+\mathcal{M}_{\overline{T}_{q},Q_{0}}\bm{f}(x)+\varepsilon.
\end{split}
\]
Assuming that $x$ is a Lebesgue point of $|\bm{f}|_{q}$ and letting $s\rightarrow0$
and $\varepsilon\rightarrow0$, leads to obtain  \eqref{eq:LemaTec1}.

Now we focus on part  \eqref{eq:LemaTec2}. Let $x,\xi\in Q$. Denote by $B_{x}$ the
closed ball centered at $x$ of radius $2\text{diam}Q$. Then $3Q\subset B_{x}$,
and we obtain
\[
\begin{split}|\overline{T}_{q}(\bm{f}\chi_{\mathbb{R}^{n}\setminus3Q})(\xi)| & \leq|\overline{T}_{q}(\bm{f}\chi_{\mathbb{R}^{n}\setminus B_{x}})(\xi)+\overline{T}_{q}(\bm{f}\chi_{B_{x}\setminus3Q})(\xi)|\\
 & \leq|\overline{T}_{q}(\bm{f}\chi_{\mathbb{R}^{n}\setminus B_{x}})(\xi)-\overline{T}_{q}(\bm{f}\chi_{\mathbb{R}^{n}\setminus B_{x}})(x)|\\
 & +|\overline{T}_{q}(\bm{f}\chi_{B_{x}\setminus3Q})(\xi)|+|\overline{T}_{q}(\bm{f}\chi_{\mathbb{R}^{n}\setminus B_{x}})(x)|.
\end{split}
\]

By the smoothness condition, since $| |a|^{r}-|b|^{r} | \leq 2^{ \max\{1,r\}-1}  |a-b|^{r}$
for every $r>0$ we have that
\[
\begin{split} & |\overline{T}_{q}(\bm{f}\chi_{\mathbb{R}^{n}\setminus B_{x}})(\xi)-\overline{T}_{q}(\bm{f}\chi_{\mathbb{R}^{n}\setminus B_{x}})(x)|\\
 & =\left|\left(\sum_{j=1}^{\infty}|T(f_{j}\chi_{\mathbb{R}^{n}\setminus B_{x}})(\xi)|^{q}\right)^{\frac{1}{q}}-\left(\sum_{j=1}^{\infty}|T(f_{j}\chi_{\mathbb{R}^{n}\setminus B_{x}})(x)|^{q}\right)^{\frac{1}{q}}\right|\\
 & \leq \left(\sum_{j=1}^{\infty}\left|T(f_{j}\chi_{\mathbb{R}^{n}\setminus B_{x}})(\xi)-T(f_{j}\chi_{\mathbb{R}^{n}\setminus B_{x}})(x)\right|^{q}\right)^{\frac{1}{q}}.
\end{split}
\]
Now using the smoothness condition (see \cite[Proof of Lemma 3.2 (ii)]{L} \[\left|T(f_{j}\chi_{\mathbb{R}^{n}\setminus B_{x}})(\xi)-T(f_{j}\chi_{\mathbb{R}^{n}\setminus B_{x}})(x)\right|\leq c_{n}\|\omega\|_{\text{Dini}}Mf_{j}(x),\]
and then we have that
\[
|\overline{T}_{q}(\bm{f}\chi_{\mathbb{R}^{n}\setminus B_{x}})(\xi)-\overline{T}_{q}(\bm{f}\chi_{\mathbb{R}^{n}\setminus B_{x}})(x)|\leq c_{n,q}\|\omega\|_{\text{Dini}}\left(\sum_{j=1}^{\infty}\left|Mf_{j}(x)\right|^{q}\right)^{\frac{1}{q}}=c_{n,q}\|\omega\|_{\text{Dini}}\overline{M}_{q}\bm{f}(x).
\]
On the other hand the size condition of the kernel yields
\[
\begin{split}|\overline{T}_{q}(\bm{f}\chi_{B_{x}\setminus3Q})(\xi)| & \leq\left|\left(\sum_{j=1}^{\infty}|T(f_{j}\chi_{B_{x}\setminus3Q})(\xi)|^{q}\right)^{\frac{1}{q}}\right|\\
 & \leq c_{n}C_{K}\left|\left(\sum_{j=1}^{\infty}\left(\frac{1}{|B_{x}|}\int_{B_{x}}|f_{j}|\right)^{q}\right)^{\frac{1}{q}}\right|\\
 & \leq c_{n}C_{K}\left|\left(\sum_{j=1}^{\infty}\left(Mf_{j}(x)\right)^{q}\right)^{\frac{1}{q}}\right|\leq c_{n}C_{K}\overline{M}_{q}\bm{f}(x).
\end{split}
\]
To end the proof of the pointwise estimate we observe that
\[
|\overline{T}_{q}(\bm{f}\chi_{\mathbb{R}^{n}\setminus B_{x}})(x)|\leq\overline{T}_{q}^{*}\bm{f}(x).
\]
Now, taking into account the pointwise estimate we have just obtained
and Theorem \ref{Thm:MaxTq} below it is clear that
\[
\left\Vert \mathcal{M}_{\overline{T}_{q}}\right\Vert _{L^{1}\rightarrow L^{1,\infty}}\leq c_{n,q}C_{T}.
\]
This ends the proof.
\end{proof}

\subsection{Proof of Theorem \ref{ThmSparseTq}}

We fix a cube $Q_{0}\subset\mathbb{R}^{n}$ such that $\supp|\bm{f}|_{q}\subseteq Q_{0}$.

We start claiming that here exists a $\frac{1}{2}$-sparse family $\mathcal{F}\subseteq\mathcal{D}(Q_{0})$
such that for a.e. $x\in Q_{0}$
\begin{equation}
\left|\overline{T}_{q}(\bm{f}\chi_{3Q_{0}})(x)\right|\leq c_{n}C_{T}\sum_{Q\in\mathcal{F}}\langle |\bm{f}|_{q}\rangle _{3Q}\chi_{Q}(x).\label{eq:Claim}
\end{equation}

Suppose that we have already proved the claim \eqref{eq:Claim}. Let us take a partition
of $\mathbb{R}^{n}$ by cubes $Q_{j}$ such that $\supp(|\bm{f}|_{q})\subseteq3Q_{j}$
for each $j$. We can do it as follows . We start with a cube $Q_{0}$
such that $\supp(|\bm{f}|_{q})\subset Q_{0}.$ And cover $3Q_{0}\setminus Q_{0}$
by $3^{n}-1$ congruent cubes $Q_{j}$. Each of them satisfies $Q_{0}\subset3Q_{j}$.
We do the same for $9Q_{0}\setminus3Q_{0}$ and so on. Now we apply the claim to each cube $Q_{j}$. Then we have that since
$\supp|\bm{f}|_{q}\subseteq3Q_{j}$ the following estimate holds a.e. $x\in Q_{j}$
\[
\left|\overline{T}_{q}\bm{f}(x)\right|\chi_{Q_{j}}(x)=\left|\overline{T}_{q}(\bm{f}\chi_{3Q_{j}})(x)\right|\leq c_{n}C_{T}\sum_{Q\in\mathcal{F}_{j}}\langle|\bm{f}|_{q}\rangle_{3Q}\chi_{Q}(x),
\]
where each $\mathcal{F}_{j}\subseteq\mathcal{D}(Q_{j})$ is a $\frac{1}{2}$-sparse
family. Taking $\mathcal{F}=\bigcup\mathcal{F}_{j}$ we have that
$\mathcal{F}$ is a $\frac{1}{2}$-sparse family and
\[
\left|\overline{T}_{q}\bm{f}(x)\right|\leq c_{n}C_{T}\sum_{Q\in\mathcal{F}}\langle |\bm{f}|_{q}\rangle_{3Q}\chi_{Q}(x).
\]
From Remark \ref{Rem} it follows that there exist $3^n$ dyadic lattices $\mathcal{D}_j$ such that for each cube $3Q$ there exists some cube $R_Q\in\mathcal{D}_j$ such that $3Q\subset R_{Q}$ and $|R_{Q}|\leq3^{n}|3Q|$. Consequently $|h|_{3Q}\leq c_{n}|h|_{R_{Q}}$. Now setting
\[
\mathcal{S}_{j}=\left\{ R_{Q}\in\mathcal{D}_{j}\,:\,Q\in\mathcal{F}\right\}
\]
and using that $\mathcal{F}$ is $\frac{1}{2}$-sparse, we obtain
that each family $\mathcal{S}_{j}$ is sparse as well. Then we can conclude that
\[
\left|\overline{T}_{q}\bm{f}(x)\right|\leq c_{n}C_{T}\sum_{j=1}^{3^{n}}\sum_{R\in\mathcal{S}_{j}}\langle |\bm{f}|_{q}\rangle_{R}\chi_{R}(x).
\]

\subsubsection*{Proof of the claim (\ref{eq:Claim})}

To prove the claim it suffices to prove the following recursive estimate:
There exist pairwise disjoint cubes $P_{j}\in\mathcal{D}(Q_{0})$
such that $\sum_{j}|P_{j}|\leq\frac{1}{2}|Q_{0}|$ and for a.e. $x\in Q_0$,
\[
\begin{split}  \left|\overline{T}_{q}(\bm{f}\chi_{3Q_{0}})(x)\right|\chi_{Q_{0}}(x)
\leq c_{n}C_{T}\langle |\bm{f}|_{q}\rangle_{3Q}+\sum_{j}\left|\overline{T}_{q}(\bm{f}\chi_{3P_{j}})(x)\right|\chi_{P_{j}}(x).
\end{split}
\]
Iterating this estimate we obtain (\ref{eq:Claim})
with $\mathcal{F}=\{P_{j}^{k}\}$ where $\{P_{j}^{0}\}=\{Q_{0}\}$,
$\{P_{j}^{1}\}=\{P_{j}\}$ and $\{P_{j}^{k}\}$ are the cubes obtained
at the $k$-th stage of the iterative process. It is also clear that
$\mathcal{F}$ is a $\frac{1}{2}$-sparse family.

Now we observe that for any arbitrary family of disjoint cubes $P_{j}\in\mathcal{D}(Q_{0})$
we have that
\[
\begin{split} & \left|\overline{T}_{q}(\bm{f}\chi_{3Q_{0}})(x)\right|\chi_{Q_{0}}(x)\\
= & \left|\overline{T}_{q}(\bm{f}\chi_{3Q_{0}})(x)\right|\chi_{Q_{0}\setminus\bigcup_{j}P_{j}}(x)+\sum_{j}\left|\overline{T}_{q}(\bm{f}\chi_{3Q_{0}})(x)\right|\chi_{P_{j}}(x)\\
\leq & \left|\overline{T}_{q}(\bm{f}\chi_{3Q_{0}})(x)\right|\chi_{Q_{0}\setminus\bigcup_{j}P_{j}}(x)+\sum_{j}\left|\overline{T}_{q}(\bm{f}\chi_{3Q_{0}\setminus3P_{j}})(x)\right|\chi_{P_{j}}(x)+\sum_{j}\left|\overline{T}_{q}(\bm{f}\chi_{3P_{j}})(x)\right|\chi_{P_{j}}(x).
\end{split}
\]
So it suffices to show that we can choose a family of pairwise disjoint
cubes $P_{j}\in\mathcal{D}(Q_{0})$ with $\sum_{j}|P_{j}|\leq\frac{1}{2}|Q_{0}|$
and such that for a.e. $x\in Q_{0}$
\begin{equation}
\begin{split} & \left|\overline{T}_{q}(\bm{f}\chi_{3Q_{0}})(x)\right|\chi_{Q_{0}\setminus\bigcup_{j}P_{j}}(x)+\sum_{j}\left|\overline{T}_{q}(\bm{f}\chi_{3Q_{0}\setminus3P_{j}})(x)\right|\chi_{P_{j}}(x)\\
 & \leq c_{n}C_{T}\langle |\bm{f}|_{q}\rangle_{3Q_{0}}.
\end{split}
\label{eq:3.2}
\end{equation}
Now we define the set $E$ where
\[
E=\left\{ x\in Q_{0}\,:\,|\bm{f}|_{q}>\alpha_{n}\langle |\bm{f}|_{q}\rangle_{3Q_{0}}\right\} \cup\left\{ x\in Q_{0}\,:\,\mathcal{M}_{\overline{T}_{q},Q_{0}}\bm{f}>\alpha_{n}C_{T}\langle |\bm{f}|_{q}\rangle_{3Q_{0}}\right\}.
\]
Since $\mathcal{M}_{\overline{T}_{q}}$ is of weak type $(1,1)$
with
\[
\|\mathcal{M}_{\overline{T}_{q}}\|_{L^{1}\rightarrow L^{1,\infty}}\leq c_{n}C_{T}.
\]
we have that, from this point the rest of the proof is analogous to \cite[Theorem 3.1]{L}.

\subsection{Proof of Theorem \ref{ThmSparseConmm}}

From Remark \ref{Rem} it follows that there exist $3^{n}$ dyadic
lattices such that for every cube $Q$ of $\mathbb{R}^{n}$ there
is a cube $R_{Q}\in\mathcal{D}_{j}$ for some $j$ for which $3Q\subset R_{Q}$
and $|R_{Q}|\leq9^{n}|Q|$

Let us fix a cube $Q_{0}\subset\mathbb{R}^{n}$. We claim that there exists a $\frac{1}{2}$-sparse family $\mathcal{F}\subseteq\mathcal{D}(Q_{0})$
such that for a.e. $x\in Q_{0}$
\begin{equation}
\left|\overline{[b,T]}_{q}(\bm{f}\chi_{3Q_{0}})(x)\right|\leq c_{n}C_{T}\sum_{Q\in\mathcal{F}}\left(|b(x)-b_{R_{Q}}|\langle |\bm{f}|_{q}\rangle_{3Q}+\langle|(b-b_{R_{Q}})||\bm{f}|_{q}\rangle_{3Q}\right)\chi_{Q}(x).\label{eq:Claim-1}
\end{equation}

Proceeding analogously as we did in the proof of Theorem \ref{ThmSparseTq},
 from the claim \eqref{eq:Claim-1} it follows that there exists a $\frac{1}{2}$-sparse family $\mathcal{F}$ such that for every $x\in \mathbb{R}^n$,
\[
\left|\overline{[b,T]}_{q}\bm{f}(x)\right|\leq c_{n}C_{T}\sum_{Q\in\mathcal{F}}\left(|b(x)-b_{R_{Q}}|\langle |\bm{f}|_{q}\rangle_{3Q}+\langle |(b-b_{R_{Q}})||\bm{f}|_{q}\rangle_{3Q}\right)\chi_{Q}(x).
\]
Now we observe that since $3Q\subset R_{Q}$ and $|R_{Q}|\leq3^{n}|3Q|$
we have that $|h|_{3Q}\leq c_{n}|h|_{R_{Q}}$. Setting
\[
\mathcal{S}_{j}=\left\{ R_{Q}\in\mathcal{D}_{j}\,:\,Q\in\mathcal{F}\right\}
\]
and using that $\mathcal{F}$ is $\frac{1}{2}$-sparse, we obtain
that each family $\mathcal{S}_{j}$ is $\frac{1}{2\cdot9^{n}}$-sparse.
Then we have that
\[
\left|\overline{[b,T]}_{q}\bm{f}(x)\right|\leq c_{n}C_{T}\sum_{j=1}^{3^{n}}\sum_{R\in\mathcal{S}_{j}}\left(|b(x)-b_{R}|\langle |\bm{f}|_{q}\rangle_{R}+\langle |(b-b_{R})||\bm{f}|_{q}\rangle_{R}\right)\chi_{R}(x).
\]

\subsubsection*{Proof of the claim (\ref{eq:Claim-1})}

To prove the claim it suffices to prove the following recursive estimate:
There exist pairwise disjoint cubes $P_{j}\in\mathcal{D}(Q_{0})$
such that $\sum_{j}|P_{j}|\leq\frac{1}{2}|Q_{0}|$ and for a.e. $x\in Q_0$,
\[
\begin{split} & \left|\overline{[b,T]}_{q}(\bm{f}\chi_{3Q_{0}})(x)\right|\chi_{Q_{0}}(x)\\
 & \leq c_{n}C_{T}\left(|b(x)-b_{R_{Q_{0}}}|\langle |\bm{f}|_{q}\rangle_{3Q_{0}}+\langle |(b-b_{R_{Q_{0}}})||\bm{f}|_{q}\rangle_{3Q_{0}}\right)+\sum_{j}\left|\overline{\left[b,T\right]}_q(\bm{f}\chi_{3P_{j}})(x)\right|\chi_{P_{j}}(x).
\end{split}
\]
 Iterating this estimate we obtain the claim with
$\mathcal{F}=\{P_{j}^{k}\}$ where $\{P_{j}^{0}\}=\{Q_{0}\}$, $\{P_{j}^{1}\}=\{P_{j}\}$
and $\{P_{j}^{k}\}$ are the cubes obtained at the $k$-th stage of
the iterative process. Now we observe that for any arbitrary family
of disjoint cubes $P_{j}\in\mathcal{D}(Q_{0})$ we have that by the
sublinearity of $\overline{[b,T]}_{q}$,
\[
\begin{split} & \left|\overline{[b,T]}_{q}(\bm{f}\chi_{3Q_{0}})(x)\right|\chi_{Q_{0}}(x)\leq\left|\overline{[b,T]}_{q}(\bm{f}\chi_{3Q_{0}})(x)\right|\chi_{Q_{0}\setminus\bigcup_{j}P_{j}}(x)\\
+ & \sum_{j}\left|\overline{[b,T]}_{q}(\bm{f}\chi_{3Q_{0}\setminus3P_{j}})(x)\right|\chi_{P_{j}}(x)+\sum_{j}\left|\overline{[b,T]}_{q}(\bm{f}\chi_{3P_{j}})(x)\right|\chi_{P_{j}}(x).
\end{split}
\]
So it suffices to show that we can choose a family of pairwise disjoint
cubes $P_{j}\in\mathcal{D}(Q_{0})$ with $\sum_{j}|P_{j}|\leq\frac{1}{2}|Q_{0}|$
and such that for a.e. $x\in Q_{0}$,
\[
\begin{split} & \left|\overline{[b,T]}_{q}(\bm{f}\chi_{3Q_{0}})(x)\right|\chi_{Q_{0}\setminus\bigcup_{j}P_{j}}(x)+\sum_{j}\left|\overline{[b,T]}_{q}(\bm{f}\chi_{3Q_{0}\setminus3P_{j}})(x)\right|\chi_{P_{j}}(x)\\
 & \leq c_{n}C_{T}\left(|b(x)-b_{R_{Q_{0}}}|\langle |\bm{f}|_{q}\rangle_{3Q_{0}}+\langle |(b-b_{R_{Q_{0}}})||\bm{f}|_{q}\rangle_{3Q_{0}}\right).
\end{split}
\]
Now we recall that $[b,T]f=[b-c,T]f=(b-c)Tf-T((b-c)f)$ for every
$c\in\mathbb{R}$. Then
\[
\begin{split} & \overline{[b,T]}_{q}(\bm{f}\chi_{3Q_{0}})(x)\chi_{Q_{0}\setminus\bigcup_{j}P_{j}}(x)=\left(\sum_{k=1}^{\infty}\left|[b-b_{R_{Q_{0}}},T](f_{k}\chi_{3Q_{0}})(x)\right|^{q}\right)^{\frac{1}{q}}\chi_{Q_{0}\setminus\bigcup_{j}P_{j}}(x)\\
\leq & \left(\sum_{k=1}^{\infty}\left|\left(b(x)-b_{R_{Q_{0}}}\right)T(f_{k}\chi_{3Q_{0}})(x)-T\left(\left(b-b_{R_{Q_{0}}}\right)f_{k}\chi_{3Q_{0}}\right)(x)\right|^{q}\right)^{\frac{1}{q}}\chi_{Q_{0}\setminus\bigcup_{j}P_{j}}(x)\\
\leq & \left(\sum_{k=1}^{\infty}\left(\left|\left(b(x)-b_{R_{Q_{0}}}\right)T(f_{k}\chi_{3Q_{0}})(x)\right|+\left|T\left(\left(b-b_{R_{Q_{0}}}\right)f_{k}\chi_{3Q_{0}}\right)(x)\right|\right)^{q}\right)^{\frac{1}{q}}\chi_{Q_{0}\setminus\bigcup_{j}P_{j}}(x)\\
= & \left|b(x)-b_{R_{Q_{0}}}\right|\overline{T}_{q}\left(\bm{f}\chi_{3Q_{0}}\right)(x)\chi_{Q_{0}\setminus\bigcup_{j}P_{j}}(x)+\overline{T}_{q}\left(\left(b-b_{R_{Q_{0}}}\right)\bm{f}\chi_{3Q_{0}}\right)(x)\chi_{Q_{0}\setminus\bigcup_{j}P_{j}}(x).
\end{split}
\]
Analogously we also have that
\[
\begin{split} & \sum_{j}\left|\overline{[b,T]}_{q}(\bm{f}\chi_{3Q_{0}\setminus3P_{j}})(x)\right|\chi_{P_{j}}(x)\\
 & \le \sum_{j}\left(\left|b(x)-b_{R_{Q_{0}}}\right|\overline{T}_{q}\left(\bm{f}\chi_{3Q_{0}\setminus3P_{j}}\right)(x)+\overline{T}_{q}\left(\left(b-b_{R_{Q_{0}}}\right)\bm{f}\chi_{3Q_{0}\setminus3P_{j}}\right)\right)\chi_{P_{j}}(x).
\end{split}
\]
And combining both estimates
\begin{eqnarray*}
 &  & \left|\overline{[b,T]}_{q}(\bm{f}\chi_{3Q_{0}})(x)\right|\chi_{Q_{0}\setminus\bigcup_{j}P_{j}}(x)+\sum_{j}\left|\overline{[b,T]}_{q}(\bm{f}\chi_{3Q_{0}\setminus3P_{j}})(x)\right|\chi_{P_{j}}(x)\leq I_{1}+I_{2},
\end{eqnarray*}
where
\begin{equation}
I_{1}=\left|b(x)-b_{R_{Q_{0}}}\right|\left(\left|\overline{T}_{q}(\bm{f}\chi_{3Q_{0}})(x)\right|\chi_{Q_{0}\setminus\bigcup_{j}P_{j}}(x)+\sum_{j}\left|\overline{T}_{q}(\bm{f}\chi_{3Q_{0}\setminus3P_{j}})(x)\right|\chi_{P_{j}}(x)\right)\label{eq:3.2-1}
\end{equation}
and
\begin{equation}
I_{2}=\left|\overline{T}_{q}\left((b-b_{R_{Q_{0}}})\bm{f}\chi_{3Q_{0}}\right)(x)\right|\chi_{Q_{0}\setminus\bigcup_{j}P_{j}}(x)+\sum_{j}\left|\overline{T}_{q}\left((b-b_{R_{Q_{0}}})\bm{f}\chi_{3Q_{0}\setminus3P_{j}}\right)(x)\right|\chi_{P_{j}}(x).\label{eq:3.3-1}
\end{equation}
Now we define the set $E=E_{1}\cup E_{2}$ where
\[
E_{1}=\left\{ x\in Q_{0}\,:\,|\bm{f}|_{q}>\alpha_{n}\langle |\bm{f}|_{q}\rangle_{3Q_{0}}\right\} \cup\left\{ x\in Q_{0}\,:\,\mathcal{M}_{T_{q},Q_{0}}\bm{f}>\alpha_{n}C_{T}\langle |\bm{f}|_{q}\rangle_{3Q_{0}}\right\}
\]
and
\[
\begin{split}E_{2} & =\left\{ x\in Q_{0}\,:\,|b-b_{R_{Q_{0}}}||\bm{f}|_{q}>\alpha_{n}\langle |b-b_{R_{Q_{0}}}||\bm{f}|_{q}\rangle _{3Q_{0}}\right\} \\
 & \cup\left\{ x\in Q_{0}\,:\,\mathcal{M}_{T,Q_{0}}\left((b-b_{R_{Q_{0}}})\bm{f}\right)>\alpha_{n}C_{T}\langle |b-b_{R_{Q_{0}}}||\bm{f}|_{q}\rangle_{3Q_{0}}\right\}.
\end{split}
\]
Since $\mathcal{M}_{\overline{T}_{q}}$ is of weak type $(1,1)$ with
\[
\|\mathcal{M}_{\overline{T}_{q}}\|_{L^{1}\rightarrow L^{1,\infty}}\leq c_{n}C_{T}.
\]
from this point it suffices to follow the arguments given in \cite[Theorem 1.1]{LORR} to end the proof.

\section{Sparse domination for rough singular integrals and commutators with rough singular integrals}

To obtain the sparse domination for $\overline T_q$ and $\overline{[b,T]}_q$, unlike our previous approach, we do not need to go through the original proof.   This is due to a very nice observation by Culiuc, Di Plinio and Ou \cite{CDO17}. We have
\begin{align*}
\Big|\sum_{j\in\mathbb Z}\int_{\mathbb{R}^{n}}T(f_j)g_jdx \Big|&\le c_n C_Ts'\sum_j\sum_{Q\in\mathcal{S}_j}\langle |f_j| \rangle_Q\langle |g_j|\rangle_{s,Q}|Q|\\
&\le 2c_n C_Ts'\int_{\mathbb R^n}\sum_j  \mathcal M_{1,s}(f_j,g_j)(x) dx,
\end{align*}
where
\[
\mathcal M_{r,s}(f,g)(x)=\sup_{Q\ni x} \langle |f |\rangle_{r,Q}\langle |g|\rangle_{s,Q},
\] and $\langle |h| \rangle_{u, Q}= \langle |h|^u \rangle_Q^{\frac 1u}$ with $u>1$.

In the case of $T_\Omega^*$, taking into account \eqref{eq:SparseRoughMax} and arguing as above
\[
\Big|\sum_{j\in\mathbb Z}\int_{\mathbb{R}^{n}}T_\Omega^*(f_j)g_jdx \Big|\leq 2c_n C_Ts'\int_{\mathbb R^n}\sum_j  \mathcal M_{s,s}(f_j,g_j)(x) dx.\]
For the commutator $[b,T_\Omega]$ with $b\in\BMO$ and $\Omega \in L^\infty(\mathbb{S}^{n-1})$, taking into account Theorem \ref{Thm:SparseCommRough}, we observe that choosing $u=\frac{s+1}{2}$ then $u'\leq2s'$ and we have that
\[
\begin{split}\sum_{Q\in{\mathcal{S}}_{j}}\langle f\rangle_{Q}\langle(b-b_{Q})g\rangle_{u,Q}|Q| & \leq\sum_{Q\in{\mathcal{S}}_{j}}\langle f\rangle_{Q}\langle b-b_{Q}\rangle_{u\left(\frac{s}{u}\right)',Q}\langle g\rangle_{s,Q}|Q|\\
 & \leq c_{n}u\left(\frac{s}{u}\right)'\|b\|_{BMO}\sum_{Q\in{\mathcal{S}}_{j}}\langle f\rangle_{Q}\langle g\rangle_{s,Q}|Q|\\
 & \leq c_{n}u\left(\frac{s}{u}\right)'\|b\|_{BMO}\sum_{Q\in{\mathcal{S}}_{j}}\langle f\rangle_{r,Q}\langle g\rangle_{s,Q}|Q|\\
 & \leq c_{n}s'\|b\|_{BMO}\sum_{Q\in{\mathcal{S}}_{j}}\langle f\rangle_{r,Q}\langle g\rangle_{s,Q}|Q|.
\end{split}
\]
On the other hand
\[
\sum_{Q\in{\mathcal{S}}_{j}}\langle(b-b_{Q})f\rangle_{Q}\langle g\rangle_{u,Q}|Q|  \leq c_{n}r'\|b\|_{BMO}\sum_{Q\in{\mathcal{S}}_{j}}\langle f\rangle_{r, Q}\langle g\rangle_{s,Q}|Q|,
\]
from which it readily follows that
\[
|\langle[b,T_{\Omega}]f,g\rangle|\le c_{n}s'(s'+r')\|b\|_{\BMO}\sum_{Q\in{\mathcal{S}}_{j}}\langle f\rangle_{r,Q}\langle g\rangle_{s,Q}|Q|.
\]
Consequently
\[\Big|\sum_{j\in\mathbb Z}\int_{\mathbb{R}^{n}}[b,T](f_j)g_jdx \Big|\leq c_{n}s'\max\{s',r'\}\|b\|_{\BMO}
\int_{\mathbb R^n}\sum_j  \mathcal M_{r,s}(f_j,g_j)(x) dx.\]
where
\[
\mathcal M_{r,s}(f,g)(x)=\sup_{Q\ni x} \langle |f |\rangle_{r,Q}\langle |g|\rangle_{s,Q}.
\]
The consideratons above reduce the proof of Theorem  \ref{thm:sparserough} to provide a sparse domination for $\overline{(\mathcal M_{r,s})}_1(\mathbf f,\mathbf g)$. That was already done in \cite{CDO17}. Here we would like to track the constants, so  present an alternative proof.
\begin{lem}\label{lem:sparsemaxi}
Let $1<q<\infty$, $1\le s< \frac{q'+1}2$ and $1\le r<\frac{q+1}2$. Then there exists a sparse family of dyadic cubes $\mathcal S$  such that
\[
\overline{(\mathcal M_{1,s})}_1(\mathbf f,\mathbf g)\le c_n qq' \sum_{Q\in \mathcal S} \langle |\mathbf f|_q \rangle_{r,Q}  \langle |\mathbf g|_{q'} \rangle_{s, Q} \chi_Q.\]
\end{lem}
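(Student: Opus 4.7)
The plan is a principal-cubes stopping-time argument adapted to the bilinear vector-valued setting. By standard reductions, assume $|\mathbf f|_q$ and $|\mathbf g|_{q'}$ are supported in a dyadic cube $Q_0$. Build a sparse family $\mathcal S$ iteratively: put $Q_0\in\mathcal S$, and for each $Q\in\mathcal S$ let $\mathcal E(Q)$ collect the maximal dyadic subcubes $P\subsetneq Q$ satisfying
\[
\langle|\mathbf f|_q\rangle_{r,P}>K\langle|\mathbf f|_q\rangle_{r,Q}\quad\text{or}\quad\langle|\mathbf g|_{q'}\rangle_{s,P}>K\langle|\mathbf g|_{q'}\rangle_{s,Q}.
\]
By the weak-$(r,r)$ and weak-$(s,s)$ maximal bounds applied to $|\mathbf f|_q^r$ and $|\mathbf g|_{q'}^s$, choosing $K$ large yields $|\bigcup\mathcal E(Q)|\le|Q|/2$, so $\mathcal S:=\bigcup_{k\ge0}\mathcal E^{(k)}(Q_0)$ is $1/2$-sparse.

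The heart of the argument is to establish, for every $Q\in\mathcal S$ and almost every $x\in E_Q:=Q\setminus\bigcup\mathcal E(Q)$, the pointwise estimate
\[
\sum_j\sup_R\langle|f_j|\rangle_R\langle|g_j|\rangle_{s,R}\le c_n qq'\,\langle|\mathbf f|_q\rangle_{r,Q}\langle|\mathbf g|_{q'}\rangle_{s,Q},
\]
where the supremum runs over dyadic cubes $R\ni x$ contained in $Q$ but not in any $P\in\mathcal E(Q)$. H\"older's inequality in $j$ with the dual pair $(q,q')$ dominates the left-hand side by $\overline M^Q_q\mathbf f(x)\cdot\overline{(M_s^Q)}_{q'}\mathbf g(x)$, with $\overline M^Q_q$ and $M_s^Q$ denoting the localized (to $Q$) vector-valued and scalar maximal functions. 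The first factor is controlled in $L^r(Q)$ by $c\,qq'\,\||\mathbf f|_q\|_{L^r(Q)}$ via the Fefferman--Stein vector-valued maximal inequality, and---using the identity $\overline{(M_s)}_{q'}=\overline M_{q'/s}(|\cdot|^s)^{1/s}$---the second factor is controlled in $L^{r'}(Q)$ by $c\,\||\mathbf g|_{q'}\|_{L^{r'}(Q)}$, the assumption $s<(q'+1)/2$ ensuring $q'/s>1$ so that the Fefferman--Stein constant involved stays bounded. A Chebyshev argument then shows that the subset of $Q$ on which either $\overline M^Q_q\mathbf f$ or $\overline{(M_s^Q)}_{q'}\mathbf g$ exceeds a suitable multiple of $\langle|\mathbf f|_q\rangle_{r,Q}$ or $\langle|\mathbf g|_{q'}\rangle_{s,Q}$ respectively has measure at most $|Q|/4$; absorbing the maximal dyadic cubes covering this exceptional set into $\mathcal E(Q)$---while keeping $|\bigcup\mathcal E(Q)|\le|Q|/2$---yields the desired pointwise bound on $E_Q$.

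Summing the pointwise estimate over the chain of $\mathcal S$-cubes containing $x$ produces the claimed sparse domination. The principal obstacle lies in the Fefferman--Stein step: one must arrange the vector-valued $L^r$- and $L^{r'}$-bounds so that the target averages $\langle|\mathbf f|_q\rangle_{r,Q}$ and $\langle|\mathbf g|_{q'}\rangle_{s,Q}$ emerge naturally with a joint constant of order $qq'$, and this is precisely what the exponent constraints $r<(q+1)/2$ and $s<(q'+1)/2$ encode.
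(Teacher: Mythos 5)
Your overall architecture (principal cubes, an exceptional set inside each stopping cube, H\"older in $j$ with the pair $(q,q')$, and vector-valued maximal estimates) is viable, and it differs from the paper only in packaging: the paper runs the same core estimates through the Lerner--Nazarov local mean oscillation formula rather than through an explicit stopping-time iteration. However, as written there is a genuine gap in your Chebyshev step. You propose to control the second factor $\overline{(M_s^Q)}_{q'}\mathbf g$ in $L^{r'}(Q)$ by $c\,\||\mathbf g|_{q'}\|_{L^{r'}(Q)}$. A Chebyshev bound from an $L^{r'}(Q)$ estimate can only force the exceptional set to be small once the threshold is a multiple of $\langle|\mathbf g|_{q'}\rangle_{r',Q}$; it cannot produce a threshold of size $\langle|\mathbf g|_{q'}\rangle_{s,Q}$, which is what the lemma requires. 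Under the hypotheses one always has $r'>s$ (indeed $r<\frac{q+1}{2}$ forces $r'>\frac{q+1}{q-1}>\frac{2q-1}{2(q-1)}>s$), and $\langle|\mathbf g|_{q'}\rangle_{r',Q}$ can be arbitrarily larger than $\langle|\mathbf g|_{q'}\rangle_{s,Q}$ (take $\mathbf g$ concentrated on a small portion of $Q$), so your argument proves a strictly weaker sparse form with exponent $r'$ on the $\mathbf g$-side, not the stated one. A second, related problem is the first factor: the strong Fefferman--Stein bound $\|\overline M_q\mathbf f\|_{L^r}\lesssim qq'\||\mathbf f|_q\|_{L^r}$ fails outright at the allowed endpoint $r=1$ and its constant is not $O(qq')$ uniformly as $r\to1$, so the claimed joint constant does not follow from the inputs you cite.

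The fix is to replace both strong bounds by the weak-type ones, which is exactly what powers the paper's proof: use $\overline M_q\mathbf f\le \overline M_{q/r}(|\mathbf f|^r)^{1/r}$ and $\overline{(M_s)}_{q'}\mathbf g=\overline M_{q'/s}(|\mathbf g|^s)^{1/s}$, and apply the weak $(1,1)$ Fefferman--Stein inequality $\|\overline M_u\mathbf h\|_{L^{1,\infty}}\le c_n u'\||\mathbf h|_u\|_{L^1}$ with $u=q/r$ and $u=q'/s$. The hypotheses $r<\frac{q+1}{2}$ and $s<\frac{q'+1}{2}$ are precisely what make $(q/r)'\le 2q'$ and $(q'/s)'\le 2q$, so the sets where $\overline M_q^Q\mathbf f>C((q/r)')^{1/r}\langle|\mathbf f|_q\rangle_{r,Q}$ or $\overline{(M_s^Q)}_{q'}\mathbf g>C((q'/s)')^{1/s}\langle|\mathbf g|_{q'}\rangle_{s,Q}$ each have measure at most $|Q|/8$, and the thresholds multiply to $c_nqq'\langle|\mathbf f|_q\rangle_{r,Q}\langle|\mathbf g|_{q'}\rangle_{s,Q}$ on the good set; with this substitution your stopping-time scheme goes through (the paper implements the same weak-type inputs via the weak-space H\"older inequality and then invokes Lerner--Nazarov, so with the correction the two proofs are essentially equivalent in substance).
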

\begin{proof}
Again, we use the three lattice theorem to reduce the problem to
study the related dyadic maximal operator. Namely, we shall prove
\[
\overline{(\mathcal{M}_{1,s}^{\mathcal{D}})}_{1}(\mathbf{f},\mathbf{g})\le c_{n}qq'\sum_{Q\in\mathcal{S}}\langle|\mathbf{f}|_{q}\rangle_{r,Q}\langle|\mathbf{g}|_{q'}\rangle_{s,Q}\chi_{Q},
\]
where $\mathcal{D}$ is a dyadic grid and
\[
\mathcal{M}_{1,s}^{\mathcal{D}}(f,g)(x)=\sup_{\substack{Q\ni x\\
Q\in\mathcal{D}
}
}\langle|f|\rangle_{r,Q}\langle|g|\rangle_{s,Q}.
\]
We shall use the Lerner-Nazarov formula. So we only need to calculate
the local mean oscillation. For every $x\in Q_{0}$, notice that
\[
\mathcal{M}_{r,s}^{\mathcal{D}}(f,g)(x)=\max\{\mathcal{M}_{r,s}^{\mathcal{D}}(f\chi_{Q_{0}},g\chi_{Q_{0}})(x),\sup_{\substack{Q\in\mathcal{D}\\
Q\supset Q_{0}
}
}\langle|f|\rangle_{r,Q}\langle|g|\rangle_{s,Q}\}.
\]
the second term on the right is constant, so based on this we define
\[
K_{0}=\sum_{j\in\mathbb{Z}}\sup_{\substack{Q\in\mathcal{D}\\
Q\supset Q_{0}
}
}\langle|f_{j}|\rangle_{r,Q}\langle|g_{j}|\rangle_{s,Q}.
\]
Then
\begin{align*}
\big|\{x\in Q_{0}:|\overline{(\mathcal{M}_{1,s}^{\mathcal{D}})}_{1}(\mathbf{f},\mathbf{g})(x)-K_{0}|>t\}\big| & \le\big|\{x\in Q_{0}:|\overline{(\mathcal{M}_{1,s}^{\mathcal{D}})}_{1}(\mathbf{f}\chi_{Q_{0}},\mathbf{g}\chi_{Q_{0}})(x)|>t\}\big|.
\end{align*}
Now we are in the position to apply the Fefferman-Stein inequality
for vector valued maximal operators. Since we need to track the constants,
here we use the version in Grafakos' book \cite[Theorem 5.6.6]{G}:
\begin{equation}
\|\overline{M}_{q}(\mathbf{f})\|_{L^{1,\infty}}\le c_{n}q'\||\mathbf{f}|_{q}\|_{L^{1}}\label{eq:lg1}.
\end{equation}
We also need the H\"older's inequality for the weak type spaces, which
can also be found in \cite[p. 16]{G}:
\begin{equation}
\|f_{1}\cdots f_{k}\|_{L^{p,\infty}}\le p^{-\frac{1}{p}}\prod_{i=1}^{k}p_{i}^{\frac{1}{p_{i}}}\|f_{i}\|_{L^{p_{i},\infty}},\label{eq:lg2}
\end{equation}
where $\frac{1}{p}=\sum_{i=1}^{k}\frac{1}{p_{i}}$ and $0<p_{i}<\infty$.
With (\ref{eq:lg1}) and (\ref{eq:lg2}) at hand, we have that since
$1<r,s<\infty,$
\begin{align*}
\|\overline{(\mathcal{M}_{1,s})}_{1}^{\mathcal{D}}(\mathbf{f},\mathbf{g})\|_{L^{\frac{rs}{r+s},\infty}} & \le r^{\frac{1}{r}}s^{\frac{1}{s}}\left(\frac{r+s}{rs}\right)^{\frac{r+s}{rs}}\|\overline{M}_{q}\mathbf{f}\|_{L^{r,\infty}}\|\overline{(M_{s})}_{q'}\mathbf{g}\|_{L^{s,\infty}}\\
 & \leq r^{\frac{1}{r}}s^{\frac{1}{s}}\left(\frac{r+s}{rs}\right)^{\frac{r+s}{rs}}\|\overline{\left(M_{r}\right)}_{q}\mathbf{f}\|_{L^{r,\infty}}\|\overline{(M_{s})}_{q'}\mathbf{g}\|_{L^{s,\infty}}\\
 & \leq r^{\frac{1}{r}}s^{\frac{1}{s}}\left(\frac{r+s}{rs}\right)^{\frac{r+s}{rs}}\|\overline{M}_{q/r}\mathbf{|f}|^{r}\|_{L^{1,\infty}}^{\frac{1}{r}}\|\overline{M}_{q'/s}\mathbf{\mathbf{|g|}^{s}}\|_{L^{1,\infty}}^{\frac{1}{s}}\\
 & \leq c_{n}r^{\frac{1}{r}}s^{\frac{1}{s}}\left(\frac{r+s}{rs}\right)^{\frac{r+s}{rs}}\left(\frac{q}{r}\right)^{'}\left(\frac{q'}{s}\right)^{'}\||\mathbf{f}|_{q}^{r}\|_{L^{1}}^{\frac{1}{r}}\||\mathbf{g}|_{q'}^{s}\|_{L^{1}}^{\frac{1}{s}}.
\end{align*}
Now we observe that $c_{n}r^{\frac{1}{r}}s^{\frac{1}{s}}\left(\frac{r+s}{rs}\right)^{\frac{r+s}{rs}}\left(\frac{q}{r}\right)^{'}\left(\frac{q'}{s}\right)^{'}\leq c_{n}qq'=\kappa$.
Then,
\begin{align*}
\big|\{x\in Q_{0}:|\overline{(\mathcal{M}_{1,s}^{\mathcal{D}})}_{1}(\mathbf{f}\chi_{Q_{0}},\mathbf{g}\chi_{Q_{0}})(x)|>t\}\big| & \le\frac{\kappa^{\frac{rs}{r+s}}}{t^{\frac{rs}{r+s}}}\Big(\int_{Q_{0}}|\mathbf{f}|_{q}^{r}\Big)^{\frac{1}{r}\frac{rs}{r+s}}\Big(\int_{Q_{0}}|\mathbf{g}|_{q'}^{s}\Big)^{\frac{1}{s}\frac{rs}{r+s}}.
\end{align*}
Taking into account the preceding estimates, we have that
\begin{align*}
\omega_{\lambda}((\overline{\mathcal{M}_{1,s}^{\mathcal{D}}})_{1}(\mathbf{f},\mathbf{g}),Q_{0}) & \le\big((\overline{\mathcal{M}_{1,s}^{\mathcal{D}}})_{1}(\mathbf{f},\mathbf{g})-K_{0}\big)^{*}(\lambda|Q_{0}|)\\
 & \le c_{n}qq'\lambda^{-\frac{r+s}{rs}}\langle|\mathbf{f}|_{q}\rangle_{r,Q_{0}}\langle|\mathbf{g}|_{q'}\rangle_{s,Q_{0}}\chi_{Q_{0}}\\
 & \le c_{n}qq'\lambda^{-2}\langle|\mathbf{f}|_{q}\rangle_{r,Q_{0}}\langle|\mathbf{g}|_{q'}\rangle_{s,Q_{0}}\chi_{Q_{0}},
\end{align*}
where the last inequality holds since $0<\lambda<1$.
From this point, a direct application of Lerner-Nazarov formula (Theorem \ref{LernerFormula-1}) together with the $3^n$-dyadic lattices trick ends the proof.
\end{proof}

\section{Strong type estimate for commutators via conjugation method}\label{Sec:ProofApComm}
The proof of Theorem \ref{Thm:ApComm} is a straightforward consequence of Theorem \ref{Thm:ApT} and the following Lemma, which is based on the so called conjugation method, that consists in obtaining uniform estimates for a suitable family of operators $T_{z}(f):=e^{zb}T\left(\frac{f}{e^{zb}}\right)$ with $z\in\mathbb{C}$.

\begin{lem}\label{Thm:ConjMethodVec}
Let $1<p,q<\infty$ and  $w\in A_{p}$. Let $b\in \BMO$  and $T$ be a linear operator such that
\[\|\overline{T}_q\bm{f}\|_{L^p(w)}\leq c_{n,p,q}\varphi\left([w]_{A_p},[w]_{A_\infty},[\sigma]_{A_\infty}\right)\||\bm{f}|_q\|_{L^p(w)}.
\]
Then
\[
\left\Vert \overline{[b,T]}_{q}\bm{f}\right\Vert _{L^{p}(w)}
\leq c_{n,p,q}\|b\|_{\BMO}\kappa_{w,v}\||\bm{f}|_{q}\|_{L^{p}(w)},
\]
where
\[\kappa_{w,v}=\varphi\left(c_{n,p}[w]_{A_{p}},c_n[w]_{A_\infty}, c_n[\sigma]_{A_\infty}\right)\left([w]_{A_\infty}+[\sigma]_{A_\infty}\right).\]
\end{lem}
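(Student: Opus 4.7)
The strategy is a vector-valued adaptation of the classical Coifman-Rochberg-Weiss conjugation method of the type exploited in \cite{CPP,HPAinfty}. I would start by forming the holomorphic family of operators
\[
T_{z}f := e^{zb}\,T\bigl(e^{-zb}f\bigr),\qquad z\in\mathbb{C},
\]
which satisfies $T_{0}=T$ and $\partial_{z}T_{z}f\big|_{z=0}=[b,T]f$. Cauchy's integral formula then yields, for every $\varepsilon>0$,
\[
[b,T]f(x)=\frac{1}{2\pi i}\oint_{|z|=\varepsilon}\frac{T_{z}f(x)}{z^{2}}\,dz.
\]
Applying this to each component $f_{j}$, taking moduli and then the $\ell^{q}$-norm, and using Minkowski's inequality for both the contour integral and $\ell^{q}$, I obtain the pointwise control
\[
\overline{[b,T]}_{q}\bm{f}(x)\leq\frac{1}{2\pi}\oint_{|z|=\varepsilon}\frac{\overline{T_{z}}_{q}\bm{f}(x)}{|z|^{2}}\,|dz|,
\]
where, since $|T_{z}f_{j}|=e^{\mathrm{Re}(z)\,b}|T(e^{-zb}f_{j})|$, one has $\overline{T_{z}}_{q}\bm{f}=e^{\mathrm{Re}(z)\,b}\,\overline{T}_{q}(e^{-zb}\bm{f})$.

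Next I would take the $L^{p}(w)$-norm through the contour integral. Setting $w_{z}:=e^{p\,\mathrm{Re}(z)\,b}w$ and $\sigma_{z}:=w_{z}^{1-p'}=e^{-p'\,\mathrm{Re}(z)\,b}\sigma$, a direct change of weight gives $\|\overline{T_{z}}_{q}\bm{f}\|_{L^{p}(w)}=\|\overline{T}_{q}(e^{-zb}\bm{f})\|_{L^{p}(w_{z})}$ and $\bigl\||e^{-zb}\bm{f}|_{q}\bigr\|_{L^{p}(w_{z})}=\bigl\||\bm{f}|_{q}\bigr\|_{L^{p}(w)}$. Applying the hypothesis to $w_{z}$ then gives
\[
\bigl\|\overline{[b,T]}_{q}\bm{f}\bigr\|_{L^{p}(w)}\leq\frac{c_{n,p,q}}{2\pi}\oint_{|z|=\varepsilon}\frac{\varphi\bigl([w_{z}]_{A_{p}},[w_{z}]_{A_{\infty}},[\sigma_{z}]_{A_{\infty}}\bigr)}{|z|^{2}}\,|dz|\;\cdot\;\bigl\||\bm{f}|_{q}\bigr\|_{L^{p}(w)}.
\]

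The key step, and the main obstacle, is the quantitative stability of the relevant weight characteristics under the perturbation $w\mapsto w_{z}$. To handle it I would combine the sharp Reverse Hölder inequality (Theorem \ref{Thm:RHI}) with the John-Nirenberg inequality: splitting $\langle w_{z}\rangle_{Q}$ by Hölder with exponents $(r_{w},r_{w}')$, $r_{w}=1+1/(\tau_{n}[w]_{A_{\infty}})$, and $\langle\sigma_{z}\rangle_{Q}$ by Hölder with $(r_{\sigma},r_{\sigma}')$ one gets
\[
\langle w_{z}\rangle_{Q}\leq 4\,e^{p\,\mathrm{Re}(z)\,b_{Q}}\langle w\rangle_{Q},\qquad \langle\sigma_{z}\rangle_{Q}\leq 4\,e^{-p'\,\mathrm{Re}(z)\,b_{Q}}\langle\sigma\rangle_{Q},
\]
provided $|\mathrm{Re}(z)|\,\|b\|_{\BMO}\max\{p\,r_{w}',\,p'\,r_{\sigma}'\}\leq c_{n}$. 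Since $p-(p-1)p'=0$, the $b_{Q}$-exponentials cancel in the product $\langle w_{z}\rangle_{Q}\langle\sigma_{z}\rangle_{Q}^{p-1}$, yielding $[w_{z}]_{A_{p}}\leq c_{n,p}[w]_{A_{p}}$; an analogous reverse-Hölder argument, applied to $w$ and to $\sigma$ separately, bounds $[w_{z}]_{A_{\infty}}$ by $c_{n}[w]_{A_{\infty}}$ and $[\sigma_{z}]_{A_{\infty}}$ by $c_{n}[\sigma]_{A_{\infty}}$. Because $r_{w}'\simeq[w]_{A_{\infty}}$ and $r_{\sigma}'\simeq[\sigma]_{A_{\infty}}$, these admissibility conditions force
\[
\varepsilon\;\sim\;\frac{1}{\|b\|_{\BMO}\bigl([w]_{A_{\infty}}+[\sigma]_{A_{\infty}}\bigr)}.
\]

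Finally, evaluating $\oint_{|z|=\varepsilon}|z|^{-2}|dz|=2\pi/\varepsilon$ and plugging in, the monotone dependence of $\varphi$ turns the bound into
\[
\bigl\|\overline{[b,T]}_{q}\bm{f}\bigr\|_{L^{p}(w)}\leq\frac{c_{n,p,q}}{\varepsilon}\,\varphi\bigl(c_{n,p}[w]_{A_{p}},c_{n}[w]_{A_{\infty}},c_{n}[\sigma]_{A_{\infty}}\bigr)\bigl\||\bm{f}|_{q}\bigr\|_{L^{p}(w)},
\]
and rewriting $1/\varepsilon$ yields the factor $\|b\|_{\BMO}([w]_{A_{\infty}}+[\sigma]_{A_{\infty}})$ demanded by the statement. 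The conjugation scheme itself is essentially formal; all the work is concentrated in the sharp reverse-Hölder/John-Nirenberg transfer of the $A_{p}$ and $A_{\infty}$ characteristics from $w$ and $\sigma$ to $w_{z}$ and $\sigma_{z}$.
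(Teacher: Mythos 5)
Your proposal is correct and follows essentially the same route as the paper: the conjugation method via the Cauchy integral representation $[b,T]f=\frac{1}{2\pi i}\int_{|z|=\varepsilon}z^{-2}T_{z}(f)\,dz$, two applications of Minkowski's inequality (in $\ell^{q}$ and in $dz$), the change of weight to $e^{p\,\mathrm{Re}(z)b}w$, and the choice $\varepsilon\simeq\bigl(\|b\|_{\BMO}([w]_{A_{\infty}}+[\sigma]_{A_{\infty}})\bigr)^{-1}$. The only difference is that the stability of the $A_{p}$ and $A_{\infty}$ characteristics under the perturbation $w\mapsto e^{p\,\mathrm{Re}(z)b}w$, which you sketch via the sharp reverse H\"older inequality and John--Nirenberg, is simply quoted in the paper from \cite[Lemma 2.1]{HConm} and \cite[Lemma 7.3]{HPAinfty}.
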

\begin{proof}
We know that
\[
[b,T]f=\left.\frac{d}{dz}e^{zb}T(fe^{-zb})\right|_{z=0}=\frac{1}{2\pi i}\int_{|z|=\varepsilon}\frac{T_{z}(f)}{z^{2}}\,dz\,,\quad\varepsilon>0
\]
where
\[
z\to T_{z}(f):=e^{zb}T\left(\frac{f}{e^{zb}}\right)\qquad z\in\mathbb{C}.
\]
Taking that into account
\[
\left\Vert \overline{[b,T]}_{q}\bm{f}\right\Vert _{L^{p}(w)}  =\left\Vert \left(\sum_{j=1}^{\infty}\left|[b,T] f_{j}(x)\right|^{q}\right)^{\frac{1}{q}}\right\Vert _{L^{p}(w)}=\left\Vert \left(\sum_{j=1}^{\infty}\left|\frac{1}{2\pi i}\int_{|z|=\varepsilon}\frac{T_{z}(f_{j})}{z^{2}}\,dz\right|^{q}\right)^{\frac{1}{q}}\right\Vert _{L^{p}(w)}.
\]
Now we use Minkowski inequality with respect to the measures $dz$
and $\ell^{q}$. Then
\[
\left\Vert \left(\sum_{j=1}^{\infty}\left|\frac{1}{2\pi i}\int_{|z|=\varepsilon}\frac{T_{z}(f_{j})}{z^{2}}\,dz\right|^{q}\right)^{\frac{1}{q}}\right\Vert _{L^{p}(w)}\leq\frac{1}{2\pi\varepsilon^{2}}\left\Vert \int_{|z|=\varepsilon}\left(\sum_{j=1}^{\infty}|T_{z}(f_{j})|^{q}\right)^{\frac{1}{q}}\,dz\right\Vert _{L^{p}(w)}.
\]
Using again Minkowski inequality again and we have that
\[
\frac{1}{2\pi\varepsilon^{2}}\left\Vert \int_{|z|=\varepsilon}\left(\sum_{j=1}^{\infty}|T_{z}(f_{j})|^{q}\right)^{\frac{1}{q}}\,dz\right\Vert _{L^{p}(w)}\leq\frac{1}{\varepsilon}\sup_{|z|=\varepsilon}\left\Vert \left(\sum_{j=1}^{\infty}|T_{z}(f_{j})|^{q}\right)^{\frac{1}{q}}\right\Vert _{L^{p}(w)}.
\]
From this point
\[\left\Vert \left(\sum_{j=1}^{\infty}|T_{z}(f_{j})|^{q}\right)^{\frac{1}{q}}\right\Vert _{L^{p}(w)}
=\left\Vert \left(\sum_{j=1}^{\infty}\left|T\left(\frac{f}{e^{zb}}\right)\right|^{q}\right)^{\frac{1}{q}}\right\Vert _{L^p({e^{Re(zb)p}w)}}, \]
and using the hypothesis on $T_q$ we obtain
\[
\left\Vert \left(\sum_{j=1}^{\infty}\left|T\left(\frac{f}{e^{zb}}\right)\right|^{q}\right)^{\frac{1}{q}}\right\Vert _{L^p({e^{Re(zb)p}w)}} \leq c_{n,p,q} \kappa\||\bm{f}|_q\|_{L^p(w)}
\]
with
\[\kappa=\varphi\left(
\left[e^{Re(zb)p}w\right]_{A_p},
\left[e^{Re(zb)p}w\right]_{A_\infty},
\left[\left(e^{Re(zb)p}w\right)^{\frac{1}{1-p}}\right]_{A_\infty}
\right).
\]

Now taking into account \cite[Lemma 2.1]{HConm}  and \cite[Lemma 7.3]{HPAinfty} we have that $[e^{\text{Re}(bz)p}w]_{A_{p}}\leq c_{n,p}[w]_{A_{p}}$, $[e^{\text{Re}(bz)p}w]_{A_{\infty}}\leq c_{n}[w]_{A_{\infty}}$ and $\left[\left(e^{Re(zb)p}w\right)^{\frac{1}{1-p}}\right]_{A_\infty} \leq c_{n}[\sigma]_{A_{\infty}}$ provided that $|z|\leq\frac{\varepsilon_{n,p}}{\|b\|_{\BMO}([w]_{A_{\infty}}+[\sigma]_{A_{\infty}})}$.
Combining all the estimates we obtain the desired result.
\end{proof}

\section{Proofs of $C_p$ condition estimates}\label{Sec:ProofThmCp}

In this section we will prove  the theorems stated in Section \ref{Cp} concerning the $C_p$ condition.

\subsection{Proofs of $M^\sharp$ approach results. Theorems  \ref{strong p}, \ref{strong pmultilinear} and \ref{commutator}}
\begin{proof}[Proof of Theorem  \ref{strong p}]       Let $\delta \in (0,1)$ be a parameter to be chosen. Then, by the Lebesgue differentiation theorem
\begin{equation*}
\|T(f)\|_{L^p(w)} \leq \|M(   T(f)^{\delta}   )^{\frac{1}{\delta}} \|_{L^p(w)}= \|M(   T(f)^{\delta}   )\|_{L^{p/\delta}(w)}^\frac{1}{\delta}.
\end{equation*}
Now we choose $ \delta\in(0,1)$ such that \[\max\{1,p\}<\frac{p}{\delta}<\max\{1,p\}+\varepsilon.\]
If we denote $\varepsilon_{1}=\max\{1,p\}+\varepsilon-\frac{p}{\delta}$ then, since $w\in C_{\max\left\{ 1,p\right\} +\varepsilon}$,  we have that $w\in C_{p/\delta+\varepsilon_{1}}$ and a direct application of Lemma \ref{yabuta} combined with the $(D)$ condition yields
\begin{equation*}
\|T(f)\|_{L^p(w)} \leq c\, \|M^{\#}(   T(f)^{\delta}   )\|_{L^{p/\delta}(w)}^\frac{1}{\delta}=
c\, \|M_{\delta}^{\#}(T(f) )\|_{L^{p}(w)} \leq c\, \| Mf\|_{L^{p}(w)},
\end{equation*}
which is the desired result. The vector valued case is analogous, assuming the $(D_q)$ condition instead so we omit the proof.

\end{proof}

\begin{proof}[Proof of Theorem \ref{strong pmultilinear}] The proof is similar to the case $m=1$.
Let $\delta \in (0,\frac1m)$ be a parameter to be chosen. Then, as above
\begin{equation*}
\|T(\vec f) \|_{L^p(w)} \leq \|M(   |T(\vec f)|^{\delta}   )^{\frac{1}{\delta}} \|_{L^p(w)}.
\end{equation*}

Now we choose $ \delta\in(0,\frac{1}{m})$ such that
\[m\max\{1,p\}<\frac{p}{\delta}<m\max\{1,p\}+\varepsilon.\]
If we denote $\varepsilon_{m}=m\max\{1,p\}+\varepsilon-\frac{p}{\delta}$ then, since $w\in C_{m\max\left\{ 1,p\right\} +\varepsilon}$,  we have that $w\in C_{p/\delta+\varepsilon_{m}}$ and a direct application of Lemma \ref{yabuta} combined with \eqref{pointw} yields

\begin{equation*}
\|T(\vec{f})\|_{L^p(w)} \leq c\, \|M^{\#}(   |T(\vec{f})|^{\delta}   )\|_{L^{p/\delta}(w)}^\frac{1}{\delta}=
c\, \|M_{\delta}^{\#}(T(\vec{f}) )\|_{L^{p}(w)} \leq c\, \| \mathcal{M}(\vec{f})\|_{L^{p}(w)},
\end{equation*}
as we wanted to prove.
\end{proof}

\begin{proof}[Proof of Theorem \ref{commutator}]

We will use the key pointwise estimate \eqref{Pointwise}: if $0<\delta<\delta_1$: there exists a positive constant $c=c_{\delta,\delta_1,T} $ such that,
\begin{equation}\label{Keyestimate}
M^{\#}_{\delta}([b,T]f)(x)\le c_{\delta,\delta_1,T} \|b\|_{BMO} \left( M_{\delta_1}(Tf)   +M^2(f)(x) \right) .
\end{equation}
By the Lebesgue differentiation theorem,
\begin{equation*}
\|[b,T]f\|_{L^p(w)} \leq \|M(   |[b,T]f|^{\delta}   )^{\frac{1}{\delta}} \|_{L^p(w)}.
\end{equation*}
We choose $ 0<\delta<\delta_1<1$ such that \[\max\{1,p\}<\frac{p}{\delta_1}<\frac{p}{\delta}<\max\{1,p\}+\varepsilon.\]
Now, if we denote $\varepsilon_{1}=\max\{1,p\}+\varepsilon-\frac{p}{\delta}$ then, since $w\in C_{\max\left\{ 1,p\right\} +\varepsilon}$,  we have that $w\in C_{p/\delta+\varepsilon_{1}}$ and a direct application of Lemma \ref{yabuta} yields
\begin{equation*}
\|[b,T]\|_{L^p(w)} \leq c\|M^{\#}(   |[b,T]|^{\delta}   )^{\frac{1}{\delta}} \|_{L^p(w)}.
\end{equation*}
Combining the preceding estimate with \eqref{Pointwise},
$$
\|[b,T]\|_{L^p(w)} \leq c\, \|b\|_{BMO}\left(\|M_{\delta_1}(Tf)  \|_{L^p(w)} + \|M^{2}f \|_{L^p(w)} \right).
$$
For the second term we are done, whilst for first one,  taking into account our choice for $\delta_1$ and arguing as in the proof of Theorem \ref{strong p}, $$
\|M_{\delta_1}(Tf)  \|_{L^p(w)} \leq c \|Mf  \|_{L^p(w)}
$$
and we are done.
Taking into account \eqref{Pointwiseq} the vector-valued case is analogous so we omit the proof.
\end{proof}

\subsection{Proofs of Theorems \ref{Thm:Cp} and \ref{Thm:CpComm}}
The proof of Theorem \ref{Thm:Cp} is actually a consequence of the sparse domination combined with the following Theorem.
\begin{thm} \label{thm:CpSparse}Let $1< p<q<\infty$. Let $\mathcal{S}$ be a sparse family and $w\in C_q$. Then
\[\begin{split}
\|\mathcal{A}_{\mathcal{S}}f\|_{L^p(w)}&\leq c\|Mf\|_{L^p(w)},\\
\|\mathcal{A}_{\mathcal{S}}f\|_{L^{p,\infty}(w)}&\leq c\|Mf\|_{L^{p,\infty}(w)}.
\end{split}\]\end{thm}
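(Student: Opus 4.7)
The plan is to adapt Sawyer's good-$\lambda$ argument from \cite{S} to sparse operators. The two key ingredients are Lemma \ref{Lem:Decay} (with $r=1$), which provides the local Lebesgue-measure exponential decay of $\mathcal{A}_\mathcal{S} f$ relative to $Mf$, and the defining inequality of the $C_q$ class, which transfers this Lebesgue estimate into a $w$-measure estimate at the cost of the factor $\int M\chi_Q^q\,w$.

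Concretely, I would prove a good-$\lambda$ inequality
\[
w(\{\mathcal{A}_\mathcal{S} f > 2\lambda,\ Mf \le \gamma\lambda\}) \le \Psi(\gamma)\, w(\{\mathcal{A}_\mathcal{S} f > \lambda\}),\qquad 0<\gamma<\tfrac12,
\]
with $\Psi(\gamma)\to 0$ as $\gamma\to 0^+$ at an exponential rate. Given this, the strong-type bound follows by the standard integration in $\lambda$ and absorption argument (after a truncation ensuring $\|\mathcal{A}_\mathcal{S} f\|_{L^p(w)}<\infty$), and the weak-type follows from the same inequality applied at a single chosen level.

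To establish the good-$\lambda$ inequality, let $\Omega_\lambda=\{\mathcal{A}_\mathcal{S} f>\lambda\}$ and decompose it via a Whitney family $\{Q_j\}$. Using that $\mathcal{S}$ is a subfamily of a dyadic lattice, the sum defining $\mathcal{A}_\mathcal{S} f$ on $Q_j$ splits into two pieces: the contribution from sparse cubes strictly containing $Q_j$, which is pointwise bounded by $c\lambda$ by the Whitney property (picking a point $y$ in the dyadic parent of $Q_j$ with $\mathcal{A}_\mathcal{S} f(y)\le\lambda$), and the contribution from sparse cubes inside $Q_j$, which equals $\mathcal{A}_{\mathcal{S}\cap\mathcal{D}(Q_j)}(f\chi_{Q_j})$. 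The exceptional set on $Q_j$ is therefore contained in
\[
\{x\in Q_j : \mathcal{A}_{\mathcal{S}\cap\mathcal{D}(Q_j)}(f\chi_{Q_j})(x) \ge (1/\gamma)\,M(f\chi_{Q_j})(x)\},
\]
whose Lebesgue measure is at most $c_1 e^{-c_2/\gamma}|Q_j|$ by Lemma \ref{Lem:Decay}, and the $C_q$ condition then gives
\[
w(\text{exc.~on }Q_j) \le c\,(c_1 e^{-c_2/\gamma})^\delta\int M\chi_{Q_j}^q\,w.
\]

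The main obstacle is the summation step: I need $\sum_j \int M\chi_{Q_j}^q\,w \le c\,w(\Omega_\lambda)$, which does \emph{not} follow from $C_q$ alone, since we assume no $A_p$-type control on $w$. The required bound must be extracted as in Sawyer's original argument through a careful bookkeeping on the Whitney cubes: the bounded overlap of the enlarged family $\{cQ_j\}$ handles the principal parts of the $M\chi_{Q_j}$, while the tails spilling outside $\Omega_\lambda$ are absorbed through iterated applications of the $C_q$ inequality at nested dyadic scales surrounding $\Omega_\lambda$. This is the delicate technical heart of the proof, essentially transferring Sawyer's analysis to the sparse setting; the integration in $\lambda$ for the strong-type bound and the single-level application for the weak-type bound are then standard.
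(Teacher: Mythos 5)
Your plan stands or falls on the good-$\lambda$ inequality
\[
w\big(\{\mathcal{A}_{\mathcal{S}}f>2\lambda,\ Mf\le\gamma\lambda\}\big)\le \Psi(\gamma)\,w\big(\{\mathcal{A}_{\mathcal{S}}f>\lambda\}\big),
\]
and this is exactly the step that is not available under $C_q$ alone. You locate the difficulty correctly -- after applying the local exponential decay and the $C_q$ condition on each Whitney cube $Q_j$ of $\Omega_\lambda=\{\mathcal{A}_{\mathcal{S}}f>\lambda\}$ you are left needing $\sum_j\int M(\chi_{Q_j})^q\,w\lesssim w(\Omega_\lambda)$ -- but the proposed fix (bounded overlap of $\{cQ_j\}$ for the ``principal parts'' plus ``iterated applications of $C_q$ at nested scales'' for the tails) does not work: a $C_q$ weight need not be doubling, so even the local part only gives $w(\cup_j cQ_j)$, which is not controlled by $w(\Omega_\lambda)$, and the tails $\int_{\mathbb{R}^n\setminus cQ_j}M(\chi_{Q_j})^q w$ see mass of $w$ arbitrarily far from $\Omega_\lambda$ that $C_q$ simply does not relate back to $w(\Omega_\lambda)$. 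This is not a technicality one can defer to \cite{S}: Sawyer never proves a $w$-to-$w$ good-$\lambda$ inequality of this form (its failure to close is precisely why Muckenhoupt's sufficiency question \cite{M} is still open), so ``transferring Sawyer's analysis'' does not supply the missing absorption.

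What Sawyer's bookkeeping actually yields -- and what the paper uses -- is a different closing mechanism: the exceptional $w$-mass is paid for by $\|Mf\|_{L^{p,\infty}(w)}^p$ rather than by $w(\Omega_\lambda)$. Concretely, the paper discretizes $\|\mathcal{A}_{\mathcal{S}}f\|_{L^p(w)}^p$ in levels $2^k$, splits $\mathcal{S}=\cup_m\mathcal{S}_m$ according to $2^m<\langle f\rangle_Q\le 2^{m+1}$ (so that for $2^m\ge\gamma 2^k$ the cubes live inside $\{Mf>\gamma 2^k\}$), uses the exponential Carleson decay of $b_m=\sum_{Q\in\mathcal{S}_m}\chi_Q$ above the maximal cubes $\mathcal{S}_m^*$ together with $C_q$ applied on those maximal cubes, and then sums the resulting quantities $\sum_{Q\in\mathcal{S}_m^*}\int M(\chi_Q)^q w$ by means of Lemma \ref{Lem:Tec2} (the maximal cubes are disjoint and sit inside $\{Mf>2^m\}$, so the sum is dominated by $2^{-mp}M_{m,p,q}(Mf)^p$) and the key Lemma \ref{Lem:Tec1}, $\sup_k\int M_{k,p,q}(Mf)^p w\lesssim\|Mf\|_{L^{p,\infty}(w)}^p$. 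Note that Lemma \ref{Lem:Tec1} is proved via Whitney decompositions of the level sets of $Mf$, where the smallness $|\Omega_k\cap 3Q_i^{k-N}|\lesssim 2^{-N}|Q_i^{k-N}|$ is a property of the maximal function, not of $\mathcal{A}_{\mathcal{S}}$; this is why the whole argument is anchored on $\{Mf>2^k\}$ instead of $\{\mathcal{A}_{\mathcal{S}}f>\lambda\}$, and why the natural output (and the form of the weak-type estimate) involves $\|Mf\|_{L^{p,\infty}(w)}$. So your use of Lemma \ref{Lem:Decay} and of $C_q$ on Whitney cubes is in the right spirit, but the absorption step as you state it is a genuine gap, and if you could prove the $w$-to-$w$ good-$\lambda$ under $C_q$ you would in fact have something substantially stronger than what is known.
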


Something analogous happens with Therorem \ref{Thm:CpComm}. It is a consequence of the sparse domination combined with the following result.
\begin{thm} \label{thm:CpSparseComm}Let $1< p<q<\infty$. Let $\mathcal{S}$ be a sparse family and $w\in C_q$, and $b\in \BMO$. Then
\[\begin{split}
\|\mathcal{T}_{b,\mathcal{S}}f\|_{L^p(w)}&\leq c\|b\|_{\BMO}\|Mf\|_{L^p(w)},\\
\|\mathcal{T}_{b,\mathcal{S}}f\|_{L^{p,\infty}(w)}&\leq c\|b\|_{\BMO}\|Mf\|_{L^{p,\infty}(w)}.
\end{split}\]
and
\[\begin{split}
\|\mathcal{T}^*_{b,\mathcal{S}}f\|_{L^p(w)}&\leq c\|b\|_{\BMO}\|M^2f\|_{L^p(w)},\\
\|\mathcal{T}^*_{b,\mathcal{S}}f\|_{L^{p,\infty}(w)}&\leq c\|b\|_{\BMO}\|M^2f\|_{L^{p,\infty}(w)}.
\end{split}\]
\end{thm}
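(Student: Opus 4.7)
The plan is to reduce each of the four inequalities to the corresponding bound for the plain sparse operator $\mathcal{A}_{\mathcal{S}}$ acting on $f$ or $Mf$, and then invoke Theorem \ref{thm:CpSparse}.

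The inequalities for $\mathcal{T}_{b,\mathcal{S}}^{*}$ admit a purely pointwise reduction. By the generalized H\"older inequality \eqref{eq:HGen} combined with John--Nirenberg,
\[
\langle|(b-b_Q)f|\rangle_Q\;\leq\;c\,\|b-b_Q\|_{\exp L, Q}\,\|f\|_{L\log L, Q}\;\leq\;c\,\|b\|_{\BMO}\,\|f\|_{L\log L, Q}.
\]
The classical comparison $\|f\|_{L\log L, Q}\lesssim\langle Mf\rangle_Q$ (a direct consequence of $M_{L\log L}f\simeq M^{2}f$ and the standard $L^1$-bound for the localized dyadic maximal operator on $Q$) then yields the pointwise domination
\[
\mathcal{T}_{b,\mathcal{S}}^{*}f(x)\;\leq\;c\,\|b\|_{\BMO}\,\mathcal{A}_{\mathcal{S}}(Mf)(x).
\]
Applying Theorem \ref{thm:CpSparse} with input function $Mf$ yields the strong and weak bounds with $\|M(Mf)\|_{L^{p}(w)}=\|M^{2}f\|_{L^{p}(w)}$ and $\|M(Mf)\|_{L^{p,\infty}(w)}=\|M^{2}f\|_{L^{p,\infty}(w)}$ on the right, as required.

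For $\mathcal{T}_{b,\mathcal{S}}$ the $x$-dependent factor $|b(x)-b_{Q}|$ prevents any direct pointwise reduction, so we invoke a principal-cube decomposition adapted to $b$ and $\mathcal{S}$ in the style of \cite{L,LORR}. Starting from each maximal cube of $\mathcal{S}$, we recursively declare as \emph{principal} every maximal descendant $P'\in\mathcal{S}$ whose dyadic average $b_{P'}$ differs from that of its current principal ancestor by more than a fixed threshold $\tau\|b\|_{\BMO}$; the resulting family $\mathcal{P}\subset\mathcal{S}$ is sparse by the standard Calder\'on--Zygmund counting argument. For $Q\in\mathcal{S}$ let $\pi(Q)\in\mathcal{P}$ be its minimal principal ancestor; the maximality in the construction forces $|b_{Q}-b_{\pi(Q)}|\leq\tau\|b\|_{\BMO}$. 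Splitting $|b(x)-b_{Q}|\chi_{Q}(x)\leq|b(x)-b_{\pi(Q)}|\chi_{Q}(x)+\tau\|b\|_{\BMO}\chi_{Q}(x)$ gives
\[
\mathcal{T}_{b,\mathcal{S}}f(x)\;\leq\;c\,\|b\|_{\BMO}\,\mathcal{A}_{\mathcal{S}}f(x)+\sum_{P\in\mathcal{P}}|b(x)-b_{P}|\chi_{P}(x)\,\mathcal{A}_{\mathcal{S}_{P}}f(x),
\]
where $\mathcal{S}_{P}:=\{Q\in\mathcal{S}:\pi(Q)=P\}$. The first summand is controlled directly by Theorem \ref{thm:CpSparse}, producing the target $c\|b\|_{\BMO}\|Mf\|_{L^{p}(w)}$ (and its weak analogue). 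For the second summand we dualize against a nonnegative $g$ with $\|g\|_{L^{p'}(w)}=1$ (respectively, truncate at level sets in the weak case) and, on each principal cube $P$, combine John--Nirenberg with the Orlicz H\"older inequality \eqref{eq:HGen} to absorb the factor $|b-b_{P}|$ into a factor $\|b\|_{\BMO}$ at the cost of an additional maximal operator on the dualizing side. The sparseness of $\mathcal{P}$ inside $\mathcal{S}$ then permits re-summation of the principal cubes into a single $\mathcal{A}_{\mathcal{S}}$-type expression applied to $f$, and a final invocation of Theorem \ref{thm:CpSparse} closes the estimate at the level $\|Mf\|_{L^{p}(w)}$ (or $\|Mf\|_{L^{p,\infty}(w)}$ in the weak case).

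The main obstacle lies in the second summand of the $\mathcal{T}_{b,\mathcal{S}}$-decomposition: the factor $|b(x)-b_{P}|$ is only $\exp L$-integrable on $P$ and not pointwise bounded, so recovering the sharper bound by $\|Mf\|_{L^{p}(w)}$ (rather than the easier but weaker $\|M^{2}f\|_{L^{p}(w)}$) demands that the Orlicz gain from John--Nirenberg be exactly absorbed by the sparse-operator bound provided by Theorem \ref{thm:CpSparse} without a second iteration of $M$. This precise matching is what produces the asymmetric pair of bounds in the statement (one by $Mf$, the other by $M^{2}f$) and is the genuine technical content; summing the two bounds then suffices to complete, via the sparse domination of Theorem \ref{ThmSparseConmm}, the proof of Theorem \ref{Thm:CpComm} for $[b,T]f$ and its vector-valued extension.
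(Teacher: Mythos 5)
Your treatment of $\mathcal{T}^{*}_{b,\mathcal{S}}$ is correct and in fact takes a shorter route than the paper: from \eqref{eq:HGen} and the standard comparison $\|f\|_{L\log L,Q}\simeq\langle M(f\chi_Q)\rangle_Q\leq\langle Mf\rangle_Q$ you get the pointwise bound $\mathcal{T}^{*}_{b,\mathcal{S}}f\lesssim\|b\|_{\BMO}\,\mathcal{A}_{\mathcal{S}}(Mf)$, and Theorem \ref{thm:CpSparse} applied to $Mf$ immediately yields the $M^2 f$ bounds, strong and weak. The paper instead re-runs the whole Sawyer-type level-set argument with $L\log L$ averages (splitting $\mathcal S$ according to $\|f\|_{L\log L,Q}$, using $M_{L\log L}f\simeq M^2f$ and Lemmas \ref{Lem:Tec1}--\ref{Lem:Tec2}); your reduction buys brevity by reusing Theorem \ref{thm:CpSparse} as a black box, and I see no problem with it.

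The $\mathcal{T}_{b,\mathcal{S}}$ half, however, has a genuine gap precisely at the point you yourself flag as ``the genuine technical content.'' Your plan for the principal-cube sum is to dualize against $g\in L^{p'}(w)$ and absorb $|b-b_P|$ via John--Nirenberg ``at the cost of an additional maximal operator on the dualizing side.'' In the $C_p$ setting this step does not close: a $C_q$ weight need not be doubling, need not belong to $A_\infty$, and no boundedness of $M$ (or of any Orlicz maximal operator) on the dual weighted space is available, so the extra maximal operator you place on the $g$-side cannot be controlled; moreover the weak-type inequality $\|\cdot\|_{L^{p,\infty}(w)}$ admits no such dualization at all. The only usable information about $w$ is the $C_q$ condition \eqref{eq:CpIntro}, which estimates $w(E)$ for $E\subseteq Q$ with small \emph{Lebesgue} relative measure; this forces a level-set argument rather than a duality argument. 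The paper's resolution is different in kind: after splitting $\mathcal S=\cup_m\mathcal S_m$ by the size of $\langle f\rangle_Q$, it invokes \cite[Lemma 5.1]{LORR} to dominate $|b(x)-b_P|\chi_P(x)$ pointwise by $c_n\|b\|_{\BMO}\sum_{R\subseteq P,\,R\in\widetilde{\mathcal S_m}}\chi_R(x)$ for an enlarged sparse family $\widetilde{\mathcal S_m}$, so that $\mathcal{T}_{b,\mathcal{S}_m}f$ is controlled on each maximal cube by the \emph{square} of a sparse counting function; the Carleson counting bound then gives sub-exponential Lebesgue decay $e^{-c\sqrt t}$, the $C_q$ condition converts this to $w$-measure, and Lemmas \ref{Lem:Tec1} and \ref{Lem:Tec2} (with $\cup_{Q\in\mathcal S_m^*}Q\subset\{Mf>2^m\}$) produce the bound by $\|Mf\|_{L^{p}(w)}$ and $\|Mf\|_{L^{p,\infty}(w)}$. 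Unless you replace your dualization step by an argument of this type (pointwise BMO-domination by a sparse counting function plus level-set estimates), the $\mathcal{T}_{b,\mathcal{S}}$ bounds, in particular the weak-type one, remain unproved.
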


To establish the preceding results we will rely upon some Lemmas that are based on ideas of \cite{S}.

\subsubsection{Lemmata}

In this section we present the technical lemmas needed to establish Theorems \ref{Thm:Cp} and \ref{Thm:CpComm}. Results here are essentially an elaboration of Sawyer's arguments \cite{S}.

Let $\Omega_k:=\{f> 2^k\}$ and define
\[
  (M_{k,p,q}(f)(x))^p= 2^{kp} \int_{\Omega_k} \frac{d(y, \Omega_k^c)^{n(q-1)}}{d(y, \Omega_k^c)^{nq}+|x-y|^{nq}}dy.
\]
When $\Omega_k$ is open let $\Omega_k= \cup_j Q_j^k$ be the Whitney decomposition, i.e., $Q_j^k$ are pairwise disjoint and
\[
8< \frac{\text{dist} (Q_j^k, \Omega_k^c)}{\text{diam} Q_j^k}\le 10,\quad \sum_j \chi_{6Q_j^k} \le C_n\chi_{\Omega_k},
\] then it is easy to check that
\[
  M_{k,p,q}(f)^p\eqsim 2^{kp} \sum_j  M(\chi_{Q_j^k})^q.
\]
Our key lemma is the following
\begin{lem}\label{Lem:Tec1}
Suppose that $1<p<q<\infty$ and that $w$ satisfies the $C_q$ condition. Then for all compactly supported $f$,
\[
\sup_k \int (M_{k,p,q}(Mf))^p w \le C \|Mf\|_{L^{p,\infty}(w)}^p.
\]
\end{lem}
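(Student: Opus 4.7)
The plan is to reduce, via the equivalence $M_{k,p,q}(Mf)^p \eqsim 2^{kp}\sum_j M(\chi_{Q_j^k})^q$ recorded in the paragraph preceding the lemma (where $\{Q_j^k\}$ denotes the Whitney decomposition of $\Omega_k=\{Mf>2^k\}$), to the local estimate
\[
\sum_j \int M(\chi_{Q_j^k})^q w \le C\,w(\Omega_k).
\]
Combined with the trivial observation $2^{kp}w(\Omega_k)\le \|Mf\|_{L^{p,\infty}(w)}^p$, this local bound would immediately imply the desired inequality uniformly in $k$, so the entire argument is reduced to proving the above display.

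To obtain this local estimate, I would first apply the classical Fefferman--Stein inequality $\int(Mg)^q w\le C_q\int g^q Mw$ (which holds for all non-negative $w$ once $q>1$) with $g=\chi_{Q_j^k}$, producing $\int M(\chi_{Q_j^k})^q w \le C_q \int_{Q_j^k} Mw$. Summing over the pairwise disjoint Whitney cubes yields
\[
\sum_j \int M(\chi_{Q_j^k})^q w \le C_q \int_{\Omega_k} Mw,
\]
so matters are reduced to the bound $\int_{\Omega_k} Mw\le C\,w(\Omega_k)$. This last inequality is a ``local--$A_\infty$'' type property on the level set $\Omega_k$ of $Mf$, and is where the $C_q$ hypothesis is invoked: the Whitney property $\mathrm{dist}(Q_j^k,\Omega_k^c)\sim\ell(Q_j^k)$ forces any cube $Q$ that contains some $x\in Q_j^k$ and leaves $Q_j^k$ to satisfy $|Q\cap\Omega_k^c|/|Q|\gtrsim 1$. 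Decomposing $Mw(x)$ across dyadic scales and applying $C_q$ with $E=\Omega_k\cap Q$ inside the containing cube $Q$ should then absorb the contribution of ``outside'' cubes into a geometric series dominated by $w(\Omega_k)$ through the $\delta$-exponent, while the contribution of cubes strictly contained in $Q_j^k$ is controlled trivially by $w(Q_j^k)$.

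The main obstacle is the last step: carefully threading the double supremum (over Whitney indices $j$ and over cubes $Q$ realizing $Mw$) while harvesting summability from the exponent $\delta$ provided by $C_q$. Note that $C_q$ alone does \emph{not} imply the local-$A_\infty$ bound $\int_Q Mw\le C\,w(Q)$ for arbitrary cubes (as it provides only a lower bound on $\int M(\chi_Q)^q w$ in terms of $w$ of subsets), so the specific structure of $\Omega_k$ as a level set of $Mf$ and the Whitney geometry of its decomposition are essential: they supply the room in $\Omega_k^c$ needed so that $C_q$ can be applied with a ratio $|E|/|Q|$ bounded strictly away from $1$, allowing the tail contributions to sum to a bounded multiple of $w(\Omega_k)$.
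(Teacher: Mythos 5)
Your reduction is to a statement that is false, so the difficulty you flag at the end is not just a technical step left open. The target estimate $\sum_j\int M(\chi_{Q_j^k})^q w\le C\,w(\Omega_k)$ (and a fortiori the intermediate bound $\int_{\Omega_k}Mw\le C\,w(\Omega_k)$ you would need after applying Fefferman--Stein) is a reverse, $A_\infty$-type inequality that $C_q$ weights do not satisfy: the $C_q$ condition only bounds $w(E)$ from above by $(|E|/|Q|)^{\delta}\int M(\chi_Q)^q w$, and never bounds $\int M(\chi_Q)^q w$ or $\int_\Omega Mw$ from above by $w$-measures; that missing direction is exactly what separates $C_q$ from the Fefferman--Stein characterization of $A_\infty$ recalled in Section \ref{subsec:ApAinfty}. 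Concretely, take $n=1$, $w=\chi_{(0,\infty)}$, which belongs to $C_q$ by Muckenhoupt's result quoted in Section \ref{Cp} (applied with the $A_q$ weight $1$), and $f=\chi_{(-3,-2)}$. Then $\Omega_{-1}=\{Mf>1/2\}$ is an interval contained in $(-\infty,-1)$, so $w(\Omega_{-1})=0$, while $Mw\ge 1/2$ everywhere, so $\int_{\Omega_{-1}}Mw>0$ and each Whitney interval $Q_j\subset\Omega_{-1}$ contributes $\int_0^\infty M(\chi_{Q_j})^q\,dx>0$. Thus both displays you propose to prove fail, even though the Lemma itself holds in this example (its right-hand side is positive because $\{Mf>t\}$ meets $(0,\infty)$ for small $t$). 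The moral is that no single-level estimate of the form ``level-$k$ quantity $\lesssim 2^{kp}w(\Omega_k)$'' can be true: the weak norm on the right must be allowed to see the weight on level sets other than $\Omega_k$.

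This is precisely how the paper's proof (following Sawyer) is organized, and why it looks more involved. It never bounds $S(k)=2^{kp}\sum_j\int M(\chi_{Q_j^k})^q w$ by $2^{kp}w(\Omega_k)$. Instead, using the good-$\lambda$-type claim $|\Omega_k\cap 3Q_i^{k-N}|\le C_n2^{-N}|Q_i^{k-N}|$, it proves the recursion $S(k)\le C_{n,\delta}2^{kp}w(\Omega_{k-N})+\tfrac12\,S(k-N)$: the parts of the estimate that cannot be controlled by any $w$-measure (the $\delta$-term coming from the $C_q$ condition on $5Q_i^{k-N}$ and the tail of $M(\chi_{Q_i^{k-N}})^q$ outside $5Q_i^{k-N}$) are absorbed back into the same quantity at the lower level $k-N$, with a factor $\delta 2^{Np}+c_{n,q}2^{N(p-q)}\le\tfrac12$ obtained by choosing $\delta$ small and $N$ large. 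This is where the hypothesis $p<q$ enters, through $2^{N(p-q)}$; your argument never uses $p<q$, which is another sign that the reduction loses essential structure. Iterating the recursion bounds $\sup_k S(k)$ by a constant times $\sup_k 2^{kp}w(\Omega_{k})\eqsim\|Mf\|_{L^{p,\infty}(w)}^p$, and a separate a priori finiteness argument for compactly supported $f$ (the estimates of $I$, $II$ and the range $k\le s$ in the paper) is needed to make this absorption legitimate.
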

\begin{proof}
Let $\Omega_k:=\{Mf> 2^k\}=\cup_j Q_j^k$ be the Whitney decomposition. Let $N$ be a positive integer (to be chosen later) and fix a Whitney cube $Q_i^{k-N}$. We now claim
\begin{equation}\label{eq:claim1}
|\Omega_k\cap 3 Q_i^{k-N}|\le C_n 2^{-N}|Q_i^{k-N}|.
\end{equation}
Indeed, let $g=f\chi_{22 \sqrt n Q_i^{k-N}}$ and $h=f-g$. Let $x_0\in 22 \sqrt n Q_i^{k-N}\setminus \Omega_{k-N}$. It is easy to check that for any $x\in 3Q_i^{k-N}$, we have
\[
M(h)(x)\le c_n M(f)(x_0)\le c_n 2^{k-N}.
\]
Let $N$ be sufficiently large such that $c_n 2^{-N}\le 1/2$.  Then
\begin{align*}
|\Omega_k\cap 3 Q_i^{k-N}|&=|\{x\in 3Q_i^{k-N}: M(f)> 2^k\}|\\
&\le |\{x\in 3Q_i^{k-N}: M(g)> 2^{k-1}\}|\\
&\le 2^{1-k}c_n\int g\le 2^{1-k}c_n |22 \sqrt n Q_i^{k-N}| M(f)(x_0)\\
&\le C_n 2^{-N} |Q_i^{k-N}|.
\end{align*}
As that in \cite{S}, define $S(k)=2^{kp} \sum_j  \int M(\chi_{Q_j^k})^q w$ and $S(k; N,i)= 2^{kp} \sum_j  \int M(\chi_{Q_j^k})^q w$, where the latter sum is taken over those $j$ for which $Q_j^k\cap Q_i^{k-N}\neq \emptyset$. Since $Q_j^k\cap Q_i^{k-N}\neq \emptyset$ together with \eqref{eq:claim1} implies $\ell (Q_j^k)\le \ell (Q_i^{k-N})$ for large $N$, and this further implies $Q_j^{k}\subset Q_i^{k-N}$, we have
\begin{align*}
S(k; N,i)&\le \int 2^{kp}\sum_{j: Q_j^k \subset 3Q_i^{k-N}} |M\chi_{Q_j^k}|^q w\\
&={ \int_{5 Q_i^{k-N}} }+  {\int_{\bbR^n \setminus 5 Q_i^{k-N}}} :=I+II \quad \mbox{for $N$ large.}
\end{align*}
By the argument in \cite{S}, we know
\begin{align*}
I&\le C_\delta 2^{kp}w(5Q_i^{k-N})+ \delta 2^{kp} \int |M\chi_{Q_i^{k-N}}|^q w,
\end{align*}
where we have used $M(\chi_{6 Q_i^{k-N}})\eqsim M(\chi_{ Q_i^{k-N}})$. Next we estimate $II$, we have
\begin{align*}
II &\le c_n 2^{kp} \int_{\bbR^n \setminus 5 Q_i^{k-N}}\frac{\sum{|Q_j^k|^q}}{|x-c_{Q_i^{k-N}}|^{nq}} w(x) dx\\
&\le c_{n,q} 2^{kp} \int_{\bbR^n \setminus 5 Q_i^{k-N}}\Big(\frac{2^{-N}|Q_i^{k-N}|}{|x-c_{Q_i^{k-N}}|^n} \Big)^q w(x) dx\\
&\le c_{n,q} 2^{N(p-q)} 2^{(k-N)p}\int |M\chi_{Q_i^{k-N}}|^q w.
\end{align*}
Thus for $N$ large (depending on $p,q$),
\begin{align*}
S(k)&\le \sum_i S(k; N,i)\\
&\le C_\delta c_n 2^{kp} w(\Omega_{k-N}) + (\delta 2^{Np}+ c_{n,q} 2^{N(p-q)})S(k-N)\\
&\le C_{n,\delta} 2^{kp}w(\Omega_{k-N}) + \frac 12 S(k-N).
\end{align*}
Taking the supremum over $k\le M$, we get
\[
\sup_{k\le M} \int (M_{k,p,q}(Mf))^p w\le c_{n,p,q} \|M(f)\|_{L^{p,\infty}(w)}^p,
\]
provided that
\[
\sup_{k\le M} \int (M_{k,p,q}(Mf))^p w<\infty.
\]
By monotone convergence, we can assume that $f$ has compact support, say $\supp f\subset Q$. Without loss of generality, assume $f\ge 0$ and $2^s<\langle f \rangle_Q \le 2^{s+1}$. Then it is easy to check that
\[
M(f)\gtrsim 2^sM(\chi_Q).
\]
Moreover, for $k\ge s+1$, $\Omega_k \subset 3Q$ and we have
\begin{align*}
&\sup_{s+1< k \le M}2^{kp}\sum_j \int M(\chi_{Q_j^k})^{q} w\\&\le \sup_{s+1< k \le M}2^{kp}  \int M(\chi_{Q})^{q} w\\
&=\sup_{s+1< k \le M}2^{kp} \sum_{\ell\ge 1} \int_{2^{\ell+1}Q\setminus 2^{\ell }Q} M(\chi_{Q})^{q} w+ \sup_{s+1< k \le M}2^{kp}  \int_{2Q} M(\chi_{Q})^{q} w\\
&=I+II.
\end{align*}
First, we estimate $II$. We have \begin{align*}
II&\le 2^{Mp} w(2Q)\le 2^{Mp} w(\{x: M\chi_Q(x)\ge \frac 1{2^n} \})\\
&\le 2^{Mp} c_{n,p}\|M\chi_Q\|_{L^{p,\infty}(w)}^p\le 2^{Mp-sp} c_{n,p}\|Mf\|_{L^{p,\infty}(w)}^p<\infty.
\end{align*}
Next we estimate $I$. Direct calculations give us
\begin{align*}
I&\le \sup_{s+1< k \le M}2^{kp} \sum_{\ell\ge 1} c_{n,q}2^{-nq\ell} w(2^{\ell+1}Q \setminus 2^\ell Q)\\
&\le \sup_{s+1< k \le M}2^{kp} \sum_{\ell\ge 1} c_{n,q}2^{-nq\ell} w(\{x: M(\chi_Q)\ge \frac 1{2^{(\ell+1)n}} \})\\
&\le 2^{Mp} c_{n,q}\sum_{\ell\ge 1}2^{-n(q-p)\ell}\| M(\chi_Q)\|_{L^{p,\infty}(w)}^p\\
&\le c_{n,p,q} 2^{Mp} 2^{-sp}\| Mf\|_{L^{p,\infty}(w)}^p< \infty.
\end{align*}
It remains to consider the case $k\le s$. We still follow the idea of Sawyer, but with slight changes. In this case, $\Omega_k \subset (2^{\frac{s-k+2} n}+1)Q$. Then again,
\begin{align*}
\sup_{k\le s} 2^{kp}\sum_{j}  \int |M(\chi_{Q_j^k})|^q w&\le \sup_{k\le s} 2^{kp} c_{n,q} \int |M\chi_{2^{\frac{s-k}{n}}Q}|^q w\\
&= 2^{sp}\sup_{m \ge 0} 2^{-mp} c_{n,q} \int |M\chi_{2^{\frac{m}{n}}Q}|^q w\\
&\le 2^{sp}\sup_{m \ge 0} 2^{-mp} c_{n,q} \int_{2^{\frac mn +1}Q} |M\chi_{2^{\frac{m}{n}}Q}|^q w\\
&+  2^{sp}\sup_{m \ge 0} 2^{-mp} c_{n,q} \sum_{\ell \ge 1}\int_{2^{\frac mn +\ell+1}Q  \setminus 2^{\frac mn +\ell}Q} |M\chi_{2^{\frac{m}{n}}Q}|^q w\\
&\le c_{n,p,q} \| Mf\|_{L^{p,\infty}(w)}^p< \infty,
\end{align*}
where the last step follows from similar calculations as the ones above. Now
\[
\sup_{k\le M} \int (M_{k,p,q}(Mf))^p w\le c_{n,p,q} \|M(f)\|_{L^{p,\infty}(w)}^p
\]
and taking the supremum over $M$ we conclude the proof.
\end{proof}
Our last result in this subsection is the following technical lemma.
\begin{lem}\label{Lem:Tec2}
Let $\{Q_j^k\}_j$ be a collection of disjoint cubes in $\{Mf> 2^k\}$, then \[
 2^{kp} \sum_j  M(\chi_{Q_j^k})^q\lesssim   M_{k,p,q}(Mf)^p.\]
\end{lem}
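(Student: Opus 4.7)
The plan is to exploit the standard integral representation
\[
M(\chi_Q)(x)^q \sim \int_Q \frac{d(y,\partial Q)^{n(q-1)}}{|x-y|^{nq}+d(y,\partial Q)^{nq}}\,dy,
\]
valid for any cube $Q$ and $q>1$. I would first verify this by separately analyzing the regimes $|x-c_Q|\lesssim \ell(Q)$ and $|x-c_Q|\gg \ell(Q)$, using the easily-checked identity $\int_Q d(y,\partial Q)^{n(q-1)}\,dy\sim \ell(Q)^{nq}$.

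Applying the representation to each $Q_j^k$ and summing, the disjointness of the family gives
\[
\sum_j M(\chi_{Q_j^k})(x)^q \sim \int_{\bigcup_j Q_j^k}\frac{d(y,\partial Q_{j(y)}^k)^{n(q-1)}}{|x-y|^{nq}+d(y,\partial Q_{j(y)}^k)^{nq}}\,dy,
\]
where $j(y)$ is the unique index with $y\in Q_{j(y)}^k$. Since $Q_j^k\subset\Omega_k:=\{Mf>2^k\}$, one has $\Omega_k^c\subset(Q_j^k)^c$, and therefore $d(y,\partial Q_{j(y)}^k)\le d(y,\Omega_k^c)$ on the domain of integration. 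The target is to compare the above with $\int_{\Omega_k}d(y,\Omega_k^c)^{n(q-1)}/(|x-y|^{nq}+d(y,\Omega_k^c)^{nq})\,dy$, which is equivalent to $2^{-kp}M_{k,p,q}(Mf)(x)^p$ through the Whitney equivalence already noted in the paper.

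The obstacle is that the kernel $t\mapsto t^{n(q-1)}/(t^{nq}+c^{nq})$ is not monotone in $t$, so the pointwise inequality of distances does not translate into a pointwise inequality between integrands. I would sidestep this by passing through the Whitney decomposition $\{W_i^k\}_i$ of $\Omega_k$: to each $Q_j^k$ assign the Whitney cube $W(Q_j^k)$ containing its center. Because $d(c_{Q_j^k},\Omega_k^c)\ge \ell(Q_j^k)/2$, the Whitney property forces $\ell(W(Q_j^k))\gtrsim\ell(Q_j^k)$, and the same chain of inequalities places $Q_j^k\subset C_n W(Q_j^k)$; consequently $M(\chi_{Q_j^k})(x)\lesssim M(\chi_{W(Q_j^k)})(x)$ pointwise.

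The hardest step will be controlling the multiplicity of $j\mapsto W(Q_j^k)$. For fixed $W_i^k$, the $Q_j^k$'s that map to it are disjoint, lie in $C_n W_i^k$, and have $\ell(Q_j^k)\lesssim \ell(W_i^k)$. Grouping them by the dyadic scale $\ell(Q_j^k)\sim 2^{-m}\ell(W_i^k)$ and by the dyadic distance $|x-c_{Q_j^k}|$, the disjointness supplies a packing bound on each pair $(m,\text{distance})$, and the resulting double series is geometric; the condition $q>1$ is precisely what makes it converge and yields
\[
\sum_{j:\,W(Q_j^k)=W_i^k} M(\chi_{Q_j^k})(x)^q \lesssim M(\chi_{W_i^k})(x)^q.
\]
Summing over $i$ and invoking the Whitney equivalence $\sum_i M(\chi_{W_i^k})(x)^q\sim 2^{-kp}M_{k,p,q}(Mf)(x)^p$ will then complete the proof.
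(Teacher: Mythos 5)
Your reduction coincides with the one the paper itself uses: assign to each $Q_j^k$ the Whitney cube $W(Q_j^k)$ of $\Omega_k=\{Mf>2^k\}$ containing its center, note that $d(c_{Q_j^k},\Omega_k^c)\ge \ell(Q_j^k)/2$ forces $\ell(Q_j^k)\lesssim \ell(W(Q_j^k))$ and $Q_j^k\subset c_n W(Q_j^k)$, hence $M(\chi_{Q_j^k})\lesssim M(\chi_{W(Q_j^k)})$, and then sum and invoke the Whitney equivalence $M_{k,p,q}(Mf)^p\eqsim 2^{kp}\sum_i M(\chi_{W_i^k})^q$. The paper's proof stops exactly there (``and the result follows''), saying nothing about the multiplicity of the map $j\mapsto W(Q_j^k)$, so you have correctly isolated the one genuinely nontrivial point of the argument.

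However, the estimate you propose in order to control that multiplicity is false. The inequality
\[
\sum_{j:\,W(Q_j^k)=W_i^k}M(\chi_{Q_j^k})(x)^q\ \lesssim\ M(\chi_{W_i^k})(x)^q
\]
cannot hold with a constant depending only on $n$ and $q$: take $n=1$, $W_i^k=(-1,1)$, $x=0$, and the pairwise disjoint intervals $Q_j=[2^{-j-1},2^{-j}]$, $j=1,\dots,N$, all contained in $W_i^k$ with side lengths at most $\ell(W_i^k)$; testing with the interval $[0,2^{-j}]$ gives $M(\chi_{Q_j})(0)\ge 1/2$ for every $j$, so the left-hand side is at least $N2^{-q}$ while the right-hand side is at most $1$. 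The reason your double series is not geometric is the diagonal blocks, where $\mathrm{dist}(x,Q_j)$ is comparable to $\ell(Q_j)$: disjointness bounds each such block by a dimensional constant, but there can be one block per scale, so the sum grows like the number of scales, and the hypothesis $q>1$ only yields decay off the diagonal. Moreover the obstruction is not an artifact of this particular route: placing the same configuration well inside a level set whose complement is at unit distance (for instance $f=2^{k+1}\chi_{\cup_j Q_j}$, so that $d(0,\Omega_k^c)\simeq 1$ and $2^{-kp}M_{k,p,q}(Mf)(x)^p\simeq 1$ near $x=0$) shows that the pointwise bound you are after fails by a factor $N$ for arbitrary disjoint subfamilies of $\Omega_k$. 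Consequently no packing argument using only disjointness and $Q_j^k\subset\Omega_k$ can close the gap; one needs either an additional hypothesis ensuring bounded multiplicity, namely $\ell(Q_j^k)\gtrsim\ell(W(Q_j^k))$ (cubes of side comparable to their distance to $\Omega_k^c$, in which case the cubes assigned to a fixed $W_i^k$ are comparably sized disjoint cubes inside $c_nW_i^k$ and hence boundedly many), or an argument that is not purely pointwise and exploits how the lemma is actually applied. Your proposal, like the short proof in the paper that it mirrors, supplies neither.
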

\begin{proof}
The proof is straightforward. Indeed, let $c_{Q_j^k}$ be the center of $Q_j^k$, and $P$ be the cube from the Whitney decomposition of $\{Mf> 2^k\}$ which contains $c_{Q_j^k}$. Of course by the Whitney property, $Q_j^k \subset c_n P$ for some dimensional constant $c_n$. Then
\[
M(\chi_{Q_j^k}) \le M(\chi^{}_{c_n P})\le c_n' M(\chi^{}_{ P})
\]
and the result follows.
\end{proof}

\subsubsection{Proof of Theorem \ref{thm:CpSparse}}
We only provide the proof for the strong type $(p,p)$ estimate, since the weak type $(p,p)$ is analogous.
Let $\gamma>0$ a small parameter that will be chosen, then we have that
\begin{align*}
\|\mathcal{A}_{\mathcal{S}}f\|_{L^p(w)}^p&\le \sum_{k\in \bbZ} 2^{(k+1)p} w(\{x: 2^k<\mathcal{A}_{\mathcal{S}}f(x)\le 2^{k+1}\})\\
&\le c_p \sum_{k\in \bbZ} 2^{kp}w(\{x:  \mathcal{A}_{\mathcal{S}}f(x)> 2^{k}\})\\
&\le  c_p \sum_{k\in \bbZ} 2^{kp}w(\{x:  \mathcal{A}_{\mathcal{S}}f(x)> 2^{k}, M(f)(x)\le \gamma 2^k\})\\&+ c_p \sum_{k\in \bbZ} 2^{kp}w(\{x:   M(f)(x)> \gamma 2^k\})\\
&\le c_p \sum_{k\in \bbZ} 2^{kp}w(\{x:  \mathcal{A}_{\mathcal{S}}f(x)> 2^{k}, M(f)(x)\le \gamma 2^k\})+ c_{p,\gamma} \|Mf\|_{L^p(w)}^p.
\end{align*}
So we only need to estimate
\[
\sum_{k\in \bbZ} 2^{kp}w(\{x:  \mathcal{A}_{\mathcal{S}}f(x)> 2^{k}, M(f)(x)\le \gamma 2^k\}).
\]
Split $\mathcal S= \cup_{m}\mathcal S_{m}$, where
\[
\mathcal{S}_m:= \{Q\in \mathcal S: 2^{m}< \langle f \rangle_Q\le 2^{m+1}\}.
\]
It is easy to see that, if $2^{m} \ge \gamma 2^k$, then for $x\in Q \in \mathcal{S}_m$, $Mf(x)> \gamma 2^k$. Set $m_0= \lfloor \log_2 (\frac 1 \gamma) \rfloor+1$, then we have
%
\begin{align*}
&\sum_{k\in \bbZ} 2^{kp}w(\{x:  \mathcal{A}_{\mathcal{S}}f(x)> 2^{k},\, M(f)(x)\le \gamma 2^k\}) \\
&= \sum_{k\in \bbZ} 2^{kp} w\left(\left\{x: \sum_{m\le  k-m_0} \mathcal{A}_{\mathcal{S}_m}f(x)> 2^{k}(1-\frac 1{\sqrt 2})\sum_{m\le k-m_0}2^{\frac{m-k+m_0}2}, \,M(f)(x)\le \gamma 2^k\right\}\right)\\
&\le \sum_{k\in \bbZ} 2^{kp} \sum_{m\le  k-m_0} w\left(\left\{x:  \mathcal{A}_{\mathcal{S}_m}f(x)>  (1-\frac 1{\sqrt 2}) 2^{\frac{m+k+m_0}2}, \,M(f)(x)\le \gamma 2^k\right\}\right).
\end{align*}

Denote $b_m =\sum_{Q\in \mathcal S_m} \chi_Q $, then
$
\mathcal{A}_{\mathcal{S}_m}f\le 2^{m+1}b_m.
$ and therefore, if we denote  $\mathcal S_m^*$ is the collection of maximal dyadic cubes in $\mathcal S_m$, arguing as in the proof of Lemma \ref{Lem:Decay},
\begin{align*}
&\Big|\Big\{\mathcal{A}_{\mathcal{S}_m}f(x)>  (1-\frac 1{\sqrt 2}) 2^{\frac{m+k+m_0}2}\Big\}\Big|\le \Big|\Big\{b_m>   \frac {\sqrt 2-1}{2\sqrt 2}  2^{\frac{-m+k+m_0}2}\Big\}\Big|\\
&\leq \sum_{Q\in\mathcal{S}^*_m}\Big|\Big\{x\in Q? b_m>   \frac {\sqrt 2-1}{2\sqrt 2}  2^{\frac{-m+k+m_0}2}\Big\}\Big|\le \exp(-c2^{\frac{-m+k+m_0}2})\sum_{Q\in \mathcal S_m^*}|Q|.
\end{align*}
Now, by the $C_q$ condition, we have
\begin{align*}
&w\Big\{\mathcal{A}_{\mathcal{S}_m}f(x)>  (1-\frac 1{\sqrt 2}) 2^{\frac{m+k+m_0}2}\Big\}\\
&=\sum_{Q\in\mathcal{S}_m^*}w\Big\{x \in Q : \mathcal{A}_{\mathcal{S}_m}f(x)>  (1-\frac 1{\sqrt 2}) 2^{\frac{m+k+m_0}2}\Big\}\\
&\le \exp(-c\epsilon 2^{\frac{-m+k+m_0}2})\sum_{Q\in \mathcal S_m^*}\int M(\chi_Q)^q w.
\end{align*}
Since $\cup_{Q\in \mathcal S_m^*} Q \subset \{x: Mf(x)> 2^m\}$, a combined application of Lemmas  \ref{Lem:Tec1} and \ref{Lem:Tec2} yields the desired result.

\subsubsection{Proof of Theorem \ref{thm:CpSparseComm}}

We may assume that $\|b\|_{\BMO}=1$  Again we just settle the strong type estimate, since the weak-weak type $(p,p)$ estimate is analogous.

First we note that using \eqref{eq:HGen}
\[
\mathcal{T}_{b,\mathcal{S}}^{*}f(x)=\sum_{Q\in\mathcal{S}}\frac{1}{|Q|}\int_{Q}|b-b_{Q}||f|\chi_{Q} \lesssim \|b\|_{\BMO}\sum_{Q\in\mathcal{S}}\|f\|_{L\log L,Q}\chi_{Q}=\|b\|_{\BMO}\,\mathcal{A}_{L\log L,\mathcal{S}}f.
\]
Now we observe that we have that
\begin{align*}
\|\mathcal{A}_{L\log L,\mathcal{S}}f\|_{L^{p}(w)} & \le\sum_{k\in\bbZ}2^{(k+1)p}w(\{x:2^{k}<\mathcal{A}_{L\log L,\mathcal{S}}f\le2^{k+1}\})\\
 & \le c_{p}\sum_{k\in\bbZ}2^{kp}w(\{x:\mathcal{A}_{L\log L,\mathcal{S}}f(x)>2^{k}\})\\
 & \le c_{p}\sum_{k\in\bbZ}2^{kp}w(\{x:\mathcal{A}_{L\log L,\mathcal{S}}f(x)>2^{k},M_{L\log L}f(x)\le\gamma2^{k}\})\\
 & +c_{p}\sum_{k\in\bbZ}2^{kp}w(\{x:M_{L\log L}f(x)>\gamma2^{k}\})\\
 & \le c_{p}\sum_{k\in\bbZ}2^{kp}w(\{x:\mathcal{A}_{L\log L,\mathcal{S}}f(x)>2^{k},M_{L\log L}f(x)\le\gamma2^{k}\})\\
 &+c_{p,\gamma}\|M_{L\log L}f\|_{L^{p}(w)}^{p}.
\end{align*}
So we only need to estimate
\[
\sum_{k\in\bbZ}2^{kp}w(\{x:\mathcal{A}_{L\log L,\mathcal{S}}f(x)>2^{k},M_{L\log L}f(x)\le\gamma2^{k}\}).
\]
Split $\mathcal{S}=\cup_{m}\mathcal{S}_{m}$, where
\[
\mathcal{S}_{m}:=\{Q\in\mathcal{S}:2^{m}<\|f\|_{L\log L,Q}\le2^{m+1}\}.
\]
It is easy to see that, if $2^{m}\ge\gamma2^{k}$, then for $x\in Q\in \mathcal{S}_{m}$,
$M_{L\log L}f(x)>\gamma2^{k}$. Set $m_{0}=\lfloor\log_{2}(\frac{1}{\gamma})\rfloor+1$,
we have
\begin{align*}
 & \sum_{k\in\bbZ}2^{kp}w(\{x:\mathcal{A}_{L\log L,\mathcal{S}}f(x)>2^{k},M_{L\log L}f(x)\le\gamma2^{k}\})\\
 & =\sum_{k\in\bbZ}2^{kp}w(\{x:\sum_{m\le k-m_{0}}\mathcal{A}_{L\log L,\mathcal{S}_{m}}f(x)>2^{k}(1-\frac{1}{\sqrt{2}})\sum_{m\le k-m_{0}}2^{\frac{m-k+m_{0}}{2}},M_{L\log L}f(x)\le\gamma2^{k}\})\\
 & \le\sum_{k\in\bbZ}2^{kp}\sum_{m\le k-m_{0}}w(\{x:\mathcal{A}_{L\log L,\mathcal{S}_{m}}f(x)>(1-\frac{1}{\sqrt{2}})2^{\frac{m+k+m_{0}}{2}},M_{L\log L}f(x)\le\gamma2^{k}\}).
\end{align*}
Denote $b_{m}=\sum_{Q\in\mathcal{S}_{m}}\chi_{Q}$, then $\mathcal{A}_{L\log L,\mathcal{S}_{m}}f\le2^{m+1}b_{m}.$
and therefore, by sparseness,
\begin{align*}
 & \Big|\Big\{\mathcal{A}_{L\log L,\mathcal{S}_{m}}f(x)>(1-\frac{1}{\sqrt{2}})2^{\frac{m+k+m_{0}}{2}}\Big\}\Big|\\
 & \le\Big|\Big\{ b_{m}>\frac{\sqrt{2}-1}{2\sqrt{2}}2^{\frac{-m+k+m_{0}}{2}}\Big\}\Big|\le\exp(-c2^{\frac{-m+k+m_{0}}{2}})\sum_{Q\in\mathcal{S}_{m}^{*}}|Q|,
\end{align*}
where $\mathcal{S}_{m}^{*}$ is the collection of maximal dyadic cubes
in $\mathcal{S}_{m}$.  By the $C_{q}$ condition, we have
\begin{align*}
 & w\Big\{\mathcal{A}_{L\log L,\mathcal{S}_{m}}f(x)>(1-\frac{1}{\sqrt{2}})2^{\frac{m+k+m_{0}}{2}}\Big\}\\
 & \le\exp(-c\epsilon2^{\frac{-m+k+m_{0}}{2}})\sum_{Q\in\mathcal{S}_{m}^{*}}\int M(\chi_{Q})^{q}w.
\end{align*}
Since $\cup_{Q\in\mathcal{S}_{m}^{*}}Q\subset\{x:M_{L\log L}f(x)>2^{m}\}\subseteq\{x:M(Mf)(x)>2^{m-n}\}$
then we have that using Lemma \ref{Lem:Tec2}
\[
\sum_{Q\in\mathcal{S}_{m}^{*}}M(\chi_{Q})^{q}w\lesssim M_{m-n,p,q}(M(Mf))^{p}
\]
and consequently
\[
\sum_{Q\in\mathcal{S}_{m}^{*}}\int M(\chi_{Q})^{q}w\lesssim\int M_{m-n,p,q}(M(Mf))^{p}.
\]
Hence taking into account Lemma \ref{Lem:Tec1}
\[
\begin{split}w\Big\{\mathcal{A}_{L\log L,\mathcal{S}_{m}}f(x)>(1-\frac{1}{\sqrt{2}})2^{\frac{m+k+m_{0}}{2}}\Big\} & \lesssim\exp(-c\epsilon2^{\frac{-m+k+m_{0}}{2}})\int M_{m-n,p,q}(M(Mf))^{p}\\
 & \lesssim\exp(-c\epsilon2^{\frac{-m+k+m_{0}}{2}})\|M(Mf)\|_{L^{p}(w)}^{p}.\\
 & \simeq\exp(-c\epsilon2^{\frac{-m+k+m_{0}}{2}})\|M_{L\log L}f\|_{L^{p}(w)}^{p}.
\end{split}
\]
This yields
\[\begin{split}
&\sum_{k\in\bbZ}2^{kp}w(\{x:\mathcal{A}_{L\log L,\mathcal{S}}f(x)>2^{k},M_{L\log L}f(x)\le\gamma2^{k}\})\\
&\lesssim\sum_{k\in\bbZ}2^{kp}\sum_{m\le k-m_{0}}\exp(-c\epsilon2^{\frac{-m+k+m_{0}}{2}})\|M_{L\log L}f\|_{L^{p}(w)}^{p}
\end{split}
\]
and we are done.

Now we turn our attention to $\mathcal{T}_{b,\mathcal{S}}f(x)$. We observe that we have that arguing as before
\begin{align*}
\|\mathcal{T}_{b,\mathcal{S}}f(x)f\|_{L^{p}(w)}^{p} & \le\sum_{k\in\bbZ}2^{(k+1)p}w(\{x:2^{k}<\mathcal{T}_{b,\mathcal{S}}f(x)\le2^{k+1}\})\\
 & \le c_{p}\sum_{k\in\bbZ}2^{kp}w(\{x:\mathcal{T}_{b,\mathcal{S}}f(x)>2^{k},M(f)(x)\le\gamma2^{k}\})+c_{p,\gamma}\|Mf\|_{L^{p}(w)}^{p}.
\end{align*}
So we only need to estimate
\[
\sum_{k\in\bbZ}2^{kp}w(\{x:\mathcal{T}_{b,\mathcal{S}}f(x)>2^{k},M(f)(x)\le\gamma2^{k}\}).
\]
Split $\mathcal{S}=\cup_{m}\mathcal{S}_{m}$, where
\[
\mathcal{S}_{m}:=\{Q\in\mathcal{S}:2^{m}<\langle f\rangle_{Q}\le2^{m+1}\}.
\]

It is easy to see that, if $2^{m}\ge\gamma2^{k}$, then for $x\in Q\in \mathcal{S}_{m}$,
$Mf(x)>\gamma2^{k}$. Set $m_{0}=\lfloor\log_{2}(\frac{1}{\gamma})\rfloor+1$,
we have
\begin{align*}
 & \sum_{k\in\bbZ}2^{kp}w(\{x:\mathcal{T}_{b,\mathcal{S}}f(x)>2^{k},M(f)(x)\le\gamma2^{k}\})\\
 & =\sum_{k\in\bbZ}2^{kp}w\left(\left\{x:\sum_{m\le k-m_{0}}\mathcal{T}_{b,\mathcal{S}_{m}}f(x)>2^{k}(1-\frac{1}{\sqrt{2}})\sum_{m\le k-m_{0}}2^{\frac{m-k+m_{0}}{2}},M(f)(x)\le\gamma2^{k}\right\}\right)\\
 & \le\sum_{k\in\bbZ}2^{kp}\sum_{m\le k-m_{0}}w\left(\left\{x:\mathcal{T}_{b,\mathcal{S}_{m}}f(x)>(1-\frac{1}{\sqrt{2}})2^{\frac{m+k+m_{0}}{2}},M(f)(x)\le\gamma2^{k}\right\}\right).
\end{align*}
Now we observe that $\mathcal{T}_{b,\mathcal{S}_{m}}f(x)\le2^{m+1}\sum_{Q\in\mathcal{S}_{m}}|b(x)-b_{Q}|\chi_{Q}$,
therefore
\begin{align*}
 & \Big|\Big\{\mathcal{T}_{b,\mathcal{S}_{m}}f(x)>(1-\frac{1}{\sqrt{2}})2^{\frac{m+k+m_{0}}{2}}\Big\}\Big|\\
 & \le\Big|\Big\{\sum_{Q\in\mathcal{S}_{m}}|b(x)-b_{Q}|\chi_{Q}>\frac{\sqrt{2}-1}{2\sqrt{2}}2^{\frac{-m+k+m_{0}}{2}}\Big\}\Big|\\
 & =\sum_{Q\in\mathcal{S}_{m}^{*}}\Big|\Big\{ x\in Q:\sum_{P\in\mathcal{S}_{m},P\subseteq Q}|b(x)-b_{P}|\chi_{P}>\frac{\sqrt{2}-1}{2\sqrt{2}}2^{\frac{-m+k+m_{0}}{2}}\Big\}\Big|,\\
\end{align*}
where $\mathcal{S}_{m}^{*}$ is the collection of maximal dyadic cubes
in $\mathcal{S}_{m}$. Now taking into account \cite[Lemma 5.1]{LORR}, we have that
there exists a sparse family $\widetilde{\mathcal{S}_{m}}$ containing
$\mathcal{S}_{m}$ such that
\[
|b(x)-b_{P}|\chi_{P}(x)\leq\|b\|_{\BMO}c_{n}\sum_{R\subseteq P,P\in\mathcal{S}_{m}}\chi_{R}(x)=c_{n}\sum_{R\subseteq P,P\in\mathcal{S}_{m}}\chi_{R}(x).
\]
Taking that into account we can continue the preceding computation as follows
\begin{align*}
 & \sum_{Q\in\mathcal{S}_{m}^{*}}\left|\left\{ x\in Q:\sum_{P\in\mathcal{S}_{m},P\subseteq Q}\left(c_{n}\sum_{R\subseteq P,P\in\widetilde{\mathcal{S}_{m}}}\chi_{R}(x)\right)\chi_{P}>\frac{\sqrt{2}-1}{2\sqrt{2}}2^{\frac{-m+k+m_{0}}{2}}\right\}\right|\\
 & \sum_{Q\in\mathcal{S}_{m}^{*}}\left|\left\{ x\in Q:\left(\sum_{P\in\widetilde{\mathcal{S}_{m}},P\subseteq Q}\chi_{P}\right)^{2}>c\frac{\sqrt{2}-1}{2\sqrt{2}}2^{\frac{-m+k+m_{0}}{2}}\right\}\right|\\
 & \le\exp(-c2^{\frac{-m+k+m_{0}}{4}})\sum_{Q\in\mathcal{S}_{m}^{*}}|Q|.
\end{align*}
Hence, combining the preceding estimates and using the $C_{q}$ condition, we have
\begin{align*}
 & w\Big\{\mathcal{T}_{b,\mathcal{S}_{m}}f(x)>(1-\frac{1}{\sqrt{2}})2^{\frac{m+k+m_{0}}{2}}\Big\}\\
 & \le\exp(-c\epsilon2^{\frac{-m+k+m_{0}}{4}})\sum_{Q\in\mathcal{S}_{m}^{*}}\int M(\chi_{Q})^{q}w.
\end{align*}
Since $\cup_{Q\in\mathcal{S}_{m}^{*}}Q\subset\{x:Mf(x)>2^{m}\}$ then
we have that using Lemma \ref{Lem:Tec2}
\[
\sum_{Q\in\mathcal{S}_{m}^{*}}M(\chi_{Q})^{q}w\lesssim M_{m,p,q}(Mf)^{p}
\]
and consequently
\[
\sum_{Q\in\mathcal{S}_{m}^{*}}\int M(\chi_{Q})^{q}w\lesssim\int M_{m,p,q}(Mf)^{p}.
\]
Hence taking into account Lemma \ref{Lem:Tec1}
\[
\begin{split}w\Big\{\mathcal{T}_{b,\mathcal{S}_{m}}f(x)>(1-\frac{1}{\sqrt{2}})2^{\frac{m+k+m_{0}}{2}}\Big\} & \lesssim\exp(-c\epsilon2^{\frac{-m+k+m_{0}}{4}})\int M_{m,p,q}(Mf)^{p}\\
 & \lesssim\exp(-c\epsilon2^{\frac{-m+k+m_{0}}{4}})\|Mf\|_{L^{p}(w)}^{p}.
\end{split}
\]
This yields
\[\begin{split}
&\sum_{k\in\bbZ}2^{kp}w(\{x:\mathcal{T}_{b,\mathcal{S}_{m}}f(x)>2^{k},M(f)(x)\le\gamma2^{k}\})\\
&\lesssim\sum_{k\in\bbZ}2^{kp}\sum_{m\le k-m_{0}}\exp(-c\epsilon2^{\frac{-m+k+m_{0}}{4}})\|Mf\|_{L^{p}(w)}^{p}
\end{split}
\]
and we are done.

\section{Proof of local decay estimates }\label{Sec:ProofLocalDecay}

As we pointed out earlier, the proof of Theorem \ref{Thm:LocalDecay}
is a straightforward consequence of the combination of the sparse
domination for each operator considered in the statement of the result
and Lemma \ref{Lem:Decay}. Hence it will be enough to give a proof
for the former.
\begin{proof}[Proof of Lemma \ref{Lem:Decay}]
First we observe that if $P$ is an arbitrary cube such that $P\cap Q\not=\emptyset$
and $|P|\simeq|Q|$ for some cube $Q\in\mathcal{S},$ then
\begin{equation}
\left|\left\{ x\in P\,:\sum_{R\in\mathcal{S},\,R\subseteq Q}\chi_{R}(x)>t\right\} \right|\leq ce^{-\alpha t}|P|.\label{eq:SparseExp}
\end{equation}
Indeed we observe that actually
\[
\begin{split}\left|\left\{ x\in P\,:\sum_{R\in\mathcal{S},\,R\subseteq Q}\chi_{R}(x)>t\right\} \right| & =\left|\left\{ x\in P\cap Q\,:\sum_{R\in\mathcal{S},\,R\subseteq Q}\chi_{R}(x)>t\right\} \right|\\
 & \leq\left|\left\{ x\in Q\,:\sum_{R\in\mathcal{S},\,R\subseteq Q}\chi_{R}(x)>t\right\} \right|.
\end{split}
\]
Now we observe that in \cite[Theorem 2.1]{OCPR}, it was established
that
\[
\left|\left\{ x\in Q\,:\sum_{R\in\mathcal{S},\,R\subseteq Q}\chi_{R}(x)>t\right\} \right|\leq ce^{-\alpha t}|Q|,
\]
so recalling now that $|P|\simeq|Q|$ yields (\ref{eq:SparseExp}).

Let us assume now that $\supp f\subseteq Q$ for some arbitrary
cube $Q$. It is clear that $Q$ can be covered by $c_{n}$
pairwise disjoint cubes $Q_j$ in $\mathcal{D}$, such that $|Q|\simeq|Q_{j}|$
and $Q\cap Q_{j}\not=\emptyset$. Then we have that
\[
f=\sum_{j=1}^{c_{n}}f\chi_{Q_{j}}.
\]
Hence
\[
\begin{split}\left|\left\{ x\in Q\,:\,\mathcal{A}_{\mathcal{S}}^{r}|f|>tMf(x)\right\} \right| & \leq\sum_{j=1}^{c_{n}}\left|\left\{ x\in Q\,:\,\mathcal{A}_{\mathcal{S}}^{r}|f\chi_{Q_{j}}|>\frac{t}{c_{n}}M(f\chi_{Q_j})(x)\right\} \right|.\end{split}
\]
We shall assume that each $Q_{j}\in\mathcal{S}$. Indeed, if that
was not the case we can add those cubes to the family and call $\widetilde{\mathcal{S}}$
the resulting family. We observe that in that case if $R\subseteq Q_{j}$,
$R$ satisfies the same Carleson that satisfied before adding those
cubes, that if $R=Q_{j}$ for some $j$, we have that
\[
\sum_{P\subseteq Q_{j},\,P\in\widetilde{\mathcal{S}}}|P|=\sum_{\stackrel{{\scriptstyle R\in\widetilde{\mathcal{S}}}}{R\text{ maximal in }Q_{j}}}\sum_{P\subseteq R,\,P\in\widetilde{\mathcal{S}}}|P|\leq\frac{1}{\eta}\sum_{\stackrel{{\scriptstyle R\in\widetilde{\mathcal{S}}}}{R\text{ maximal in }Q_{j}}}|R|\leq\frac{1}{\eta}|Q_{j}|
\]
and in the case $R$ contains some $Q_{j}$
\[
\sum_{R\subseteq Q_{j},\,P\in\widetilde{\mathcal{S}}}|P|=\sum_{R\subseteq P,\,R\in\mathcal{S}}|P|+\sum_{Q_{j}\subseteq P}|P|\leq\left(\frac{1}{\eta}+c_{n}\right)|P|.
\]
In virtue of Lemma \ref{Lem:CarlesonSparse} the preceding estimates
yield that $\widetilde{\mathcal{S}}$ is a $\frac{\eta}{1+\eta c_{n}}$-sparse
family. Now we observe that
\[
\begin{split} & \mathcal{A}_{\mathcal{S}}^{r}|f\chi_{Q_{j}}|(x)=\left(\sum_{P\in\mathcal{S}}\left(\frac{1}{|P|}\int_{P}|f\chi_{Q_{j}}|\right)^{r}\chi_{P}(x)\right)^{\frac{1}{r}}>tM(f\chi_{Q_j})(x)\\
\iff & \frac{\sum_{P\in\mathcal{S}}\left(\frac{1}{|P|}\int_{P}|f\chi_{Q_{j}}|\right)^{r}\chi_{P}(x)}{M(f\chi_{Q_j})(x)}>t^{r}.
\end{split}
\]
Now we split the sparse operator as follows
\[
\begin{split} & \sum_{P\in\mathcal{S}}\left(\frac{1}{|P|}\int_{P}|f\chi_{Q_{j}}|\right)^{r}\chi_{P}(x)\\
 & =\sum_{P\in\mathcal{S},\,P\subsetneq Q_{j}}\left(\frac{1}{|P|}\int_{P}|f\chi_{Q_{j}}|\right)^{r}\chi_{P}(x)+\sum_{P\in\mathcal{S},\,P\supseteq Q_{j}}\left(\frac{1}{|P|}\int_{P}|f\chi_{Q_{j}}|\right)^{r}\chi_{P}(x).
\end{split}
\]
Now we observe that trivially
\[
\frac{\sum_{P\in\mathcal{S},\,P\subsetneq Q_{j}}\left(\frac{1}{|P|}\int_{P}|f\chi_{Q_j}|\right)^{r}\chi_{P}(x)}{M(f\chi_{Q_j})(x)^{r}}\leq\sum_{P\in\mathcal{S},\,P\subseteq Q_{j}}\chi_{P}(x).
\]
On the other hand we have that for every $x\in Q$, since $5Q_{j}\supset Q$,
\[
\begin{split}\frac{\sum_{P\in\mathcal{S},\,P\supseteq Q_{j}}\left(\frac{1}{|P|}\int_{P}|f\chi_{Q_j}|\right)^{r}\chi_{P}(x)}{M(f\chi_{Q_j})(x)^{r}}
& \leq\sum_{P\in\mathcal{S},\,P\supseteq Q_{j}}\frac{\left(\frac{1}{|P|}\int_{P}|f\chi_{Q_j}|\right)^{r}}{\left(\frac{1}{|5Q_{j}|}\int_{5Q_{j}}|f\chi_{Q_j}|\right)^{r}}\chi_{P}(x)\\
 & =\sum_{P\in\mathcal{S},\,P\supseteq Q_{j}}\frac{\left(\frac{1}{|P|}\int_{Q_{j}}|f|\right)^{r}}{\left(\frac{1}{|5Q_{j}|}\int_{Q_{j}}|f|\right)^{r}}\chi_{P}(x)\\
 & =\sum_{P\in\mathcal{S},\,P\supseteq Q_{j}}\left(\frac{|5Q_{j}|}{|P|}\right)^{r}\chi_{P}(x)\\
 & \leq5^{nr}\sum_{k=0}^{\infty}\frac{1}{2^{nrk}}=\frac{10^{nr}}{2^{nr}-1}.
\end{split}
\]
Combining those estimates and taking into account (\ref{eq:SparseExp}),
\[
\left|\left\{ x\in Q\,:\,\mathcal{A}_{\mathcal{S}}^{r}|f\chi_{Q_{j}}|>tMf(x)\right\} \right|\leq\left|\left\{ x\in Q\,:\,\sum_{\stackrel{{\scriptstyle P\in\mathcal{S}}}{P\subseteq Q_{j}}}\chi_{P}(x)>t^{r}-c_{n,r}\right\} \right|\leq c_{1}e^{-c_{2}t^{r}}|Q|
\]
and we are done.

Let us turn our attention now to the other estimate. By homogeneity
we shall assume that $\|b\|_{\BMO}=1$. Arguing as before it's clear
that $Q$ can be covered by $c_{n}$ pairwise disjoint cubes in $\mathcal{D}$,
such that $|Q|\simeq|Q_{j}|$ and $Q\cap Q_{j}\not=\emptyset$. Let
us denote by $\{Q_{j}\}$ that family of cubes. Then we have that
\[
f=\sum_{k=1}^{c_{n}}f\chi_{Q_{j}}.
\]
As we showed before we can assume that each $Q_{k}\in\mathcal{S}$.
We observe that
\[
\begin{split} & \left|\left\{ x\in Q\,:\,\mathcal{T}_{\mathcal{S},b}f(x)+\mathcal{T}_{\mathcal{S},b}^{*}f(x)>tM^{2}f(x)\right\} \right|\\
 & \leq\sum_{j=1}^{c_{n}}\left|\left\{ x\in Q\,:\,\frac{\mathcal{T}_{\mathcal{S},b}f\chi_{Q_{j}}(x)}{M^{2}\left(f\chi_{Q_{j}}\right)(x)}>\frac{t}{c_{n}2}\right\} \right|+\sum_{j=1}^{c_{n}}\left|\left\{ x\in Q\,:\,\frac{\mathcal{T}_{\mathcal{S},b}^{*}f\chi_{Q_{j}}(x)}{M^{2}\left(f\chi_{Q_{j}}\right)(x)}>\frac{t}{2c_{n}}\right\} \right|\\
 & =\sum_{j=1}^{c_{n}}I_{j}+II_{j}.
\end{split}
\]
Hence the estimate boils down to control both $I_{j}$ and $II_{j}.$
Let us focus first on $I_{j}$.
\[
\begin{split}  I_{j}
 & \leq\left|\left\{ x\in Q\,:\,\frac{\sum_{P\in\mathcal{S}}|b(x)-b_{P}| \langle |f\chi_{Q_{j}}|\rangle_{P}\chi_{P}(x)}{M^{2}(f\chi_{Q_{j}})(x)}>t\right\} \right|\\
 & \leq\left|\left\{ x\in Q\,:\,\frac{\left(\sum_{P\in\mathcal{S},\,P\subsetneq Q_{j}}+\sum_{P\in\mathcal{S},\,P\supseteq Q_{j}}\right)|b(x)-b_{P}| \langle |f\chi_{Q_{j}}|\rangle_{P}\chi_{P}(x)}{M(f\chi_{Q_{j}})(x)}>t\right\} \right|.
\end{split}
\]
First we work on $\frac{\sum_{P\in\mathcal{S},\,P\supseteq Q_{j}}|b(x)-b_{P}|  \langle |f\chi_{Q_{k}}|\rangle_{P}\chi_{P}(x)}{M(f\chi_{Q_j})(x)}$.
We observe that since $\supp f\subseteq Q_{j}$ and $Q\cap Q_{j}\not=\emptyset$
we have that for every $x\in Q$, since $5Q_{j}\supset Q$,
\[
\begin{split} & \frac{\sum_{P\in\mathcal{S},\,P\supseteq Q_{j}}|b(x)-b_{P}|  \langle |f\chi_{Q_{j}}|\rangle_{P}\chi_{P}(x)}{M(f\chi_{Q_{j}})(x)}\\
 & \leq\frac{\sum_{P\in\mathcal{S},\,P\supseteq Q_{j}}|b(x)-b_{P}| \langle |f\chi_{Q_{j}}|\rangle_{P}\chi_{P}(x)}{\frac{1}{|5Q_{j}|}\int_{Q_{j}}|f|}\\
 & \leq5^{n}\sum_{P\in\mathcal{S},\,P\supseteq Q_{j}}|b(x)-b_{P}|\frac{|Q_{j}|}{|P|}\chi_{P}(x)\\
 & \leq5^{n}\sum_{P\in\mathcal{S},\,P\supseteq Q_{j}}|b_{Q_{j}}-b_{P}|\frac{|Q_{j}|}{|P|}\chi_{P}(x)+5^{n}|b(x)-b_{Q_{j}}|\sum_{P\in\mathcal{S},\,P\supseteq Q_{j}}\frac{|Q_{j}|}{|P|}\chi_{P}(x).
\end{split}
\]
Arguing as before
\[
5^{n}|b(x)-b_{Q_{j}}|\sum_{P\in\mathcal{S},\,P\supseteq Q_{j}}\frac{|Q_{j}|}{|P|}\chi_{P}(x)\leq5^{n(r+1)}(2^{nr})'|b(x)-b_{Q_{j}}|\leq c_{n}|b(x)-b_{5Q_{j}}|+c_{n}.
\]
Now we observe that, in the worst case, we can find a sequence of
dyadic cubes $Q_{j}\subseteq P_{1}\subseteq P_{2}\subseteq\dots\subseteq P$
\[
|b_{Q_{j}}-b_{P}|\leq\sum_{j=1}^{\log_{2}\frac{|P|}{|Q_{j}|}}|b_{Q_{j}}-b_{P}|\leq2^{n}\log_{2}\frac{|P|}{|Q_{j}|}.
\]
Then
\[
\begin{split} & 5^{n}\sum_{P\in\mathcal{S},\,P\supseteq Q_{j}}|b_{Q_{j}}-b_{P}|\frac{|Q_{j}|}{|P|}\chi_{P}(x)\leq5^{n}2^{n}\sum_{P\in\mathcal{S},\,P\supseteq Q_{j}}\log_{2}\left(\frac{|P|}{|Q_{j}|}\right)\frac{|Q_{j}|}{|P|}\chi_{P}(x)\\
 & \leq5^{n}2^{n}\|b\|_{\BMO}\sum_{k=1}^{\infty}\frac{k}{2^{nk}}.
\end{split}
\]
Hence
\[
\frac{\sum_{P\in\mathcal{S},\,P\supseteq Q_{j}}|b(x)-b_{P}| \langle |f\chi_{Q_{j}}|\rangle_{P}\chi_{P}(x)}{M(f\chi_{Q_{j}})(x)}\leq c_{n}\left(|b(x)-b_{5Q_{j}}|+1\right),
\]
and we have that
\[
\begin{split} & \leq\left|\left\{ x\in Q\,:\,\frac{\left(\sum_{P\in\mathcal{S},\,P\subsetneq Q_{j}}+\sum_{P\in\mathcal{S},\,P\supseteq Q_{j}}\right)|b(x)-b_{P}| \langle |f\chi_{Q_{j}}|\rangle_{P}\chi_{P}(x)}{M(f\chi_{Q_{j}})(x)}>t\right\} \right|\\
 & \leq\left|\left\{ x\in Q\,:\,\frac{\sum_{P\in\mathcal{S},\,P\subsetneq Q_{j}}|b(x)-b_{P}| \langle |f\chi_{Q_{j}}|\rangle_{P}\chi_{P}(x)}{M(f\chi_{Q_{j}})(x)}+c_{n}\left(|b(x)-b_{5Q_{j}}|+1\right)>\frac{t}{2}\right\} \right|\\
 & \leq\left|\left\{ x\in Q\,:\,\frac{\sum_{P\in\mathcal{S},\,P\subsetneq Q_{j}}|b(x)-b_{P}| \langle |f\chi_{Q_{j}}|\rangle_{P}\chi_{P}(x)}{M(f\chi_{Q_{j}})(x)}>\frac{t}{2}\right\} \right|\\
 & +\left|\left\{ x\in Q\,:\,c_{n}\left(|b(x)-b_{5Q_{j}}|+1\right)>\frac{t}{2}\right\} \right|.
\end{split}
\]
Now we observe that the second term has exponential decay due to John-Nirenberg
theorem, so it sufices to deal with the first term that will have
the subexponential decay. Using Lemma \cite[Lemma 5.1]{LORR}, and
taking into account that $|Q|\simeq|Q_{j}|$ and also (\ref{eq:SparseMax}),
\[
\begin{split}\\
 & \left|\left\{ x\in Q\,:\,\frac{\sum_{P\in\mathcal{S},\,P\subsetneq Q_{j}}|b(x)-b_{P}| \langle |f\chi_{Q_{j}}|\rangle_{P}\chi_{P}(x)}{M(f\chi_{Q_{j}})(x)}>t\right\} \right|\\
 & =\left|\left\{ x\in Q\cap Q_{j}\,:\,\frac{\sum_{P\in\mathcal{S},\,P\subsetneq Q_{j}}|b(x)-b_{P}| \langle |f\chi_{Q_{j}}|\rangle_{P}\chi_{P}(x)}{M(f\chi_{Q_{j}})(x)}>t\right\} \right|\\
 & \leq\left|\left\{ x\in Q_{j}\,:\,\frac{\sum_{P\in\mathcal{S},\,P\subsetneq Q_{j}}|b(x)-b_{P}| \langle |f\chi_{Q_{j}}|\rangle_{P}\chi_{P}(x)}{M(f\chi_{Q_{j}})(x)}>t\right\} \right|\\
 & \leq\left|\left\{ x\in Q_{j}\,:\,\sum_{P\in\mathcal{S},\,P\subsetneq Q_{j}}\sum_{R\in\tilde{\mathcal{S}},\,R\subseteq P}\chi_{R}(x)\chi_{P}(x)>\frac{t}{c_{n}}\right\} \right|\\
 & \leq\left|\left\{ x\in Q_{j}\,:\,\left(\sum_{P\in\tilde{\mathcal{S}},\,R\subsetneq Q_{j}}\chi_{P}(x)\right)^{2}>\frac{t}{c_{n}}\right\} \right|\\
 & \leq\left|\left\{ x\in Q_{j}\,:\,\sum_{P\in\tilde{\mathcal{S}}}\chi_{P}(x)>\sqrt{\frac{t}{c_{n}}}\right\} \right|\leq c_{1}e^{-c_{2}\sqrt{t}}|Q_{j}|\simeq c_{1}e^{-c_{2}\sqrt{t}}|Q|.
\end{split}
\]
For $II_{j}$, we are going to prove that that term also has exponential decay. We observe that by generalized H\"older's inequality
\[
\sum_{Q\in\mathcal{S}}\langle |b-b_{Q}||f\chi_{Q_{j}}|\rangle_{Q}\chi_{Q}(x)\leq c\sum_{Q\in\mathcal{S}}\|f\chi_{Q_{j}}\|_{L\log L,Q}\chi_{Q}(x).
\]
Hence, since $M^{2}f\simeq M_{L\log L}f$, we can split the sparse
operator as we did before
\[
\begin{split}II_{j} & \leq\left|\left\{ x\in Q\,:\,\sum_{P\in\mathcal{S}}\|f\chi_{Q_{j}}\|_{L\log L,P}\chi_{P}(x)>\frac{M_{L\log L}(f\chi_{Q_{j}})(x)t}{c}\right\} \right|\\
 & =\left|\left\{ x\in Q\,:\,\frac{\left(\sum_{P\in\mathcal{S},\,P\subsetneq Q_{j}}+\sum_{P\in\mathcal{S},\,P\supseteq Q_{j}}\right)\|f\chi_{Q_{j}}\|_{L\log L,P}\chi_{P}(x)}{M_{L\log L}(f\chi_{Q_j})(x)}>\frac{t}{c}\right\} \right|.
\end{split}
\]
We shall deal with both terms analogously as in the case of the standard
sparse operator. First we observe that
\[
\frac{\sum_{P\in\mathcal{S},\,P\subsetneq Q_{j}}\|f\chi_{Q_{j}}\|_{L\log L,P}\chi_{P}(x)}{M_{L\log L}(f\chi_{Q_j})(x)}\leq\sum_{P\in\mathcal{S},\,P\subsetneq Q_{j}}\chi_{P}(x).
\]
For the other term we observe that since $5Q_{j}\supset Q$,
\[
\frac{\sum_{P\in\mathcal{S},\,P\supseteq Q_{j}}\|f\chi_{Q_{j}}\|_{L\log L,P}\chi_{P}(x)}{M_{L\log L}(f\chi_{Q_j})(x)}\leq\sum_{P\in\mathcal{S},\,P\supseteq Q_{j}}\frac{\|f\chi_{Q_{j}}\|_{L\log L,P}}{\|f\chi_{Q_{j}}\|_{L\log L,5Q_{j}}}\chi_{P}(x).
\]
Let us call $\Phi(t)=t(1+\log^{+}t)$. We recall that $\Phi(ab)\leq\Phi(a)\Phi(b)$.
Taking that into account we observe that
\[
\begin{split}\frac{1}{|P|}\int_{P}\Phi\left(\frac{f\chi_{Q_{j}}}{\frac{\|f\chi_{Q_{j}}\|_{L\log L,5Q_{j}}}{\Phi^{-1}\left(\frac{|P|}{|5Q_{j}|}\right)}}\right)dx & =\frac{1}{|P|}\int_{Q_{j}}\Phi\left(\frac{f\chi_{Q_{j}}}{\|f\chi_{Q_{j}}\|_{L\log L,5Q_{j}}}\Phi^{-1}\left(\frac{|P|}{|5Q_{j}|}\right)\right)dx\\
 & \leq\frac{1}{|P|}\int_{Q_{j}}\Phi\left(\frac{f\chi_{Q_{j}}}{\|f\chi_{Q_{j}}\|_{L\log L,5Q_{j}}}\right)\frac{|P|}{|5Q_{j}|}dx\\
 & \leq\frac{1}{|5Q_{j}|}\int_{5Q_{j}}\Phi\left(\frac{f\chi_{Q_{j}}}{\|f\chi_{Q_{j}}\|_{L\log L,5Q_{j}}}\right)dx\leq1,
\end{split}
\]
where the last estimate follows from the definition of $\|f\chi_{Q_{j}}\|_{L\log L,5Q_{j}}$.
This proves that
\[
\|f\chi_{Q_{j}}\|_{L\log L,P}\leq\frac{\|f\chi_{Q_{j}}\|_{L\log L,5Q_{j}}}{\Phi^{-1}\left(\frac{|P|}{|5Q_{j}|}\right)}.
\]
Then, since $\Phi^{-1}\left(t\right)\simeq\frac{t}{\log t}$
\[
\begin{split}\sum_{P\in\mathcal{S},\,P\supseteq Q_{j}}\frac{\|f\chi_{Q_{j}}\|_{L\log L,P}}{\|f\chi_{Q_{j}}\|_{L\log L,5Q_{j}}}\chi_{P}(x) & \leq\sum_{P\in\mathcal{S},\,P\supseteq Q_{j}}\frac{1}{\Phi^{-1}\left(\frac{|P|}{|5Q_{j}|}\right)}\chi_{P}(x)\\
 & \simeq\sum_{P\in\mathcal{S},\,P\supseteq Q_{j}}\frac{|5Q_{j}|}{|P|}\log\left(\frac{|P|}{|5Q_{j}|}\right)\chi_{P}(x)\leq c\sum_{k=0}^{\infty}\frac{k}{2^{nk}}.
\end{split}
\]
Summarizing
\[
\frac{\sum_{P\in\mathcal{S},\,P\supseteq Q_{j}}\|f\chi_{Q_{j}}\|_{L\log L,P}\chi_{P}(x)}{M_{L\log L}(f\chi_{Q_j})(x)}\leq c\sum_{k=0}^{\infty}\frac{k}{2^{nk}}.
\]
Combining estimates
\[
\begin{split}II_{j} &\leq \left|\left\{ x\in Q\,:\,\frac{\left(\sum_{P\in\mathcal{S},\,P\subsetneq Q_{j}}+\sum_{P\in\mathcal{S},\,P\supseteq Q_{j}}\right)\|f\chi_{Q_{j}}\|_{L\log L,P}\chi_{P}(x)}{M_{L\log L}(f\chi_{Q_j})(x)}>\frac{t}{c}\right\} \right|
\\ & \leq\left|\left\{ x\in Q\,:\,\sum_{P\in\mathcal{S},\,P\subsetneq Q_{j}}\chi_{P}(x)+c\sum_{k=0}^{\infty}\frac{k}{2^{nk}}>\frac{t}{c}\right\} \right|
\end{split}
\]
and it suffices to use (\ref{eq:SparseMax}).


\end{proof}

\appendix
\renewcommand*{\thesection}{\Alph{section}}

\section{Unweighted quantitative estimates}

In this appendix we collect some quantitative unweighted estimates
for Calder\'on-Zygmund satisfying
Dini condition and their vector-valued counterparts. These estimates
are somehow implicit in the literature and are a basic ingredient
for our fully-quantitative sparse domination results.

\subsection{A quantitative pointwise estimate involving $M_{\delta}^{\sharp}$
and $T$}

In this Section we follow the strategy devised in \cite{AP}. We will
track carefully the constants involved in the case that $T$ is an
$\omega$-Calder\'on-Zygmund with $\omega$ satisfying a Dini condition. Fo\-llo\-wing the notation in \cite{AP}, if $K$ is the kernel
associated to $T$, we define
\[
D_{B}K(y)=\frac{1}{|B|}\frac{1}{|B|}\int_{B}\int_{B}\left|K(x,y)-K(z,y)\right|dxdz.
\]
Let $B=B(x_{0},r)$. Now our purpose is to compute $D_{B}K(y)$ for
$y$ ``far enough'' from the ball $B$. First of all we observe
that
\[
D_{B}K(y)\leq2\frac{1}{|B|}\int_{B}\left|K(x,y)-K(x_{0},y)\right|dx.
\]
Now, for $|x_{0}-y|>2r\geq2|x-x_{0}|$ using the smoothness condition
we obtain
\[
D_{B}K(y)\leq2\omega\left(\frac{r}{|x_{0}-y|}\right)\frac{1}{|x_{0}-y|^{n}}.
\]
By standard computations
\begin{equation}
\begin{split}\sup_{r>0}\int_{|x_{0}-y|>2r}|f(y)|D_{B}K(y)dy & \leq2\sup_{r>0}\sum_{k=1}^{\infty}\int_{2^{k}r\leq|x_{0}-y|<2^{k+1}r}|f(y)|\omega\left(\frac{r}{|x_{0}-y|}\right)\frac{1}{|x_{0}-y|^{n}}dy\\
 & \leq2\sup_{r>0}\sum_{k=1}^{\infty}\int_{|x_{0}-y|<2^{k+1}r}|f(y)|\omega\left(\frac{r}{2^{k}r}\right)\frac{1}{\left(2^{k+1}r\right)^{n}}dy\\
 & \leq2^{n+1}\|\omega\|_{\text{Dini}}Mf(x_{0}).
\end{split}
\label{eq:DBK}
\end{equation}

\begin{prop}
\label{Prop:MdeltaSharpT}Let $T$ be an $\omega$-Calder\'on-Zygmund operator satisfying a Dini condition. For each $0<\delta<1$
we have that
\[
M_{\delta}^{\sharp}(Tf)(x_{0})\leq2^{n+1}\left(\frac{1}{1-\delta}\right)^{\frac{1}{\delta}}\left(\|T\|_{L^{2}\rightarrow L^{2}}+\|\omega\|_{\text{Dini}}\right)Mf(x_{0}).
\]
\end{prop}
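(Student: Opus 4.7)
The plan is to fix an arbitrary ball $B = B(x_0, r)$ containing $x_0$ and to estimate $\bigl(\inf_c\tfrac{1}{|B|}\int_B \bigl||Tf|^\delta - c\bigr|\bigr)^{1/\delta}$ uniformly in $B$, from which the result on $M_\delta^\sharp$ follows by taking the supremum. I would perform the usual decomposition $f = f_1 + f_2$ with $f_1 = f\chi_{2B}$ and reduce matters to two pieces: a ``local'' contribution coming from $f_1$ (to be handled by Kolmogorov's inequality together with the weak--$(1,1)$ bound for $T$) and a ``far--field'' contribution coming from $f_2$ (to be handled by the kernel-smoothness estimate already recorded in \eqref{eq:DBK}).

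The key linearization step is the standard inequality
\[
\inf_{c\in\mathbb{R}} \frac{1}{|B|}\int_B \bigl||Tf(y)|^\delta - c\bigr|\,dy \leq \frac{1}{|B|^2}\int_B\!\!\int_B \bigl||Tf(x)|^\delta - |Tf(z)|^\delta\bigr|\,dx\,dz,
\]
followed by the subadditivity $\bigl||a|^\delta - |b|^\delta\bigr| \leq |a-b|^\delta$ (valid since $0<\delta<1$) and the triangle inequality
\[
|Tf(x) - Tf(z)| \leq |Tf_1(x)| + |Tf_1(z)| + |Tf_2(x) - Tf_2(z)|,
\]
together with $(a+b+c)^\delta \leq a^\delta + b^\delta + c^\delta$ (again because $0<\delta<1$). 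This reduces the problem to bounding the two averages
\[
I := \frac{1}{|B|}\int_B |Tf_1(y)|^\delta\,dy, \qquad II := \frac{1}{|B|^2}\int_B\!\!\int_B |Tf_2(x) - Tf_2(z)|^\delta\,dx\,dz.
\]

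For $I$, I would apply Kolmogorov's inequality and the weak $(1,1)$-bound $\|T\|_{L^1\to L^{1,\infty}} \lesssim \|T\|_{L^2\to L^2} + \|\omega\|_{\mathrm{Dini}}$ (valid in the Dini setting) to get
\[
I^{1/\delta} \leq \Bigl(\tfrac{1}{1-\delta}\Bigr)^{1/\delta}\frac{\|T\|_{L^1\to L^{1,\infty}}\,\|f_1\|_1}{|B|} \lesssim \Bigl(\tfrac{1}{1-\delta}\Bigr)^{1/\delta}\bigl(\|T\|_{L^2\to L^2} + \|\omega\|_{\mathrm{Dini}}\bigr) Mf(x_0).
\]
For $II$, Jensen's inequality (using concavity of $t\mapsto t^\delta$) gives
\[
II \leq \Bigl(\tfrac{1}{|B|^2}\int_B\!\!\int_B |Tf_2(x) - Tf_2(z)|\,dx\,dz\Bigr)^\delta \leq \Bigl(\int_{(2B)^c} |f(y)|\,D_B K(y)\,dy\Bigr)^\delta,
\]
so that \eqref{eq:DBK} yields $II^{1/\delta} \leq 2^{n+1}\|\omega\|_{\mathrm{Dini}}\,Mf(x_0)$.

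Combining the two estimates, taking $\delta$-th roots, and supremizing over all balls $B\ni x_0$ produces the stated inequality. The main obstacle is purely quantitative: one must verify that the factor $(1-\delta)^{-1/\delta}$ from Kolmogorov and the geometric factor $2^{n+1}$ from the Dini smoothness estimate combine cleanly under a common constant of the form $2^{n+1}(1-\delta)^{-1/\delta}$, which requires choosing the threshold radius $2r$ (separating $f_1$ from $f_2$) and absorbing the $2^{n\delta}$ normalization $\|f_1\|_1/|B| \leq 2^n Mf(x_0)$ into the larger Kolmogorov factor; all other manipulations are routine.
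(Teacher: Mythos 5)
Your proposal is correct and follows essentially the same route as the paper's proof: the splitting $f=f\chi_{2B}+f\chi_{(2B)^c}$, Kolmogorov's inequality with the weak $(1,1)$ bound $\|T\|_{L^1\to L^{1,\infty}}\lesssim \|T\|_{L^2\to L^2}+\|\omega\|_{\mathrm{Dini}}$ for the local part, and Jensen plus the $D_BK$ estimate \eqref{eq:DBK} for the far-field part. The only cosmetic difference is that you linearize via the double average of $\bigl||Tf(x)|^{\delta}-|Tf(z)|^{\delta}\bigr|$ rather than choosing $c=\frac{1}{|B|}\int_B Tf_2$ as the paper does, and both arguments are equally (im)precise about the final dimensional constant, which the paper likewise absorbs into $c_n$.
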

\begin{proof}
We are going to prove the following
\[
\left(\frac{1}{|B|}\int_{B}\left|\left|Tf\right|^{\delta}-\left|c\right|^{\delta}\right|dx\right)^{\frac{1}{\delta}}\leq2^{n+1}\left(\frac{1}{1-\delta}\right)^{\frac{1}{\delta}}\left(\|T\|_{L^{2}\rightarrow L^{2}}+\|\omega\|_{\text{Dini}}\right)Mf(x_{0}).
\]
Let $f=f_{1}+f_{2}$ where $f_{1}=f\chi_{B(x_{0},2r)}$ and let us
take $c=\frac{1}{|B|}\int_{B}Tf_{2}$. Since $\left|\left|a\right|^{\delta}-\left|b\right|^{\delta}\right|\leq\left|a-b\right|^{\delta}$
for $\delta\in(0,1)$. Then
\[
\begin{split}\left(\frac{1}{|B|}\int_{B}\left|\left|Tf\right|^{\delta}-\left|c\right|^{\delta}\right|dx\right)^{\frac{1}{\delta}} & \leq2\left(\frac{1}{|B|}\int_{B}\left|Tf_{1}\right|^{\delta}dx\right)^{\frac{1}{\delta}}+2\left(\frac{1}{|B|}\int_{B}\left|Tf_{2}-\frac{1}{|B|}\int_{B}Tf_{2}\right|^{\delta}dx\right)^{\frac{1}{\delta}}\\
 & =2(I+II).
\end{split}
\]
Firstly we control $II$. Using Jensen's inequality
\[
II\leq\frac{1}{|B|}\int_{B}\left|Tf_{2}-\frac{1}{|B|}\int_{B}Tf_{2}\right|dx\leq\int_{|x_{0}-y|>2r}|f(y)|D_{B}K(y)dy.
\]
If we apply now the estimate we obtained before the statement of this
theorem
\[
II\leq2^{n+1}\|\omega\|_{\text{Dini}}Mf(x_{0}).
\]
For $I$ Kolmogorov inequality (See for instance \cite[Exercise 2.1.5]{G})
leads to the following estimate
\[
\left(\frac{1}{|B|}\int_{B}|Tf_{1}(x)|^{\delta}dx\right)^{\frac{1}{\delta}}\leq\left(\frac{1}{1-\delta}\right)^{\frac{1}{\delta}}\|T\|_{L^{1}\rightarrow L^{1,\infty}}\frac{1}{|B|}\|f_{1}\|_{L^{1}(\mathbb{R}^{n})}.
\]
Then, since $\|T\|_{L^{1}\rightarrow L^{1,\infty}}\leq c_{n}\left(\|T\|_{L^{2}\rightarrow L^{2}}+\|\omega\|_{\text{Dini}}\right)$
\[
I\leq c_{n}\left(\frac{1}{1-\delta}\right)^{\frac{1}{\delta}}\left(\|T\|_{L^{2}\rightarrow L^{2}}+\|\omega\|_{\text{Dini}}\right)Mf(x_{0}).
\]

\end{proof}

\subsection{A quantitative control of $\|\overline{T}_{q}\|_{L^{1}\rightarrow L^{1,\infty}}$ }
\begin{prop}
\label{Prop:TqFullyQuant}Let $1<q<\infty$ and $T$ be an $\omega$-Calder\'on-Zygmund
operator satisfying a Dini condition. Then
\[
\|\overline{T}_{q}\|_{L^{1}\rightarrow L^{1,\infty}}\leq c_{n}(\left\Vert \omega\right\Vert _{\text{Dini}}+\|T\|_{L^{q}\rightarrow L^{q}}).
\]
Furthermore, since $\|T\|_{L^{q}\rightarrow L^{q}}\leq c_{n}\left(\left\Vert \omega\right\Vert _{\text{Dini}}+\|T\|_{L^{2}\rightarrow L^{2}}\right)$
\[
\|\overline{T}_{q}\|_{L^{1}\rightarrow L^{1,\infty}}\leq c_{n}(\left\Vert \omega\right\Vert _{\text{Dini}}+\|T\|_{L^{2}\rightarrow L^{2}}).
\]
\end{prop}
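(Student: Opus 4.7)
The plan is to run the classical Calder\'on--Zygmund weak-type $(1,1)$ argument, but applied to the scalar auxiliary function $|\bm{f}|_q$, and to exploit two vector-valued devices: the fact that $\overline{T}_q$ is bounded on $L^q(\ell^q)$ with norm at most $\|T\|_{L^q\to L^q}$ (since $\overline{T}_q$ is just componentwise $T$), and Minkowski's integral inequality to commute the $\ell^q$-norm with averages and kernel integrals.

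First I would fix $\lambda>0$ and perform the Calder\'on--Zygmund decomposition of $|\bm{f}|_q$ at level $\lambda$, producing pairwise disjoint dyadic cubes $\{Q_k\}$ with $\lambda<\langle|\bm{f}|_q\rangle_{Q_k}\le 2^n\lambda$, $\sum_k|Q_k|\le\lambda^{-1}\||\bm{f}|_q\|_{L^1}$, and $|\bm{f}(x)|_q\le\lambda$ for a.e.\ $x$ outside $E:=\bigcup_k Q_k$. Split each $f_j=g_j+b_j$ with $g_j=f_j\chi_{E^c}+\sum_k(f_j)_{Q_k}\chi_{Q_k}$ and $b_{j,k}=(f_j-(f_j)_{Q_k})\chi_{Q_k}$. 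One application of Minkowski's inequality to the averaging operator gives $|\bm{g}|_q\le 2^n\lambda$ pointwise, $\||\bm{g}|_q\|_{L^1}\le\||\bm{f}|_q\|_{L^1}$, and, for the bad pieces, $|\bm{b}_k|_q:=\bigl(\sum_j|b_{j,k}|^q\bigr)^{1/q}\le\chi_{Q_k}\bigl(|\bm{f}|_q+\langle|\bm{f}|_q\rangle_{Q_k}\bigr)$.

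For the good part I would use Chebyshev together with $L^q(\ell^q)$-boundedness of $\overline{T}_q$ and the trivial $L^\infty$--$L^1$ interpolation of $|\bm{g}|_q$:
\[
\left|\{\overline{T}_q\bm{g}>\lambda/2\}\right|
\le\frac{2^q}{\lambda^q}\|T\|_{L^q\to L^q}^{\,q}\int|\bm{g}|_q^{\,q}\,dx
\le\frac{c_{n,q}\,\|T\|_{L^q\to L^q}^{\,q}}{\lambda}\||\bm{f}|_q\|_{L^1}.
\]
For the bad part I would work away from $E^*:=\bigcup_k 2Q_k$, which has measure at most $c_n\lambda^{-1}\||\bm{f}|_q\|_{L^1}$. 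Subtracting $K(x,c_k)$ inside $\int b_{j,k}=0$ and invoking the smoothness of $K$ gives, for $x\notin E^*$,
\[
|Tb_j(x)|\le\sum_k\int_{Q_k}\frac{\omega(|y-c_k|/|x-c_k|)}{|x-c_k|^n}\,|b_{j,k}(y)|\,dy.
\]
Taking the $\ell^q$-norm in $j$ under the integral (Minkowski a third time), integrating in $x\notin E^*$, swapping order, and using the standard dyadic-annuli bound $\int_{|x-c_k|\ge\ell(Q_k)}|x-c_k|^{-n}\omega(|y-c_k|/|x-c_k|)\,dx\le c_n\|\omega\|_{\textup{Dini}}$ uniformly in $y\in Q_k$, reduces the bad-part integral to $c_n\|\omega\|_{\textup{Dini}}\sum_k\int_{Q_k}|\bm{b}_k|_q\le c_n\|\omega\|_{\textup{Dini}}\||\bm{f}|_q\|_{L^1}$. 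A final Chebyshev on this, plus the trivial estimate for $|E^*|$, combines with the good-part bound to yield the desired weak-type inequality; the ``furthermore'' statement is then immediate from the classical $\|T\|_{L^q\to L^q}\le c_n(\|\omega\|_{\textup{Dini}}+\|T\|_{L^2\to L^2})$.

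The only real technical subtlety is the careful bookkeeping of the passage from scalar to $\ell^q$-valued estimates: Minkowski's integral inequality has to be invoked three times (in the vector-valued CZ decomposition to get an $L^\infty$ bound for $|\bm{g}|_q$, to bound $|\bm{b}_k|_q$ by scalar quantities, and after the kernel smoothness inequality), and each application requires $1<q<\infty$. Beyond this, the proof is a direct translation of the scalar Calder\'on--Zygmund weak-type argument.
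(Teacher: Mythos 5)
Your skeleton is essentially the paper's: a Calder\'on--Zygmund decomposition of the scalar function $|\bm{f}|_q$, a componentwise good/bad splitting, the bound $\|\overline{T}_q\bm{g}\|_{L^q}\le\|T\|_{L^q\to L^q}\||\bm{g}|_q\|_{L^q}$ for the good part, and the Dini modulus estimate for the bad part away from the dilated cubes (your Minkowski bookkeeping, the annuli estimate and the bound on $|E^*|$ are all fine). The gap is quantitative, and it sits exactly at the point the proposition is about: you decompose at height $\lambda$, so your good-part estimate is $|\{\overline{T}_q\bm{g}>\lambda/2\}|\le c_{n,q}\lambda^{-1}\|T\|_{L^q\to L^q}^{\,q}\||\bm{f}|_q\|_{L^1}$, i.e.\ the $q$-th power of the operator norm rather than the first power. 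Summing your three pieces only yields $\|\overline{T}_q\|_{L^1\to L^{1,\infty}}\le c_{n,q}\bigl(\|\omega\|_{\text{Dini}}+\|T\|_{L^q\to L^q}^{\,q}\bigr)$, and after inserting $\|T\|_{L^q\to L^q}\le c_n(\|\omega\|_{\text{Dini}}+\|T\|_{L^2\to L^2})$ this becomes $(\|\omega\|_{\text{Dini}}+\|T\|_{L^2\to L^2})^q$, not the linear bound claimed in the statement --- and the linear dependence is what matters here, since $C_T$ enters the sparse domination theorems to the first power.

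The missing idea is the standard linearization device, which is precisely what the paper's proof does: run the decomposition at height $\alpha\lambda$ with $\alpha=\|T\|_{L^q\to L^q}^{-1}$. Then $\||\bm{g}|_q\|_{L^\infty}\le 2^n\alpha\lambda$, so the good part gives $\lambda^{-q}2^q\|T\|_{L^q\to L^q}^{\,q}(2^n\alpha\lambda)^{q-1}\||\bm{f}|_q\|_{L^1}=c_{n,q}\lambda^{-1}\|T\|_{L^q\to L^q}\||\bm{f}|_q\|_{L^1}$, while $|\widetilde{\Omega}|\le 2^n(\alpha\lambda)^{-1}\||\bm{f}|_q\|_{L^1}$ again contributes $\|T\|_{L^q\to L^q}$ linearly, and the bad part contributes only $\|\omega\|_{\text{Dini}}$. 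Equivalently, you could normalize first by homogeneity, replacing $T$ by $T/(\|\omega\|_{\text{Dini}}+\|T\|_{L^q\to L^q})$ (which rescales $K$, $\omega$ and all the norms involved linearly), prove the bound with constant $c_{n,q}$, and scale back. Either one-line repair turns your argument into a proof of the stated estimate; as written, it does not give it.
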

\begin{proof}
Fix $\lambda>0$ and let $\{Q_{j}\}$ be the family of non overlapping
cubes that satisfy
\begin{equation}
\lambda\alpha<\frac{1}{|Q_{j}|}\int_{Q_{j}}|\bm{f}(x)|_{q}dx\leq2^{n}\alpha\lambda,\label{descompo}
\end{equation}
and that are maximal with respect to left hand side inequality. Let
us denote by $z_{j}$ and by $r_{j}$ the center and side-length of
each $Q_{j}$, respectively. If we denote $\Omega=\bigcup_{j}Q_{j}$,
then, it is clear that $|f(x)|_{q}\leq\alpha\lambda$ a.e.$x\in\mathbb{R}^{n}\setminus\Omega$.

Now we split $f$ as $f=g+b$, in a slightly different way to the
usual. We consider $g=\{g_{i}\}_{i=1}^{\infty}$ given by
\[
g_{i}(x)=\left\{ \begin{array}{cl}
f_{i}(x) & \mbox{for }x\in\mathbb{R}^{n}\setminus\Omega,\\
(f_{i})_{Q_{j}} & \mbox{for }x\in Q_{j},
\end{array}\right.
\]
where, as usual, $(f_{i})_{Q_{j}}$ is the average of $f_{i}$ on
the cube $Q_{j}$, and
\[
\bm{b}(x)=\{b_{i}(x)\}_{i=1}^{\infty}=\left\{ \sum_{Q_{j}}b_{ij}(x)\right\} _{i=1}^{\infty}
\]
with $b_{ij}(x)=(f_{i}(x)-(f_{i})_{Q_{j}})\chi_{Q_{j}}(x)$. Let $\widetilde{\Omega}=\cup_{j}2Q_{j}$.
We then have
\begin{equation}
\begin{split}\left|\{y\in\mathbb{R}^{n}:|\overline{T}_{q}\bm{f}(y)|>\lambda\}\right|\leq & \left|\{y\in\mathbb{R}^{n}\setminus\widetilde{\Omega}\,:\,|\overline{T}_{q}\bm{g}(y)|>\lambda/2\}\right|\\
 & +\left|\widetilde{\Omega}\right|\\
 & +\left|\{y\in\mathbb{R}^{n}\setminus\widetilde{\Omega}\,:\,|\overline{T}_{q}\bm{b}(y)|>\lambda/2\}\right|.
\end{split}
\label{eq:tresterminos}
\end{equation}
The rest of the proof can be completed following standard computations
(see for instance \cite{PT}). Choosing $\alpha=\frac{1}{\|T\|_{L^{q}\rightarrow L^{q}}}$
yields the desired conclusion.

\end{proof}

\subsection{Boundedness of $\overline{M}_{q}$ on $L^{p,\infty}$ }

In this Section we prove that $\overline{M}_{q} :L^{p,\infty}\rightarrow L^{p,\infty}$.
For that purpose we will use the following Fefferman-Stein type estimate
obtained in \cite[Theorem 1.1]{P}
\begin{thm}
\label{Thm:FefStein}Let $1<p<q<\infty$ then, if $g$ is a locally
integrable function, we have that
\[
\int_{\mathbb{R}^n}\overline{M}_{q}\bm{f}g\leq\int_{\mathbb{R}^n}|\bm{f}|_{q}Mg.
\]
\end{thm}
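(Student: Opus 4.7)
The stated inequality is the Fefferman--Stein type estimate for the vector-valued Hardy--Littlewood maximal operator obtained in \cite[Theorem 1.1]{P}. My plan would be to adapt the classical Fefferman--Stein scheme to the vector-valued setting, the key new input being Minkowski's inequality in the $\ell^q$-direction.

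\smallskip

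First, I would reduce to the dyadic setting. By the three-lattices observation (Remark \ref{Rem}), $Mf(x)\le c_n\sum_{k=1}^{3^n}M^{\mathcal{D}_k}f(x)$, hence $\overline M_q\bm f(x)\le c_n\sum_k \overline{M}_q^{\mathcal{D}_k}\bm f(x)$, so it suffices to prove the analogous estimate for a single dyadic $\overline{M}_q^{\mathcal D}$. The core of the argument is then the weighted weak-type $(1,1)$ bound
$$
g\bigl(\{\overline M_q^{\mathcal D}\bm f>\lambda\}\bigr)\le \frac{c_{n,q}}{\lambda}\int_{\mathbb{R}^n}|\bm f|_q\,Mg\,dx,\qquad \lambda>0,
$$
applied to nonnegative $g$. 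To prove this I would perform a Calder\'on--Zygmund decomposition of the \emph{scalar} function $|\bm f|_q$ at level $\lambda$, obtaining maximal dyadic cubes $\{Q_j^\lambda\}$ with $\lambda<\langle|\bm f|_q\rangle_{Q_j^\lambda}\le 2^n\lambda$, and split each component as $f_i=g_i+b_i$ where $g_i=f_i\chi_{\bigcup_j Q_j^\lambda)^c}+\sum_j\langle f_i\rangle_{Q_j^\lambda}\chi_{Q_j^\lambda}$. Minkowski's inequality in the $\ell^q$-direction applied to the averages $\langle|f_i|\rangle_{Q_j^\lambda}$ then gives $|\bm g|_q\le c_n\lambda$ a.e., while the bad part $\overline M_q^{\mathcal D}\bm b$ is supported in $\bigcup_j Q_j^\lambda$. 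Summing $g(Q_j^\lambda)\le|Q_j^\lambda|\inf_{Q_j^\lambda}Mg$ across $j$ and comparing with the averages of $|\bm f|_q$ yields the claimed weak-type bound.

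\smallskip

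To pass from the level-set estimate to the integrated inequality, I would use a layer-cake together with a Marcinkiewicz-type interpolation: the weak-type bound above combined with the trivial $L^\infty$ estimate $\overline M_q^{\mathcal D}\bm f\le \||\bm f|_q\|_\infty$ provides, for any intermediate $p$, an integrated inequality $\int(\overline M_q^{\mathcal D}\bm f)^p g\le c\int |\bm f|_q^p Mg$ (this is where the parameter $1<p<q$ in the statement makes its appearance), from which the theorem follows by choosing the scaling appropriately. The main obstacle in this plan is the vector-valued Calder\'on--Zygmund splitting: one has to show that after replacing the $f_i$'s by their averages inside the bad cubes the resulting $|\bm g|_q$ remains bounded by $c_n\lambda$, which requires Minkowski in the $\ell^q$-sum and is the place where the argument truly uses $q\ge 1$. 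A secondary technical point is controlling the support of $\overline M_q^{\mathcal D}\bm b$: componentwise $b_i$ has mean zero on each $Q_j^\lambda$, but the $\ell^q$ aggregation has to be handled cube-by-cube to avoid losing a factor dependent on the number of components.
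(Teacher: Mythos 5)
A preliminary remark: the paper does not prove this theorem at all --- it is quoted from \cite[Theorem 1.1]{P} --- and, as it is actually used in Theorem \ref{Thm:WeakppMq}, the estimate carries $p$-th powers, i.e. $\int_{\mathbb{R}^n}(\overline{M}_q\bm{f})^p g\le c_{n,p,q}\int_{\mathbb{R}^n}|\bm{f}|_q^p\,Mg$ for $1<p<q$. As literally written (no powers) it is false: take $g\equiv 1$ and $\bm{f}$ with a single nonzero component and it would assert that $M$ is bounded on $L^1$. So the integrated inequality at the end of your outline is the correct target, but it cannot be converted into the power-free form ``by choosing the scaling appropriately''.

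Your proof sketch has a genuine gap in the weak-type step. The claim that $\overline{M}_q^{\mathcal{D}}\bm{b}$ is supported in $\bigcup_j Q_j^{\lambda}$ is false: the maximal operator integrates $|b_i|$, so the mean-zero property of $b_i$ on $Q_j^{\lambda}$ gives nothing (cancellation helps singular integrals, not maximal functions). For $x\notin\bigcup_jQ_j^{\lambda}$ the best pointwise bound one can extract (Minkowski in $j$ and in the integral) is $\overline{M}_q^{\mathcal{D}}\bm{b}(x)\le c_{n}\lambda\sum_j M^{\mathcal{D}}(\chi_{Q_j^{\lambda}})(x)$, and one must then prove $g\big(\big\{\sum_j M^{\mathcal{D}}\chi_{Q_j^{\lambda}}>c_n\big\}\big)\le \frac{C}{\lambda}\int|\bm{f}|_q\,Mg$; similarly, the good part is not finished by the bound $|\bm{g}|_q\le c_n\lambda$ alone. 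This is exactly where the work lies: the estimate $\|\overline{M}_q\bm{f}\|_{L^{1,\infty}(w)}\le c_{n,q}\||\bm{f}|_q\|_{L^{1}(Mw)}$ is itself one of the main results of \cite{P}, quoted separately in the introduction of this paper, and it cannot be dispatched in two lines.

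The second gap is the interpolation endpoint: the ``trivial'' bound $\|\overline{M}_q\bm{f}\|_{L^\infty}\le\||\bm{f}|_q\|_{L^\infty}$ is false, since $\overline{M}_q$ is not bounded on $L^\infty$. On the real line take $f_i=\chi_{[2^i,2^{i+1})}$, $1\le i\le N$: the supports are disjoint, so $|\bm{f}|_q\le 1$, yet $Mf_i(0)\ge\frac12$ for every $i$, whence $\overline{M}_q\bm{f}(0)\ge\frac12 N^{1/q}$. This is not a cosmetic issue in your scheme: if that endpoint held, interpolation with the weak $(1,1)$ bound would give the weighted $L^p$ inequality for every $1<p<\infty$, whereas it is known to fail for $p\ge q$; the hypothesis $p<q$ must enter somewhere, and in your outline it never genuinely does. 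The correct second endpoint is $p=q$: apply the scalar Fefferman--Stein inequality $\int (Mh)^q g\le c\int |h|^q Mg$ to each component and sum in $i$ to get $\int(\overline{M}_q\bm{f})^q g\le c\int|\bm{f}|_q^q\,Mg$; Marcinkiewicz interpolation for the pair of measures $Mg\,dx$ and $g\,dx$, between this and a correctly proven weighted weak $(1,1)$ estimate, then yields precisely the range $1<p<q$.
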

As we anounced, using the estimate in Theorem \ref{Thm:FefStein},
we can obtain the following result.
\begin{thm}
\label{Thm:WeakppMq}Let $1<p,\,q<\infty$. Then
\[
\left\Vert \overline{M}_{q}\bm{f}\right\Vert _{L^{p,\infty}}\leq c_{n,q}\left\Vert |\bm{f}|_{q}\right\Vert _{L^{p,\infty}}.
\]
\end{thm}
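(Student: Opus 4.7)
The plan is to obtain the $L^{p,\infty}$ bound via duality against the Lorentz space $L^{p',1}$, using Theorem \ref{Thm:FefStein} as the central input. The motivation is that Theorem \ref{Thm:FefStein} is essentially an ``$L^1$-dual'' Fefferman-Stein inequality, which interpolates nicely with the weak-type characterization by level sets.

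First I would recall the well-known equivalence, valid for $1<p<\infty$,
\[
\|h\|_{L^{p,\infty}} \;\le\; \sup_{0<|E|<\infty} |E|^{-1/p'}\int_E |h|\,dx \;\le\; p'\,\|h\|_{L^{p,\infty}},
\]
which reduces the problem to controlling $|E|^{-1/p'}\int_E \overline{M}_q\bm{f}\,dx$ uniformly in $E$. Taking an arbitrary measurable set $E$ with $0<|E|<\infty$ and applying Theorem \ref{Thm:FefStein} with $g=\chi_E$ gives
\[
\int_E \overline{M}_q\bm{f}\,dx \;\le\; \int_{\mathbb{R}^n} |\bm{f}|_q\, M(\chi_E)\,dx.
\]

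Next I would apply H\"older's inequality in Lorentz spaces, namely $\int |uv|\le c\,\|u\|_{L^{p,\infty}}\|v\|_{L^{p',1}}$, to estimate the right-hand side by $c\,\||\bm{f}|_q\|_{L^{p,\infty}}\|M(\chi_E)\|_{L^{p',1}}$. Since $p>1$ we have $p'<\infty$, and since $p<\infty$ we have $p'>1$, so the Hardy-Littlewood maximal operator $M$ is bounded on $L^{p',1}$; therefore
\[
\|M(\chi_E)\|_{L^{p',1}} \;\le\; c_p\,\|\chi_E\|_{L^{p',1}} \;=\; c_p\,|E|^{1/p'}.
\]
Dividing by $|E|^{1/p'}$, taking the supremum over $E$, and invoking the equivalence above yields
\[
\|\overline{M}_q\bm{f}\|_{L^{p,\infty}} \;\le\; c_{n,p,q}\,\||\bm{f}|_q\|_{L^{p,\infty}},
\]
which is the claimed estimate.

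There is no real obstacle: the argument is a clean three-line duality/H\"older chain built on Theorem \ref{Thm:FefStein}. The only mildly delicate point is recalling the boundedness of $M$ on $L^{p',1}$ (which is standard, e.g.\ by Marcinkiewicz interpolation between the strong type $(r,r)$ for some $r<p'$ and the weak type $(1,1)$) and the equivalent characterization of the weak $L^p$ norm by averages over sets, both of which are classical and do not require new ideas.
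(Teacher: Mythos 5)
Your argument hinges on applying Theorem \ref{Thm:FefStein} at exponent $1$, i.e.\ on the inequality $\int_E \overline{M}_q\bm{f}\,dx\le \int_{\mathbb{R}^n} |\bm{f}|_q\, M\chi_E\,dx$, and this is exactly the step that fails. The display in Theorem \ref{Thm:FefStein} omits a power: the hypothesis $1<p<q<\infty$ is there because the actual result of \cite[Theorem 1.1]{P} reads $\int (\overline{M}_q\bm{f})^p g \le c\int |\bm{f}|_q^p\, Mg$, and the restriction $p<q$ is essential. The exponent-$1$ version you invoke cannot hold: taking $\bm{f}=(f,0,0,\dots)$ it reduces to the scalar inequality $\int_E Mf\le c\int |f|\,M\chi_E$, which is false (in $\mathbb{R}$, with $f=\varepsilon^{-1}\chi_{[0,\varepsilon]}$ and $E=[0,1]$, the left-hand side is $\approx\log(1/\varepsilon)$ while the right-hand side is $\approx 1$). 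So your chain breaks at its first link, even though everything after it --- the characterization of $\|\cdot\|_{L^{p,\infty}}$ by averages over sets, H\"older in Lorentz spaces, and the boundedness of $M$ on $L^{p',1}$ --- is correct.

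The missing idea is the power trick, and with it your scheme becomes the paper's proof. The paper fixes $1<r<\min\{p,q\}$, writes $\|\overline{M}_q\bm{f}\|_{L^{p,\infty}}=\|(\overline{M}_q\bm{f})^r\|_{L^{p/r,\infty}}^{1/r}$, and dualizes $L^{p/r,\infty}$ against $L^{(p/r)',1}$; since $r<q$ (equivalently, since $(\overline{M}_q\bm{f})^r\le \overline{M}_{q/r}(|\bm{f}|^r)$ pointwise with $q/r>1$), the Fefferman--Stein inequality is then applied legitimately at exponent $r$, giving $\int(\overline{M}_q\bm{f})^r|g|\le \int|\bm{f}|_q^r\,Mg$, after which Lorentz H\"older and the boundedness of $M$ on $L^{(p/r)',1}$ finish exactly as in your last two steps. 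Your set-indicator characterization of the weak norm is just the $g=\chi_E$ instance of that duality, so the repair is local: replace $\int_E\overline{M}_q\bm{f}$ by $\int_E(\overline{M}_q\bm{f})^r$ with $1<r<\min\{p,q\}$ and run your argument for $\|(\overline{M}_q\bm{f})^r\|_{L^{p/r,\infty}}$; as written, without that step, the proof is not valid.
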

\begin{proof}
Let us fix $1<r<\min\left\{ p,\,q\right\} $. Then
\[
\left\Vert \overline{M}_{q}\bm{f}\right\Vert _{L^{p,\infty}}=\left\Vert \left(\overline{M}_{q}\bm{f}\right)^{\frac{r}{r}}\right\Vert _{L^{p,\infty}}=\left\Vert \left(\overline{M}_{q}\bm{f}\right)^{r}\right\Vert _{L^{\frac{p}{r},\infty}}^{\frac{1}{r}}.
\]
Now by duality
\[
\left\Vert \left(\overline{M}_{q}\bm{f}\right)^{r}\right\Vert _{L^{\frac{p}{r},\infty}}^{\frac{1}{r}}=\left(\sup_{\|g\|_{L^{\left(\frac{p}{r}\right)',1}}=1}\left|\int_{\mathbb{R}^{n}}\left(M_{q}\bm{f}\right)^{r}g\right|\right)^{\frac{1}{r}},
\]
and using Theorem \ref{Thm:FefStein} together with H\"older's inequality in the context of Lorentz spaces we have
\[
\begin{split}\left|\int_{\mathbb{R}^{n}}\left(\overline{M}_{q}\bm{f}\right)^{r}g\right| & \leq\int_{\mathbb{R}^{n}}\left|\left(\overline{M}_{q}\bm{f}\right)^{r}g\right|\leq\int_{\mathbb{R}^{n}}\left|\bm{f}\right|_{q}^{r}\left|Mg\right|\\
 & \leq\|\left|\bm{f}\right|_{q}^{r}\|_{L^{\frac{p}{r},\infty}}\|Mg\|_{L^{\left(\frac{p}{r}\right)',1}}\\
 & \leq c_{n,p,q}\|\left|\bm{f}\right|_{q}\|_{L^{p,\infty}}^{r}\|g\|_{L^{\left(\frac{p}{r}\right)',1}}\leq c_{n,p,q}\|\left|\bm{f}\right|_{q}\|_{L^{p,\infty}}^{r}.
\end{split}
\]
Summarizing
\[
\left\Vert \overline{M}_{q}\bm{f}\right\Vert _{L^{p,\infty}}=\left\Vert \left(\overline{M}_{q}\bm{f}\right)^{r}\right\Vert _{L^{\frac{p}{r},\infty}}^{\frac{1}{r}}\leq\left(c_{n,p,q}\|\left|\bm{f}\right|_{q}\|_{L^{p,\infty}}^{r}\right)^{\frac{1}{r}}\leq c_{n,p,q}\|\left|\bm{f}\right|_{q}\|_{L^{p,\infty}}.
\]
\end{proof}

\subsection{Weak type $(1,1)$ of $\overline{T^*}_q$ }

In this Section we present a fully quantitative estimate of the weak-type
$(1,1)$ of $\overline{T^{*}}_{q}$ via a suitable pointwise Cotlar inequality.

Now we recall Cotlar's inequality for $T^{*}$. In \cite[Theorem A.2]{HRT}
the following result is obtained
\begin{lem}
\label{Lem:CotlarEscalar}Let $T$ be an $\omega$-Calder\'on-Zygmund operator with $\omega$ satisfying a Dini condition and let $\delta\in(0,1)$.
Then
\[
T^{*}f(x)\leq c_{n,\delta}\left(M_{\delta}(|Tf|)(x)+\left(\|T\|_{L^{2}\rightarrow L^{2}}+\|\omega\|_{\text{Dini}}\right)Mf(x)\right).
\]
\end{lem}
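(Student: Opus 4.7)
The plan is to establish the pointwise Cotlar-type inequality via the classical localization argument, carefully tracking the dependence on $\|\omega\|_{\text{Dini}}$ and $\|T\|_{L^2 \to L^2}$. Fix $x \in \mathbb{R}^n$ and $\varepsilon > 0$, let $B = B(x, \varepsilon/2)$, and split $f = f_1 + f_2$ with $f_1 = f\chi_{B(x,\varepsilon)}$. The goal is to bound the truncated integral $T_\varepsilon f(x)$ for arbitrary $\varepsilon$ and then take the supremum.

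First I would compare $T_\varepsilon f(x)$ with $T f_2(z)$ for $z \in B$. Since the integration in both is over $\{y : |x-y| > \varepsilon\}$, the difference is $\int_{|x-y|>\varepsilon}[K(x,y) - K(z,y)]f(y)\,dy$, and because $|x-z| \le \varepsilon/2 \le \frac{1}{2}|x-y|$ on the domain of integration, the smoothness condition applies. A dyadic annular decomposition identical to \eqref{eq:DBK}, combined with the Dini condition, yields
\[
|T_\varepsilon f(x) - T f_2(z)| \le c_n \|\omega\|_{\text{Dini}}\, Mf(x), \qquad z \in B.
\]
Writing $Tf_2(z) = Tf(z) - Tf_1(z)$, we obtain the pointwise inequality
\[
|T_\varepsilon f(x)| \le c_n \|\omega\|_{\text{Dini}}\, Mf(x) + |Tf(z)| + |Tf_1(z)|.
\]

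Next, I would raise this to the $\delta$-th power (using $(a+b+c)^\delta \le a^\delta + b^\delta + c^\delta$ for $\delta \in (0,1)$) and average over $z \in B$. The $Tf$-term is immediately controlled by $M_\delta(|Tf|)(x)^\delta$. For the $Tf_1$-term, Kolmogorov's inequality gives
\[
\left(\frac{1}{|B|}\int_B |Tf_1|^\delta\right)^{1/\delta} \le \left(\frac{1}{1-\delta}\right)^{1/\delta} \|T\|_{L^1\to L^{1,\infty}} \frac{1}{|B|}\int_{B(x,\varepsilon)} |f|,
\]
and the standard Calder\'on--Zygmund decomposition argument under a Dini kernel shows $\|T\|_{L^1\to L^{1,\infty}} \le c_n(\|T\|_{L^2\to L^2} + \|\omega\|_{\text{Dini}})$, so this term is bounded by $c_{n,\delta}(\|T\|_{L^2\to L^2} + \|\omega\|_{\text{Dini}}) Mf(x)$. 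Taking the supremum over $\varepsilon > 0$ delivers the claim.

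The main obstacle is the fully quantitative tracking of constants: the Dini integral $\int_0^1 \omega(t)\,dt/t$ appears both directly in the kernel smoothness step and indirectly through the weak-type $(1,1)$ bound for $T$ (which itself is obtained via the same Dini decomposition used in Proposition \ref{Prop:TqFullyQuant}). Both routes are standard, but one must be careful not to double-count the constant and to ensure that the factor $(1-\delta)^{-1/\delta}$ arising from Kolmogorov's inequality is the only $\delta$-dependent blow-up, absorbed into $c_{n,\delta}$.
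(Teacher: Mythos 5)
Your argument is correct: it is the classical Cotlar localization proof (compare $T_\varepsilon f(x)$ with $Tf_2(z)$ on $B(x,\varepsilon/2)$ via the Dini smoothness estimate, then Kolmogorov plus the quantitative weak $(1,1)$ bound $\|T\|_{L^1\to L^{1,\infty}}\leq c_n(\|T\|_{L^2\to L^2}+\|\omega\|_{\text{Dini}})$), with the constants tracked as required. The paper itself does not prove this lemma but quotes it from \cite[Theorem A.2]{HRT}, and your proof is essentially the argument given there, so there is nothing to add.
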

Armed with this lemma we are in the position to prove the following pointwise vector-valued Cotlar's inequality.
\begin{lem}
Let $T$ be an $\omega$-Calder\'on-Zygmund operator with $\omega$ satisfying a Dini condition, $\delta\in(0,1)$ and $1<q<\infty$. Then
\[
\overline{T^{*}}_{q}\bm{f}(x)\leq c_{n,\delta}\left(\overline{M}_{\frac{q}{\delta}}(|\overline{T}\bm{f}|^{\delta})(x)^{\frac{1}{\delta}}+\left(\|T\|_{L^{2}\rightarrow L^{2}}+\|\omega\|_{\text{Dini}}\right)\overline{M}_{q}\bm{f}(x)\right),
\]
where $|\overline{T}\bm{f}|^{\delta}$ stands for $\left\{ |Tf_{j}|^{\delta}\right\} _{j=1}^{\infty}$.
\end{lem}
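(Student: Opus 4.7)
The plan is to deduce the vector-valued Cotlar inequality directly from the scalar Cotlar inequality (Lemma \ref{Lem:CotlarEscalar}) applied componentwise, followed by Minkowski's inequality in $\ell^q$ and a rewriting of the resulting terms. No new geometric or kernel-level argument is needed; all of that work has already been absorbed into the scalar Cotlar inequality.

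First, I would fix $x \in \mathbb{R}^n$ and apply Lemma \ref{Lem:CotlarEscalar} to each component $f_j$, which yields
\[
T^{*}f_{j}(x) \le c_{n,\delta}\left(M_{\delta}(|Tf_{j}|)(x) + C_{T}\, Mf_{j}(x)\right),
\]
where I abbreviate $C_{T} := \|T\|_{L^{2}\to L^{2}} + \|\omega\|_{\mathrm{Dini}}$. Raising to the $q$-th power, summing in $j$, taking the $q$-th root, and using Minkowski's inequality in $\ell^{q}$ (recall $1 < q < \infty$) I obtain
\[
\overline{T^{*}}_{q}\bm{f}(x) \le c_{n,\delta}\left[\Big(\sum_{j=1}^{\infty} M_{\delta}(|Tf_{j}|)(x)^{q}\Big)^{1/q} + C_{T}\,\overline{M}_{q}\bm{f}(x)\right],
\]
which already isolates the correct Hardy--Littlewood maximal piece on the right.

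The remaining step is purely algebraic: since $M_{\delta}(g)(x) = M(|g|^{\delta})(x)^{1/\delta}$ by definition, I have
\[
\Big(\sum_{j=1}^{\infty} M_{\delta}(|Tf_{j}|)(x)^{q}\Big)^{1/q} = \Big(\sum_{j=1}^{\infty} M(|Tf_{j}|^{\delta})(x)^{q/\delta}\Big)^{1/q} = \Big[\Big(\sum_{j=1}^{\infty} M(|Tf_{j}|^{\delta})(x)^{q/\delta}\Big)^{\delta/q}\Big]^{1/\delta},
\]
and the bracketed expression is exactly $\overline{M}_{q/\delta}(|\overline{T}\bm{f}|^{\delta})(x)$ with the convention $|\overline{T}\bm{f}|^{\delta} = \{|Tf_{j}|^{\delta}\}_{j}$. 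Substituting this identity back into the previous display gives the claimed bound with the same constant $c_{n,\delta}$ (up to harmless absolute factors from Minkowski).

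I do not foresee a genuine obstacle: the scalar Cotlar inequality is the only analytic input, Minkowski is valid because $q > 1$, and the reindexing $M_{\delta} \leftrightarrow \overline{M}_{q/\delta}(|\cdot|^{\delta})^{1/\delta}$ is tautological. The one place worth being careful is tracking that the constant depends only on $n$ and $\delta$ (and that $C_{T}$ is factored out cleanly), since this estimate is needed with explicit constants to feed into Proposition \ref{Prop:TqFullyQuant} and ultimately into the fully-quantitative sparse domination of Theorem \ref{ThmSparseTq}.
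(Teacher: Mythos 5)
Your proposal is correct and coincides with the paper's proof, which simply applies the scalar Cotlar inequality (Lemma \ref{Lem:CotlarEscalar}) to each component; your added details (the $\ell^q$ triangle inequality and the identity $M_{\delta}(g)=M(|g|^{\delta})^{1/\delta}$ rewriting the first term as $\overline{M}_{q/\delta}(|\overline{T}\bm{f}|^{\delta})^{1/\delta}$) are exactly the steps the paper leaves implicit.
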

\begin{proof}
It suffices to apply Lemma \ref{Lem:CotlarEscalar} to each term of
the sum.
\end{proof}
\begin{thm}
\label{Thm:MaxTq}Let $T$ be an $\omega$-Calder\'on-Zygmund operator with
$\omega$ satisfying the Dini condition, and $1<q<\infty$. Then
\[
\|\overline{T^{*}}_{q}\bm{f}\|_{L^{1,\infty}}\leq c_{n,\delta,q}\left(\|T\|_{L^{2}\rightarrow L^{2}}+\|\omega\|_{\text{Dini}}\right)\||\bm{f}|_{q}\|_{L^{1}}.
\]
\end{thm}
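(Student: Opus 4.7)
The plan is to combine the vector-valued Cotlar inequality from the preceding lemma with the ingredients gathered in the Appendix. Starting from the pointwise estimate
\[
\overline{T^{*}}_{q}\bm{f}(x)\leq c_{n,\delta}\left(\overline{M}_{\frac{q}{\delta}}\bigl(|\overline{T}\bm{f}|^{\delta}\bigr)(x)^{\frac{1}{\delta}}+C_{T}\,\overline{M}_{q}\bm{f}(x)\right),
\]
with $C_{T}=\|T\|_{L^{2}\to L^{2}}+\|\omega\|_{\mathrm{Dini}}$, I would bound $\|\overline{T^{*}}_{q}\bm{f}\|_{L^{1,\infty}}$ by the sum of the two corresponding weak-$L^{1}$ quasi-norms and treat each piece separately.

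The second piece is immediate from the classical Fefferman--Stein weak-type $(1,1)$ inequality for $\overline{M}_{q}$ recalled at the beginning of the paper, which yields $\|\overline{M}_{q}\bm{f}\|_{L^{1,\infty}}\leq c_{n,q}\||\bm{f}|_{q}\|_{L^{1}}$. For the first piece I would fix any $\delta\in(0,1)$ and use the elementary rescaling identity for the Lorentz quasi-norm, namely $\|g^{1/\delta}\|_{L^{1,\infty}}=\|g\|_{L^{1/\delta,\infty}}^{1/\delta}$, to write
\[
\bigl\|\overline{M}_{q/\delta}\bigl(|\overline{T}\bm{f}|^{\delta}\bigr)^{1/\delta}\bigr\|_{L^{1,\infty}}=\bigl\|\overline{M}_{q/\delta}\bigl(|\overline{T}\bm{f}|^{\delta}\bigr)\bigr\|_{L^{1/\delta,\infty}}^{1/\delta}.
\]
Since $1/\delta>1$, Theorem~\ref{Thm:WeakppMq} applies to $\overline{M}_{q/\delta}$ on $L^{1/\delta,\infty}$, giving the bound $c_{n,q,\delta}\bigl\|\,|\,|\overline{T}\bm{f}|^{\delta}\,|_{q/\delta}\bigr\|_{L^{1/\delta,\infty}}$.

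The key algebraic observation is that
\[
\bigl|\,|\overline{T}\bm{f}|^{\delta}\,\bigr|_{q/\delta}=\Bigl(\sum_{j}|Tf_{j}|^{\delta\cdot q/\delta}\Bigr)^{\delta/q}=(\overline{T}_{q}\bm{f})^{\delta},
\]
so applying the rescaling identity once more,
\[
\bigl\|\,|\,|\overline{T}\bm{f}|^{\delta}\,|_{q/\delta}\bigr\|_{L^{1/\delta,\infty}}=\|\overline{T}_{q}\bm{f}\|_{L^{1,\infty}}^{\delta}.
\]
At this point I would invoke Proposition~\ref{Prop:TqFullyQuant}, which yields the fully quantitative bound $\|\overline{T}_{q}\bm{f}\|_{L^{1,\infty}}\leq c_{n}C_{T}\||\bm{f}|_{q}\|_{L^{1}}$. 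Raising to the power $1/\delta$ and collecting all constants gives the desired estimate with constant $c_{n,\delta,q}C_{T}$.

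There is no real obstacle here since every ingredient is already available; the only subtle point is keeping the bookkeeping straight for the two rescalings of the weak quasi-norm and the nested $\ell^{q}$ identification $||\overline{T}\bm{f}|^{\delta}|_{q/\delta}=(\overline{T}_{q}\bm{f})^{\delta}$, which is what allows the argument to collapse back to the scalar-flavored estimate of Proposition~\ref{Prop:TqFullyQuant}. The role of the parameter $\delta\in(0,1)$ is precisely to guarantee that $\overline{M}_{q/\delta}$ is being evaluated in a strictly super-unit Lorentz space, where Theorem~\ref{Thm:WeakppMq} is available.
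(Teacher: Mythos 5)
Your proposal is correct and coincides with the paper's own argument (its first of two alternatives): the vector-valued Cotlar inequality, the Fefferman--Stein weak $(1,1)$ bound for $\overline{M}_{q}$, and then the rescaling $\|g^{1/\delta}\|_{L^{1,\infty}}=\|g\|_{L^{1/\delta,\infty}}^{1/\delta}$ combined with Theorem~\ref{Thm:WeakppMq} for $\overline{M}_{q/\delta}$, the identification $\bigl|\,|\overline{T}\bm{f}|^{\delta}\bigr|_{q/\delta}=(\overline{T}_{q}\bm{f})^{\delta}$, and Proposition~\ref{Prop:TqFullyQuant}. The bookkeeping of the two rescalings is handled exactly as in the paper, so nothing further is needed.
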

\begin{proof}
Using the previous lemma
\[
\|\overline{T^{*}}_{q}\bm{f}\|_{L^{1,\infty}}\leq c_{n,\delta}\left(\left\Vert \overline{M}_{\frac{q}{\delta}}(|\overline{T}\bm{f}|^{\delta})^{\frac{1}{\delta}}\right\Vert _{L^{1,\infty}}+\left(\|T\|_{L^{2}\rightarrow L^{2}}+\|\omega\|_{\text{Dini}}\right)\left\Vert \overline{M}_{q}\bm{f}\right\Vert _{L^{1,\infty}}\right).
\]
For the second term we have that
\[
\left\Vert \overline{M}_{q}\bm{f}\right\Vert _{L^{1,\infty}}\leq c_{n,q}\||\bm{f}|_{q}\|_{L^{1}}
\]
so we only have to deal with the first term. There are two possible
ways to deal with that term.

\begin{enumerate}
\item Using that  $\overline{M}_{q}:L^{p,\infty}\rightarrow L^{p,\infty}$ (Theorem \ref{Thm:WeakppMq}).

We observe that
\[
\begin{split}\left\Vert \overline{M}_{\frac{q}{\delta}}(|\overline{T}\bm{f}|^{\delta})(x)^{\frac{1}{\delta}} \right\Vert _{L^{1,\infty}} & =\left\Vert \overline{M}_{\frac{q}{\delta}}(|\overline{T}\bm{f}|^{\delta}) \right\Vert _{L^{\frac{1}{\delta},\infty}}^{\frac{1}{\delta}}\leq C_{n,\delta,q}\left\Vert |T\bm{f}|_{\frac{q}{\delta}}^{\delta}\right\Vert _{L^{\frac{1}{\delta},\infty}}^{\frac{1}{\delta}}\\
 & =C_{n,\delta,q}\left\Vert \overline{T}_{q}\bm{f}\right\Vert _{L^{1,\infty}}\leq C_{n,\delta,q}\|\overline{T}_{q}\|_{L^{1}\rightarrow L^{1,\infty}}\||\bm{f}|_{q}\|_{L^{1}}.
\end{split}
\]
Now, taking into account Proposition \ref{Prop:TqFullyQuant} we have that
\[
\max\left\{ \|\overline{T}_{q}\|_{L^{1}\rightarrow L^{1,\infty}},\|T\|_{L^{2}\rightarrow L^{2}}+\|\omega\|_{\text{Dini}}\right\} \leq c_{n,q}\left(\|T\|_{L^{2}\rightarrow L^{2}}+\|\omega\|_{\text{Dini}}\right)
\]
and we are done.
\item Via extrapolation.

The argument relies upon \cite[Theorem 2.1]{CUMP1}. We recall here
the statement of that result.

\begin{thm*}
Given a family $\mathcal{F}$ of pairs of functions, suppose that
for some $p_{0}$, $0<p_{0}<\infty$, and every weight $w\in A_{\infty}$
\[
\int_{\mathbb{R}^{n}}f(x)^{p_{0}}w(x)dx\leq C\int g(x)^{p_{0}}w(x)dx\qquad(f,g)\in\mathcal{F}
\]
Then for all $0<p,q<\infty,$ and $0<s\leq\infty$ and $w\in A_{\infty}$
we have that
\[
\left\Vert \left(\sum_{j}(f_{j})^{q}\right)^{\frac{1}{q}}\right\Vert _{L^{p,s}(w)}\leq C\left\Vert \left(\sum_{j}(g_{j})^{q}\right)^{\frac{1}{q}}\right\Vert _{L^{p,s}(w)}\qquad(f_{j},g_{j})\in\mathcal{F}
\]
\end{thm*}
Now we recall that for every $0<\varepsilon < \delta <1$ that we have for
every $w\in A_{\infty}$ \large{}(see for instance
\cite{PRRR})
\[
\int_{\mathbb{R}^{n}}M_{\varepsilon}f(x)w(x)dx\leq c_{n}[w]_{A_{\infty}}\int_{\mathbb{R}^{n}}M_{\delta}^{\sharp}fw(x)dx.
\]
In particular
\[
\int_{\mathbb{R}^{n}}M_{\varepsilon}(Tf)(x)w(x)dx\leq c_{n}[w]_{A_{\infty}}\int_{\mathbb{R}^{n}}M_{\delta}^{\sharp}\left(Tf\right)(x)w(x)dx.
\]
Now the extrapolation Theorem yields
\[
\left\Vert \left(\sum_{j}M_{\delta}(|Tf_{j}|)^{q}\right)^{\frac{1}{q}}\right\Vert _{L^{1,\infty}(w)}\leq C\left\Vert \left(\sum_{j}M_{\delta}^{\sharp}\left(Tf_{j}\right)^{q}\right)^{\frac{1}{q}}\right\Vert _{L^{1,\infty}(w)},
\]
and if we choose $w=1$
\[
\left\Vert \overline{M}_{\frac{q}{\delta}}(|\overline{T}\bm{f}|^{\delta})(x)^{\frac{1}{\delta}}\right\Vert _{L^{1,\infty}}=\left\Vert \left(\sum_{j}M_{\delta}(Tf_{j})^{q}\right)^{\frac{1}{q}}\right\Vert _{L^{1,\infty}}\leq C\left\Vert \left(\sum_{j}M_{\delta}^{\sharp}\left(Tf_{j}\right)^{q}\right)^{\frac{1}{q}}\right\Vert _{L^{1,\infty}}.
\]
Applying Proposition \ref{Prop:MdeltaSharpT} we obtain
\[
\left\Vert \left(\sum_{j}M_{\delta}^{\sharp}\left(Tf_{j}\right)^{q}\right)^{\frac{1}{q}}\right\Vert _{L^{1,\infty}}\leq2^{n+1} c_{n,q,\delta}\left(\|T\|_{L^{2}\rightarrow L^{2}}+\|\omega\|_{\text{Dini}}\right)\left\Vert \overline{M}_{q}\bm{f}\right\Vert _{L^{1,\infty}}
\]
and this ends the proof since we know that $\left\Vert \overline{M}_{q}\bm{f}\right\Vert _{L^{1,\infty}}\leq c_{n,q}\||\bm{f}|_{q}\|_{L^{1}}$
and clearly
\[
\max\left\{ c_{n,q},c_{n,q,\delta}\left(\|T\|_{L^{2}\rightarrow L^{2}}+\|\omega\|_{\text{Dini}}\right)\right\} \leq c_{n,\delta,q}\left(\|T\|_{L^{2}\rightarrow L^{2}}+\|\omega\|_{\text{Dini}}\right).
\]
\end{enumerate}
\end{proof}

\bibliographystyle{plain}
\bibliography{refs}

\end{document}